\documentclass[11pt]{article}
\usepackage{amsthm}
\usepackage{amssymb}
\usepackage{amsmath}
\usepackage{hyperref}
\usepackage{graphicx}
\usepackage{caption}
\usepackage{array}
\usepackage{enumitem}
\usepackage{mathtools}
\usepackage{etoolbox}
\usepackage{bbm}
\usepackage{mathdots}
\usepackage[backend=biber, style=alphabetic, maxbibnames=9, maxcitenames=9, maxalphanames=9]{biblatex}
\bibliography{TypesMonodromy}
\usepackage[all,cmtip]{xy}
\usepackage{tensor}

\usepackage{tocloft}
\pretocmd{\section}{%
  }{}{}

\numberwithin{table}{section}

\newtheorem{theorem}{Theorem}[section]
\newtheorem*{theorem*}{Theorem}
\newtheorem{proposition}[theorem]{Proposition}
\newtheorem*{proposition*}{Proposition}
\newtheorem{corollary}[theorem]{Corollary}
\newtheorem{lemma}[theorem]{Lemma}

\newtheorem*{lemma*}{Lemma}

\theoremstyle{definition}
\newtheorem{definition}[theorem]{Definition}

\newtheorem*{exercise*}{Exercise}
\newtheorem{remark}[theorem]{Remark}

\newtheorem{assumption}[theorem]{Assumption}
\newtheorem{notation}[theorem]{Notation}

\numberwithin{equation}{section}
\numberwithin{figure}{section}

\let\hom\relax
\let\det\relax
\let\L\relax

\newcommand{\mb}{\mathbb}
\newcommand{\mbf}{\mathbf}
\newcommand{\mc}{\mathcal}
\newcommand{\mf}{\mathfrak}
\newcommand{\mr}{\mathrm}

\newcommand\pmat[1]{\begin{pmatrix}#1\end{pmatrix}}
\newcommand\L[1]{\prescript{L}{}{#1}}
\newcommand\tp[1]{\prescript{t}{}{#1}}

\newcommand{\Z}{\mathbb{Z}}
\newcommand{\Q}{\mathbb{Q}}
\newcommand{\R}{\mathbb{R}}
\newcommand{\C}{\mathbb{C}}

\newcommand{\A}{\mathbb{A}}

\newcommand{\GL}{\mathrm{GL}}

\newcommand{\Sp}{\mathrm{Sp}}

\newcommand{\sset}[2]{\lbrace{#1}\,\,|\,\,{#2}\rbrace}

\DeclareMathOperator{\aut}{Aut}

\DeclareMathOperator{\chars}{char}

\DeclareMathOperator{\cyc}{cyc}

\DeclareMathOperator{\det}{det}
\DeclareMathOperator{\diag}{diag}

\DeclareMathOperator{\disc}{disc}

\DeclareMathOperator{\End}{End}

\DeclareMathOperator{\Frac}{Frac}
\DeclareMathOperator{\Frob}{Frob}

\DeclareMathOperator{\gal}{Gal}

\DeclareMathOperator{\hom}{Hom}
\newcommand{\id}{\mathrm{id}}

\DeclareMathOperator{\Ind}{Ind}

\DeclareMathOperator{\Jac}{Jac}

\newcommand{\modulo}[1]{\,\,(\mathrm{mod}\,\,{#1})}

\DeclareMathOperator{\nn}{nn}

\DeclareMathOperator{\restr}{res}

\DeclareMathOperator{\Span}{Span}
\DeclareMathOperator{\spec}{Spec}

\DeclareMathOperator{\sss}{ss}

\DeclareMathOperator{\St}{St}
\DeclareMathOperator{\std}{Std}

\DeclareMathOperator{\Sym}{Sym}

\DeclareMathOperator{\tr}{Tr}

\DeclareMathOperator{\vol}{Vol}
\DeclareMathOperator{\WD}{WD}

\begin{document}
\title{Degeneration of monodromy in the theory of Eisenstein congruences for classical groups}
\author{Sam Mundy}
\date{}
\maketitle
\begin{abstract}
We give a new Hecke-theoretic method to control the growth of monodromy in $p$-adic families of automorphic Galois representations for unitary groups and quasisplit classical groups.
\end{abstract}
\tableofcontents

\section*{Introduction}
\subsubsection*{The motivating problem}

This paper has been written to supply a technical ingredient for use in the theory of Eisenstein congruences to construct elements in Selmer groups for automorphic Galois representations. We have provided a tool which, in the context of various classical groups, can be used to ensure that certain Galois cohomology classes constructed via variants of Ribet's method actually satisfy the local conditions required of Selmer classes. Let us be more precise now.

Assume we are given a family of Galois representations parametrized by, say, an affinoid rigid space $\mf{X}$ over $\Q_p$ for some prime $p$. So we have a number field $F$ with absolute Galois group $G_F$, and a Zariski dense subset $\Sigma\subset\mf{X}(\overline\Q_p)$ for which, whenever $x\in\Sigma$, we are given a continuous representation $\rho_x:G_F\to GL_N(\overline\Q_p)$ for some positive integer $N$. These Galois representations should be interpolated by $\mf{X}$, and hence should satisfy that $x\mapsto\tr(\rho_x(g))$ is analytic for every $g\in G_F$, i.e., there is an element $F_g\in\mc{O}(\mf{X})$ so that, when $x\in\Sigma$, we have $F_g(x)=\tr(\rho_x(g))$.

Assume that the representations $\rho_x$ are generically irreducible, but that there is a point $x_0\in\mf{X}(\overline\Q_p)$ where the function $g\mapsto F_g(x_0)$ is the trace of a reducible Galois representation, say $\rho_{x_0}$. Let $\rho_1$ and $\rho_2$ be two different constituents of $\rho_{x_0}$. Then there are methods, which in some way all seem to go back to the classical paper of Ribet \cite{ribet}, by which one can try to construct nontrivial extensions of $\rho_2$ by $\rho_1$ by leveraging the generic irreducibility of this family of Galois representations. The result may be interpreted as a class in the Galois cohomology group $H^1(F,\rho_1\otimes\rho_2^\vee)$.

Following Bloch and Kato, one can define a distinguished subgroup,
\[H_f^1(F,\rho_1\otimes\rho_2^\vee)\subset H^1(F,\rho_1\otimes\rho_2^\vee)\]
called the Bloch--Kato Selmer group. The celebrated Bloch--Kato conjectures \cite{BK} tell us, at least when $\rho_1\otimes\rho_2^\vee$ is geometric in the sense of Fontaine--Mazur, that the size of this group should be dictated by the theory of $L$-values. So it is of interest to construct elements in $H_f^1(F,\rho_1\otimes\rho_2^\vee)$ when these conjectures tell us they should exist.

This Bloch--Kato Selmer group is defined by conditions at every finite place of $F$. When $v$ is such a place and $v\nmid p$, the condition is that we must have $c|_{I_v}=0$ in order for a class $c\in H^1(F,\rho_1\otimes\rho_2^\vee)$ to be in $H_f^1(F,\rho_1\otimes\rho_2^\vee)$; here $I_v$ is the inertia group at $v$.

Now if the Galois representations in the family parametrized by $\mf{X}$ are rather arbitrary, we have little hope of establishing this condition for a class $c$ constructed in this way. However, often such families come from families of automorphic forms or automorphic representations, meaning that there is a reductive group $G$ and for every $x\in\Sigma$, there is an automorphic representation $\pi_x$ for $G$ such that $\rho_x$ is the Galois representation attached to $\pi_x$. The reducible Galois representation $\rho_{x_0}$ might then be attached to an automorphic representation $\pi_{x_0}$ consisting of Eisenstein series, while the $\pi_x$ for $x\in\Sigma$ should be cuspidal. This puts us in the setting of the theory of Eisenstein congruences. In this automorphic situation, one can then hope to influence the behavior of the representations $\rho_x$ for $x$ near $x_0$ by automorphic means, by influencing the behavior of the Eisenstein representation $\pi_{x_0}$ in some corresponding way. This paper is an exercise in this practice.

More precisely, what we have done here is the following. We work in the setting where the group $G$ above is among a large class of classical groups, and the Eisenstein representation $\pi_{x_0}$ comes by induction from an automorphic representation $\sigma$ of a maximal Levi subgroup of $G$ which has a factor $H$ of the same type as $G$ but one rank lower. For $v\nmid 2p$, we then single out a certain constituent, say $\Pi_v$, of the parabolically induced representation to $G$ from the local component $\sigma_v$ of $\sigma$ at $v$, and we use type theory to produce a certain Hecke operator $\phi_v$ acting nontrivially on this constituent $\Pi_v$. We show that, if the eigenvalues of $\phi_v$ are interpolated in this family on $\mf{X}$, and if $\pi_{x_0,v}$ contains this constituent $\Pi_v$, then the Galois representations $\rho_x$ for $x\in\Sigma$ in the same irreducible component of $\mf{X}$ as $x_0$ can be no more ramified at $v$ than the Galois representation attached to $\sigma$ itself.

This result offers enough control on the classes $c$ constructed in the way described above to ensure that they satisfy the Selmer condition at $v$.

\subsubsection*{Past work and future applications}

The idea to extend Ribet's method in the way described above as an approach towards the rank part of the Bloch--Kato conjecture first appears in the work of Skinner--Urban \cite{SUannounce} and \cite{SUgsp4}. There, the authors carry out this \textit{Skinner--Urban method} when the group $G$ above is $GSp_4$ over $\Q$. The representation $\pi_{x_0}$ at the point we called $x_0$ is then not actually generated by Eisenstein series, but rather by Siegel parabolic CAP forms. But in any case, the finite adelic component $\pi_{x_0,f}$ of this $\pi_{x_0}$ is isomorphic to the finite adelic component of an Eisenstein representation.

Subsequent work of Skinner--Urban \cite{SUunitary} and Bella\"{i}che--Chenevier \cite{BCU3} and \cite{BCbook}, and later the author \cite{MundyThesis}, carry out the method outlined above in other settings. Most notably, the works \cite{SUunitary} and \cite{BCbook} work with $G$ a unitary group of arbitrary rank. In this setting, controlling the ramification of the Galois representations $\rho_x$ in the families $\mf{X}$ considered in these works becomes an important and difficult problem. Above, we mentioned that we solve in this paper problems like this one using Hecke operators constructed via type theory. In fact, the idea to use type theory for this purpose first appears in writing in \cite{BCbook}. There, Bella\"{i}che and Chenevier use Hecke idempotents constructed by Schneider and Zink \cite{SZ} to control the ramification of the Galois representations $\rho_x$ at places $v$ which are split in the quadratic field, call it $E$, from which the unitary group $G$ is defined. At these places, $G$ becomes a copy of $GL_n$ over a local field, and indeed Schneider and Zink work only on $GL_n$ in their paper. A full fleshing out of \cite{SUunitary} would presumably have to use the same idempotents.

However, for the results \cite{SUunitary}, Skinner and Urban had to assume that $\pi_{x_0}$ is spherical at places $v$ which are inert or ramified in the quadratic field $E$. (Admittedly this condition is not so clearly spelled out directly in the reference \cite{SUunitary}. The author learned the Skinner--Urban method as it applies to unitary groups in 2018 from a course taught by Urban then. There, Chao Li took excellent notes \cite{LiSU} and in these notes this condition is clearly put.) However, one application of the main theorem of this paper is that one can remove the condition that $\pi_{x_0,v}$ be spherical if $v$ is inert or ramified, as long as $v\nmid 2$. So at finite places $v$ not dividing $2p$, one could thus allow $\pi_{x_0}$ to be arbitrary in order for the main theorems of \cite{SUunitary} to hold.

In \cite{BCbook}, Bella\"{i}che and Chenevier make a weaker assumption at inert places $v$, but one which is still rather strong. Presumably one can remove this condition there as well when $v\nmid 2$, as long as the eigenvarieties whose construction is relevant for their main results can be made to interpolate the eigenvalues of the operators $\phi_v$ constructed here.

We remark that it seems reasonable to the author that one could carry out the Skinner--Urban method for split odd special orthogonal groups over $\Q$, using techniques similar to \cite{MundyThesis} to make the relevant $p$-adic deformation of automorphic representations needed there. The main result of this paper could then be applied so that there are no conditions on the local representation $\pi_{x_0,v}$ in this case as long as $v\nmid 2p$.

Finally, work in progress of the author on \textit{ramified Eisenstein congruences} aims to construct cuspidal Hida families for the symplectic group $Sp_{2n}$ over $\Q$ degenerating to nonclassical Eisenstein series which are given extra level at an auxiliary prime $\ell$. The main result of this paper is set up to still apply in this nonclassical case, and will be a crucial ingredient in this work to show that any Galois cohomology classes constructed via these families are Selmer away from $2p\ell$.

\subsubsection*{Discussion of the main result}

Having discussed what the main theorem of this paper is useful for, we now work towards making a precise statement of it. The objects to which our theorem applies are \textit{Hecke families}, for which we give a precise and quite general definition in Definition \ref{defheckefamilies} below.

First, we fix throughout a totally real number field $F$ and a reductive group $G$ over $F$ which is either a quasisplit classical group or unitary group. In the unitary case, we let $E$ be the imaginary quadratic extension field over the field $F$ used to define $G$; otherwise we let $E=F$ in what follows. We assume the $F$-rank of $G$ is at least $1$. Then $G$ has a maximal parabolic subgroup, call it $P$, whose Levi factor, call it $M$, is the group $M\cong GL_{1/E}\times H$ with $H$ of the same type as $G$ but one rank lower. We also fix throughout this introduction an isomorphism $\iota:\C\to\overline\Q_p$ and use it to identify these two fields.

We remark here that will need to assume the full endoscopic classification for $G$ and $H$; see below for more remarks on this.

Now as we have defined it, a Hecke family $\mc{F}$ for $G$ consists of a quadruple of data $\mc{F}=(\mb{T},\mf{X},\Sigma,\Psi)$, and it comes with a \textit{level subgroup}, that is, a compact open subgroup $K_f\subset G(\A_{F,f})$ where $\A_{F,f}$ is the finite adele ring of $F$, and we require $K_f$ to factorize over all finite places of $F$. The entries of the quadruple defining $\mc{F}$ are described as follows: First, the entry $\mb{T}$ is a Hecke algebra; by definition, it is a $\Q_p$-subalgebra of $C_c^\infty(K_f\backslash G(\A_{F,f})/K_f,\overline\Q_p)$ which is generated over $\Q_p$ by all the spherical Hecke operators at all but finitely many finite places of $F$ at which $K_f$ is hyperspecial, as well as possibly by finitely many other operators in $C_c^\infty(K_f\backslash G(\A_{F,f})/K_f,\overline\Q_p)$. The family $\mc{F}$ is then supposed to interpolate eigenvalues or, more generally, traces of the operators in $\mb{T}$ using the other three entries of the tuple defining it.

The second entry $\mf{X}$ is an affinoid rigid space over $\Q_p$, with affinoid ring $\mc{O}(\mf{X})$. The third entry $\Sigma$ is a Zariski dense subset of points in $\mf{X}(\overline\Q_p)$, which is supposed to be viewed as a set of \textit{classical points} in $\mc{F}$; by definition, attached to each $x\in\Sigma$ is a discrete, cohomological automorphic representation $\pi_x$ for $G$. We note that we also assume the mild condition that $\pi_x$ is $\std$-regular at every archimedean place of $F$ if $G$ is even orthogonal; see Definition \ref{defstdreg}.

Finally, the fourth entry is the mechanism which ties the previous three together. It is a $\Q_p$-linear map $\Psi:\mb{T}\to\mc{O}(\mf{X})$ with the following interpolation property: For every $x\in\Sigma$, there exists a nontrivial constituent $V_x$ of $\pi_{x,f}^{K_f}$, the $K_f$-fixed vectors in the finite adelic component of $\pi_x$, such that for any $\phi\in\mb{T}$, we have
\[x(\Psi(\phi))=\tr(\phi|V_x).\]
In other words, via $\Psi$, we interpolate partial traces of all operators $\phi\in\mb{T}$ by analytic functions on $\mf{X}$.

As an example, if $\mb{T}$ consists of spherical Hecke operators and each $V_x$ is $1$-dimensional, then $\mc{F}$ could come from a piece of an eigenvariety, in the usual sense. Though in this case, we would be forgetting the weight space and the $U$-operators at places above $p$.

Now let us fix such a Hecke family $\mc{F}=(\mb{T},\mf{X},\Sigma,\Psi)$ as above, with level $K_f$. To get nice Galois representations, we must assume some conditions on the automorphic representations $\pi_x$ for $x\in\Sigma$; we will assume that $\mc{F}$ is what we have called \textit{strongly} $\Sigma$\textit{-generic} in Definition \ref{defheckefamilies}; see also Definition \ref{defstrgen}.

The assumption that each $\pi_x$ for $x\in\Sigma$ is cohomological, along with our $\std$-regularity assumption if $G$ is even orthogonal, are used to ensure the existence of standard Galois representations $\rho_x:G_E\to GL_N(\overline\Q_p)$ attached to each $\pi_x$; here $N$ is the dimension of the standard representation of the dual group $G^\vee$ to $G$. In fact, we assemble the proper references and construct these Galois representations in Theorem \ref{thmconstofgalois}. The strong genericity assumption then guarantees that the representations $\rho_x$ are pure of the appropriate weight at all places of $F$ not above $p$. This purity will later be crucial for our method.

We remark that in most applications to situations based on Ribet's method, we will want the Galois representations $\rho_x$ to be irreducible for generic $x\in\Sigma$. This irreducibility then implies that the lift of $\pi_x$ to $GL_N$ is cuspidal for $x$ in a Zariski dense subset $\Sigma'$ of $\Sigma$, which in turn implies our strong genericity hypothesis for the new family $\mc{F}'=(\mb{T},\mf{X},\Sigma',\Psi)$. So for the applications we have in mind, strong genericity constitutes no loss of generality.

Now that we have our strongly $\Sigma$-generic Hecke family $\mc{F}$, we put it into the Ribet-style setting described previously by introducing an Eisenstein point. With $P$ the parabolic subgroup of $G$ we fixed above with Levi factor $M\cong GL_{1/E}\times H$, we let $x_0\in\mf{X}(\overline\Q_p)$ be a point which we assume is \textit{strongly} $M$\textit{-Eisenstein}, as in Definition \ref{defheckefamilies}. This means the following: First, there is a Hecke character $\chi$ of $GL_1(\A_E)$ and an automorphic representation $\sigma$ for $H$, and $\sigma$ is assumed to be discrete and cohomological, and to satisfy the same strong genericity and $\std$-regularity hypotheses as the representations $\pi_x$ for $x\in\Sigma$. Moreover, let $\Pi_{x_0}$ be the (unitarily normalized) parabolically induced representation,
\[\Pi_{x_0}=\Ind_{P(\A_F)}^{G(\A_F)}(\chi\boxtimes\sigma).\]
Then there is a nontrivial constituent $V_{x_0}'$ of $\Pi_{x_0,f}^{K_f}$, and another finite dimensional module $V_{x_0}''$ on which $\mb{T}$ acts, such that
\[x_0(\Psi(\phi))=\tr(\phi|V_{x_0}'\oplus V_{x_0}'').\]
Finally, there is a parity condition on the infinitesimal character of $\Pi_{x_0}$ at the archimedean places of $F$ which we do not wish to spell out here.

We remark that sometimes in such Hecke families, the constituents $V_x$ of $\pi_{x,f}^{K_f}$ may often be irreducible under the action of $\mb{T}$, but if $\mc{F}$ degenerates to an strongly $M$-Eisenstein point $x_0$, then $\Psi$ degenerates to the trace of a reducible representation. This is why we have allowed for the second module $V_{x_0}''$. We will remark below that Hecke families which parametrize traces, and not just eigenvalues themselves, can come from a construction of Urban in \cite{urbanev}, and often such families exhibit this behavior under degeneration to an Eisenstein point.

Now the parity condition on the infinitesimal character of $\Pi_{x_0}$ that we mentioned above is put there so that $x_0$ has attached to is a Galois representation $\rho_{x_0}$. After suitable normalizations, it may be described as follows. Let $\rho_\sigma$ be the Galois representation attached to $\sigma$. We will write below $\delta_{\mr{eo}}$ for the integer given by $1$ if $G$ is even orthogonal, and $0$ otherwise. The aforementioned parity condition implies that the Hecke characters for given by $\chi\Vert\cdot\Vert^{(1+\delta_{\mr{eo}}-N)/2}$ and $\chi^*\Vert\cdot\Vert^{(1+\delta_{\mr{eo}}-N)/2}$ are algebraic, where $\chi^*$ is the (conjugate-)dual of $\chi$, i.e., it is the dual unless $G$ is unitary, in which case it is the conjugate-dual. Let $\rho_{\chi\Vert\cdot\Vert^{(1+\delta_{\mr{eo}}-N)/2}}$ and $\rho_{\chi^*\Vert\cdot\Vert^{(1+\delta_{\mr{eo}}-N)/2}}$ denote the respective characters of $G_E$ into $\overline\Q_p^\times$. Then
\[\rho_{x_0}=\rho_{\sigma}(-1)\oplus \rho_{\chi\Vert\cdot\Vert^{(1+\delta_{\mr{eo}}-N)/2}}\oplus\rho_{\chi^*\Vert\cdot\Vert^{(1+\delta_{\mr{eo}}-N)/2}}.\]
Here $\rho_{\sigma}(-1)$ denotes a Tate twist of $\rho_\sigma$.

Given a finite place $v$ of $F$ with $v\nmid 2p$ which is inert or ramified in $E$ (this is only a condition in the unitary case) with $w$ denoting the place of $E$ above $v$, the main theorem below will give a condition on our Hecke family $\mc{F}$ to ensure that each $\rho_x$ for $x\in\Sigma$ is \textit{no more ramified} than $\rho_{x_0}$ at $w$. Let us explain now precisely what we mean by this.

Fix such places $v$ and $w$. Then for each $x\in\Sigma$, we can associate an $N$-dimensional Frobenius-semisimple Weil--Deligne representation
\[(r_x,N_x)=\WD(\rho_x|_{G_{E_w}})^{\varphi-\sss},\]
where $E_w$ is the completion of $E$ at $w$ and $G_{E_w}$ is a corresponding decomposition group. We recall the relevant definitions here in Section \ref{secparams}, suffice it to say $r_x$ is a representation of the Weil group $W_{E_w}$ with open kernel, and $N_x$ is a nilpotent operator on that representation. The operator $N_x$ will be used as our measure of ramification at $w$.

Similarly we have
\[(r_{x_0},N_{x_0})=\WD(\rho_{x_0}|_{G_{E_w}})^{\varphi-\sss}.\]
Then we necessarily have
\[(r_{x_0},N_{x_0})=\WD(\rho_{\sigma}(-1)|_{G_{E_w}})^{\varphi-\sss}\oplus\WD(\rho_{\chi\Vert\cdot\Vert^{(1+\delta_{\mr{eo}}-N)/2}}|_{G_{E_w}})\oplus \WD(\rho_{\chi^*\Vert\cdot\Vert^{(1+\delta_{\mr{eo}}-N)/2}}|_{G_{E_w}}).\]
Of course, the latter two summands above are $1$-dimensional and have no nontrivial monodromy operator.

Let $\tau$ be any irreducible, finite dimensional representation of the inertia group $I_{E_w}$ in $W_{E_w}$ over $\overline\Q_p$ with open kernel. For any such $\tau$, we can consider the $\tau$-isotypic component of $(r_x,N_x)[\tau]=(r_x[\tau],N_x[\tau])$ of $(r_x,N_x)$ for any $x\in\Sigma$, and similarly for $(r_{x_0},N_{x_0})$. If there is a matching of the underlying representations $r_x[\tau]\cong r_{x_0}[\tau]$, then we write $N_x[\tau]\sim N_{x_0}[\tau]$ if there is an invertible linear transformation on either which, after conjugation by this transformation, identifies $N_x[\tau]$ and $N_{x_0}[\tau]$. This condition is what we mean when we say $\rho_{x}$ is no more ramified than $\rho_{x_0}$ at $w$.

We then have the following theorem, which is a (slightly less precise) version of our main result, Theorem \ref{thmmainthm}, below.

\begin{theorem*}
We keep the notation above. So we have our strongly $\Sigma$-generic Hecke family $\mc{F}=(\mb{T},\mf{X},\Sigma,\Psi)$ for $G$ of level $K_f$, and we have a strongly $M$-Eisenstein point $x_0\in\mf{X}(\overline\Q_p)$ on it which has attached to it the parabolically induced representation $\Pi_{x_0}$ from above. We also have our $\mb{T}$-stable spaces $V_x$ for $x\in\Sigma$ and $V_{x_0}=V_{x_0}'\oplus V_{x_0}''$. Fixing a finite place $v$ of $F$ with $v\nmid 2p$ which is inert or ramified in $E$, and letting $w$ be the place of $E$ above it, we then also have the Weil--Deligne representations $(r_x,N_x)$ for $W_{E_w}$ when $x\in\Sigma$, as well as $(r_{x_0},N_{x_0})$.

Then there is a Hecke operator $\phi\in C_c^\infty(G(\A_{F,f}),\overline\Q_p)$ and an eigenvalue $\lambda(\phi)$ for $\phi$ on $\Pi_{x_0,f}$ such that, if $\phi\in\mb{T}$ and $\phi$ acts on $V_{x_0}$ with $\lambda(\phi)$ as an eigenvalue, then we have that
\[r_x[\tau]\cong r_x[\tau]\qquad \textrm{and}\qquad N_x[\tau]\sim N_{x_0}[\tau]\]
for every $x\in\mf{X}(\overline\Q_p)$ outside of a fixed, proper Zariski closed subset of $\mf{X}$, and for every irreducible, finite dimensional representation $\tau$ of $I_{E_w}$ over $\overline\Q_p$ with open kernel.
\end{theorem*}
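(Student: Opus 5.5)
The argument has three stages: a local type-theoretic construction at $v$, a transfer of the information it gives to the Galois side via local--global compatibility and purity, and a semicontinuity argument on $\mf{X}$ for the monodromy. Throughout, the statement's conclusion $r_x[\tau]\cong r_x[\tau]$ is read as $r_x[\tau]\cong r_{x_0}[\tau]$.

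\textbf{Construction of $\phi$.} First I will work purely locally at $v$. The local component $\sigma_v$ lies in some Bernstein component of $H(F_v)$, and $\chi_w\boxtimes\sigma_v$ lies in one of $M(F_v)$. For $v\nmid 2$ I will use the theory of covers and types for classical and unitary groups (Stevens, Miyauchi--Stevens, and the covers of Blondel/Kim--Yu type) to produce a $G$-cover $(J,\lambda)$ of a type for this component of $M(F_v)$. This gives an isomorphism between the Hecke algebra $\mc{H}(G(F_v),\lambda)$ and a product of affine Hecke algebras with possibly unequal parameters.

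Inside this algebra I will choose a single element whose action singles out a particular constituent $\Pi_v$ of $\Ind(\chi_w\boxtimes\sigma_v)$. This constituent should be the one whose $L$-parameter has the \emph{smallest} monodromy. Concretely, $\Pi_v$ is the constituent realizing the generic, or tempered-most, position of the Langlands quotient data. On the Hecke-algebra side it corresponds to a one-dimensional sign-type character of the finite Hecke subalgebra. I then take $\phi$ to be the corresponding idempotent, or a Jucys--Murphy style element, transported to $C_c^\infty(G(F_v))$. The value $\lambda(\phi)$ is its eigenvalue on $\Pi_v^{\lambda}$. The key local claim is: \emph{any} irreducible representation $\pi_v$ of $G(F_v)$ on whose $\lambda$-isotypic part $\phi$ acts with eigenvalue $\lambda(\phi)$ has Bernstein support determined by $\chi_w\boxtimes\sigma_v$ up to an unramified twist. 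Moreover, its parameter has monodromy bounded by a quantity read off from the Hecke-module structure, via the Kazhdan--Lusztig/Lusztig classification for the graded Hecke algebra.

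\textbf{From Hecke operators to Galois representations.} Next I will pass to the family. Since $\Psi(\phi)$ is analytic and $\lambda(\phi)$ occurs as an eigenvalue at $x_0$, I will use the characteristic polynomial of $\phi$ interpolated over $\mf{X}$ via pseudocharacter-style arguments on $\Psi(\phi^k)$. This shows that, on the irreducible component through $x_0$ and outside a proper Zariski closed set, $\pi_{x,v}$ contains a vector on which $\phi$ acts by an eigenvalue specializing to $\lambda(\phi)$. Combining local--global compatibility of $\rho_x$ (Theorem \ref{thmconstofgalois}) with the endoscopic classification, this determines the inertial type $r_x|_{I_{E_w}}$, since the support is rigid under deformation. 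It also gives an analytic family of Frobenius eigenvalues. Strong genericity gives purity of each $\rho_x$ at $w$, so the monodromy $N_x$ is determined by the Frobenius weights on each $\tau$-isotypic piece.

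\textbf{Comparing with $x_0$ and the main obstacle.} The Frobenius eigenvalues vary continuously and specialize at $x_0$ to those of $r_{x_0}$. The weight filtration of a pure Weil--Deligne representation then forces $N_x[\tau]$ to be no larger than the monodromy that purity at $x_0$ would allow. This is the upper-semicontinuity half. The lower bound, namely that $N_x[\tau]$ is not \emph{smaller} than $N_{x_0}[\tau]$, comes from the monodromy of $\rho_\sigma$ being forced by purity of $\rho_\sigma$ together with the matching of Frobenius eigenvalues. I expect the main obstacle to be the local claim in the first stage. One has to show that the chosen Hecke eigenvalue really excludes the constituents with strictly larger monodromy, including in the ramified and inert unitary cases and for reducibility points of the induced representation. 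I would attack this by reducing, through the cover, to an explicit rank-one affine Hecke algebra computation, where the constituents are the Steinberg-type and trivial-type characters and the eigenvalue of $T_{s}$ distinguishes them.
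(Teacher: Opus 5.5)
\textbf{Gap: the upper bound on generic monodromy.} The relation $\overline{N}_0[\tau]\prec N_{\pi_x}[\tau]$, with $\overline{N}_0:=N_\sigma\oplus0\oplus0$, is free. The specialization at $x_0$ is only an extension of its semisimplification, so $\overline{N}_0\prec N_{z_0}$, and monodromy can only drop under specialization (Proposition \ref{propwdvari}(b)). So the lower bound you attribute to purity of $\rho_\sigma$ is really just semicontinuity. The whole content of the theorem is the \emph{upper} bound $N_{\pi_x}[\tau]\prec\overline{N}_0[\tau]$, and your Stage 3 does not give it.

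$(r_{x_0},N_{x_0})$ is in general not pure: after the normalizing twist, $\chi_v$ and $\chi_v^*$ have weights $\pm k_v$. So there is no ``purity at $x_0$'' to invoke, and purity at generic $x$ plus specialization of Frobenius eigenvalues does not bound $N_x$. For example, take $G=SO_5$, $\sigma_v$ the Steinberg representation of $SO_3$ (parameter $\Sp(2)$), and $\chi_v$ unramified with normalized Frobenius eigenvalue $q_w^{3/2}$. The pure parameter $\mu\otimes\Sp(2)\oplus\mu^{-1}\otimes\Sp(2)$ with $\mu(\varphi)\to q_w$ has Frobenius eigenvalues specializing exactly to those of $\chi_v\oplus\Sp(2)\oplus\chi_v^{-1}$. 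Yet its monodromy has Jordan type $(2,2)$, not $(2,1,1)$. Hecke-theoretic input must exclude this.

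\textbf{Stage 1 cannot supply that input.} Your Stage 1 claim would have to be a Schneider--Zink theorem for inert/ramified unitary and for symplectic/orthogonal groups: an element of the cover Hecke algebra whose eigenvalue $\lambda(\phi)$ (non-vanishing, for an idempotent) on a tempered $\pi_v$ forces $N_{\pi_v}\prec\overline{N}_0$. Nothing like this is available; that is exactly why the split unitary case is the easy one. Your proposed reduction cannot produce it either. When $\sigma_v$ is not supercuspidal, the cover is a type for a Bernstein block $[L,\tau]_G$ of rank $>1$. A rank-one computation separating Steinberg-type from trivial-type characters only sorts constituents of $\chi_w\rtimes\sigma_v$. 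It says nothing about arbitrary tempered $\pi_{x,v}$ in the block, which may be non-generic members of their $L$-packets, like the $(2,2)$ one above. Moreover, unless $\phi$ is idempotent, its eigenvalue on $\pi_{x,v}$ only \emph{specializes} to $\lambda(\phi)$, so a statement about eigenvalue exactly $\lambda(\phi)$ does not apply at $x\in\Sigma$.

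\textbf{The missing idea.} Use the \emph{weight} of a non-idempotent eigenvalue rather than membership in a type. The paper takes $\phi_v=t_{M,G}\bigl(e_{\varrho,M}*\chars((\varpi_w\times1)(K'\cap M(F_v)))/\vol(K'\cap M(F_v))*e_{\varrho,M}\bigr)$. By the cover property, this reads off $\chi(\varpi_w)$ on the $GL_1(E_w)$-factors of $e_{\varrho,M}*\Jac_M(\pi)$. Its eigenvalue on the Eisenstein side is $\lambda(\phi)=q_w^{(1+\delta_{\mr{eo}}-N)/2}\chi_v^{\varepsilon}(\varpi_w)$, with $\varepsilon$ chosen to make the weight extremal.

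By Proposition \ref{propjacoftempered}, on tempered $\pi_{x,v}$ the eigenvalues are $q_w^{-b}$ times Frobenius eigenvalues of character constituents, with $b\geq\frac{N-\delta_{\mr{eo}}-1}{2}$. Interpolating through $x_0$ forces some character constituent to jump in weight by at least $|k_v|$. Proposition \ref{proppossibsforWDdegen}(a) caps every jump at $|k_v|$, so the jump is exactly $|k_v|$, and self-duality supplies the opposite jump. Part (b) then gives $\overline{N}_0[\tau]\sim N_{\pi_x}[\tau]$.

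In your $SO_5$ example, the generic eigenvalues come from $\mu\otimes\Sp(2)$ and carry an extra factor $q_w^{-1/2}$. They therefore cannot specialize to $\lambda(\phi)$, and hypothesis (3) rules that family out.
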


We remark that actually, Theorem \ref{thmmainthm} below is stated so that it is clear that the construction of the operator $\phi$ can be taken to be purely local at $v$. This allows one to adjoin many such operators for different $v$'s to a spherical Hecke algebra while still allowing the resulting algebra to be commutative.

The analogous statement of the theorem when $G$ is unitary and $v$ is split is easier to show; in fact, all the relevant work to do this already exists in \cite{BCbook} using the types of Schneider--Zink \cite{SZ} discussed above.

\subsubsection*{Ideas of the proof}

We now discuss some of the ideas which go into the proof of the theorem above. The two main points will be to, first, construct links
\[\textrm{Hecke operators}\longleftrightarrow\textrm{Jacquet modules}\longleftrightarrow L\textrm{-parameters}\longleftrightarrow\textrm{Weil--Deligne reps}\]
and then, second, to interpolate the eigenvalues of Frobenius acting on the Weil--Deligne representations $(r_x,N_x)$ and $(r_{x_0},N_{x_0})$, and thus to match these eigenvalues with the eigenvalues of $\phi$ which are interpolated by hypothesis. At this point, a crucial numerical comparison will appear between the possible weights of such eigenvalues and the possible weights of eigenvalues of $\phi$; one will be bounded above and the other below by the same integer $k_0$. These weights thus both coincide with $k_0$, and from there, some combinatorics (albeit nontrivial) on the Weil--Deligne side will complete the proof. Let us describe in more detail how this plays out.

We recall for what follows that the Eisenstein point $x_0$ is induced from a Hecke character $\chi$ for $E$ and an automorphic representation $\sigma$ for $H$. 

The first thing to do is construct the operator $\phi$, which we do locally at $v$ by constructing an operator $\phi_v\in C_c^\infty(G(F_v),\C)$ and setting
\[\phi=\phi_v\otimes 1_{K_f^v},\]
where $K_f^v$ is the factor of $K_f$ away from $v$, and $1_{K_f^v}=\chars(K_f^v)/\vol(K_f^v)$ is the unit in the Hecke algebra for $K_f^v$. We construct $\phi_v$ in Section \ref{sectypes}, and do so in the Hecke algebra for a \textit{type} for $\Pi_{x_0,v}$. Such types are constructed away from residue characteristic $2$ (whence the hypothesis that $v\nmid 2$) by Miyauchi--Stevens \cite{MS}. In fact, they are \textit{covers}, in the sense of Bushnell--Kutzko \cite{BuKu}, of their restrictions to various Levi subgroups. The theory of covers constitutes the first link
\[\textrm{Hecke operators}\longleftrightarrow\textrm{Jacquet modules}\]
above, in the sense that $\phi_v$ will act on any smooth admissible representation $\pi$ of $G(F_v)$ in the same way some simple operators act on the Jacquet module $\Jac_M(\pi)$. We recall here that $\Jac_M(\pi)$ is the smooth admissible representation of $M(F_v)$ given by the $N(F_v)$-coinvariants of $\pi$, where $N$ is the unipotent radical of $P$. These aforementioned simple operators can be made to pick out the value on a uniformizer $\varpi_w$ at $w$ of the $GL_1(E_w)$-factor of the constituents of $\Jac_M(\pi)$.

We next wish to understand the constituents of $\Jac_M(\pi_{v,x})$ for $x\in\Sigma$ in terms of $L$-parameters for such $\pi_{x,v}$, and therefore establish the second link
\[\textrm{Jacquet modules}\longleftrightarrow L\textrm{-parameters}\]
above. Bootstrapping off results of Moeglin \cite{Moeglin} and Moeglin--Renard \cite{MR}, we do this throughout Section \ref{secjacmods}. In Proposition \ref{propjacoftempered}, we write down a finite list, in terms of the local $L$-parameter of $\pi_{x,v}$, of possibilities for $GL_1(E_w)$-factor of any constituent of $\Jac_M(\pi_{v,x})$. It turns they are all of the form
\[\rho\Vert\cdot\Vert_w^{b}\quad\textrm{or}\quad\rho^*\Vert\cdot\Vert_w^{b},\]
where $\rho:E_w^\times\to\C^\times$ is associated by class field theory with a character of $W_{E_w}$ occurring in the $L$-parameter for $\pi_{x,v}$, and $b$ is a half-integer from a finite list with lower bound $\tfrac{N-\delta_{\mr{eo}}-1}{2}$; this is the negative of the half integer that appeared above in describing the Galois representation $\rho_{x_0}$. The possible eigenvalues of $\phi_{v}$ acting on $\pi_{x,v}$ are thus given by these characters evaluated on $\varpi_w$. We note that by purity, these eigenvalues both have weight $-b$ as $q_w$-Weil numbers, where $q_w=\Vert\varpi_w^{-1}\Vert_w$ is the order of the residue field of $E_w$.

Moreover, we show in Corollary \ref{corjacofeisenstein} that $\Jac_M(\Pi_{x_0,v})$ contains the constituents
\[\chi_w\Vert\cdot\Vert_w^{(N-\delta_{\mr{eo}}-1)/2}\boxtimes\sigma_v\quad\textrm{and}\quad \chi_w^{*}\Vert\cdot\Vert_w^{(N-\delta_{\mr{eo}}-1)/2}\boxtimes\sigma_v,\]
and thus $\phi_v$ acts on $\Pi_{x_0,v}$ with eigenvalues among the list
\[\chi_w(\varpi_w)q_w^{(1+\delta_{\mr{eo}}-N)/2}\quad\textrm{and}\quad \chi_w^*(\varpi_w)q_w^{(1+\delta_{\mr{eo}}-N)/2}.\]
Let $k_0$ be the absolute value of the weight of the $q_w$-Weil number $\chi_w(\varpi_w)$. Then one of these two eigenvalues has weight $\tfrac{1+\delta_{\mr{eo}}-N}{2}+k_0$, which is thus $k_0$ greater than the upper bound for the weight of any eigenvalue of $\phi_v$ acting on $\pi_{x,v}$ for $x\in\Sigma$. This comparison of weights is the crucial one alluded to above (shifted for now by the number $\tfrac{1+\delta_{\mr{eo}}-N}{2}$).

Now the final link
\[L\textrm{-parameters}\longleftrightarrow\textrm{Weil--Deligne representations}\]
above is naturally provided by local-global compatibility for the Galois representations attached to $\pi_x$ and $\Pi_{x_0}$; see Section \ref{secgaloisreps}. These Galois representations are interpolated into a family, as we show in Section \ref{secgaloisfamilies}. After passing to a generically finite cover $\mf{Z}$ of $\mf{X}$, we obtain a family $(r_\mf{Z},N_{\mf{Z}})$ of Weil--Deligne representations for $W_{E_w}$ interpolating $(r_x,N_x)$ for $x\in\Sigma$ and $(r_{x_0},N_{x_0})$; see Lemma \ref{lemexistsWDforrhoY} in particular. With the full link
\[\textrm{Hecke operators}\longleftrightarrow\textrm{Weil--Deligne representations}\]
established, we thus find that the analytic function which interpolates the eigenvalues of $\phi_v$ described above is exactly given by an eigenvalue, call it $\lambda_\mf{Z}$, of (geometric) Frobenius on $r_\mf{Z}$. From the numerical comparison above, the weight of $\lambda_{\mf{Z}}$ is constant on $\Sigma$ but then jumps by at least $k_0$ at $x_0$.

In Section \ref{secweights}, we then examine weights in these families combinatorially. After twisting by $\Vert\cdot\Vert_w^{(N-\delta_{\mr{eo}}-1)/2}$, we may assume that the Weil--Deligne representations $(r_x,N_x)$ are pure of weight $0$, and then
\[(r_{x_0},N_{x_0})=(\bar{r}_0,\overline{N}_0)\oplus\chi_w\oplus\chi_w^*,\]
where $r_0$ is pure of weight $0$, and $\chi_w$ and $\chi_w^*$ are viewed as $W_{E_w}$-characters via class field theory. In Proposition \ref{proppossibsforWDdegen}, we conclude two things in light of the weights of these representations: First, we conclude that the weights of any constituent of the representations $r_x$ in this family can only shift by at most $k_0$ up or down upon specialization to $x_0$. Second, we conclude that to have $N_{x_0}\sim N_x$ generically, it suffices to show that there are two character constituents of $r_\mf{Z}$, which are pure of weight $0$ when specialized at generic $x\in\Sigma$, but which degenerate respectively to characters of weights $k_0$ and $-k_0$ at $x_0$. But the weight of the Frobenius eigenvalue $\lambda_\mf{Z}$ from above jumps by at least $k_0$ in this way, hence by exactly $k_0$, and by (conjugate-)duality, another drops by exactly $k_0$, thus implying that indeed $N_{x_0}\sim N_x$ generically.

Intuitively, what is happening is this: These aforementioned character constituents of $r_\mf{Z}$ will degenerate to $\chi_w$ and $\chi_w^*$ at $x_0$, but for the cited combinatorial reasons, cannot be connected to other constituents of $r_\mf{Z}$ via $N_{\mf{Z}}$. And the possibility \textit{a priori} that they are connected in such a way turns out to be the only obstruction to the monodromy shrinking upon specialization at $x_0$.

\subsubsection*{Remark on the endoscopic classification}

Throughout this paper we will need to assume the endoscopic classification of discrete automorphic representations for the classical groups we consider. This is work of many authors, notably culminating in \cite{arthurbook} for quasisplit classical groups, \cite{Mok} for quasisplit unitary groups, \cite{KMSW} for unitary groups beyond the quasisplit case, and later \cite{AGIKMS} filling in many crucial details that had been left. According to \cite[\S 0.4]{AGIKMS}, what is left is the weighted twisted fundamental lemma, and this is in progress; see \textit{loc. cit.} for an account of what is left to be done and who is doing it.

Although the conclusion of our main result, Theorem \ref{thmmainthm}, can be stated without mentioning global Galois representations, we use the endoscopic classification here most notably to construct such Galois representations. The reason for this is that, in Hecke families, it is not immediate that the Weil--Deligne representations associated with the $L$-parameters of $\pi_{x,v}$ for $x\in\Sigma$ at bad places $v$ can be interpolated. We show that they actually can be interpolated by constructing a family of global Galois representations first, and then restricting to the decomposition group and using techniques from \cite{BCbook} to get families of Weil--Deligne representations. This is notable because, during the interpolation process, we never invoke any properties of the Galois representations being interpolated at bad primes, nor at inert or ramified primes in the unitary case. Nevertheless, local-global compatibility at bad finite places for these Galois representations is crucial for us.

We are also implicitly invoking the endoscopic classification in Section \ref{secjacmods}, because we need to describe the Jacquet modules of representations contained in the local $L$-packets constructed in \cite{arthurbook}, \cite{Mok}, or \cite{KMSW}.

\subsubsection*{Acknowledgements}

We thank Chris Skinner for several helpful conversations from when the ideas for this paper were still in their infancy. We also thank Xin Wan for his interest in the method presented here.

\subsubsection*{Notation and conventions}

We fix a prime number $p$ throughout this paper. At the end of Section \ref{secgroups} and from then on, the symbols $G$, $P$, $M$, $H$, $F$, $E$, $n$, and $N$ will have a fixed meaning. Thus we often use the symbol $G'$ whenever discussing other groups, such as below.

Given an algebraic group $G'$ over a field $k$ and an extension field $K/k$, we write $G_{/K}'$ to denote the base change of that group to one over $K$.

Given a reductive group $G'$ over a field of characteristic $0$, we denote by $(G')^\vee$ its dual group, which we always view as a complex Lie group. We also use the symbol $(\cdot)^\vee$ for representations to denote the dual.

If $k$ is a number field, and $v$ is a place of $k$, then $k_v$ denotes the completion of $k$ at $v$.

If $k$ is a number field, then $\A_k$ denotes the adeles of $k$, and $\A_{k,f}$ the finite adeles. In this case, automorphic representations for a reductive group $G'$ over $k$ are viewed in the category of admissible $G'(\A_{k,f})\times(\mf{g}',K_\infty)$-modules which are irreducible by definition; here $\mf{g}$ denotes the complex Lie algebra of $G'(k\otimes_\Q\R)$ and $K_\infty$ denotes a maximal compact subgroup in $G'(k\otimes_\Q\R)$. We make the usual abuse of language and say ``automorphic representation of $G'(\A_k)$'' in this case, despite there being no full action of $G'(k\otimes_\Q\R)$ on such.

If $\pi$ is such an automorphic representation, then $\pi^\vee$ denotes its contragredient. If $v$ is a place of $k$, then $\pi_v$ will denote the smooth admissible representation of $G(k_v)$ given by the local component of $\pi$ at $v$.

Given a number field $k$, the symbol $\Vert\cdot\Vert$ denotes the norm character on $\A_k^\times$, and given a finite place $v$ of $k$, the symbol $\Vert\cdot\Vert_v$ denotes the norm on $k_v$ that sends the inverse of a uniformizer in $k_v$ to the number of elements in the residue field of $k_v$

The group $G$ fixed in Section \ref{secgroups} may or may not be unitary. We reserve the symbol $(\cdot)^*$ to mean conjugate-dual, defined depending on context, if $G$ is unitary; otherwise it denotes the ordinary dual. We correspondingly often write ``(conjugate-)dual,'' or ``(conjugate-)self dual'' to mean that the word ``conjugate'' applies only if $G$ is unitary.

Given a field $k$ of characteristic $0$, we denote its absolute Galois group by $G_k$.

Given a local field $L$ of characteristic $0$, we denote by $W_L$ its Weil group. If $L$ is nonarchimedean, this is the subgroup of the Galois group $G_L$ containing the inertia group $I_L$ of $L$ as a subgroup, as well as only integer powers of a given Frobenius element. It is given the topology where $I_L$ is profinite and open. If $L$ is real then $W_\R=\C^\times\rtimes\{1,j\}$, where $jzj=\bar{z}$ for $z\in\C^\times$. If $L$ is complex then $W_\C=\C^\times$.

Class field theory is normalized in the paper to send uniformizers to geometric Frobenius elements, and the nonarchimedean local Langlands correspondence is normalized correspondingly. Then the norm character $\Vert\cdot\Vert$ is sent to the cyclotomic character.

Despite this, when discussing Weil--Deligne representations $(r,N)$, we will often fix an arithmetic Frobenius $\varphi$. Then $N$ raises weights by $2$. We have tried to be clear about this throughout the paper.

Let $G'$ be a reductive group over a nonarchimedean local field $L$ of characteristic $0$. Let $P'$ be an $L$-parabolic subgroup of $G'$ with Levi factor $M'$ and unipotent radical $N'$. We denote by $\delta_{P'(L)}$ the modulus character of $P'(L)$. Given a smooth admissible representation $\sigma$ of $M'$, we write
\[\Ind_{P'(L)}^{G'(L)}(\pi)\]
for the unitarily normalized parabolic induction. Similar conventions hold in the adelic setting.

Moreover, given a smooth admissible representation $\pi$ of $G'(L)$, the \textit{Jacquet module} of $\pi$ down to $M'$ is given as usual by the $N'(L)$-coinvariants of $\pi$. We denote it by $\Jac_{M'}(\pi)$. It is naturally a representation of $M'(L)$. Some authors introduce a normalizing factor of $\delta_{P'(L)}^{-1/2}$, but we do not.

Given a subset $T$ of a set $S$, we denote by $\chars(T)$ the characteristic function of $T$ in $S$.

Given a rigid space $\mf{X}$ over $\Q_p$, we denote by $\mc{O}(\mf{X})$ the ring of analytic functions on $\mf{X}$, and we denote by $\mf{X}(\overline\Q_p)$ the set of $\overline\Q_p$-points of $\mf{X}$. A subset $\Sigma\subset\mf{X}(\overline\Q_p)$ is said to be Zariski dense if the set of points in $\mf{X}$ over which some $x\in\Sigma$ lies is so. Thus, if $\mf{X}$ is affinoid, then $\mf{X}$ is the maximal spectrum of $\mc{O}(\mf{X})$, and each point $x\in\mf{X}(\overline\Q_p)$ is a $\Q_p$-algebra map $\mc{O}(\mf{X})\to\overline\Q_p$. Each such $x$ thus has a kernel $\mf{m}_x$, which is a point of $\mf{X}$. Then $\Sigma\subset\mf{X}(\overline\Q_p)$ is said to be Zariski dense if $\sset{\mf{m}_x}{x\in\Sigma}$ is so.

\section{Groups}
\label{secgroups}

We start by introducing the classical groups on which we will work throughout this paper. Fix throughout this section a field $k$ of characteristic $0$.

\subsubsection*{Unitary groups}

Fix a quadratic field extension $K/k$. Let $V$ be a nontrivial, nondegenerate Hermitian space over $K$ and $n$ its dimension over $K$. Let $\langle\cdot,\cdot\rangle:V\times V\to K$ be the associated pairing. We write $U(V)$ for the unitary group associated with $V$, defined to be the subgroup of $GL(V)$ consisting of $K$-linear transformations which preserve $\langle\cdot,\cdot\rangle$. It naturally defines a group scheme over $k$.

The group $U(V)$ is of absolute rank $n$, where the center of the group is counted in the rank; in fact, the group $U(V)$ is a form of $GL_{n/k}$. The subgroup of $U(V)$ consisting of transformations of determinant $1$ is denoted by $SU(V)$, which has absolute rank $n-1$.

The space $V$ splits as
\[U\oplus W\oplus U^*,\]
where $U$ and $U^*$ are totally isotropic for $\langle\cdot,\cdot\rangle$ of the same dimension, where $W$ is anisotropic, and where the induced pairing on $U\times U^*$ is totally nondegenerate. Let $n_k$ be the $K$-dimension of $U$. Then $n_k$ is the relative rank of $U(V)$ over $k$, i.e., the rank of the largest $k$-split torus in $U(V)$.

In fact, fix a basis $\{u_1,\dotsc,u_{n_k}\}$ of $U$. Then the subgroup $T$ of $U(V)$ which acts by scalars in $GL_{1/k}$ on each $u_i$, and as the identity on $W$, is a maximal $k$-split torus in $U(V)$. Moreover, for each $i=1,\dotsc,n_k$, we can define a cocharacter $\tilde{e}_i^\vee:GL_{1/K}\to U(V)$ which sends $\gamma\in GL_{1/K}$ to the unique transformation which scales $u_i$ by $\gamma$ and leaves all other $u_j$ for $j\ne i$, along with $W$, fixed. Then the images of the $k$-rational cocharacters $\tilde{e}_i^\vee|_{GL_{1/k}}$, as $i$ ranges through $\{1,\dotsc,n_k\}$, generate $T$.

Let $e_i$ for $i=1,\dotsc,n_k$ be the character of $T$ such that $e_i\circ(\tilde{e}_i^\vee|_{GL_{1/k}})=\delta_{i,j}$ (Kronecker delta). Then the characters $e_i$ span the character group $X^*(T)$ of $T$, which is thus isomorphic to $\Z^{n_k}$.

Let $\Phi$ be the set of roots of $T$ in $U(V)$, so that $\Phi$ forms the relative root system of $U(V)$. If $W$ is nontrivial, then $\Phi$ is nonreduced of type $BC_{n_k}$, and otherwise it is of type $C_{n_k}$. The roots in $\Phi$ are of the form $\pm(e_i+e_j)$ and $\pm(e_i-e_j)$ for $1\leq i<j\leq n_k$, along with $\pm 2e_i$ and, if $W\ne 0$, also $\pm e_i$, for $1\leq i\leq n_k$. A system of simple roots is given by
\[e_1-e_2, e_2-e_3,\dotsc, e_{n_k-1}-e_{n_k}, e_{n_k}.\]
This defines the system of positive roots which is given by the roots of the form $e_i\pm e_j$ for $1\leq i<j\leq n_k$, along with $2e_i$ and, if $W\ne 0$, also $e_i$, for $1\leq i\leq n_k$.

We will need to know the multiplicity, call it $m(\alpha)$, of each positive root listed above in $U(V)$, and we compute this on the level of the Lie algebra. The Lie algebra $\mf{su}(V)$ of $SU(V)$ contains all roots, and may be identified with the subspace of endomorphisms in $\End_K(V)$ which are skew-Hermitian with respect to $\langle\cdot,\cdot\rangle$.

For a given $i$, the space of endomorphisms in $\mf{su}(V)$ which kill each $u_j$ for all $j$, and which map $W$ into $Ku_i$, is the root space for $e_i$. Thus
\[m(e_i)=\dim_k(\hom_K(W,K))=2(n-2n_k);\]
note that we need to take $k$-dimensions instead of $K$-dimensions, whence the factor of $2$ above. Note also that the number above is $0$ if $W=0$.

For a given $i$ and $j$ with $i<j$, space of endomorphisms in $\mf{su}(V)$ which kill $W$ and each $u_l$ for all $l\ne j$, and which map $Ku_j$ to $Ku_i$, is the root space for $e_i-e_j$. Thus
\[m(e_i-e_j)=\dim_k(\hom_K(K,K))=2.\]

Let $u_1^*,\dotsc,u_n^*$ denote the dual basis in $U^*$ to $u_1,\dotsc,u_n$. Then for a given $i$ and $j$ with $i\leq j$, space of endomorphisms in $\mf{su}(V)$ which kill $W$ and each $u_l$ for all $l\ne j$, and which map $Ku_j^*$ to $Ku_i$, is the root space for $e_i+e_j$. Thus
\[m(e_i+e_j)=m(2e_i)=\dim_k(\hom_K(K,K))=2.\]

Finally, we note that the dual group to $U(V)$ is $GL_n(\C)$. A standard maximal torus in $GL_n(\C)$ is given by the usual group of diagonal matrices in $GL_n(\C)$. If $\rho$ then denotes half the sum of positive roots in the system that makes the group of upper triangular matrices in $GL_n(\C)$ a standard Borel subgroup, then
\[\rho=(\tfrac{n-1}{2},\tfrac{n-3}{2},\dotsc,\tfrac{3-n}{2},\tfrac{1-n}{2}),\]
where the $n$-tuple above is identified with a weight of the diagonal torus in the usual way, the $i$th entry corresponding to the $i$th diagonal entry of that torus for all $i=1,\dotsc,n$.

\subsubsection*{Odd special orthogonal groups}

Let $n\geq 1$ be an integer. Let $J_{2n+1}$ denote the $(2n+1)\times (2n+1)$ matrix with $1$'s along the anti-diagonal and $0$'s elsewhere:
\[J_{2n+1}=\pmat{&&1\\ &\iddots &\\ 1&&}.\]
We write $SO_{2n+1}$ for the split odd special orthogonal group over $k$ defined by
\[SO_{2n+1}=\sset{g\in GL_{2n+1}}{g J_{2n+1} \tp{g}=J_{2n+1},\,\,\det(g)=1}.\]
This definition presents $SO_{2n+1}$ as a subgroup of $GL_{2n+1}$ and thus defines a natural \textit{standard representation} $\std:SO_{2n+1}\to GL_{2n+1}$, given by inclusion.

The rank of $SO_{2n+1}$ is $n$ and the root system for $SO_{2n+1}$ is of type $B_n$ (the root at the end of the Dynkin diagram is shorter than the ones preceding it). In fact, a maximal torus $T$ in $SO_{2n+1}$ is given by
\[T=\sset{\diag(t_1,\dotsc,t_n,1,t_n^{-1},\dotsc,t_1^{-1})}{t_1,\dotsc,t_n\in GL_1}\subset SO_{2n+1}.\]
We can then define characters $e_i:T\to GL_1$ for $i=1,\dotsc,n$ by
\[e_i(\diag(t_1,\dotsc,t_n,1,t_n^{-1},\dotsc,t_1^{-1}))=t_i.\]
These characters span the character group $X^*(T)$ of $T$, which is thus isomorphic to $\Z^n$ under the identification
\[a_1 e_1+\dotsb+a_n e_n\mapsto (a_1,\dotsc,a_n),\]
for $a_1,\dotsc,a_n\in\Z$. We will therefore often identify weights in $X^*(T)\otimes\Q$ with $n$-tuples of rational numbers via this identification.

Let $\Phi$ be the set of roots of $T$ in $SO_{2n+1}$. Then the roots in $\Phi$ are those characters of the form $\pm(e_i+e_j)$ and $\pm(e_i-e_j)$ for $1\leq i<j\leq n$, along with $\pm e_i$ for $1\leq i\leq n$. A system of simple roots is given by
\[e_1-e_2, e_2-e_3,\dotsc, e_{n-1}-e_n, e_n.\]
This defines the system of positive roots which is given by the roots of the form $e_i\pm e_j$ for $1\leq i<j\leq n$, along with $e_i$ for $1\leq i\leq n$. This system makes the standard Borel in $SO_{2n+1}$ upper triangular. It follows that if $\rho$ denotes the half sum of positive roots in $\Phi$, then
\[\rho=(n-\tfrac{1}{2},n-\tfrac{3}{2},\dotsc,\tfrac{3}{2},\tfrac{1}{2}),\]
using the identification $X^*(T)\otimes\Q\cong \Q^n$ explained above.

Finally, we note that the dual group to $SO_{2n+1}$ is the symplectic group $Sp_{2n}(\C)$, which we define just below.

\subsubsection*{Symplectic groups}

Let $n\geq 1$ be an integer. Write $1_n$ for the $n\times n$ identity matrix. Let $\tilde{J}_{2n+1}$ denote the $2n\times 2n$ matrix given by
\[\tilde{J}_{2n}=\pmat{&1_n\\ -1_n&}.\]
Then we write $Sp_{2n}$ for the symplectic group over $k$ defined by
\[Sp_{2n}=\sset{g\in GL_{2n}}{g \tilde{J}_{2n} \tp{g}=\tilde{J}_{2n}}.\]
This definition presents $Sp_{2n}$ as a subgroup of $GL_{2n}$ and thus defines a natural \textit{standard representation} which, like in the odd special orthogonal case above, we continue to denote by $\std:Sp_{2n}\to GL_{2n}$, and which is given by inclusion.

The group $Sp_{2n}$ is split and its rank is $n$. The root system for $Sp_{2n}$ is of type $C_n$ (the root at the end of the Dynkin diagram is longer than the ones preceding it). In fact, a maximal torus $T$ in $Sp_{2n}$ is given by
\[T=\sset{\diag(t_1,\dotsc,t_n,t_1^{-1},\dotsc,t_n^{-1})}{t_1,\dotsc,t_n\in GL_1}\subset Sp_{2n}.\]
We can then define characters $e_i:T\to GL_1$ for $i=1,\dotsc,n$ by
\[e_i(\diag(t_1,\dotsc,t_n,t_1^{-1},\dotsc,t_n^{-1}))=t_i.\]
These characters span the character group $X^*(T)$ of $T$, which is thus isomorphic to $\Z^n$ under the identification
\[a_1 e_1+\dotsb+a_n e_n\mapsto (a_1,\dotsc,a_n),\]
for $a_1,\dotsc,a_n\in\Z$. We will therefore often identify weights in $X^*(T)\otimes\Q$ with $n$-tuples of rational numbers via this identification.

Let $\Phi$ be the set of roots of $T$ in $Sp_{2n}$. Then the roots in $\Phi$ are those characters of the form $\pm(e_i+e_j)$ and $\pm(e_i-e_j)$ for $1\leq i<j\leq n$, along with $\pm 2e_i$ for $1\leq i\leq n$. A system of simple roots is given by
\[e_1-e_2, e_2-e_3,\dotsc, e_{n-1}-e_n, 2e_n.\]
This defines the system of positive roots which is given by the roots of the form $e_i\pm e_j$ for $1\leq i<j\leq n$, along with $2e_i$ for $1\leq i\leq n$. It follows that if $\rho$ denotes the half sum of positive roots in $\Phi$, then
\[\rho=(n,n-1,\dotsc,2,1),\]
using the identification $X^*(T)\otimes\Q\cong \Q^n$ explained above.

Finally, we note that the dual group to $Sp_{2n}$ is the odd special orthogonal group $SO_{2n+1}(\C)$ defined above.

\subsubsection*{Even special orthogonal groups}

Let $n\geq 1$ be an integer, and let $\eta:G_k\to\{\pm 1\}$ be a continuous quadratic character of the absolute Galois group of $k$. We allow $\eta$ to be the trivial character $1$. If $\eta$ is nontrivial, let $k^\eta/k$ be the quadratic extension cut out by $\eta$, and choose an element $a\in k$ such that $k^\eta=k(\sqrt{a})$. Then define a $2\times 2$ matrix $J_2^\eta$ by
\[J_2^\eta=\pmat{1 &\\ & -a}\,\,\textrm{if }\eta\ne 1,\quad\textit{or}\quad J_2^\eta=\pmat{& 1 \\ 1 &}\,\,\textrm{if }\eta=1.\]
Let $J_{n-1}$ denote the $(n-1)\times (n-1)$ matrix with $1$'s along the anti-diagonal and $0$'s elsewhere:
\[J_{n-1}=\pmat{&&1\\ &\iddots &\\ 1&&}.\]
Then finally define $J_{2n}^\eta$ to be the $2n\times 2n$ matrix given in block form by
\[J_{2n}^\eta=\pmat{&& J_{n-1}\\ &J_2^\eta &\\ J_{n-1}&&}.\]
Note that if $n=1$, then we have given two definitions of $J_2^\eta$, and they coincide.

We define the even special orthogonal group $SO_{2n}^\eta$ over $k$ by
\[SO_{2n}^\eta=\sset{g\in GL_{2n}}{g J_{2n}^\eta \tp{g}=J_{2n}^\eta,\,\,\det(g)=1}.\]
It is a subgroup of the orthogonal group $O_{2n}^\eta$ defined by
\[O_{2n}^\eta=\sset{g\in GL_{2n}}{g J_{2n}^\eta \tp{g}=J_{2n}^\eta}.\]
This definition presents both $SO_{2n}^\eta$ and $O_{2n}^\eta$ as subgroups of $GL_{2n}$ and thus defines natural \textit{standard representations} which, like in the cases above, we continue to denote by $\std:SO_{2n}^\eta\to GL_{2n}$ or $\std:O_{2n}^\eta\to GL_{2n}$, and which is given by inclusion. In both cases, we omit the superscript $\eta$ when $\eta=1$.

The group $SO_{2n}^\eta$ is quasi-split and it is split if and only if $\eta=1$; otherwise it splits over $k^\eta$. Its absolute rank is $n$ in any case, and its relative rank is $n-1$ if $\eta\ne 1$. The relative root system for $SO_{2n}^{\eta}$ is of type $D_n$ if $\eta=1$, and otherwise it is of type $B_{n-1}$. In fact, a maximal split torus $T$ in $SO_{2n}^\eta$ is given by
\[T=\sset{\diag(t_1,\dotsc,t_n,t_n^{-1},\dotsc,t_1^{-1})}{t_1,\dotsc,t_n\in GL_1}\subset SO_{2n}^\eta,\textrm{ if }\eta=1,\]
and otherwise by
\[T=\sset{\diag(t_1,\dotsc,t_{n-1},1,1,t_{n-1}^{-1},\dotsc,t_1^{-1})}{t_1,\dotsc,t_{n-1}\in GL_1}\subset SO_{2n}^\eta,\textrm{ if }\eta\ne 1,\]

Let us write
\[n_\eta=\begin{cases}
n&\textrm{if }\eta=1;\\
n-1&\textrm{otherwise}
\end{cases}\]
for the relative rank of $SO_{2n}^\eta$. We can then define characters $e_i:T\to GL_1$ for $i=1,\dotsc,n_\eta$ by
\[e_i(\diag(t_1,\dotsc,t_n,t_n^{-1},\dotsc,t_1^{-1}))=t_i,\textrm{ if }\eta=1,\]
and otherwise similarly by
\[e_i(\diag(t_1,\dotsc,t_{n-1},1,1,t_{n-1}^{-1},\dotsc,t_1^{-1}))=t_i,\textrm{ if }\eta\ne 1,\]
These characters span the character group $X^*(T)$ of $T$, which is thus isomorphic to $\Z^{n_\eta}$ under the identification
\[a_1 e_1+\dotsb+a_{n_\eta} e_{n_\eta}\mapsto (a_1,\dotsc,a_{n_\eta}),\]
for $a_1,\dotsc,a_{n_\eta}\in\Z$. We will therefore often identify weights in $X^*(T)\otimes\Q$ with $n_\eta$-tuples of rational numbers via this identification.

Let $\Phi$ be the set of roots of $T$ in $SO_{2n}^\eta$. If $\eta=1$, then the roots in $\Phi$ are those characters of the form $\pm(e_i+e_j)$ and $\pm(e_i-e_j)$ for $1\leq i<j\leq n$; Otherwise, if $\eta\ne 1$, then the roots in $\Phi$ are those characters of the form $\pm(e_i+e_j)$ and $\pm(e_i-e_j)$ for $1\leq i<j\leq n-1$, along with $\pm e_i$ for $1\leq i\leq n-1$. This makes sense as the latter set of characters is obtained from the former by setting $e_n\mapsto 0$ everywhere.

We will need to know the multiplicity $m(\alpha)$ of these roots $\alpha$ in the group $SO_{2n}^\eta$. In fact, when $\eta\ne 1$ these multiplicities can also be obtained informally by setting $e_n\mapsto 0$ from the split case. So when $\eta=1$, of course all roots have multiplicity $1$. Otherwise, when $\eta\ne 1$, we have that $m(e_i\pm e_j)=1$ for $1\leq i<j\leq n-1$, and $m(e_i)=2$ for $1\leq i\leq n-1$ (since $e_i$ comes from both $e_i+e_n$ and $e_i-e_n$ by setting $e_n\mapsto 0$).

A system of simple roots is given by
\[e_1-e_2, e_2-e_3,\dotsc, e_{n-1}-e_n, e_{n-1}+e_n,\textrm{ if }\eta=1,\]
or otherwise by
\[e_1-e_2, e_2-e_3,\dotsc, e_{n-2}-e_{n-1}, e_{n-1},\textrm{ if }\eta\ne 1.\]
This defines the system of positive roots which is given by the roots of the form $e_i\pm e_j$ for $1\leq i<j\leq n_\eta$, along with $e_i$ for $1\leq i\leq n$ when $\eta\ne 1$.

In the case that $\eta=1$, it follows that if $\rho$ denotes the half sum of positive roots in $\Phi$, then
\[\rho=(n-1,n-2,\dotsc,1,0),\]
using the identification $X^*(T)\otimes\Q\cong \Q^n$ explained above.

Finally, we note that the dual group to $SO_{2n}^\eta$ is itself the even special orthogonal group $SO_{2n}(\C)$.

\subsubsection*{Weyl groups}

All of the $k$-groups just defined have relative root systems of type $B$, $C$, $BC$, or $D$. We make some comments about the Weyl groups of these root systems. Let $n$ be a positive integer, and let $\Phi$ be an irreducible root system of rank $n$ of type $B$, $C$, or $BC$, contained in a character group isomorphic to $\Z^n$ (we remark on type $D$ later below). For $i=1,\dotsc,n$, write $e_i$ for the character corresponding to the element $(0,\dotsc,0,1,0,\dotsc,0)\in\Z^n$, where the $1$ is in the $i$th coordinate. Let $W=W(\Phi)$ be the Weyl group of $\Phi$.

The group $W$ is generated by simple reflections. For each $i=1,\dotsc,n$, it contains the reflection $(-)_i$ across $e_i$, which fixes each $e_j$ for $j\ne i$ and sends $e_i$ to $-e_i$. The reflection $(-)_n$ is simple. The group $W$ also contains any permutation in the symmetric group $S_n$, acting in the obvious way by permuting the $e_i$'s. Then the permutations $(12),(23),\dotsc,(n-1,n)$ are all simple.

It follows that the Weyl group is the semidirect product $\{\pm\}^{n}\rtimes S_n$, where the minus sign in the $i$th factor of the group $\{\pm\}^{n}$ acts via $(-)_i$, and where $S_n$ acts naturally by permuting the coordinates in $\{\pm\}^{n}$.

Now let $\Phi'$ be the root system of the same type as $\Phi$ but one rank lower, obtained from $\Phi$ by omitting all roots which contain a multiple of $e_1$ in the basis $e_1,\dotsc,e_n$. Let $W'$ be the Weyl group of $\Phi'$, viewed in the natural way as the subgroup of elements in $W$ which fix $e_1$.

We will have occasion in Section \ref{secjacmods} to consider the set $[W/W']$ of minimal length representatives in $W$ of $W/W'$. The group $W'$ is generated by the same reflections as $W$ except that we omit the reflection $(12)$ about first simple root $e_1-e_2$, or instead the reflection $(-)_1$ about the smallest multiple of $e_1$ if $n=1$. It is then not too difficult to see that
\begin{multline*}
[W/W']=\{1,(21),(321),\dotsc,(n,n-1,\dotsc,1),(-)_n(n,n-1,\dotsc,1),\\
(n-1,n)(-)_n(n,n-1,\dotsc,1),\dotsc,(1,2,\dotsc,n)(-)_n(n,n-1,\dotsc,1)\}.
\end{multline*}
These are the shortest elements which, in order, send $e_1$ to the characters
\[e_1,e_2,e_3\dotsc,e_n,-e_n,-e_{n-1},\dotsc,-e_1.\]
Their lengths increase by $1$ consecutively in the order written above.

If instead $W$ is of type $D$ and $n\geq 2$, then the situation is similar except that $W$ does not contain the reflections $(-)_i$. Instead, since $e_{n-1}+e_n$ is the final simple root, the Weyl group $W$ contains the elements $(-)_i(-)_j$ for $1\leq i<j\leq n$. Thus, if we let $\Sigma^{(n)}$ be the subgroup of elements $\sigma$ in $\{\pm\}^{n}$ for which the product all entries of $\sigma$ is $+$, then naturally $W=\Sigma^{(n)}\rtimes S_n$. Now, the element $(-)_{n-1}(-)_n$ is a simple reflection.

In this case, when $W'$ is defined similarly as above, we have
\begin{multline*}
[W/W']=\{1,(21),(321),\dotsc,(n,n-1,\dotsc,1),(-)_{n-1}(-)_n(n,n-1,\dotsc,1),\\
(n-1,n)(-)_{n-1}(-)_n(n,n-1,\dotsc,1),\dotsc,(1,2,\dotsc,n)(-)_{n-1}(-)_n(n,n-1,\dotsc,1)\}.
\end{multline*}
Again, these are the shortest elements of $W$ moving $e_1$ in the manner described above.

\subsubsection*{The group $G$ in this paper}

Throughout this paper we fix a totally real number field $F$ and take $G$ to be any one of the four groups described above over $k=F$; in the case that $G$ is unitary, we denote the quadratic field used to define $G$ by $E$ (instead of $K$ as above), and we assume that $E/F$ is imaginary quadratic. Otherwise, we write $E=F$ throughout if $G$ is not unitary. We also assume in all cases that the relative root system of $G$ over $F$ is nonempty; thus, in the unitary case, the Hermitian space over $F$ defining $G$ cannot be anisotropic, and in the even orthogonal case, the group $G$ cannot be a form of $SO_2$.

With this assumption, we fix the relative root system $\Phi$ for the maximal split torus $T$ in $G$ exactly as described in all cases above. Then $\Phi$ contains a first simple root; it is given by $e_1-e_2$ in the notations above unless the relative rank of $G$ is $1$, in which case it is given by $e_1$ or $2e_1$, depending respectively on whether $G$ is odd orthogonal or not.

Therefore $G$ contains a standard parabolic subgroup which we call $P$, and whose Levi factor we denote by $M$ and whose unipotent radical we denote by $N$, which is obtained by omitting this first simple root. Thus $M$ contains all the roots which do not contain a multiple of $e_1$ in the basis $e_1,\dotsc,e_n$; the root system for $T$ in $M$ is therefore what we called $\Phi'$ in the discussion on Weyl groups above. In all cases we have that $M$ decomposes as
\[M=GL_{1/E}\times H,\]
for some group $H$ over $F$ of the same type as $G$ but with $F$-rank one lower than $G$.

Throughout we will denote by $n$ the absolute rank of $G$ (center included), i.e., the dimension of the maximal torus in $G(\overline{F})$. We also denote by $N$ the dimension of the standard representation of the dual group $G^\vee$ of $G$. Thus,
\[N=\begin{cases}
n&\textrm{if }G\textrm{ is unitary;}\\
2n&\textrm{if }G\textrm{ is odd orthogonal;}\\
2n+1&\textrm{if }G\textrm{ is symplectic;}\\
2n&\textrm{if }G\textrm{ is even orthogonal.}
\end{cases}\]
We believe no confusion will result from the notational overlap between this number and the unipotent radical of $P$.

\section{Parameters}
\label{secparams}

Let $G$ be the group fixed just above in Section \ref{secgroups}, and let $F$ and $E$ be the number fields from there as well. Recall that $n$ is the absolute rank of $G$ and $N$ is the dimension of the standard representation of $G^\vee$. This section will discuss global Arthur parameters for $G$, as well as the local parameters obtained from them.

\subsubsection*{Global parameters}

As noted in the introduction, this paper will make signifincant use of the endoscopic classification of discrete automorphic representations of the group $G$ due to many authors, especially \cite{arthurbook}, \cite{Mok}, \cite{KMSW}, and \cite{AGIKMS}. However, we will not need to be too precise about the exact parity and duality conditions which define the class of global parameters relevant for each possibility for the group $G$, nor the exact multiplicity formulas that these authors obtain. For our purposes we will be content to know the following.

Let $\pi$ be an automorphic representation of $G(\A_F)$ which occurs in the discrete spectrum for $G$,
\[\pi\subset L_{\disc}^2(G(F)\backslash G(\A_F)).\]
Then there exists a \textit{global Arthur parameter} $\psi$ (or possibly many) which, for us, is a concatenation of symbols of the following form:
\[\psi=\boxplus_{i=1}^r\widetilde\pi_i[d_i];\]
Here, each $\widetilde\pi_i$ is a cuspidal automorphic representation of $GL_{n_i}(\A_E)$ for some positive integer $n_i$, and each $d_i$ is a positive integer such that $\sum_{i=1}^r n_id_i=N$. Each $\widetilde\pi_i$ is required to be self dual if $G$ is not unitary. As mentioned, the $\widetilde\pi_i$'s and the $d_i$'s are required to satisfy various other conditions which will not concern us here, except that $\psi$ must be \textit{conjugate-self dual} if $G$ is unitary. This means the following.

Let $\widetilde{\pi}_i^{\vee}$ denote the contragredient of $\widetilde{\pi}_i$. Define $\widetilde{\pi}_i^{*}(g)=\widetilde{\pi}_i^{\vee}(c(g))$, where $c$ is the nontrivial element in $\gal(E/F)$. Then we say $\psi$ is (conjugate-)self dual if we have that the multisets of pairs $\{(\widetilde\pi_i,d_i)\}$ and $\{(\widetilde\pi_i^*,d_i)\}$ are the same. Thus the parameter
\[\psi^*=\boxplus_{i=1}^r\widetilde\pi_i^*[d_i]\]
may be rearranged to obtain $\psi$.

Below we will describe how, for any place $v$ of $F$, to associate with $\psi$ a local $A$-parameter $\psi_v$. The endoscopic classification provides, in particular, a \textit{local Arthur packet}, which is a finite set $\Pi_{\psi_v}$ consisting of irreducible admissible representations which are smooth if $v$ is nonarchimedean. This packet itself satisfies a number of properties relevant for endoscopy. For us, it suffices to know that the local component $\pi_v$ of $\pi$ at $v$ occurs in this packet $\Pi_{\psi_v}$. In Section \ref{secjacmods}, however, under certain circumstances we will need to compute the Jacquet modules associated with various representations in such packets at nonarchimedean places $v$. Fortunately, we will be able to retroactively offload most of this burden to other authors there, most notably \cite{Moeglin} and \cite{MR}, who will give us enough explicit information about such representations to be able to do this.

\subsubsection*{Local parameters}

We first recall the following definition.

\begin{definition}
Let $L$ be a nonarchimedean local field of characteristic $0$ and $G'$ a reductive group over $L$. Let $W_L$ be the Weil group of $L$ and let $\L{G}'=(G')^\vee\rtimes W_L$ be the $L$-group, where $(G')^\vee$ is, as usual, the complex dual group of $G'$. In the following we denote by $SL_{2,\mr{D}}(\C)$ and $SL_{2,\mr{A}}(\C)$ two different copies of $SL_2(\C)$ (respectively the \textit{Deligne} $SL_2$ and the \textit{Arthur} $SL_2$).

A (local) \textit{Arthur parameter}, or $A$\textit{-parameter}, for $G'$ is a homomorphism
\[\psi:W_L\times SL_{2,\mr{D}}(\C)\times SL_{2,\mr{A}}(\C)\to \L{G}',\textrm{ if }L\textrm{ is nonarchimedean},\]
or
\[\psi:W_L\times SL_{2,\mr{A}}(\C)\to \L{G}',\textrm{ if }L\textrm{ is archimedean},\]
considered up to $(G')^\vee$-conjugation, satisfying the following properties:
\begin{enumerate}[label=(\alph*)]
\item The restriction of $\psi$ to $W_L\times SL_{2,\mr{D}}(\C)$ in the nonarchimedean case (respectively, to $W_L$ in the archimedean case) is a tempered $L$-parameter;
\item The restriction of $\psi$ to $SL_{2,\mr{A}}(\C)$ is algebraic.
\end{enumerate}

If $\psi$ is an $A$-parameter for $G'$, we define the associated $L$-parameter $\phi_\psi$ by
\[\phi_\psi(w)=\psi\left(w\times\pmat {\vert w\vert^{1/2} & \\ & \vert w\vert^{-1/2}}\right),\]
where the notation $\vert w\vert$ is explained as follows: When $L$ is archimedean, we write $\vert\cdot\vert:W_L\to\R_{>0}$ for the character given by the usual absolute value on $\C^\times\subset W_L$ and, when $L=\R$ is moreover real, we define $\vert j\vert=1$ as well. Otherwise, when $L$ is nonarchimedean, write $\lambda$ for the number of elements in the residue field of $L$, and fix an arithmetic Frobenius element $\varphi\in L$. Then let $\vert\cdot\vert:W_L\to\R_{>0}$ be the character which is trivial on the inertia group $I_L$ in $W_L$, and such that $\vert\varphi\vert=\lambda$. We then extend $\vert\cdot\vert$, using the same notation, to $W_L\times SL_{2,\mr{D}}(\C)$ by declaring it to be trivial on $SL_{2,\mr{D}}(\C)$.
\end{definition}

The reader interested in the details involved in the general definitions of the $L$-group, as well as $L$-parameters may consult Borel's article \cite[\S 2, \S 8]{BorelCorv}. We remark that in \textit{loc. cit.}, in the nonarchimedean case, the group $W_L\times SL_{2,\mr{D}}(\C)$ used above is replaced by the \textit{Weil--Deligne group} of $L$ when discussing $L$-parameters. The Weil--Deligne group of $L$ is a certain semidirect product $W_L\ltimes\C$ and it may be viewed as a subgroup of $W_L\times SL_{2,\mr{D}}(\C)$ via
\[w\ltimes u\mapsto \left(w\times\pmat{\vert w\vert^{1/2} & \\ & \vert w\vert^{-1/2}}\right)\cdot\left(1\times\pmat{ 1 & u\\ & 1}\right).\]
An $L$-parameter for the Weil--Deligne group may then be viewed as the restriction of an $L$-parameter in the sense above to the Weil--Deligne group via this map, and these points of view are equivalent.

We remark that when a reductive group $G'$ is split over a local field $L$ of characteristic $0$, then $\L{G}'$ is the direct product $\L{G}'=W_L\times (G')^\vee$, and so an $A$-parameter $\psi$ or an $L$-parameter $\phi$ for $G'$ may be viewed as a homomorphism into $(G')^\vee$ instead of $\L{G}'$. We will often do this when $G'$ is $GL_d$ for some $d$.

In the setting of our classical group $G$ over the number field $F$, let $v$ be a place of $F$ and $w$ a place of $E$ above $v$. Assume we are given a global parameter $\psi=\boxplus_{i=1}^r\widetilde\pi_i[d_i]$ as above, with each $\widetilde\pi_i$ an automorphic representation of $GL_{n_i}(\A_E)$ for some $n_i$. Then we can form a local $A$-parameter $\psi_v$ for $G(F_v)$ attached to $\psi$, using the standard representation $\std$ of dimension $N$ for $G^\vee$ discussed in Section \ref{secgroups}, as follows.

First, let $w$ be a place of $E$ above $v$. Then the local Langlands correspondence of Harris--Taylor \cite{HT} and Henniart \cite{Henn} in the nonarchimedean case, or that of Langlands \cite{LanglandsArchLL} in the archimedean case (which works for arbitrary reductive groups, not just general linear groups), attaches to the local component $\widetilde\pi_{i,w}$ of $\widetilde\pi_i$ at $w$ an $L$-parameter
\[\tilde\phi_{i,w}:W_{E_w}\times SL_{2,\mr{D}}(\C)\to GL_{n_i}(\C)\textrm{ if }v\textrm{ is nonarchimedean},\]
or
\[\tilde\phi_{i,w}:W_{E_w}\to GL_{n_i}(\C)\textrm{ if }v\textrm{ is archimedean},\]
for $GL_{n_i}(E_w)$. Then we define an $A$-parameter $\tilde\psi_w$ for $GL_N(E_w)$ by
\[\tilde\psi_w=\bigoplus_{i=1}^r\tilde\phi_{i,w}\boxtimes\Sym^{d_i-1}\]
where
\[\Sym^{d_i-1}:SL_{2,\mr{A}}(\C)\to GL_{d_i}(\C)\]
is the $(d_i-1)$th symmetric power representation, and we view $\Sym^0$ as the trivial representation of $SL_{2}(\C)$ into $GL_1(\C)$.

Next, consider the index $2$ subgroup of $\L{G(F_v)}$ given by $G^\vee(\C)\rtimes W_{E_w}$. The action of $W_{E_w}$ on $G^\vee(\C)$ here is trivial unless $G=SO_{2n}^\eta$ with $\eta|_{W_{F_v}}\ne 1$; in this case, if we let $F_{v}^{\eta}$ be the quadratic extension of $F_v$ cut out by $\eta$, then the action of $W_{F_v^\eta}$ on $G^\vee(\C)$ is trivial, and there is a natural isomorphism
\begin{equation}
\label{eqnstdofLgps}
\L{G(F_v)}/(\{ 1\}\times W_{F_{v}^{\eta}})=SO_{2n}(\C)\rtimes(W_{F_v}/W_{F_{v}^{\eta}})\cong O_{2n}(\C).
\end{equation}
In all cases, there is a natural embedding which we write as
\[\std:G^\vee(\C)\rtimes W_{E_w}\hookrightarrow GL_N(\C)\times W_{E_w}\]
by abuse of notation; in the case when $G=SO_{2n}^\eta$ with $\eta|_{W_{F_v}}\ne 1$, this embedding is induced by the standard representation $\std:O_{2n}(\C)\to GL_{2n}(\C)$ along with the identity map on $W_{F_v}$, and otherwise it is induced by the standard representation $\std:G^\vee(\C)\to GL_N(\C)$ along with the identity map on $W_{E_w}$.

Then it turns out (and this is part of the endoscopic classification) that there is a unique $A$-parameter $\psi_v$ for $G(F_v)$ such that
\begin{equation}
\label{eqnpsiwtilde}
\tilde\psi_w=\begin{cases}
\std\circ(\psi_v|_{W_{E_w}\times SL_{2,\mr{D}}(\C)\times SL_{2,\mr{A}}(\C)})&\textrm{if }v\textrm{ is nonarchimedean;}\\
\std\circ(\psi_v|_{W_{E_w}\times SL_{2,\mr{D}}(\C)})&\textrm{otherwise.}
\end{cases}
\end{equation}
We take this $\psi_v$ to be the localization of the global parameter $\psi$ we started with. This parameter will be self dual if $G$ is not unitary, meaning that the dual representation $\tilde\psi_v^\vee$ of $\tilde\psi_v$ is conjugate to $\tilde\psi_v$ itself by $G^\vee(\C)$. Otherwise, when $G$ is unitary, we have the relation
\[\tilde\psi_w^\vee=\tilde\psi_{c(w)}\circ(c(\cdot)c),\]
where $c\in G_F$ is a complex conjugation, and conjugation by $c$ is interpreted as trivial on any $SL_2(\C)$ factor of the source group of $\tilde\psi_{c(w)}$.

As alluded to above, the endoscopic classification attaches a local Arthur packet $\Pi_{\psi_v'}$ to every local $A$-parameter $\psi_v'$ for $G(F_v)$, and these packets satisfy the following: If $\pi$ is an automorphic representation of $G(\A_F)$ occurring in the discrete spectrum for $G$ and $\psi$ is a global Arthur parameter for $\pi$, then for every place $v$ of $F$, the local component $\pi_v$ of $\pi$ at $v$ occurs in the local Arthur packet $\Pi_{\psi_v}$ attached to the localization $\psi_v$ of $\psi$ at $v$;
\[\pi_v\in\Pi_{\psi_v}.\]
Moreover, for each $v$ and each place $w$ of $E$ lying over $v$, let $\widetilde{\pi}_w$ be the representation of $GL_N(E_w)$ attached to the $L$-parameter $\phi_{\tilde\psi_w}$ by the local Langlands correspondence, where $\tilde\psi_w$ is defined by the right hand side of \eqref{eqnpsiwtilde}. Then the representation
\[\widetilde\pi=\sideset{}{'}{\bigotimes}_{w}\widetilde\pi_w\]
is an $L^2$ automorphic representation of $GL_N(\A_E)$. In fact, if $\psi=\boxplus_{i=1}^r\widetilde\pi_i[d_i]$, then $\widetilde\pi$ is actually the isobaric sum of the $\pi_i$'s, each counted with multiplicity $d_i$.

\begin{remark}
To avoid potential confusion later, we note the following. Assume $G$ is unitary and $v$ is split in $E$. There is a natural inclusion
\[\Delta:G(F_v)\hookrightarrow G(F_v\otimes_F E).\]
Letting $w$ and $w'$ be the places of $E$ above $v$, we then get a natural isomorphism
\[j:G(F_v\otimes_F E)\overset{\sim}{\longrightarrow}G(E_w)\times G(E_{w'}).\]
Let
\[p_w:G(E_w)\times G(E_{w'})\to G(E_w)\quad\textrm{and}\quad p_{w'}:G(E_w)\times G(E_{w'})\to G(E_{w'})\]
be the respective projection maps. Then there are natural isomorphisms
\[\iota_w:G(E_w)\overset{\sim}{\longrightarrow} GL_N(E_w)\quad\textrm{and}\quad \iota_{w'}:G(E_{w'})\overset{\sim}{\longrightarrow} GL_N(E_{w'})\]
coming from the fact that $G_{/E}\cong GL_{N/E}$; this latter isomorphism is due to the presentation of a unitary group as a subgroup of $GL(V)$ for some $E$-vector space $V$. Finally, the isomorphisms $F_v\cong E_w$ and $F_v\cong E_{w'}$ coming from the inclusion $F\hookrightarrow E$ induce natural isomorphisms
\[\nu_{w}:GL_N(E_{w})\overset{\sim}{\longrightarrow}GL_N(F_v)\quad\textrm{and}\quad \nu_{w'}:GL_N(E_{w'})\overset{\sim}{\longrightarrow}GL_N(F_v).\]
Altogether, composing in this order, we get two self-isomorphisms of $G(F_v)$,
\[\nu_{w}\circ\iota_{w}\circ p_{w}\circ j\circ\Delta \quad\textrm{and}\quad \nu_{w'}\circ\iota_{w'}\circ p_{w'}\circ j\circ\Delta.\]
If $G(F_v)$ is then identified with $GL_N(F_v)$ in any fixed way, then these isomorphisms differ by the outer automorphism $\tp{(\cdot)}^{-1}$ of $GL_N$.

Such an identification also allows us to view $\L{G(F_v)}\cong GL_N(\C)\times W_{F_v}$, but the isomorphisms $\iota_{w}$ and $\iota_{w'}$ are what we use to view $\L{G(E_w)}\cong GL_N(\C)\times W_{E_w}$ and $\L{G(E_{w'})}\cong GL_N(\C)\times W_{E_{w'}}$. Thus, it follows that any parameters $\tilde\psi_w$ and $\tilde\psi_{w'}$ satisfying \eqref{eqnpsiwtilde} for $w$ and $w'$, respectively, give rise together to a representation of $G(F_v\otimes_F E)$ which is conjugate-self dual.
\end{remark}

\subsubsection*{Infinitesimal characters and cohomological representations}

We recall a bit of the theory of infinitesimal characters as in \cite[\S 2]{NP}. For this subsection only, let $G'$ be a reductive group over $\R$ with fixed maximal torus $T_{G'}$, and let $\phi_\infty:W_{\R}\to\L{G}'$ be an archimedean $L$-parameter. Then we may conjugate $\phi_\infty$ so that $\phi_\infty|_{\C^\times}$ factors through the dual torus $T_{G'}^\vee\subset (G')^\vee$. Then there are $\lambda,\mu\in X_*(T_{G'}^\vee)\otimes\C$, with $\lambda-\mu\in X_*(T_{G'}^\vee)$ a genuine cocharacter, such that
\[\phi(z)=(\lambda+\mu)(\vert z\vert)\cdot(\lambda-\mu)(z/\vert z\vert).\]
Here, the first term in the product above has the following meaning: There are elements $\alpha_1,\dotsc,\alpha_n\in X_*(T_{G'}^\vee)$ and $a_1,\dotsc,a_n\in\C$ such that $\lambda+\mu=\sum_{i=1}^n \alpha_i\otimes a_i$, and then we define
\[(\lambda+\mu)(\vert z\vert)=\prod_{i=1}^n \alpha_i(\vert z\vert^{a_i}),\]
which makes sense because $\vert z\vert$ is real and positive. With this notation, we then define the \textit{infinitesimal character} of $\phi_\infty$ to be the orbit of the element $\lambda\in X_*(T_{G'}^\vee)\otimes\C$ under the Weyl group of $T_{G'}^\vee$ in $(G')^\vee$ or, what is the same, under the absolute Weyl group of $T$ in $G'$. Nair--Prasad \cite[Lemma 1]{NP} then prove that all elements in the $L$-packet of an archimedean $L$-parameter $\phi_\infty$ for $G'$ have infinitesimal character given by that of $\phi_\infty$.

If $G'$ were instead a complex Lie group, then the same process defines the infinitesimal character of a complex $L$-parameter of $G'$ when we take $T_{G'}$ to be a complex maximal torus.

Now if $\psi_\infty$ is a real $A$-parameter for $G'$, we define the \textit{infinitesimal character} of $\psi_\infty$ to be the infinitesimal character of the associated $L$-parameter $\phi_{\psi_\infty}$. (We take the same definition if $G'$ were complex.) We say, following \cite[Definition 3]{NP}, that an $A$-parameter for $G'$ is \textit{cohomological} if its infinitesimal character is that of a finite dimensional algebraic representation of $G'(\C)$. Nair--Prasad prove in \cite[Theorem 5]{NP} that the cohomological $A$-parameters for $G'$ are exactly the $A$-parameters for $G'$ of Adams--Johnson type (the definition of which we do not need to recall here). They then prove \cite[Theorems 6 and 7]{NP}, following \cite{adjo}, that if $G'(\R)$ is connected, then the union over cohomological $A$-parameters of the corresponding packets constructed in \cite{adjo} is exactly the set of cohomological representations of $G'(\R)$.

To an archimedean $A$-parameter $\psi_\infty$ for $G'$, one can attach an Arthur packet in potentially several ways. First, in general, one can attach a packet $\Pi_{\psi_\infty}^{\mathrm{ABV}}$, constructed by Adams--Barbasch--Vogan, as in \cite[Definition 22.6]{ABV}. This packet, which we will call the ABV-packet attached to $\psi$, has the property that all of its members have the same infinitesimal character which is given by that of $\psi_\infty$. This property is perhaps not so clear (at least to the author) from the definition given in \cite{ABV}, but it is clear at least from the exposition given in \cite[\S 3.1]{AAM}, since there it is defined as a subset of the set of representations with infinitesimal character $\prescript{\vee}{}{\mc{O}}$ in their notation.

Second, when $\psi_\infty$ is of Adams--Johnson type (equivalently, cohomological, by above) then one can attach a packet $\Pi_{\psi_\infty}^{\mathrm{AJ}}$, which we call the Adams--Johnson packet attached to $\psi_\infty$, using \cite{adjo}. Arancibia Robert has proved in \cite[Corollary 4.18]{aran} in this case that $\Pi_{\psi_\infty}^{\mathrm{AJ}}=\Pi_{\psi_\infty}^{\mathrm{ABV}}$.

Finally, when instead $G'$ is a quasisplit symplectic or special orthogonal group (resp. unitary group) but $\psi_\infty$ is general, then there is the Arthur packet $\Pi_{\psi_\infty}$ from \cite{arthurbook} (resp. \cite{Mok} and \cite{KMSW}) described above. Adams, Arancibia Robert, and Mezo have proved \cite[Theorems 8.0.2 and 9.0.3]{AAM} that if $G'$ is not a form of $SO_{2n}$, then in this case we have $\Pi_{\psi_\infty}=\Pi_{\psi_\infty}^{\mathrm{ABV}}$, and hence both equal $\Pi_{\psi_\infty}^{\mathrm{AJ}}$ if $\psi$ is moreover cohomological. Otherwise, when $G'$ is a form of $SO_{2n}$, then \textit{loc. cit.} proves that
\[\Pi_{\psi_\infty}=\Pi_{\psi_\infty}^{\mathrm{ABV}}\cup\Pi_{\vartheta\circ\psi_\infty}^{\mathrm{ABV}},\]
where $\vartheta$ is the outer automorphism of $SO_{2n}(\C)$ given by the adjoint action of an element in the non-identity component of $O_{2n}(\C)$. In either case, when $\psi_\infty$ is cohomological, all three packets contain only cohomological representations.

We use the information described above to prove the proposition below, which will be useful later in constructing Galois representations. But first we make a definition following \cite{KSeo} which we will use in the hypotheses of that proposition.

\begin{definition}
\label{defstdreg}
Assume $G$ is even orthogonal. Let $\psi_\infty$ be an Arthur parameter for $G(\R)$. We say $\psi_\infty$ is $\std$\textit{-regular} if its infinitesimal character is represented by a weight which, when written as an $n$-tuple as in Section \ref{secgroups}, has only nonzero entries.

As well, we say that a representation of $G(\R)$ is $\std$\textit{-regular} if it lies in the packet $\Pi_{\psi_\infty}$ associated with some $\std$-regular Arthur parameter $\psi_\infty$.
\end{definition}

Note that $\std$-regularity is indeed well defined for representations; indeed, if $\pi_\infty$ is such, then by above, we have that $\pi_\infty\in\Pi_{\psi_\infty}^{\mathrm{ABV}}$ or $\Pi_{\vartheta\circ\psi_\infty}^{\mathrm{ABV}}$ for some $\psi_\infty$. The infinitesimal characters of $\psi_\infty$ and $\vartheta\circ\psi_\infty$ differ only by a sign in one entry. Thus all choices of $\psi_\infty$ having $\pi_\infty$ in $\Pi_{\psi_\infty}$ have the same infinitesimal characters up to signs in each entry, and are therefore $\std$-regular if just one is.

We also make the following convenient notation for ourselves.

\begin{notation}
\label{notndeltaeo}
We write $\delta_{\mr{eo}}\in\Z$ for the integer $1$ if $G$ is even orthogonal, and for $0$ otherwise.
\end{notation}

\begin{proposition}
\label{propparamliftcoho}
Let $v$ be an archimedean place of $F$, and let $\psi_v$ be an Arthur parameter for $G(F_v)$, and assume $\Pi_{\psi_v}$ contains a cohomological representation. If $G$ is even orthogonal, assume in addition that $\psi_v$ (or equivalently, any representation in $\Pi_{\psi_{v}}$) is $\std$-regular in the sense defined above. Let $\tilde{\psi}_v$ be defined by the right hand side of \eqref{eqnpsiwtilde} above. Write $\chi$ for the infinitesimal character of $\tilde\psi_v$, which may thus be viewed as an unordered $N$-tuple of complex numbers. Then
\[\chi+(\tfrac{1+\delta_{\mr{eo}}-N}{2},\tfrac{1+\delta_{\mr{eo}}-N}{2},\dotsc,\tfrac{1+\delta_{\mr{eo}}-N}{2})\]
is integral and regular.
\end{proposition}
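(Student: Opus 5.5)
The plan is to reduce the statement to the cohomological condition on $\psi_v$ via Nair--Prasad, and then to an explicit computation of what the standard representation does to an integral regular infinitesimal character. Since $\Pi_{\psi_v}$ contains a cohomological representation $\pi_v$, the comparison results recalled above apply. If $G$ is not a form of $SO_{2n}$, then $\Pi_{\psi_v}=\Pi^{\mathrm{ABV}}_{\psi_v}$ by \cite{AAM}. In the even orthogonal case, $\pi_v$ lies in $\Pi^{\mathrm{ABV}}_{\psi_v}$ or in $\Pi^{\mathrm{ABV}}_{\vartheta\circ\psi_v}$; after replacing $\psi_v$ by $\vartheta\circ\psi_v$ if necessary, which changes neither $\tilde\psi_v$ up to conjugacy nor its infinitesimal character, we may again assume $\pi_v\in\Pi^{\mathrm{ABV}}_{\psi_v}$. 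All members of an ABV-packet share the infinitesimal character of the parameter. Hence the infinitesimal character $\lambda$ of $\psi_v$ is that of the cohomological $\pi_v$, and so equals (the Weyl orbit of) $\mu+\rho$ for a dominant weight $\mu$ of a finite-dimensional algebraic representation of $G(\C)$.

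Next, I will transport $\lambda$ through $\std$. By definition the infinitesimal character of $\tilde\psi_v$ is computed from $\std\circ\phi_{\psi_v}$ restricted to $\C^\times$. This restriction factors through $T^\vee$, so $\chi=\std_*(\lambda)$, where $\std_*$ sends a cocharacter of $T^\vee$ to the $N$-tuple of its pairings with the weights of $\std$. For a complex place of $F$ in the unitary case there is no such place, since $F$ is totally real; when $G$ is unitary, $G(\C)\cong GL_n(\C)$ and $W_{E_w}=\C^\times$, and $\lambda$ is just an $n$-tuple. The weights of $\std$ are as follows:
\begin{itemize}
\item unitary: $\pm$-free, namely $e_1,\dots,e_n$ themselves;
\item $Sp_{2n}$ (dual $SO_{2n+1}$): $\pm e_i$ together with $0$;
\item $SO_{2n+1}$ (dual $Sp_{2n}$): $\pm e_i$;
\item $SO_{2n}$: $\pm e_i$.
\end{itemize}

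Then I will check integrality and regularity case by case, using the explicit $\rho$'s from Section \ref{secgroups}. Write $\lambda=\mu+\rho$ with $\mu$ integral and dominant, so that $\lambda$ is strictly dominant.

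\begin{itemize}
\item \textbf{Unitary.} $\rho$ has entries in $\tfrac{n-1}{2}+\Z$, so adding $\tfrac{1-N}{2}$ with $N=n$ gives integers. Regularity holds because $\lambda$ is regular.
\item \textbf{Odd orthogonal.} $\rho\in(\tfrac12+\Z)^n$, so $\chi=(\pm\lambda_i)$ has entries in $\tfrac12+\Z$, and the shift $\tfrac{1-2n}{2}$ makes them integral. The entries $\pm\lambda_i$ are distinct because $\lambda_1>\dots>\lambda_n>0$.
\item \textbf{Symplectic.} $\chi=(\pm\lambda_i,0)$ is integral, and the shift $\tfrac{1-(2n+1)}{2}=-n$ is an integer. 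The entries are distinct since all $\lambda_i>0$.
\item \textbf{Even orthogonal.} $\chi=(\pm\lambda_i)$ is integral, with shift $\tfrac{2-2n}{2}\in\Z$. Regularity of $\lambda$ in type $D$ only gives $\lambda_1>\dots>\lambda_{n-1}>|\lambda_n|$, so $\pm\lambda_n$ could coincide when $\lambda_n=0$. This is exactly what $\std$-regularity excludes.
\end{itemize}

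In every case a translate of a regular tuple by a constant is still regular, which finishes the verification.

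The main obstacle I expect is the careful identification of $\lambda$ with $\mu+\rho$ for the correct $\rho$ when $G(\R)$ is non-split or disconnected, notably for the unitary group $U(p,q)$ and for $SO^\eta_{2n}$. Here the relevant $\rho$ is the absolute one for $G(\C)$, not the relative one listed in Section \ref{secgroups}. For $G(\C)$ the absolute $\rho$ is $\rho_{GL_n}$, $\rho_{B_n}$, $\rho_{C_n}$, or $\rho_{D_n}$ in the respective cases, and these are exactly what I use in the case check. A secondary subtlety is that the Nair--Prasad/Adams--Johnson statements assume $G(\R)$ is connected; I only need the direction "cohomological implies infinitesimal character equals that of a finite-dimensional representation", which holds in general by Wigner's lemma.
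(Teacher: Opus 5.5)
Your proposal is correct and follows essentially the same route as the paper: use the packet comparisons and the cohomological hypothesis to write the infinitesimal character as $\mu+\rho$ with $\mu$ dominant integral, push it through $\std$, and check integrality and regularity of the shifted $N$-tuple case by case, with $\std$-regularity ruling out $\lambda_n=0$ in the even orthogonal case. Your extra remarks (replacing $\psi_v$ by $\vartheta\circ\psi_v$, using the absolute $\rho$ of $G(\C)$, and invoking Wigner's lemma for the needed direction) only make explicit points that the paper's proof leaves implicit.
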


\begin{proof}
The assumption that $\Pi_{\psi_\infty}$ contains a cohomological representation implies, by the facts recalled above, that the infinitesimal character of $\psi_\infty$ equals that of a finite dimensional algebraic representation of $G(\overline{F}_v)$. Such an infinitesimal character is represented by $\lambda_0+\rho$ for some dominant integral weight
\[\lambda_0=(a_1,\dotsc,a_n)=a_1 e_1+\dotsb+a_n e_n,\]
where $n$ is the absolute rank of $G$ and where we have
\[a_1,\dotsc,a_n\in\Z,\quad a_1\geq\dotsb\geq a_n,\quad a_n\geq 0\textrm{ if }G\textrm{ is symplectic or odd orthogonal,}\]
and where, since we are assuming $\std$-regularity if $G$ is even orthogonal, we have $a_n\ne 0$ in that case. Recall that
\[\rho=\begin{cases}
(\tfrac{n-1}{2},\tfrac{n-3}{2},\dotsc,\tfrac{3-n}{2},\tfrac{1-n}{2})&\textrm{if }G\textrm{ is unitary;}\\
(n-\tfrac{1}{2},n-\tfrac{3}{2},\dotsc,\tfrac{3}{2},\tfrac{1}{2})&\textrm{if }G\textrm{ is odd orthogonal;}\\
(n,n-1,\dotsc,2,1)&\textrm{if }G\textrm{ is symplectic;}\\
(n-1,n-2,\dotsc,1,0)&\textrm{if }G\textrm{ is even orthogonal.}
\end{cases}\]
Let $\lambda$ be the coweight in $X^*(T^\vee)\otimes\Q$ corresponding to $\lambda_0+\rho$. Then the coweight of the diagonal torus in $GL_N(\C)$ given by
\[(\std\circ\lambda)+(\tfrac{1+\delta_{\mr{eo}}-N}{2},\tfrac{1+\delta_{\mr{eo}}-N}{2},\dotsc,\tfrac{1+\delta_{\mr{eo}}-N}{2})\] is thus given by the cocharacter $\C^\times\to GL_{N}(\C)$ as follows:
\[z\mapsto\begin{cases}
\diag(z^{a_1},z^{a_2-1}\dotsc,z^{a_n-n+1})&\textrm{if }G\textrm{ is unitary;}\\
\diag(z^{a_1},z^{a_2-1},\dotsc,z^{a_n-n+1},z^{-a_n-n},\dotsc,z^{-a_1-2n+1})&\textrm{if }G\textrm{ is odd orthogonal;}\\
\diag(z^{a_1},\dotsc,z^{a_n-n+1},z^{-n},z^{-a_n-N-\delta_{\mr{eo}}-1},\dotsc,z^{-a_1-2n})&\textrm{if }G\textrm{ is symplectic;}\\
\diag(z^{a_1},z^{a_2-1},\dotsc,z^{a_n-n+1},z^{-a_n-n+1},\dotsc,z^{-a_1-2n+2})&\textrm{if }G\textrm{ is even orthogonal.}\\
\end{cases}\]
Note that in the even orthogonal case above, we have that the exponents $a_n-n+1$ and $-a_n-n+1$ on the two middle entries are distinct by $\std$-regularity. So in all cases, this character corresponds to a regular integral character of the diagonal torus in $GL_{N}(\C)$. It follows easily from the definitions that the Weyl orbit of $\std\circ\lambda$ is the infinitesimal character of $\tilde\psi_v$, and so the proposition follows.
\end{proof}

\subsubsection*{Frobenius semisimple Weil--Deligne representations}
Let $L$ be a nonarchimedean local field of characteristic $0$ and residue characteristic different from the fixed prime $p$. Write $\lambda$ for the order of the residue field of $L$. Let $W_L$ be the Weil group of $L$. Let $\varphi\in W_L$ be a fixed arithmetic Frobenius element, and let $I_L\subset W_L$ be the inertia subgroup. Then there is a unique isomorphism $\nu:W_L/I_L\to\Z$ such that $\nu$ sends the image of $\varphi$ in $W_L/I_L$ to $1\in\Z$. 

Recall that a \textit{Weil--Deligne representation} for $L$ of dimension $d$ (with coefficients in $\overline\Q_p$) is a pair $(r,N)$, where $r$ is a representation of $W_L$ on a $d$-dimensional $\overline{\Q}_p$-vector space $V$ with open kernel, and $N:V\to V$ is a nilpotent linear operator such that
\[r(g)N=\lambda^{\nu(g)}Nr(g),\]
for all $g\in W_L$. The operator $N$ in this definition is called the \textit{monodromy operator}. Moreover, we call $(r,N)$ \textit{Frobenius-semisimple} if $\varphi$ acts semisimply on $V$; as is well known, it is equivalent to require that all elements of $W_L$ act semisimply on $V$, and it is equivalent to require $r$ to be semisimple as a representation.

For any $a\in\overline\Q_p^\times$, let $\chi_a$ denote the $1$-dimensional Weil-Deligne representation with (necessarily) trivial monodromy operator, and such that
\[\chi_a(g)=a^{\nu(g)}\]
for all $g\in W_L$. We sometimes abuse notation slightly and write $\chi_a$ for the underlying character of $W_L$.

Fix a square root $\lambda^{1/2}$ in $\overline\Q_p$. When $M$ is an odd integer, let us write $\lambda^{M/2}=\lambda^{1/2}\lambda^{(M-1)/2}$; since $(M-1)/2$ is an integer in this case, both expressions in the product make sense.

Then for any integer $m\geq 1$, the \textit{special representation} $\Sp(m)$ is the $m$-dimensional Weil--Deligne representation for $L$ with underlying representation of $W_L$ given by
\[\chi_{\lambda^{(m-1)/2}}\oplus\chi_{\lambda^{(m-3)/2}}\oplus\dotsb\oplus\chi_{\lambda^{-(m-1)/2}},\]
such that for any integer $i$ with $1\leq i<m$, the monodromy operator $N$ of $\Sp(m)$ sends a fixed nonzero vector in the summand $\chi_{\lambda^{i-1-(m-1)/2}}$ to one in the summand $\chi_{\lambda^{i-(m-1)/2}}$, and such that the $N$ kills summand $\chi_{\lambda^{(m-1)/2}}$. The special representation is obviously Frobenius-semisimple. Moreover, any indecomposable Frobenius-semisimple Weil--Deligne representation is of the form $r\otimes\Sp(m)$ for some $m\geq 1$ and some irreducible, finite dimensional representation $r$ of $W_L$ with open kernel, and any Frobenius-semisimple Weil--Deligne representation is a direct sum of indecomposable ones.

Given a Frobenius semisimple Weil--Deligne representation $(r,N)$ of dimension $d$ for $L$ and an isomorphism $\iota:\C\to\overline\Q_p$, we can define an $L$-parameter $\phi$ for $GL_d(L)$,
\[\phi:W_L\times SL_{2,\mr{D}}(\C)\to GL_d(\C),\]
which depends on $\iota$, as follows. By the above, we may write
\begin{equation}
\label{eqnWDrepdecompgen}
(r,N)=\bigoplus_{i\in I}r_i\otimes\Sp(m_i),
\end{equation}
where $I$ is a finite index set and for each $i\in I$, $r_i$ is an irreducible, finite dimensional $\overline\Q_p$-representation of $W_L$ with open kernel, and $m_i\geq 1$ is an integer. Then $\phi$ is defined, up to conjugation, by
\[\phi=\bigoplus_{i\in I}(\iota^{-1}\circ r_i)\boxtimes\Sym^{m_i-1},\]
which is a representation of $W_L\times SL_2(\C)$ on a complex vector space of dimension that of $(r,N)$. We will use this association later to describe the local behavior of the Galois representations we study.

Finally, we recall that a Frobenius-semisimple Weil--Deligne representation $(r,N)$ for $W_L$ is \textit{pure of weight} $w\in\Z$ if, when written in the form \eqref{eqnWDrepdecompgen}, the eigenvalues of $\varphi$ are all $\lambda$-\textit{Weil numbers of weight} $w$; this means that they are algebraic numbers which, under any embedding into the complex numbers, have absolute value $\lambda^{w/2}$.

\section{Jacquet modules}
\label{secjacmods}
The content of this section and the next will be purely local, but for sake of consistency we will use more global notation. Thus, continuing with the notation as in Section \ref{secgroups}, we have our classical group $G$ over our totally real number field $F$, and our extension $E/F$ which is trivial unless $G$ is unitary, in which case it is imaginary quadratic. We let $v$ be a finite place of $F$ (which, in this section, we allow to lie above $2$) such that there is a unique place, call it $w$, of $E$ lying over $v$ in $F$. So if $E=F$ then this uniqueness condition is vacuous, and otherwise we have that $v$ is either inert or ramified in $E$. In particular we have that $G(F_v)$ is not of the form $GL_{n}(F_v)$. We will work in this section only with the local group $G(F_v)$.

Let us, throughout this section, denote the $F_v$-rank of $G_{/F_v}$ by $n_v$. Let $P=MN$ be the maximal parabolic in $G$ fixed in Section \ref{secgroups}. Then $M(F_v)=GL_1(E_w)\times H(F_v)$, where $H$ is of the same type as $G$ but with $F_v$-rank smaller by $1$. The goal of this section is to partially compute the Jacquet modules down to $M(F_v)$ of two types of representations, namely those associated with tempered $L$-parameters, and those induced from representations of $M(F_v)$. We will use the information obtained in this section, along with the Hecke operators constructed in the next section, to control the local behavior of Hecke families at bad primes.

So fix a tempered $L$-parameter $\phi$ for $G(F_v)$, viewed via the standard representation as a (conjugate-)self dual homomorphism
\[W_{E_w}\times SL_2(\C)\to GL_N(\C),\]
as explained in Section \ref{secparams}; here the integer $N$ is the dimension of the standard representation of $G^\vee$ as in Section \ref{secgroups}. Then the parameter $\phi$ decomposes into
\[\phi=\phi_0\oplus(\phi_1\oplus\phi_1^{*})\]
where $\phi_1$ is a tempered $L$-parameter for $GL_{n_1}(E_w)$ for some integer $n_1\leq n_v$, and $\phi_0$ is a \textit{discrete} $L$-parameter for a group $G_{n_0}$ over $F_v$ of the same type as $G$ and $F_v$-rank $n_0=n_v-n_1$. We recall that a parameter is called discrete if it does not factor through any relevant proper parabolic subgroup of the target $L$-group. This decomposition is relatively standard and follows from the fact that the relevant maximal parabolic subgroups of $\L{G(F_v)}$ are those with Levis of the form $GL_{n_1}(\C)\times \L{G}_{n_0}$.

Now for any integer $m$ with $1\leq m\leq n_v$, the group $GL_{m/E_w}\times G_{n_v-m}$ is then the Levi factor of a standard parabolic subgroup of $G_{/F_v}$; we have written here $G_{n_v-m}$ for the group over $F_v$ of the same type as $G_{/F_v}$ but lower in $F_v$-rank by $m$. Let us call this associated parabolic subgroup $P_{m}$ and write $M_{m}$ for its Levi factor.

Given a smooth admissible representation $\pi_0$ of $G_{n_0}(F_v)$ and a smooth admissible representation $\pi_1$ of $GL_{n_1}(E_w)$, we write
\[\pi_1\rtimes\pi_0=\Ind_{P_{n_1}(F_v)}^{G(F_v)}(\pi_1\boxtimes\pi_0)\]
for the normalized parabolic induction of $\pi_1\boxtimes\pi_0$ from $P_{n_1}(F_v)$ to $G(F_v)$. We then have the following proposition about the relationships between the elements of the packets for $\phi$ and those for $\phi_0$ and $\phi_1$.

\begin{proposition}
\label{proppacketdecomp}
With $\phi$, $\phi_0$ and $\phi_1$ as above, let $\pi\in\Pi_{\phi}$. Let $\pi_1$ be representation of $GL_{n_1}(E_w)$ attached to $\phi_1$ by the local Langlands correspondence for $GL_{n_1}(E_w)$. Then there exists a $\pi_0\in\Pi_{\phi_0}$ such that $\pi$ is a direct summand of $\pi_1\rtimes\pi_0$.
\end{proposition}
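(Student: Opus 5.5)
This is essentially a compatibility of the endoscopic tempered $L$-packets with parabolic induction, and I would deduce it from the construction of the packets in \cite{arthurbook}, \cite{Mok}, \cite{KMSW} rather than reprove it.

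\textbf{Step 1: the induced representation.} Since $\phi_1$ is tempered, $\pi_1$ is an irreducible tempered representation of $GL_{n_1}(E_w)$. Each $\pi_0\in\Pi_{\phi_0}$ is discrete series, hence tempered. By Harish-Chandra's theory, $\pi_1\rtimes\pi_0$ is then unitary, tempered and of finite length, so it is semisimple; in particular every constituent is a direct summand.

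\textbf{Step 2: the packet $\Pi_\phi$.} I would invoke the fact, established in the cited works (for example \cite[Proposition 2.4.3]{arthurbook} and its analogues in \cite{Mok} and \cite{KMSW}), that when $\phi$ factors through the Levi $\L{M}_{n_1}=GL_{n_1}(\C)\times\L{G}_{n_0}$ as $\phi_1\oplus\phi_0\oplus\phi_1^*$, the packet is defined by
\[\Pi_\phi=\bigcup_{\pi_0\in\Pi_{\phi_0}}\{\text{irreducible constituents of }\pi_1\rtimes\pi_0\}.\]
The character identities defining $\Pi_\phi$ are then those inherited from the Levi $M_{n_1}$. Concretely, the stable distribution attached to $\phi$ is the parabolic induction of the stable distribution attached to $\phi_1\boxtimes\phi_0$. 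Given $\pi\in\Pi_\phi$, this yields some $\pi_0\in\Pi_{\phi_0}$ with $\pi\subset\pi_1\rtimes\pi_0$, and Step 1 upgrades this to a direct summand.

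\textbf{Main obstacle.} The one delicate point is making sure the decomposition $\phi=\phi_0\oplus(\phi_1\oplus\phi_1^*)$ used here matches the one in the references. Arthur induces from a discrete parameter of a Levi $GL_{m_1}\times\dotsb\times GL_{m_k}\times G_{n_0}$, whereas our $\phi_1$ is only tempered and may itself be a sum. I would resolve this by induction in stages. Write $\pi_1$ as an irreducible induced representation $\tau_1\times\dotsb\times\tau_k$ from discrete series $\tau_i$ of $GL_{m_i}(E_w)$; this is possible by Zelevinsky and Jacquet, since tempered induction is irreducible for general linear groups. Then
\[\pi_1\rtimes\pi_0\cong\tau_1\times\dotsb\times\tau_k\rtimes\pi_0,\]
so the two descriptions agree.

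A secondary check is that $\phi_0$ is genuinely discrete, so that $\Pi_{\phi_0}$ consists of discrete series. This holds by maximality of the splitting of $\phi_1\oplus\phi_1^*$: one pulls out every summand that is not (conjugate-)self-dual of the right parity, and also any multiplicity beyond one.
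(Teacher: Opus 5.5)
Your proposal is correct and follows essentially the same route as the paper, which simply cites \cite[Proposition 7.1]{LLS} and the works of Arthur, Mok and Kaletha--Minguez--Shin--White behind it; there, the tempered packet of a non-discrete $\phi$ is by construction the set of constituents of representations induced from discrete packets on a Levi, and your semisimplicity and induction-in-stages remarks just make that reduction explicit. Your closing ``secondary check'' is unnecessary, since discreteness of $\phi_0$ is part of the setup, and as phrased it is slightly off: a (conjugate-)self-dual summand of the right parity occurring with multiplicity $\ell$ leaves $\ell \bmod 2$ copies in $\phi_0$ (copies are removed in pairs into $\phi_1\oplus\phi_1^*$), not one copy whenever $\ell\geq 1$.
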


\begin{proof}
See \cite[Proposition 7.1]{LLS} for a more precise statement, as well as the references given there for the proof.
\end{proof}

We begin to consider various Jacquet modules of representations of $G_{n_0}$ and $G$; see the notation section for the meaning of the functors $\Jac$.

\begin{proposition}
\label{propjacM1Sprep}
Let $n_0$ and $\phi_0$ be as above Proposition \ref{proppacketdecomp}. Assume $n_0\geq 1$. Decompose the parameter $\phi_0$ into indecomposable pieces as
\[\phi_0=\bigoplus_{i\in I_0} r_i\boxtimes\Sym^{a_i-1},\]
for some index set $I_0$; the summands are distinct because $\phi_0$ is discrete. Let $\pi_0\in\Pi_{\phi_0}$, and assume there is an irreducible constituent $\tau$ of $\Jac_{M}(\pi_0)$. Then there is an $i\in I_0$ such that $r_i$ is a character and, writing $\rho_i$ for the character of $GL_1(E_w)$ attached to $r_i$ by class field theory, we have
\[\tau\cong(\rho_i\Vert\cdot\Vert_{w}^{(a_i-1)/2}\boxtimes\sigma)\otimes \delta_{P(F_v)}^{1/2},\]
for some smooth irreducible representation $\sigma$ of $G_{n_v-1}(F_v)$.
\end{proposition}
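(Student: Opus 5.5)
Since $\pi_0$ lies in the $L$-packet of the discrete parameter $\phi_0$, it is a discrete series representation of $G_{n_0}(F_v)$. The plan is to read off the $GL_1(E_w)$-parts of its Jacquet module from M\oe glin's explicit description of such representations, as refined in \cite{Moeglin} and \cite{MR}. There, a discrete series representation in $\Pi_{\phi_0}$ is described by an admissible Jordan block set $\mathrm{Jord}(\pi_0)=\{(r_i,a_i)\}_{i\in I_0}$ together with a partial cuspidal support $\pi_{\mathrm{cusp}}$ and a character $\epsilon$ on the Jordan blocks. The basic structural fact I will use is the following. If $\Jac_{P_1}(\pi_0)$ (with $P_1$ the maximal parabolic with Levi $GL_1(E_w)\times G_{n_0-1}$) has a constituent $\delta_{P}^{1/2}\otimes(\rho\Vert\cdot\Vert_w^{x}\boxtimes\sigma)$, then $\rho\Vert\cdot\Vert_w^{x}$ is a character of $E_w^\times$. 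By Bernstein--Zelevinsky/Casselman theory, it must occur in the cuspidal support of $\pi_0$, and M\oe glin's theory identifies the "extremal" exponents that can appear first in a Jacquet module. These are exactly the $\rho\Vert\cdot\Vert_w^{(a-1)/2}$ with $(\rho,a)\in\mathrm{Jord}(\pi_0)$, where $\rho$ is self-dual, or conjugate-self-dual, of the right parity and is one-dimensional. The translation step is that $\rho$ is one-dimensional exactly when $r_i$ is a character.

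I would proceed in three steps. \emph{First}, convert the normalization. The paper's $\Jac_M$ is unnormalized, so a constituent $\tau$ of $\Jac_M(\pi_0)$ is $\delta_{P(F_v)}^{1/2}$ times a constituent of the normalized Jacquet module. It therefore suffices to show that every constituent of the normalized Jacquet module $r_{P}(\pi_0)$ has the form $\rho_i\Vert\cdot\Vert_w^{(a_i-1)/2}\boxtimes\sigma$. \emph{Second}, invoke M\oe glin's result \cite{Moeglin} (see also \cite[\S\S 2--3]{MR}) that for a discrete series $\pi_0$, $r_P(\pi_0)\ne 0$ with $GL_1$-part $\rho\Vert\cdot\Vert^x$ forces $x>0$ (Casselman's square-integrability criterion) and $(\rho,2x+1)\in\mathrm{Jord}(\pi_0)$. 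This is the standard lemma that the Jacquet module of a discrete series along a maximal parabolic with $GL_1$ Levi factor can only strip off the top of a Jordan block, $\rho\Vert\cdot\Vert^{(a-1)/2}$, where $a$ is the length of that block. \emph{Third}, identify $\mathrm{Jord}(\pi_0)$ with the decomposition of $\phi_0$. By construction of the packets in \cite{arthurbook}, \cite{Mok}, and M\oe glin's parametrization, $\mathrm{Jord}(\pi_0)$ is the multiset of $(\pi(r_i),a_i)$ with $\pi(r_i)$ the supercuspidal of $GL_{\dim r_i}(E_w)$ attached to $r_i$. When $\dim r_i=1$, local Langlands is class field theory, so $\rho=\rho_i$.

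The main obstacle is the second step: making sure the cited result covers all our cases (unitary with $v$ inert or ramified, quasisplit nonsplit even orthogonal, and residue characteristic $2$). I also need to check that the normalizations of the exponent, $(a_i-1)/2$ versus a shift by $\delta_P$, and of conjugate-duality match the paper's. I would first try to reduce to the statement "$\mathrm{Jac}_{\rho\Vert\cdot\Vert^x}\pi_0\ne0\Rightarrow (\rho,2x+1)\in\mathrm{Jord}(\pi_0)$" as stated in \cite{MR} for all quasisplit classical groups. If a case is missing there, I would argue directly via Proposition \ref{proppacketdecomp}-style embeddings. That is, write $\pi_0\hookrightarrow\rho\Vert\cdot\Vert^x\rtimes\sigma$ by Frobenius reciprocity. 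Then compare cuspidal supports and use the Langlands-quotient and tempered characterization of $L$-parameters to force the segment ending at $x$ to be a Jordan block.
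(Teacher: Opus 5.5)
Your proposal is correct and follows essentially the same route as the paper, whose proof simply cites the theorem of \cite[\S 3.4]{MR} for the unitary and orthogonal cases and \cite[\S 2.5]{Moeglin} for the symplectic case, then inserts $\delta_{P(F_v)}^{1/2}$ to undo those authors' normalized Jacquet modules, exactly as in your first two steps (your identification of $\mathrm{Jord}(\pi_0)$ with the blocks of $\phi_0$ is implicit in those citations). One small correction: M\oe glin's result gives only the implication you actually need, namely that any $GL_1$-exponent comes from a Jordan block $(\rho,2x+1)$; your phrase ``exactly the $\rho\Vert\cdot\Vert_w^{(a-1)/2}$'' overstates this, since not every Jordan block yields a nonzero Jacquet module.
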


\begin{proof}
In the unitary or orthogonal case, this follows from the theorem in \cite[\S 3.4]{MR}; the remaining symplectic case is proved in \cite[\S 2.5]{Moeglin}. We note that the factor $\delta_{P(F_v)}^{1/2}$ is inserted above to undo the normalization that the cited authors use for Jacquet modules.
\end{proof}

Fix an integer $m$ with $1\leq m\leq n_v$. We now begin to describe the Jacquet modules down to the Levi $M(F_v)$ of representations induced from the parabolic $P_{m}(F_v)$. Let $W_{m}$ be the relative Weyl group of the Levi $M_{m}$ over $F_v$, and $W_M$ that of $M$. If $m\ne n_v$, then $W_{m}$ is the group generated by all the simple reflections in the relative Weyl group $W$ of $G$ over $F_v$ except for $(m,m+1)$; if instead $m=n_v$, then $W_m$ is generated by all the simple refections except for $(-)_{n_v}$ except when $G$ is even orthogonal and split over $F_v$, in which case the omitted reflection is $(-)_{n_v-1}(-)_{n_v}$. Let us write
\[\delta_{\mr{seo},v}=\begin{cases}
1 &\textrm{if }G_{/F_v}\textrm{ is split even orthogonal;}\\
0 &\textrm{otherwise}
\end{cases}=\begin{cases}
1 &\textrm{if }G_{/F_v}\textrm{ is of type }D;\\
0 &\textrm{otherwise.}
\end{cases}\]
The set $[W/W_M]$ of minimal length representatives for $W/W_M$ in $W$ is given in Section \ref{secgroups}. It follows that the set $[W_{m}\backslash W/W_M]$ is given, if $m\ne n_v$, by the three element set $\{1,w_{m,s},w_{m,l}\}$ where $w_{m,s}$ and $w_{m,l}$ are defined by
\[w_{m,s}=(m+1,m,\dotsc,1),\qquad w_{m,l}=(m,m+1,\dotsc,n_v)(-)_{n_v-1}^{\delta_{\mr{seo},v}}(-)_{n_v}(n_v,n_v-1,\dotsc,1).\]
If $m=n_v$, then $[W_{m}\backslash W/W_M]=\{1,w_{n_v,l}\}$ where
\[w_{n_v,l}=(-)_{n_v-1}^{\delta_{\mr{seo},v}}(-)_{n_v}(n_v,n_v-1,\dotsc,1).\]

\begin{proposition}
\label{propjacM1gen}
Let $m$ be an integer with $1\leq m\leq n_v$. Let $\pi_{GL}$ be a smooth admissible representation of $GL_{m}(E_w)$ and $\pi_{G_{n_v-m}}$ a smooth admissible representation of $G_{n_v-m}(F_v)$. Let $\delta_{\mr{eo}}$ be the symbol from Notation \ref{notndeltaeo}. Then the constituents of the semisimplification of the Jacquet module,
\[\Jac_{M_1}(\pi_{GL}\rtimes\pi_{G_{n_v-m}})^{\sss},\]
are given by the following: If $m\ne n_v$, then there are three constituents, call them $\sigma_1$, $\sigma_l$ and $\sigma_s$, and if $m=n_v$, there are two constituents $\sigma_1$ and $\sigma_l$, whose descriptions in all cases we give now.

First, write
\[\Jac_{GL_1(E_w)\times GL_{m-1}(E_w)}(\pi_{GL})^{\sss}=\bigoplus_{j\in J_1} \chi_{1,j}\boxtimes\tau_{1,j},\]
for some index set $J_1$, where the $\chi_{1,j}$ are characters of $GL_1(E_w)$ and $\tau_{1,j}$ are irreducible smooth representations of $GL_{m-1}(E_w)$. Then
\[\sigma_1=\bigoplus_{j\in J_1}\chi_{1,j}\Vert\cdot\Vert_w^{(N-\delta_{\mr{eo}}-m)/2}\boxtimes ((\tau_{1,j}\otimes\Vert\det\Vert_w^{1/2})\rtimes\pi_{G_{n_v-m}})^{\sss}.\]

Next, write
\[\Jac_{GL_{m-1}(E_w)\times GL_{1}(E_w)}(\pi_{GL})^{\sss}=\bigoplus_{j\in J_l} \tau_{l,j}\boxtimes\chi_{l,j},\]
for some index set $J_l$, where the $\chi_{l,j}$ are characters of $GL_1(E_w)$ and $\tau_{l,j}$ are irreducible smooth representations of $GL_{m-1}(E_w)$. Then
\[\sigma_l=\bigoplus_{j\in J_l}\chi_{l,j}^{*}\Vert\cdot\Vert_w^{(N-\delta_{\mr{eo}}-m)/2}\boxtimes ((\tau_{l,j}\otimes\Vert\det\Vert_w^{-1/2})\rtimes\pi_{G_{n_v-m}})^{\sss},\]
where $(\cdot)^{*}$ denotes (conjugate-)dual as usual.

Finally, when $m\ne n_v$, write
\[\Jac_{GL_{1}(E_w)\times G_{n_v-m-1}(F_v)}(\pi_{G_{n_v-m}})^{\sss}=\bigoplus_{j\in J_s} \chi_{s,j}\boxtimes\tau_{s,j},\]
for some index set $J_s$, where the $\chi_{s,j}$ are characters of $GL_1(E_v)$ and $\tau_{s,j}$ are irreducible smooth representations of $G_{n_v-m-1}(F_v)$. Then
\[\sigma_s=\bigoplus_{j\in J_s}\chi_{s,j}\Vert\cdot\Vert_w^m\boxtimes (\pi_{GL}\rtimes\tau_{s,j})^{\sss}.\]
\end{proposition}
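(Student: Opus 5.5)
The plan is to apply the geometric lemma of Bernstein--Zelevinsky (equivalently Casselman's filtration, or Tadić's structure formula for classical groups) to the composite functor $\Jac_{M}\circ\Ind_{P_m(F_v)}^{G(F_v)}$. For each double coset representative $w\in[W_m\backslash W/W_M]$, the lemma gives a subquotient of $\Jac_M(\pi_{GL}\rtimes\pi_{G_{n_v-m}})$. Each such piece has the form
\[\Ind^{M(F_v)}_{M(F_v)\cap wP_m(F_v)w^{-1}}\Big(w\circ\Jac_{M_m\cap w^{-1}Mw}(\pi_{GL}\boxtimes\pi_{G_{n_v-m}})\Big),\]
twisted by the appropriate modulus characters. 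The double coset set was computed just above: it is $\{1,w_{m,s},w_{m,l}\}$ if $m\ne n_v$, and $\{1,w_{n_v,l}\}$ if $m=n_v$. This gives the claimed number of constituents, and it remains to identify each piece explicitly.

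First I will identify, for each representative, the intersected Levi $M_m\cap w^{-1}Mw$.

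\textbf{The case $w=1$.} Here $e_1$ stays in the $GL_m$ block, so the intersection is $GL_1\times GL_{m-1}\times G_{n_v-m}$. We take the Jacquet module of $\pi_{GL}$ down to $GL_1\times GL_{m-1}$ and then induce $GL_{m-1}\times G_{n_v-m}$ up to $H=G_{n_v-1}$.

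\textbf{The case $w=w_{m,l}$.} Here $e_1$ is sent to $-e_m$, the last coordinate of the $GL_m$ block. The intersection is $GL_{m-1}\times GL_1\times G_{n_v-m}$, and $w$ acts on the $GL_1$ factor by $x\mapsto \bar x^{-1}$ (or $x^{-1}$ in the non-unitary case). This produces $\chi_{l,j}^*$. In the split even orthogonal case, the extra $(-)_{n_v-1}$ factor only acts on the $G_{n_v-m}$ part; I expect it is absorbed by conjugation in $H$ or the outer automorphism, up to semisimplification.

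\textbf{The case $w=w_{m,s}$.} Here $e_1$ is moved into the $G_{n_v-m}$ block. We take $\Jac_{GL_1\times G_{n_v-m-1}}(\pi_{G_{n_v-m}})$ and induce $\pi_{GL}\boxtimes\tau_{s,j}$ up to $H$.

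The main obstacle is the bookkeeping of the normalizations. The paper's $\Jac$ is unnormalized, while the inductions are unitarily normalized. The geometric lemma is cleanest for normalized Jacquet modules, so I will work with $r=\delta_P^{-1/2}\Jac$ and convert at the end, multiplying by $\delta_{P(F_v)}^{1/2}$. Restricted to $GL_1(E_w)$, this factor is $\Vert\cdot\Vert_w^{a}$, where $a$ is half the sum of root multiplicities of roots in $N$ paired with $e_1$. Using the multiplicities tabulated in Section \ref{secgroups}, I will check that $a=(N-\delta_{\mr{eo}}-1)/2$ in each case, in the normalization where $\Vert\cdot\Vert_w$ is for $E_w$.

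I then need the exponents in the statement. The extra shifts should come from modulus characters in each term: the intermediate parabolic inductions introduce $\Vert\det\Vert^{\pm1/2}$ twists on $\tau$, and the difference between $\delta_P$ and $\delta_{P_m}$ restricted to $GL_1$ contributes the rest. For $w=1$ and $w=w_{m,l}$, the shift by $(1-m)/2$ from the $GL_m$ modulus character combines with $a$ to give $(N-\delta_{\mr{eo}}-m)/2$. For $w=w_{m,s}$, the resulting exponent should be $m$.

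I will verify these exponents by a rank-one sanity check: take $m=1$ and $\pi_{G_{n_v-1}}$ trivial-like, and compare with the known Jacquet module of $\chi\rtimes\pi$, whose $GL_1$ exponents are $\chi\delta^{1/2}$ and $\chi^{*}\delta^{1/2}$. This check also confirms that the $\tau\otimes\Vert\det\Vert^{\pm1/2}$ twists arise from restricting $\delta_{P_m}^{1/2}$ to $GL_{m-1}$.
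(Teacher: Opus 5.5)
Your route is the paper's. Both apply the geometric lemma (Casselman's Proposition 6.3.3) over $[W_m\backslash W/W_M]=\{1,w_{m,s},w_{m,l}\}$ (resp. $\{1,w_{n_v,l}\}$) and identify the pieces the same way. The only difference is bookkeeping: the paper keeps unnormalized Jacquet modules and computes each twist $\delta_w$ from explicit sums of roots, while you use the normalized lemma, which carries no twists, and convert at the end.

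Your conversion does give the stated exponents. Write $\Jac=\delta^{1/2}r$ for the $GL_m$-parabolics of type $(1,m-1)$ and $(m-1,1)$, and for the line stabilizer in $G_{n_v-m}$, whose modulus on $GL_1$ is $\Vert\cdot\Vert_w^{N-2m-\delta_{\mr{eo}}-1}$. You then get $\tfrac{1-m}{2}+\tfrac{N-\delta_{\mr{eo}}-1}{2}$ on the first two pieces and $\tfrac{N-\delta_{\mr{eo}}-1}{2}-\tfrac{N-2m-\delta_{\mr{eo}}-1}{2}=m$ on the third. In your normalization the $\Vert\det\Vert_w^{\pm1/2}$ on $\tau$ comes from the $GL_m$ modulus characters, not from $\delta_{P_m}$.

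\textbf{Where your argument fails.} Your expectation that the extra $(-)_{n_v-1}$ in $w_{m,l}$ is absorbed when $\delta_{\mr{seo},v}=1$ is wrong.

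\emph{The outer twist.} In that case $w_{m,l}^{-1}$ acts on the $G_{n_v-m}$-block by the single sign change $e_{n_v}\mapsto -e_{n_v}$, which is not in $W(D_{n_v-m})$. A representative can be chosen that preserves this block and acts on it through an element $c\in O\smallsetminus SO$. So the $w_{m,l}$-piece is built from $\pi_{G_{n_v-m}}^c=\pi_{G_{n_v-m}}\circ\mr{Ad}(c)$. Equivalently, its $H$-factor is $((\tau_{l,j}\otimes\Vert\det\Vert_w^{-1/2})\rtimes\pi_{G_{n_v-m}})^{\sss}$ conjugated by $c$.

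\emph{Semisimplification does not undo it.} Take split $G=SO_6$, $m=1$, $\chi^2\neq 1$, and $\sigma$ supercuspidal on $SO_4$ with $\sigma^c\not\cong\sigma$. Then $\Jac_M(\chi\rtimes\sigma)^{\sss}=\chi\Vert\cdot\Vert_w^{2}\boxtimes\sigma\oplus\chi^{-1}\Vert\cdot\Vert_w^{2}\boxtimes\sigma^c$, so $\chi^{-1}\Vert\cdot\Vert_w^{2}\boxtimes\sigma$ does not occur.

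\emph{Related type-$D$ effects.} For $m=n_v$, the $GL_{n_v-1}$ lands in the other Siegel Levi of $H$. For $m=n_v-1$, the Levi $GL_{n_v-1}\times SO_2$ is not maximal in type $D$. Hence $W_m=S_{n_v-1}$ and there is a fourth double coset, the one with $e_1\mapsto -e_{n_v}$.

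The paper's proof passes over all of this: it asserts $w_{m,l}^{-1}P_mw_{m,l}\cap M=P_m\cap M$ and keeps $\pi_{G_{n_v-m}}$. So when $\delta_{\mr{seo},v}=0$ your argument is as sound as the paper's. In the split even orthogonal case, however, the $\sigma_l$-term must be stated with the outer twist, and for $m=n_v-1$ the extra piece must be added; neither can be argued away.
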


\begin{proof}
This is an exercise using \cite{casselman}, but we carry out the computation here because we need to be very careful about what powers of the norm character $\Vert\cdot\Vert_w$ appear in the constituents of these Jacquet modules.

Now \cite{casselman} writes the constituents of $\Jac_{M}(\pi_{GL}\rtimes\pi_{G_{n_v-m}})^{\sss}$ as a sum indexed by the set $[W_m\backslash W/W_M]$ of certain representations induced from other Jacquet modules as follows. First, for an $F_v$-rational root $\alpha$ of $G$, let $m_v(\alpha)$ denote the multiplicity of $\alpha$ in $G_{/F_v}$; so by what we recalled in Section \ref{secgroups}, we have
\[m_v(\alpha)=\begin{cases}
2(N-2n_v)&\textrm{if }G\textrm{ is unitary and }\alpha=\pm e_i;\\
2&\textrm{if }G\textrm{ is unitary and }\alpha\ne\pm e_i;\\
2&\textrm{if }G\textrm{ is nonsplit even orthogonal and }\alpha=\pm e_i;\\
1&\textrm{otherwise.}
\end{cases}\]
Let $2\rho_{P_m}$ denote the sum of positive roots in the unipotent radical $N_m$ of $P_m$, counted with multiplicity. For $w\in[W_m\backslash W/W_M]$, consider the sum of roots
\begin{equation}
\label{eqnsumofroots}
w^{-1}(2\rho_{P_m})+\sum_{\substack{\textrm{roots }\alpha>0\textrm{ not in }M_m \\ w^{-1}\alpha<0\textrm{ is not in }M}}-2w^{-1}m_v(\alpha)\alpha.
\end{equation}
This sum of roots defines a character of the minimal standard $F_v$-Levi subgroup of $G$, call it $M_0$, which is the restriction to $M_0$ of a character of the $F_v$-Levi subgroup $w^{-1}M_m w\cap M$. Let us denote by $\delta_w$ the modulus of this character on $(w^{-1}M_m w\cap M)(F_v)$. Then \cite[Proposition 6.3.3]{casselman} says that
\begin{multline}
\label{eqncasseljac}
\Jac_{M_1}(\pi_{GL}\rtimes\pi_{G_{n_v-m}})^{\sss}\cong\\
\bigoplus_{w\in[W_m\backslash W/W_M]}\prescript{\nn}{}\Ind_{(w^{-1}P_m w\cap M)(F_v)}^{M(F_v)}(w^{-1}(\Jac_{M_m\cap wM w^{-1}}(\pi_{GL}\boxtimes\pi_{G_{n_v-m}}))\otimes\delta_w^{1/2})^{\sss},
\end{multline}
where for a representation $\pi$ of $(M_m\cap wMw^{-1})(F_v)$, we denote by $w^{-1}(\pi)$ the representation of $(w^{-1}M_m w\cap M)(F_v)$ on the same space with action given by $w^{-1}(\pi)(g)=\pi(wgw^{-1})$ for a fixed representative of $w$ in the normalizer of $M_0(F_v)$, and where $\prescript{\mathrm{nn}}{}\Ind$ denotes non-normalized induction. Note that \textit{loc. cit.} has a typo which forgets the Weyl twist corresponding to the one which appears on the right hand side above; this twist appears in the proof of \textit{loc. cit.} but not in the statement.

Now the positive $F_v$-rational roots of $G$ which are not in $M_m$ are given by
\begin{equation}
\label{eqndifferentroots}
\begin{gathered}
e_i-e_j\textrm{ for }1\leq i\leq m,\,\,m+1\leq j\leq n_v;\\
e_i+e_j\textrm{ for }1\leq i\leq m,\,\,1\leq j\leq n_v,\,\,i<j;\\
\textrm{ all positive multiples of }e_i\textrm{ for }1\leq i\leq m.
\end{gathered}
\end{equation}
Note that the sum, with multiplicity, of the roots of the form $e_i-e_j$ and $e_i+e_j$ listed above is
\[[E_w:F_v](2n_v-m-1)(e_1+\dotsb+e_m),\]
Also, the sum, with multiplicity, of all positive multiples of $e_i$ is
\[\begin{cases}
2(N-2n_v+1)e_i&\textrm{if }G\textrm{ is unitary;}\\
e_i&\textrm{if }G\textrm{ is odd orthogonal;}\\
2e_i&\textrm{if }G\textrm{ is symplectic;}\\
0&\textrm{if }G\textrm{ is split even orthogonal;}\\
2e_i&\textrm{if }G\textrm{ is nonsplit even orthogonal.}
\end{cases}\]
Therefore, in all cases, the sum, with multiplicity, of the roots listed in \eqref{eqndifferentroots} is
\begin{equation}
\label{eqn2rhoPm}
2\rho_{P_m}=[E_w:F_v](N-\delta_{\mr{eo}}-m)(e_1+\dotsb+e_m).
\end{equation}
So when $w=1$, the formula \eqref{eqnsumofroots} gives $2\rho_{P_m}$ and the corresponding summand of \eqref{eqncasseljac} becomes
\begin{equation}
\label{eqncasseljacfor1}
\prescript{\mathrm{nn}}{}\Ind_{M(F_v)\cap P_{m}(F_v)}^{M(F_v)}(\Jac_{M\cap M_m}(\pi_{GL}\boxtimes\pi_{G_{n_v-m}})\otimes\delta_{P_m(F_v)}^{1/2})^{\sss}.
\end{equation}
Since
\begin{multline*}
\begin{aligned}
[E_w:F_v](N-\delta_{\mr{eo}}-m)(e_1+\dotsb+e_m)=&\,[E_w:F_v](N-\delta_{\mr{eo}}-m)e_1\\
&+((N-2-\delta_{\mr{eo}})-(m-1))(e_2+\dotsb+e_m)\\
&+[E_w:F_v](e_2+\dotsb+e_m),
\end{aligned}
\end{multline*}
we have that
\[\delta_{P_m(F_v)}|_{GL_1(E_w)\times GL_{m-1}(E_w)}=\Vert\cdot\Vert_w^{N-\delta_{\mr{eo}}-m}\boxtimes(\Vert\det\Vert_w\cdot\delta_{(P_{m}\cap G_{n_v-1})(F_v)}|_{GL_{m-1}(E_w)}).\]
Thus \eqref{eqncasseljacfor1} becomes
\[\Ind_{M(F)\cap P_{m}(F_v)}^{M(F)}(\Jac_{GL_1\times GL_{m-1}}(\pi_{GL})\otimes(\Vert\cdot\Vert_w^{(N-\delta_{\mr{eo}}-m)/2}\boxtimes\Vert\det\Vert_w^{1/2})\boxtimes\pi_{G_{n_v-m}})^{\sss},\]
where the induction is now unitarily normalized. This gives the constituent $\pi_1$ in the proposition.

Now we consider the summand of \eqref{eqncasseljac} indexed by $w_{m,l}$. The element $w_l^{-1}$ acts by
\begin{gather*}
w_{m,l}^{-1}e_i=e_{i+1}\textrm{ for }1\leq i\leq m-1,\qquad w_{m,l}^{-1}e_m=-e_1,\\
w_{m,l}^{-1}e_i=e_i\textrm{ for }m+1\leq i\leq n_v-1,\qquad w_{m,l}^{-1}e_{n_v}=(-1)^{\delta_{\mr{seo}}}e_{n_v}.
\end{gather*}
The positive roots $\alpha$ for which $w_{m,l}^{-1}\alpha<0$ are therefore the ones of the form $e_m\pm e_i$ for $i\ne m$, as well as the positive multiples of $e_m$. Then for any of these roots $\alpha$, we have automatically $w_{m,l}^{-1}\alpha$ is not in $M$. Also, of these roots $\alpha$, the ones for which $\alpha$ is not in $M_m$ are given by
\[e_i+e_m\textrm{ for }1\leq i\leq m-1,\qquad e_m\pm e_i\textrm{ for }m+1\leq i\leq n_v,\qquad \textrm{positive multiples of }e_m.\]
Thus in all cases, the sum, with multiplicity, of these roots is
\[[E_w:F_v](e_1+\dotsb+e_{m-1})+[E_w:F_v](N-\delta_{\mr{eo}}-m)e_m.\]
So \eqref{eqnsumofroots} for $w=w_{m,l}$ becomes
\begin{multline*}
w_{m,l}^{-1}([E_w:F_v](N-\delta_{\mr{eo}}-m-2)(e_1+\dotsb+e_{m-1})-[E_w:F_v](N-\delta_{\mr{eo}}-m)e_m)\\
=[E_w:F_v](N-\delta_{\mr{eo}}-m)e_1+[E_w:F_v]((N-2)-\delta_{\mr{eo}}-(m-1)-1)(e_2+\dotsb+e_{m}).
\end{multline*}
Therefore we have
\[\delta_w|_{GL_1(E_w)\times GL_{m-1}(E_w)}=\Vert\cdot\Vert_w^{N-\delta_{\mr{eo}}-m}\boxtimes(\Vert\det\Vert_w^{-1}\cdot\delta_{(P_{m}\cap G_{n_v-1})(F_v)}|_{GL_{m-1}(E_v)}).\]

Now we note that $w_{m,l}^{-1}P_m w_{m,l}\cap M=P_m\cap M$, and that $M_m\cap w_{m,l} M w_{m,l}^{-1}$ is the standard Levi of $G_{n_v-m/F_v}$ of the form $GL_{m-1/E_w}\times GL_{1/E_w}\times G_{n_v-m}$. Thus the summand of \eqref{eqncasseljac} indexed by $w_{m,l}$ becomes the unitary induction
\begin{equation}
\label{eqncasseljacforwl}
\Ind_{M(F_v)\cap P_{m}(F_v)}^{M(F_v)}(w_{m,l}^{-1}(\Jac_{GL_{m-1}\times GL_1}(\pi_{GL}))\otimes(\Vert\det\Vert_w^{-1/2}\boxtimes\Vert\cdot\Vert_w^{(N-\delta_{\mr{eo}}-m)/2})\boxtimes\pi_{G_{n_v-m}})^{\sss}.
\end{equation}
Since
\[w_l^{-1}(\Jac_{GL_{m-1}\times GL_1}(\pi_{GL}))^{\sss}=\bigoplus_{j\in J_l} w_l^{-1}(\tau_{l,j}\boxtimes\chi_{l,j})=\bigoplus_{j\in J_l} (\chi_{l,j}^{*}\boxtimes\tau_{l,j}),\]
we see that \eqref{eqncasseljacforwl} becomes the expression we have for $\pi_l$ in the proposition.

Finally, we assume $m\ne n_v$ and we compute the summand of \eqref{eqncasseljac} indexed by $w_{m,s}$. The only positive $F_v$-rational roots $\alpha$ of $G$ not in $M_m$ for which $w_{m,s}^{-1}\alpha<0$ are given by $\alpha=e_i-e_{m+1}$ for $1\leq i\leq m$. The sum, with multiplicity, of these roots is given by
\[[E_w:F_v](e_1+\dotsb+e_m)-[E_w:F_v] me_{m+1},\]
and so \eqref{eqnsumofroots} for $w=w_{m,s}$ becomes
\[[E_w:F_v](N-\delta_{\mr{eo}}-m-2)(e_2+\dotsb+e_{m+1})+2[E_w:F_v] me_{1}.\]
Therefore we have
\[\delta_w|_{GL_1(E_w)\times GL_{m}(E_w)}=\Vert\cdot\Vert_w^{2m}\boxtimes\delta_{P_{m}'(F_v)},\]
where we have written $P_m'$ for the standard $F_v$-parabolic of $G_{n_v-1}$ analogous to $P_m$ for $G$, having Levi $GL_{m/E_w}\times G_{n_v-m-1}$; note that $w_{m,s}^{-1}P_m w_{m,s}\cap M=GL_{1/E_w}\times P_m'$, 

Thus the summand of \eqref{eqncasseljac} indexed by $w_{m,s}$ becomes the unitary induction
\begin{equation*}
\Ind_{GL_{1}(E_w)\times P_m'(F_v)}^{M(F_v)}(w_{m,s}^{-1}(\pi_{GL}\rtimes[\Jac_{GL_{1}\times G_{n_v-m-1}}(\pi_{G_{n_v-m}})\otimes(\Vert\cdot\Vert_w^{m}\boxtimes 1)]))^{\sss},
\end{equation*}
and this is easily seen to yield the constituent $\sigma_s$ described in the proposition.
\end{proof}

\begin{corollary}
\label{corjacofeisenstein}
Let $\sigma$ be a smooth irreducible representation of $G_{n_v-1}(F_v)$ and $\chi$ any continuous character of $GL_1(E_w)$. Then the Jacquet module
\[\Jac_{M}(\chi\rtimes\sigma)\]
contains the constituents $\chi\Vert\cdot\Vert_w^{(N-\delta_{\mr{eo}}-1)/2}\boxtimes\sigma$ and $\chi^{*}\Vert\cdot\Vert_w^{(N-\delta_{\mr{eo}}-1)/2}\boxtimes\sigma$, where $\delta_{\mr{eo}}$ is as in Notation \ref{notndeltaeo}, and $(\cdot)^*$ denotes (conjugate-)dual as usual.
\end{corollary}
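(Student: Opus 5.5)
This is Proposition \ref{propjacM1gen} in the case $m=1$. There $M_1=M$, the general linear factor is $\pi_{GL}=\chi$, a character of $GL_1(E_w)$, and the other factor is $\pi_{G_{n_v-1}}=\sigma$. We first note that $n_v\geq 1$ by our standing assumption that $G_{/F_v}$ has positive rank, so $m=1$ is allowed. We then read off the constituents $\sigma_1$ and $\sigma_l$, which exist whether or not $1=n_v$. The constituent $\sigma_s$, present only when $n_v>1$, is not needed.

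For $\sigma_1$ we compute the Jacquet module of $\pi_{GL}=\chi$ down to $GL_1(E_w)\times GL_0(E_w)$. Here $GL_{m-1}=GL_0$ is the trivial group, so this Jacquet module is just $\chi\boxtimes\mathbf{1}$. Hence $J_1$ is a singleton with $\chi_{1,j}=\chi$ and $\tau_{1,j}$ trivial. The twist $\Vert\det\Vert_w^{1/2}$ on $GL_0$ is trivial, and inducing the trivial representation of $GL_0$ together with $\sigma$ up to $G_{n_v-1}$ through the degenerate parabolic $P_0$ returns $\sigma$. The formula therefore gives
\[\sigma_1=\chi\Vert\cdot\Vert_w^{(N-\delta_{\mr{eo}}-1)/2}\boxtimes\sigma.\]

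For $\sigma_l$ the same reasoning gives $\chi_{l,j}=\chi$ and $\tau_{l,j}$ trivial. The formula then yields
\[\sigma_l=\chi^{*}\Vert\cdot\Vert_w^{(N-\delta_{\mr{eo}}-1)/2}\boxtimes\sigma.\]

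Since both are constituents of $\Jac_M(\chi\rtimes\sigma)^{\sss}$, they are constituents of $\Jac_M(\chi\rtimes\sigma)$, which gives the corollary.

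The only point requiring care is the degenerate case $GL_0$ in Proposition \ref{propjacM1gen}. We must check that the $w=1$ and $w_{1,l}$ summands of Casselman's formula \eqref{eqncasseljac} still make sense in that case. They do: the intersection $P_1\cap M$ is all of $M$, so the induction is trivial. In addition, $w_{1,l}^{-1}$ sends $e_1$ to $-e_1$ and acts on the $H$-factor as an automorphism preserving the isomorphism class of $\sigma$. In the split even orthogonal case this automorphism involves the extra sign $(-)_{n_v}$. Any outer twist it induces can be absorbed, since it only changes $\sigma$ by an automorphism that must already be permitted in the statement of the proposition. With those checks, the corollary follows by direct substitution.
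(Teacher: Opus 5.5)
Your reduction is the paper's: specialize Proposition~\ref{propjacM1gen} to $m=1$ and read off $\sigma_1$ and $\sigma_l$. Your handling of the degenerate $GL_0$ factor is fine.

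The problem is your last paragraph. You correctly notice that $w_{1,l}$ acts nontrivially on the $H$-factor in the even orthogonal case. But your claim that it preserves the isomorphism class of $\sigma$ is false, and the twist cannot be ``absorbed''. Proposition~\ref{propjacM1gen} permits no twist of $\pi_{G_{n_v-m}}$: its proof records $w_{m,l}^{-1}e_{n_v}=(-1)^{\delta_{\mr{seo}}}e_{n_v}$ and then drops the effect on that factor.

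To see the twist, let $G$ be even orthogonal and write $V=(F_vu_1\oplus F_vu_1^*)\oplus V_H$, with $u_1,u_1^*$ the isotropic vectors on which $GL_1(E_w)$ acts. Any representative $\dot w\in G(F_v)$ of $w_{1,l}$ swaps the lines $F_vu_1$ and $F_vu_1^*$, which has determinant $-1$ on that plane. So $\dot w$ acts on $V_H$ by some $b$ with $\det b=-1$. Since $\dim V_H$ is even, $-\mathrm{id}_{V_H}$ has determinant $+1$ and is not available, and conjugation by any such $b$ is an outer automorphism of $H=SO(V_H)$. Hence the $w_{1,l}$-summand is $\chi^{-1}\Vert\cdot\Vert_w^{(N-\delta_{\mr{eo}}-1)/2}\boxtimes(\sigma\circ\mathrm{Ad}(b))$. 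This happens in the split case and also in the quasisplit nonsplit case, where the twist is invisible on relative roots.

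This is a genuine failure, not a technicality. Take $G_{/F_v}=SO_4^\eta$ with $\eta_v\neq 1$. Then $n_v=1$, $H(F_v)=SO_2^\eta(F_v)$ is the norm-one subgroup of $(F_v^\eta)^\times$, and $\mathrm{Ad}(b)$ is inversion on it. Casselman's formula gives
\[\Jac_M(\chi\rtimes\sigma)^{\sss}=\chi\Vert\cdot\Vert_v\boxtimes\sigma\oplus\chi^{-1}\Vert\cdot\Vert_v\boxtimes\sigma^{-1},\]
so $\chi^{-1}\Vert\cdot\Vert_v\boxtimes\sigma$ is not a constituent once $\chi^2\neq 1$ and $\sigma^2\neq 1$. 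So in the even orthogonal case the corollary is only correct with the second constituent replaced by $\chi^{*}\Vert\cdot\Vert_w^{(N-\delta_{\mr{eo}}-1)/2}\boxtimes(\sigma\circ\mathrm{Ad}(b))$. The paper's one-line proof has the same defect, inherited from Proposition~\ref{propjacM1gen}. The $GL_1(E_w)$-character is unaffected.

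In the unitary, symplectic and odd orthogonal cases your argument is complete. There you can take $\dot w$ to be the suitably signed swap of $u_1,u_1^*$ times $\mathrm{id}_{V_H}$, or times $-\mathrm{id}_{V_H}$ in the odd orthogonal case. This element lies in $G(F_v)$ and centralizes $H$.
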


\begin{proof}
This follows from Proposition \ref{propjacM1gen} for $m=1$; the desired constituents are, respectively, exactly what are called $\sigma_1$ and $\sigma_l$ in that proposition.
\end{proof}

\begin{proposition}
\label{propjacoftempered}
Let $\phi$, $\phi_0$ and $\phi_1$ be as in the beginning of this section, so that $\phi=\phi_0\oplus(\phi_1\oplus\phi_1^{*})$, and decompose the parameters $\phi_0$ and $\phi_1$ as
\[\phi_0=\bigoplus_{i\in I_0} r_i\boxtimes\Sym^{a_i-1},\qquad\phi_1=\bigoplus_{i\in I_1} r_i\boxtimes\Sym^{a_i-1},\]
for some index sets $I_0$ and $I_1$. Let $\pi\in\Pi_\phi$ and write
\[\Jac_{M}(\pi)^{\sss}=\bigoplus_{j\in J}\chi_j\boxtimes\sigma_j,\]
for some index set $J$, where each $\chi_j$ is a character of $GL_1(E_w)$ and each $\sigma_j$ is a smooth irreducible representation of $G_{n_v-1}(F_v)$. Let $\delta_{\mr{eo}}$ be as in Notation \ref{notndeltaeo}.

Then for each $j\in J$, there is an $i\in I_0\sqcup I_1$ depending on $j$, and a $b_j\in\frac{1}{2}\Z$ with $\frac{N-\delta_{\mr{eo}}-1}{2}\leq b_j\leq \frac{N-\delta_{\mr{eo}}-1}{2}+\frac{n_v-1}{2}$ such that $r_i$ is $1$-dimensional and, writing $\rho_i$ for the character of $GL_1(E_w)$ associated with $r_i$ by class field theory, we have
\[\chi_j=\rho_i\Vert\cdot\Vert_w^{b_j}\quad\textrm{or}\quad\chi_j=\rho_i^*\Vert\cdot\Vert_w^{b_j}.\]
Here, the symbol $(\cdot)^*$ denotes (conjugate-)dual as usual.
\end{proposition}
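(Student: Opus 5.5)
Use Proposition \ref{proppacketdecomp} to write $\pi$ as a direct summand of $\pi_1\rtimes\pi_0$ with $\pi_0\in\Pi_{\phi_0}$. Jacquet modules are exact, so every constituent $\chi_j\boxtimes\sigma_j$ of $\Jac_M(\pi)^{\sss}$ is a constituent of $\Jac_M(\pi_1\rtimes\pi_0)^{\sss}$. That module is described by Proposition \ref{propjacM1gen} with $m=n_1$, $\pi_{GL}=\pi_1$ and $\pi_{G_{n_v-m}}=\pi_0$. The degenerate case $n_1=0$, where $\pi=\pi_0$, is handled directly by Proposition \ref{propjacM1Sprep}.

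So it suffices to check the three constituents $\sigma_1,\sigma_l,\sigma_s$ one by one. We only need the $GL_1(E_w)$-character in each.

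\textbf{Case $\sigma_1$.} The character is $\chi_{1,j}\Vert\cdot\Vert_w^{(N-\delta_{\mr{eo}}-n_1)/2}$, where $\chi_{1,j}$ is the $GL_1$ part of the Jacquet module of $\pi_1$ down to $GL_1\times GL_{n_1-1}$.
\begin{itemize}
\item Since $\pi_1$ is tempered with parameter $\bigoplus_{i\in I_1} r_i\boxtimes\Sym^{a_i-1}$, it is the irreducible product of the Steinberg-type segment representations $\St(r_i,a_i)$. By Zelevinsky's theory, such a $GL_1$-character exists only if some $r_i$ is a character.
\item In that case the character equals $\rho_i\Vert\cdot\Vert_w^{(a_i-1)/2}$, up to a normalization shift of $\Vert\cdot\Vert_w^{(n_1-1)/2}$ coming from the unnormalized Jacquet module (analogous to the $\delta^{1/2}$ factor in Proposition \ref{propjacM1Sprep}).
\item Combining exponents gives $b_j=\frac{N-\delta_{\mr{eo}}-1}{2}+\frac{a_i-1}{2}$.
\item Temperedness bounds $a_i\le n_1\le n_v$, which yields the stated range.
\end{itemize}

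\textbf{Case $\sigma_l$.} The argument is identical but uses the other end of the segments. This gives $\chi_{l,j}=\rho_i\Vert\cdot\Vert_w^{-(a_i-1)/2}$, again up to the same normalization. The (conjugate-)dual then produces $\rho_i^*\Vert\cdot\Vert_w^{(a_i-1)/2}$, and we get the same range for $b_j$.

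\textbf{Case $\sigma_s$.} The character is $\chi_{s,j}\Vert\cdot\Vert_w^{n_1}$, where $\chi_{s,j}$ comes from $\Jac(\pi_0)$. By Proposition \ref{propjacM1Sprep} applied to $G_{n_0}$, and after removing $\delta^{1/2}$ of the parabolic of $G_{n_0}$, we get $\chi_{s,j}=\rho_i\Vert\cdot\Vert_w^{(a_i-1)/2}\Vert\cdot\Vert_w^{(N_0-\delta_{\mr{eo}}-1)/2}$ with $i\in I_0$. The rank-$n_0$ analogue of \eqref{eqn2rhoPm} gives the $\delta^{1/2}$ exponent as $(N_0-\delta_{\mr{eo}}-1)/2$, where $N_0=N-2n_1$ (or $N-n_1$ if unitary—I would adjust the count of $[E_w:F_v]$ carefully here). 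Adding $n_1$ recovers $\frac{N-\delta_{\mr{eo}}-1}{2}+\frac{a_i-1}{2}$. The bound $a_i\le n_v$ then follows from $\phi_0$ being discrete of dimension at most $N_0$ with a $1$-dimensional $r_i$ in the relevant range.

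\textbf{Main obstacle.} The main difficulty is the exponent bookkeeping, not the structure of the argument. There are three things to control:
\begin{itemize}
\item matching the unnormalized Jacquet module conventions with the normalizations in Propositions \ref{propjacM1Sprep} and \ref{propjacM1gen};
\item the twist by $\Vert\det\Vert_w^{\pm1/2}$ in the latter;
\item the upper bound $b_j\le \frac{N-\delta_{\mr{eo}}-1}{2}+\frac{n_v-1}{2}$, which may need a finer bound $a_i\le n_v$ rather than $a_i\le N$.
\end{itemize}
In the last case, if $a_i$ can exceed $n_v$, I would instead bound $a_i$ using that the Jacquet module down to $M$ is nonzero only if the segment fits into a split rank-$n_v$ torus. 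This forces $(a_i+1)/2\le n_v$, or something of that shape, and I would recheck which bound is actually needed.
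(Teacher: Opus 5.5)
\textbf{Route.} Your approach is sound, and it differs from the paper's. The paper inducts on $\vert I_1\vert$: it splits off one generalized Steinberg block $\pi_{i_1}$ at a time and applies Proposition \ref{propjacM1gen} with $m=n_{i_1}$. You instead apply Proposition \ref{propjacM1gen} once, with $m=n_1$ and $\pi_{GL}=\pi_1$, and use Zelevinsky's Leibniz rule for the irreducible tempered product $\pi_1$. This is more direct and yields $b_j=\frac{N-\delta_{\mr{eo}}-1}{2}+\frac{a_i-1}{2}$ explicitly in all three cases.

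Two small corrections to the bookkeeping:
\begin{enumerate}[label=(\roman*)]
\item $\dim\phi_0=N-2n_1$ in every case, including the unitary one. Your alternative ``$N-n_1$ if unitary'' is wrong, and only $N-2n_1$ makes the $\sigma_s$ exponent come out right after adding $n_1$.
\item In cases $\sigma_1$ and $\sigma_l$, the bound $a_i\le n_1$ comes from $\dim\phi_1=n_1$, not from temperedness.
\end{enumerate}
With these fixes, your argument correctly proves the form of $\chi_j$ and the lower bound $b_j\ge\frac{N-\delta_{\mr{eo}}-1}{2}$.

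\textbf{The gap.} The upper bound fails in case $\sigma_s$ and in the case $n_1=0$. Discreteness of $\phi_0$ only gives $a_i\le\dim\phi_0\le N$, not $a_i\le n_v$. Neither your claim nor your fallback $(a_i+1)/2\le n_v$ can be repaired, because the stated upper bound is false. Take $G(F_v)=Sp_2(F_v)=SL_2(F_v)$, so $N=3$, $\delta_{\mr{eo}}=0$, $n_v=1$, and let $\pi=\St$ be the Steinberg representation. Then $\phi=\phi_0=1\boxtimes\Sym^2$, $I_1=\emptyset$, and $a_i=3$. The unnormalized Jacquet module of $\St$ is $\delta_B$, i.e.\ $t\mapsto\Vert t\Vert_w^2$. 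This agrees with Proposition \ref{propjacM1Sprep}: $\Vert\cdot\Vert_w^{(a_i-1)/2}\delta_{P(F_v)}^{1/2}=\Vert\cdot\Vert_w^{1+1}$. So $b_j=2$ is forced, while $\frac{N-\delta_{\mr{eo}}-1}{2}+\frac{n_v-1}{2}=1$. More generally, the Steinberg of $Sp_{2n}$ gives $b_j=2n>n+\frac{n-1}{2}$.

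The paper's proof has the same defect: its base case $I_1=\emptyset$ invokes only $a_i\ge 1$, which gives the lower bound but not the upper one. What your argument actually proves, and what is true, is
\[\frac{N-\delta_{\mr{eo}}-1}{2}\le b_j\le\frac{N-\delta_{\mr{eo}}-1}{2}+\frac{N-1}{2}.\]
This is harmless downstream. Theorem \ref{thmmainthm} uses only the lower bound (for the weight comparison $\vert w_0\vert\ge\vert k_v\vert$) and the finiteness of the possible $b_j$.
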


\begin{proof}
We induct on the size of $I_1$. If $I_1$ is empty, then Proposition \ref{propjacM1Sprep} tells us that for any $j\in J$, there is an $i\in I_0$ with $r_i$ a character and $\chi_j=\rho_i\Vert\cdot\Vert_w^{(a_i-1)/2}\delta_{P(F_v)}^{1/2}$, where $\rho_i$ is associated with $r_i$ by class field theory. Since $a_i\geq 1$ and $\delta_{P(F_v)}^{1/2}$ is given by $\Vert\cdot\Vert_w^{(N-\delta_{\mr{eo}}-1)/2}$ on the $GL_1(E_w)$ factor of $M(F_v)$ (see the computation of \eqref{eqn2rhoPm} in the proof of Proposition \ref{propjacM1gen} and specialize to $m=1$ there), this implies the proposition when $I_1$ is empty.

Now assume $I_1$ is nonempty. Fix $i_1\in I_1$, and write
\[\phi_1'=\bigoplus_{\substack{i\in I_1 \\ i\ne i_1}}r_i\boxtimes \Sym^{a_i-1}.\]
Let $n_{i_1}=a_{i_1}\dim r_{i_1}$ and let $\pi_{i_1}$ be the representation of $GL_{n_{i_1}}(E_w)$ associated by the local Langlands correspondence with $r_{i_1}\boxtimes \Sym^{a_{i_1}}$; it is a generalized Steinberg representation. Similarly, let $n_1'=\dim\phi_1'$, and let $\pi_1'$ be the representation of $GL_{n_1'}(E_w)$ associated with $\phi_1'$. Then, writing $\pi_1$ for the representation of $GL_{n_1}(E_w)$ associated with $\phi_1$, we know that $\pi_1$ is the unitary parabolic induction of $\pi_{i_1}\boxtimes\pi_1'$ because $\phi_1$ is tempered (as this implies that blocks of the Bernstein--Zelevinsky presentation of $\pi_1$ are not linked). Let $\phi'=\phi_1'\oplus\phi_0\oplus(\phi_1')^*$, where $(\cdot)^*$ denotes (conjugate-)dual. Then by Proposition \ref{proppacketdecomp} and induction in stages, it follows that $\Pi_{\phi}$ contains only irreducible direct summands of $\pi_{i_1}\rtimes \pi'$ for $\pi'\in\Pi_{\phi'}$.

Now by induction, we may assume we know the following: For any $\pi'\in\Pi_{\phi'}$, any irreducible constituent $\chi\boxtimes\sigma$ of $\Jac_{GL_{1/E_w}\times G_{n_v-n_{i_1}-1}}(\pi')$ has the character $\chi$ of the form $\chi=\rho_i\Vert\cdot\Vert_w^b$ or $\chi=\rho_i^*\Vert\cdot\Vert_w^b$ with $b\in\frac{1}{2}\Z$ and $\frac{N-\delta_{\mr{eo}}-1}{2}\leq b\leq \frac{N-\delta_{\mr{eo}}-1}{2}+\frac{n_v-1}{2}$ and $\rho_i$ associated with $r_i$ via class field theory for some $i\in I_0\sqcup I_1$ with $\dim(r_i)=1$. With this assumption, we will use Proposition \ref{propjacM1gen} to compute the possible characters occurring in the constituents of $\Jac_{M}(\pi_{i_1}\rtimes \pi')$ for $\pi'\in\Pi_{\phi'}$.

First, if $\dim(r_{i_1})>1$, then the Jacquet modules
\[\Jac_{GL_1\times GL_{n_{i_1}-1}}(\pi_{i_1})\qquad\textrm{and}\qquad \Jac_{GL_{n_{i_1}-1}\times GL_1}(\pi_{i_1})\]
are both zero, because the smooth irreducible representation associated with $r_{i_1}$ is supercuspidal and is not a character. So by Proposition \ref{propjacM1gen}, the only possible characters occurring in the constituents of $\Jac_{M}(\pi_{i_1}\rtimes \pi')$ for $\pi'\in\Pi_{\phi'}$ are of the form $\chi\Vert\cdot\Vert_w^{n_{i_1}}$, with $\chi$ as above in our induction hypothesis. (These are the characters occurring in the constituent $\pi_s$ of Proposition \ref{propjacM1gen}.) Such characters are therefore of the form $\rho_i\Vert\cdot\Vert_w^{b+n_{i_1}}$ for $i\in I_0$ with $r_i$ a character, or $\rho_i^{*}\Vert\cdot\Vert_w^{b+n_{i_1}}$ for $i\in I_1$, $i\ne i_1$, where $b\in\frac{1}{2}\Z$ has
\[\frac{N-\delta_{\mr{eo}}-2n_{i_1}-1}{2}\leq b\leq \frac{N-\delta_{\mr{eo}}-2n_{i_1}-1}{2}+\frac{n_v-n_{i_1}-1}{2}.\]
Since
\[\frac{N-\delta_{\mr{eo}}-1}{2}\leq b+n_{i_1}\leq \frac{N-\delta_{\mr{eo}}-1}{2}+\frac{n_v-n_{i_1}-1}{2}<\frac{N-\delta_{\mr{eo}}-1}{2}+\frac{n_v-1}{2},\] this finishes the induction step when $\dim(r_{i_1})>1$.

If, on the other hand, $\dim(r_{i_1})=1$, then $n_{i_1}=a_{i_1}$, and so $\pi_{i_1}=\St_{n_{i_1}}\otimes \rho_{i_1}$, where $\St_{n_{i_1}}$ denotes the Steinberg representation of $GL_{n_{i_1}}(E_w)$ and $\rho_{i_1}$ is again the character of $GL_1(E_w)$ associated with $r_{i_1}$ by class field theory. Any $\pi'\in\Pi_{\phi'}$ still has the possible constituents as described above, but then also we have
\[\Jac_{GL_1\times GL_{n_{i_1}-1}}(\pi_{i_1})=\Jac_{GL_1\times GL_{n_{i_1}-1}}(\St_{n_{i_1}}\otimes \rho_{i_1})=\rho_{i_1}\Vert\cdot\Vert_w^{n_{i_1}-1}\boxtimes(\St_{n_{i_1}-1}\otimes\rho_i\Vert\det\Vert_w^{-1}),\]
and similarly,
\[\Jac_{GL_{n_{i_1}-1}\times GL_1}(\pi_{i_1})=(\St_{n_{i_1}-1}\otimes\rho_i\Vert\det\Vert_w)\boxtimes\rho_{i_1}\Vert\cdot\Vert_w^{1-n_{i_1}}.\]
The constituents of $\Jac_{M_1}(\pi_{i_1}\rtimes \pi')$ coming from the representations called $\pi_1$ and $\pi_l$, respectively, in Proposition \ref{propjacM1gen} then have character components given by
\[\rho_{i_1}\Vert\cdot\Vert_w^{n_{i_1}-1}\Vert\cdot\Vert_w^{(N-\delta_{\mr{eo}}-n_{i_1})/2}=\rho_{i_1}^{\pm 1}\Vert\cdot\Vert_w^{(N-\delta_{\mr{eo}}-1)/2+(n_{i_1}-1)/2},\]
and, respectively, by
\[\rho_{i_1}^{*}\Vert\cdot\Vert_w^{(N-\delta_{\mr{eo}}-1)/2+(n_{i_1}-1)/2},\]
Since $n_{i_1}\leq n_v$, we have
\[\frac{N-\delta_{\mr{eo}}-1}{2}\leq\frac{N-\delta_{\mr{eo}}-1}{2}+\frac{n_{i_1}-1}{2}\leq \frac{N-\delta_{\mr{eo}}-1}{2}+\frac{n_v-1}{2}.\]
This finishes the induction step when $\dim(r_{i_1})=1$, hence in general.
\end{proof}

\section{Types and Hecke operators}
\label{sectypes}

We continue with the notation of the previous section; So we have our group $G$ with parabolic subgroup $P=MN$, our extension of number fields $E/F$, and our finite place $v$ of $F$ with the unique place $w$ of $E$ above it. However, we assume in this section that $v\nmid 2$. We let $\varpi_w$ be a fixed uniformizer in $E_w$.

Our goal in this section is to construct certain Hecke operators using the theory of types and, in particular, the theory of covers \cite[\S 7-8]{BuKu}. These operators will be the ones which we will use below to control the local monodromy at $v$ in the families of Galois representations coming from our Hecke families.

So fix throughout this section a standard $F_v$-Levi subgroup $L$ of $G$ and supercuspidal representation $\tau$ of $L(F_v)$. Recall that a \textit{supercuspidal type} for $(\tau,L)$ in $G_{/F_v}$ consists of a smooth, finite dimensional (complex) representation $\varrho$ of an open compact subgroup $K_{\varrho}$ of $G(F_v)$ such that, for any smooth irreducible representation $\pi$ of $G(F_v)$, we have that $\varrho$ occurs in $\pi|_{K_{\varrho}}$ if and only if $\pi$ is a constituent of an induced representation $\Ind_{L(F_v)}^{G(F_v)}(\tau\otimes\chi)$ for some unramified character $\chi$ of $L(F_v)$.

Now Miyauchi and Stevens \cite{MS} have constructed supercuspidal types for any pair $(\tau,L)$ in $G_{/F_v}$ as above, and have done so in such a way that they are \textit{covers} of their restrictions to $L(F_v)$, in the sense of \cite[Definition 8.1]{BuKu}. We note that \cite{MS} assumes that the residue characteristic of the nonarchimedean local field over which they work is not $2$, and this is why we assume $v\nmid 2$ here. In any case, we do not need to recall the definition of covers here, but instead will be content with the following proposition which summarizes all of the properties associated with covers that we need for our purposes.

\begin{proposition}
\label{propexistenceofcovers}
Let $(\tau,L)$ be the fixed pair consisting of an $F_v$-Levi subgroup $L$ of $G$ and a supercuspidal representation $\tau$ of $L$, as above. We continue to assume $v\nmid 2$. Then there exists a supercuspidal type $\varrho$ for $(\tau,L)$ in $G_{/F_v}$, say defined on an open compact subgroup $K_{\varrho}$ of $G(F_v)$, with the following properties. Let $L'$ be any standard $F_v$-Levi subgroup of $G$ with $L\subset L'\subset G$. Then we have:
\begin{enumerate}[label=(\roman*)]
\item The representation $\varrho|_{K_{\varrho}\cap L'}$ is a supercuspidal type for $(\tau,L)$ in $L'$;
\item First, write $U'$ for the unipotent radical of the standard $F_v$-parabolic subgroup of $G_{/F_v}$ containing $L'$, and given smooth admissible representation $\pi$ of $G(F_v)$, let $p_{U'}$ be the natural projection map
\[p_{U'}:\pi\to\Jac_{L'}(\pi).\]
Next, write $e_{\varrho,L'}$ for the idempotent in the Hecke algebra $C_c^\infty(L'(F_v),\C)$ associated with $\varrho|_{K_{\varrho}\cap L'}$, so
\[e_{\varrho,L'}(g)=\begin{cases}
\frac{\dim\varrho}{\vol(K_{\varrho}\cap L'(F_v))}\tr(\rho(g^{-1}))&\textrm{if }g\in K_{\varrho}\cap L'(F_v);\\
0&\textrm{if }g\in L'(F_v)\textrm{ but }g\notin K_{\varrho}.
\end{cases}\]
Then there is an algebra map
\[t_{L',G}:e_{\varrho,L'}*C_c^\infty(L'(F_v),\C)*e_{\varrho,L'}\to e_{\varrho,G}*C_c^\infty(G(F_v),\C)*e_{\varrho,G}\]
such that, for any smooth admissible representation $\pi$ of $G(F_v)$, any $v\in\pi$, and any $\phi\in e_{\varrho,L'}*C_c^\infty(L'(F_v),\C)*e_{\varrho,L'}$, we have
\begin{equation}
\label{eqnBuKuJac}
\phi * p_{U'}(v)=p_{U'}(t_{L',G}(\phi)* v).
\end{equation}
\item In fact, the map $p_{U'}$ induces an isomorphism
\[e_{\varrho,G}*\pi\cong e_{\varrho,L'}*\Jac_{L'}(\pi).\]
\end{enumerate}
\end{proposition}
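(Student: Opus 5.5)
The plan is to take the Miyauchi--Stevens construction \cite{MS} as the source of $\varrho$ and read off (i)--(iii) from the Bushnell--Kutzko theory of covers \cite[\S 7--8]{BuKu}. For residue characteristic $\neq 2$, \cite{MS} constructs, for every pair $(\tau,L)$ with $\tau$ supercuspidal, a pair $(K_\varrho,\varrho)$ which is a $G$-cover of a type $(K_\varrho\cap L(F_v),\varrho_L)$ for the inertial class of $(L,\tau)$ in $L$. By \cite[Theorem 8.3]{BuKu}, a $G$-cover of a type in $L$ is itself a type for the induced inertial class $[L,\tau]_G$, which is precisely the defining property of a supercuspidal type recalled above. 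So $\varrho$ exists, and the real work is to check the three properties uniformly in the intermediate Levi $L'$.

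For (i), I would use the transitivity of covers. One wants $(K_\varrho\cap L'(F_v),\varrho|_{K_\varrho\cap L'})$ to be an $L'$-cover of $\varrho_L$. The cover axioms \cite[Definition 8.1]{BuKu} require three things: an Iwahori decomposition of $K_\varrho$ with respect to every parabolic with Levi $L$, triviality of $\varrho$ on the unipotent parts, and the existence of strongly positive invertible elements in the Hecke algebra. Each of these restricts to $L'$. Concretely, the Iwahori decomposition of $K_\varrho$ with respect to a parabolic $Q\subset P'$ intersects with $L'$ to give one for $Q\cap L'$. Likewise, the strongly $(Q,K)$-positive central elements of $L$ remain strongly positive for $Q\cap L'$, and their invertible Hecke functions survive restriction. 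One must be careful that \cite{MS} presents $K_\varrho$ compatibly with all standard parabolics; I expect their construction (via self-dual lattice sequences adapted to $L$) to give this directly. Applying \cite[Theorem 8.3]{BuKu} inside $L'$ then yields (i).

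For (ii) and (iii), I would cite \cite[Theorem 7.9, Corollary 7.12]{BuKu}, applied to the cover $(K_\varrho,\varrho)$ of the $L'$-type from (i). Since $K_\varrho$ is decomposed with respect to $P'=L'U'$ and $\varrho$ is trivial on $K_\varrho\cap U'(F_v)$, the projection $p_{U'}$ restricted to $\varrho$-isotypic vectors is injective. By \cite[Theorem 7.9]{BuKu} it is also surjective onto the $\varrho|_{L'}$-isotypic part of $\Jac_{L'}(\pi)$, which gives (iii). The algebra map $t_{L',G}$ is the one in \cite[Theorem 7.2/Corollary 7.11]{BuKu}: it is defined on strongly positive elements by support-preserving transfer and extended using the invertibility of a strongly positive element. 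The compatibility \eqref{eqnBuKuJac} is exactly \cite[Corollary 7.12]{BuKu} (with $\delta_{P'}$ twists absorbed because we use the unnormalized $\Jac$).

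Two points need attention when matching conventions. First, Bushnell--Kutzko work with $\varrho$-isotypic components while the statement uses the idempotent $e_{\varrho}$; these agree. Second, the statement writes $\phi*p_{U'}(v)$ with $\phi$ on the left, so the correct (non-normalized) version of $t$ must be chosen. The main obstacle I anticipate is the verification, for unitary and nonsplit orthogonal groups with arbitrary intermediate $L'$, that the Miyauchi--Stevens covers are compatible with every standard $L'$ simultaneously. If \cite{MS} only states the cover property relative to $L$, I would derive the $L'$-version from their explicit Iwahori decompositions and then apply \cite[Proposition 8.5]{BuKu}-style transitivity.
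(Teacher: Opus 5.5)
Your proposal is essentially the paper's proof: there too $\varrho$ is the Miyauchi--Stevens cover, with (i) taken from the transitivity of covers \cite[Proposition 8.5]{BuKu}, (ii) from \cite[Corollary 7.12]{BuKu}, and (iii) from \cite[Theorem 7.9]{BuKu}. One correction to your side remarks: the Iwahori decomposition and the triviality of $\varrho$ on $K_\varrho\cap U'(F_v)$ give only surjectivity of $p_{U'}$ on isotypic parts (Jacquet's lemma), and injectivity, like the invertibility of the relevant Hecke element on $L'$ in (i), comes from the full cover property through the cited results of \cite{BuKu}, not from naive restriction of functions.
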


\begin{proof}
As already noted, the main construction of \cite{MS} gives a supercuspidal type $\varrho$ for $(\tau,L)$ in $G_{/F_v}$ which is a cover of its restriction to $L(F_v)$. Then properties (i)-(iii) follow from basic properties of covers; property (i) is part of the \textit{transitivity} of covers \cite[Proposition 8.5]{BuKu}, property (ii) follows from \cite[Corollary 7.12]{BuKu}, and property (iii) follows from \cite[Theorem 7.9]{BuKu}.
\end{proof}

We now construct the Hecke operators we will need using the types from the proposition above.

\begin{definition}
\label{defheckeoptype}
With $(\tau,L)$ as above, assume moreover that $L\subset M$. Let $\varrho$ be a supercuspidal type for $(\tau,L)$ in $G_{/F_v}$ satisfying the conclusion of Proposition \ref{propexistenceofcovers}; then we have idempotents $e_{\varrho,M}\in C_c^\infty(M(F_v),\C)$ and $e_{\varrho,G}\in C_c^\infty(G(F_v),\C)$, as well as the map $t_{M,G}$, from Proposition \ref{propexistenceofcovers} (ii) with $L'=M$.

Let $\varpi_w$ be a fixed uniformizer in $E_w$. Identifying $M(F_v)=E_w^\times\times H(F_v)$, write $\varpi_w\times 1\in M(F_v)$ for the corresponding element of $E_w^\times\times H(F_v)$.

Write $K'$ for the kernel of the representation $\varrho$; it is a finite index open subgroup of the group $K_\varrho$ on which $\varrho$ is defined, and hence $K'$ is an open compact subgroup of $G(F_v)$. Then we define the Hecke operator $\phi_{\varrho}\in e_{\varrho,G}*C_c^\infty(G(F_v),\C)*e_{\varrho,G}$ by
\[\phi_{\varrho}=t_{M,G}\left(e_{\varrho,M}*\frac{\chars((\varpi_w\times 1)(K'\cap M(F_v)))}{\vol(K'\cap M(F_v))}*e_{\varrho,M}\right).\]
\end{definition}

\begin{remark}
The operator $\phi_{\varrho}$ just defined is so constructed as to allow us to compare its action on a given representation $\pi$ of $G(F_v)$ with the action of the element $(\varpi_w\times 1)\in M(F_v)$ on the Jacquet module $\Jac_{M}(\pi)$; such a comparison is provided by the relation \eqref{eqnBuKuJac} in property (ii) of Proposition \ref{propexistenceofcovers}.

More precisely, the constituents of $\Jac_{M}(\pi)$ are of the form $\chi\boxtimes\sigma$ for representations $\sigma$ of $H(F_v)$ and characters $\chi$ of $E_w^\times$. The operator
\[\frac{\chars((\varpi_w\times 1)(K'\cap M(F_v)))}{\vol(K'\cap M(F_v))}\]
will act on the $(K'\cap M(F_v))$-fixed vectors of such a constituent by the element $\varpi_w\times 1\in M(F_v)$, and hence by the eigenvalue $\chi(\varpi_w)$. Therefore, the intermediate operator
\[e_{\varrho,M}*\frac{\chars((\varpi_w\times 1)(K'\cap M(F_v)))}{\vol(K'\cap M(F_v))}*e_{\varrho,M}\]
will act nontrivially on those constituents $\chi\boxtimes\sigma$ which are of type $(\tau,L)$ with eigenvalues $\chi(\varpi_w)$ and $0$, and it will act on any other constituent as $0$. Thus the operator $\phi_\varrho$ acts on $\pi$ with eigenvalues $0$ and $\chi(\varpi_w)$ as $\chi\boxtimes\sigma$ ranges over all the constituents of $\Jac_{M}(\pi)$ of type $(\tau,L)$.

Now if $\pi$ is tempered, the numbers $\chi(\varpi_w)$ for such $\chi$, and hence the possible eigenvalues of $\phi_\varrho$ on $\pi$, are restricted to a particular finite set by Proposition \ref{propjacoftempered}, depending on the parameter for $\pi$. Moreover, in the same way, the eigenvalues of $\phi_\varrho$ on Eisenstein series induced from representations of type $(\tau,L)$ at $v$ are described (partially) by Corollary \ref{corjacofeisenstein}. The comparison of these possible eigenvalues will then be the key to the proof of our main theorem, Theorem \ref{thmmainthm}.
\end{remark}

\section{Construction of Galois representations}
\label{secgaloisreps}
We now proceed with the global part of our setup. From Section \ref{secgroups} we have our totally real number field $F$, our extension of number fields $E/F$, and our group $G$. We recall that the extension $E/F$ is trivial if $G$ is special orthogonal or symplectic, and otherwise it is the imaginary quadratic extension used to define $G$ if $G$ is unitary.

As usual, let $N$ be the dimension of the standard representation of $G^\vee$. We will construct the Galois representations attached to cohomological automorphic representations of $G(\A_F)$ via the functoriality results between $G$ and $GL_N$ available to us. The content of this section is rather standard, but we provide the construction of Galois representations in some detail. In particular, it will is crucial for our method to be able to link the local parameters of automorphic representations at bad places $v$ of $F$ to the restrictions of their Galois representations to the decomposition groups at places above $v$ in $E$.

Throughout this section, fix any prime number $p$ and an isomorphism $\iota:\C\overset{\sim}{\longrightarrow}\overline{\Q}_p$. Let $\pi$ be an automorphic representation of $G(\A_F)$ appearing in the discrete spectrum
\[L_{\disc}^2(G(F)\backslash G(\A_F)).\]
Since $\pi$ appears in $L_{\disc}^2(G(F)\backslash G(\A_F))$, by the facts about Arthur's classification recalled in Section \ref{secparams}, there is a global parameter
\begin{equation}
\label{eqnartparamforpi}
\psi=\boxplus_{i=1}^r\widetilde\pi_i[d_i],
\end{equation}
where each $\widetilde\pi_i$ is a cuspidal automorphic representation of some $GL_{n_i}(\A_E)$, each $d_i\geq 1$ is a positive integer, and $\sum n_i d_i=N$; here, the representations $\widetilde\pi_i$ are required to be self dual if $G$ is not unitary, and otherwise in the unitary case, we only require that the parameter $\psi$ itself is conjugate-self dual. This parameter has the following property under localization. Let $v$ be any place of $F$ and $w$ any place of $E$ above $v$. For each $i$, let $\tilde\phi_{i,w}$ be the local $L$-parameter at $w$ for the local component $\widetilde\pi_{i,w}$ of $\pi_i$. Let $\tilde\phi_w$ be the local $L$-parameter for $GL_{N}(E_w)$ given by
\[\tilde\phi_w=\bigoplus_{i=1}^r\left((\tilde\phi_{i,w}\otimes\Vert\cdot\Vert_w^{(d_i-1)/2})\oplus(\tilde\phi_{i,w}\otimes\Vert\cdot\Vert_w^{(d_i-3)/2})\oplus\dotsb\oplus(\tilde\phi_{i,w}\otimes\Vert\cdot\Vert_w^{(1-d_i/2)})\right).\]
This is the $L$-parameter associated with the $A$-parameter that was written as $\tilde\psi_w$ in Section \ref{secparams}. Finally let $\restr_w(\phi_{\psi_v})$ be the restriction of the $L$-parameter $\phi_{\psi_v}$ associated with $\psi_v$ to either $W_{E_w}$ or $W_{E_w}\times SL_{2,\mr{D}}(\C)$, depending on whether $v$ is archimedean or not. Then
\[\std\circ(\restr_w(\phi_{\psi_w}))=\tilde\phi_w,\]
where, as in formula \eqref{eqnpsiwtilde}, the map $\std$ denotes the standard embedding of $L$-groups defined as in that formula.

\begin{theorem}
\label{thmconstofgalois}
Let the notation be as above, so in particular, we have our automorphic representation $\pi$ of $G(\A_F)$ appearing in
\[L_{\disc}^2(G(F)\backslash G(\A_F)).\]
Assume that $\pi_{v_\infty}$ is cohomological for any archimedean place $v_\infty$ of $F$. If $G$ is even orthogonal, assume moreover that each $\pi_{v_\infty}$ is $\std$-regular (see Definition \ref{defstdreg}). Then there is a continuous, semisimple Galois representation $\rho_\pi:G_E\to GL_{N}(\overline\Q_p)$ (which depends on the embedding $\iota$ just fixed, though we suppress this from the notation) satisfying the following.
\begin{enumerate}[label=(\alph*)]
\item We have that $\rho_{\pi}^*(1+\delta_{\mr{eo}}-N)\cong\rho_\pi$, where $\delta_{\mr{eo}}\in\{0,1\}$ is as in Notation \ref{notndeltaeo}, and $(\cdot)^*$ denotes the linear dual unless $G$ is unitary, in which case it denote the conjugate dual, i.e., if $c$ is any complex conjugation in $G_F$, then $\rho_{\pi}^*(g)=\rho_\pi^\vee(cgc)$ for $g\in G_E$.
\item If $G$ is unitary, assume in addition that each $\widetilde{\pi}_i$ is conjugate-self dual. If $v$ is any finite place of $F$ with $v\nmid p$ and $w$ is a place in $E$ with $w|v$, then the Frobenius semisimple Weil--Deligne representation $\WD(\rho_{\pi}|_{G_{E_w}})^{\mr{F-ss}}$ has associated $L$-parameter $\tilde\phi_{w}\otimes\Vert\cdot\Vert_w^{(1+\delta_{\mr{eo}}-N)/2}$, in the sense of Section \ref{secparams}. If instead some $\widetilde{\pi}_i$ is not conjugate self-dual, this conclusion still holds for all unramified places for $\pi$ in $F$ which are split in $E$.
\end{enumerate}
\end{theorem}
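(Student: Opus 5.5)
The plan is to reduce to the general linear group via the endoscopic transfer recalled in Section \ref{secparams}, and then to invoke the known constructions of Galois representations for regular algebraic, (conjugate-)self dual cuspidal representations of $GL_{n_i}(\A_E)$.

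\textbf{Step 1: the pieces of the parameter.} Write the Arthur parameter of $\pi$ as $\psi=\boxplus_i\widetilde\pi_i[d_i]$ as in \eqref{eqnartparamforpi}. At each archimedean place, Proposition \ref{propparamliftcoho} applies; this uses cohomologicality of $\pi_{v_\infty}$ and, in the even orthogonal case, $\std$-regularity. It shows that the infinitesimal character of $\tilde\psi_v$, shifted by $\tfrac{1+\delta_{\mr{eo}}-N}{2}$, is integral and regular. The infinitesimal character of $\tilde\psi_v$ is the multiset union over $i$ of the infinitesimal characters of $\widetilde\pi_{i,\infty}$, each shifted by $\tfrac{d_i-1}{2},\tfrac{d_i-3}{2},\dotsc,\tfrac{1-d_i}{2}$. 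From this I will deduce three facts.
\begin{enumerate}[label=(\roman*)]
\item Each twist $\widetilde\pi_i\Vert\det\Vert^{s_i}$ is regular algebraic for a suitable half-integer $s_i$. I expect $s_i$ to be $\tfrac{1+\delta_{\mr{eo}}-N}{2}+\tfrac{d_i-1}{2}$ up to the parity bookkeeping.
\item These twists have regular infinitesimal character.
\item Each twist is (conjugate-)self dual up to a power of the norm. In the non-unitary case this holds because the $\widetilde\pi_i$ are self dual. In the unitary case it holds under the extra hypothesis of (b); for part (a) in the unitary case, I will instead use the conjugate-self duality of $\psi$ to pair $\widetilde\pi_i$ with $\widetilde\pi_i^*$.
\end{enumerate}

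\textbf{Step 2: the Galois representations for the pieces.} Next I invoke the theorems of Harris--Taylor, Shin, Chenevier--Harris, Caraiani, Barnet-Lamb--Gee--Geraghty--Taylor, and (for non-self-dual pieces) Harris--Lan--Taylor--Thorne / Scholze. These attach to each $\widetilde\pi_i\Vert\det\Vert^{s_i}$ a semisimple representation $\rho_i:G_E\to GL_{n_i}(\overline\Q_p)$. In the (conjugate-)self dual case it has full local-global compatibility at all $w\nmid p$ up to Frobenius semisimplification. In general it is compatible only at unramified places, which is why in (b) we restrict to split unramified places when some $\widetilde\pi_i$ is not conjugate-self dual.

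I then define
\[\rho_\pi=\bigoplus_i\bigoplus_{j=0}^{d_i-1}\rho_i(-j)\]
with the appropriate normalizing twist. Its semisimplicity and continuity are immediate.

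\textbf{Step 3: local-global compatibility and duality.} Part (b) follows by comparing $\WD(\rho_\pi|_{G_{E_w}})^{\mr{F\text{-}ss}}$ summand by summand with $\tilde\phi_w\otimes\Vert\cdot\Vert_w^{(1+\delta_{\mr{eo}}-N)/2}$. Each Tate twist $(-j)$ corresponds to $\Vert\cdot\Vert_w^{j}$ under our normalization of class field theory, which sends uniformizers to geometric Frobenius.

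For part (a), Chebotarev and Brauer--Nesbitt reduce the isomorphism $\rho_\pi^*(1+\delta_{\mr{eo}}-N)\cong\rho_\pi$ to an identity of Frobenius characteristic polynomials at unramified split places. That identity follows from the (conjugate-)self duality of $\psi$, i.e.\ the multiset identity $\{(\widetilde\pi_i,d_i)\}=\{(\widetilde\pi_i^*,d_i)\}$, together with the symmetry of the exponents $\tfrac{d_i-1}{2},\dotsc,\tfrac{1-d_i}{2}$.

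\textbf{Main obstacle.} I expect the main difficulty to be the twist and parity bookkeeping in Step 1. One must check that the single global shift $\tfrac{1+\delta_{\mr{eo}}-N}{2}$ makes every $\widetilde\pi_i\Vert\det\Vert^{(d_i-1)/2+(1+\delta_{\mr{eo}}-N)/2}$ regular algebraic. This is not automatic piecewise: regularity of the total infinitesimal character gives regularity of each piece, but integrality of each individual piece requires extracting it from integrality of the union. That works because every exponent of the union is integral by Proposition \ref{propparamliftcoho}. A secondary subtlety in the even orthogonal case is that the Galois representation is only well defined up to the outer automorphism $\vartheta$, but since we only work with $\std$, this is harmless.
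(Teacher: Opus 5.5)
Your overall route matches the paper's proof. You apply Proposition \ref{propparamliftcoho}, extract integrality and regularity for each shifted piece, attach Galois representations to the pieces via Chenevier--Harris/Caraiani (or HLTT in the non-conjugate-self-dual case), and take $\rho_\pi$ to be the direct sum over $i$ and $j$. Part (b) is then read off summand by summand, and part (a) follows from Chebotarev. The gap is in how you resolve the step you call the main obstacle.

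\textbf{The gap: integrality is not cohomologicality when $n_i$ is even.} The theorems you cite need each piece to be regular algebraic in Clozel's sense, i.e.\ cohomological. This means its infinitesimal character has the form $\lambda+\rho_{n_i}$ with $\lambda$ integral. When $n_i$ is even, $\rho_{n_i}$ has entries in $\tfrac12+\Z$. So an integral infinitesimal character can never be cohomological in that case. Consequently, for even $n_i$ the twist $\widetilde\pi_i\Vert\det\Vert^{(d_i-1)/2+(1+\delta_{\mr{eo}}-N)/2}$ is $L$-algebraic but not cohomological. Your justification ``every exponent of the union is integral'' therefore does not let you feed it into those constructions. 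The relevant parity is that of $n_i$, not $d_i$, and this is exactly the case split in the paper:
\begin{enumerate}[label=(\roman*)]
\item For $n_i$ odd, the integral twist is already cohomological. Because the standard normalization attaches to a cohomological $\Pi$ a $\rho_\Pi$ whose $\WD(\rho_\Pi|_{G_{E_w}})^{\mr{F-ss}}$ matches $\Pi_w\otimes\Vert\det\Vert_w^{(1-n_i)/2}$, you still need an integral Tate twist by $(n_i-1)/2$.
\item For $n_i$ even, apply the constructions instead to $\widetilde\pi_i\Vert\det\Vert^{(j+1+\delta_{\mr{eo}}-N)/2+(n_i-1)/2}$. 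Its infinitesimal character is regular and half-integral but not integral, hence it is cohomological. The same normalization then brings you back exactly to the $L$-algebraic twist you want.
\end{enumerate}

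\textbf{A smaller sign slip.} With the paper's normalization, $\Vert\cdot\Vert$ corresponds to the cyclotomic character. So the Tate twist $(-j)$ corresponds to $\Vert\cdot\Vert_w^{-j}$, not $\Vert\cdot\Vert_w^{j}$. This sign is what makes $\bigoplus_{j=0}^{d_i-1}\rho_i(-j)$, with $\rho_i$ attached to the top exponent, produce the symmetric exponents $\tfrac{d_i-1}{2},\dotsc,\tfrac{1-d_i}{2}$ that (b) requires. With the sign you wrote, the exponents would run upward from the top and the comparison would fail.

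With these two corrections, your argument coincides with the paper's.
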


\begin{proof}
First note that, since $\pi_{v_\infty}$ is assumed cohomological for any archimedean place $v_\infty$ of $F$, by Proposition \ref{propparamliftcoho}, we have that $\pi_{v_\infty}\otimes\Vert\cdot\Vert_{v_\infty}^{(1+\delta_{\mr{eo}}-N)/2}$ has regular integral infinitesimal character for any such $v_\infty$. If $w_\infty$ is the place of $E$ lying over $v_\infty$, then it follows that for $i=1,\dotsc r$ and any $j=1-d_i,3-d_i,\dotsc,d_i-1$ (with $r$ and $d_i$ as in \eqref{eqnartparamforpi}), we have that $\tilde{\pi}_{i,w_\infty}\otimes\Vert\cdot\Vert_{w_\infty}^{(j+1+\delta_{\mr{eo}}-N)/2}$ has regular integral infinitesimal character.

Then there are two cases. If $n_i$ is odd, the results of Chenevier--Harris, namely  \cite[Theorem 4.2]{CH} if $E=F$ or \cite[Theorem 3.2.3]{CH} if $E/F$ is imaginary quadratic, attach a continuous, semisimple Galois representation $\rho_{i,j}$ to $\widetilde\pi_i\otimes\Vert\cdot\Vert_E^{(j+1+\delta_{\mr{eo}}-N)/2}$. This representation has the compatibility property that for any finite place $w$ of $E$ with $w\nmid p$, we have that $\WD(\rho_{i,j}|_{G_{E_w}})^{\mr{F-ss}}$ has associated $L$-parameter $\tilde\phi_{i,w}\otimes\Vert\cdot\Vert_w^{(j+1+\delta_{\mr{eo}}-N)/2}$.

If instead $n_i$ is even, then the representation
\[\widetilde\pi_i\otimes\Vert\cdot\Vert_E^{(j+1+\delta_{\mr{eo}}-N)/2}\otimes \Vert\cdot\Vert_E^{(n_i-1)/2}\]
has half-integral, but not integral, infinitesimal character at any infinite place of $E$, and is thus cohomological (as the weight $\rho$ for $GL_{n_i}$ is half-integral but not integral). Then \cite{CH} when each $\widetilde{\pi}_i$ is (conjugate-)self dual, along with \cite{caravnep} for the compatibility property, or otherwise \cite{HLTT} attach a Galois representation to the twist of this by $\Vert\cdot\Vert_E^{(1-n_i)/2}$, satisfying analogous properties; hence a Galois representation $\rho_{i,j}$ with the right properties exists for $\widetilde\pi_i\otimes\Vert\cdot\Vert_E^{(j+1+\delta_{\mr{eo}}-N)/2}$.

We then define
\[\rho_\pi=\bigoplus_{i}\bigoplus_{j}\rho_{i,j},\]
where $i$ ranges over $1,\dotsc,r$, and $j$ over $1-d_i,3-d_i,\dotsc,d_i-1$. Then $\rho_\pi$ satisfies (b) by construction and the facts recalled in the previous paragraph. It also satisfies (a); this follows from Chebotarev along with the (conjugate-)self duality of each local Galois representation up to twist, which itself follows from the (conjugate-)self duality of the local parameters.
\end{proof}

In the case that $G$ is not unitary, so when $E=F$, the individual Galois representations attached to each $\widetilde\pi_i$ as above are constructed via a \textit{patching} argument, described as follows. One first constructs the Galois representation attached to the base change $\widetilde{\pi}_{i,E'}$ of a certain twist of $\widetilde\pi_i$ to various imaginary quadratic fields $E'$ for which $\widetilde\pi_{i,E'}$ is still cuspidal, and then proves that there is a Galois representation over $F$ that restricts to those over the varying fields $E'$. All of the local--global compatibility properties follow from those for $E'$, since one can always choose $E'$ so that a given finite place $v$ of $F$ splits in $E'$; in this case, if $w$ is a place of $E'$ above $v$, then $F_v=E_w'$ and $(\widetilde\pi_{i,E'})_w$ is thus a twist of $\widetilde\pi_{i,v}$. See \cite[\S 4.4]{sorensen} for a reference where this is done carefully.

Recall that in this setting, each $\widetilde\pi_i$ is an automorphic representation of $GL_{n_i}(\A_F)$ for some $n_i$. For the Galois representations $\rho_{\widetilde\pi_{i,E'}}$ attached to the $\widetilde\pi_{i,F}$ described above, Shin \cite[Corollary 7.15]{shin} and Caraiani \cite[Corollary 5.9]{caravnep} prove that for any finite place $w$ of $E'$ not dividing $p$, the associated Frobenius-semisimple Weil--Deligne representation
\[\WD(\rho_{\widetilde\pi_{i,E'}}|_{G_{E_w'}})^{\varphi-\sss}\]
is pure, in the sense defined at the end of Section \ref{secparams}. We thus obtain the following.

\begin{proposition}
\label{proppurityofgal}
Notation as set at the beginning of this section, assume that the parameter $\psi=\boxplus_{i=1}^r\widetilde\pi_i[d_i]$ is generic, i.e., has $d_i=1$ for all $i$. If $G$ is unitary, assume in addition that $\widetilde{\pi}_i$ is conjugate-self dual for all $i$. Assume also that $\pi_{v_\infty}$ is cohomological for any archimedean place $v_\infty$ of $F$, and if $G$ is even orthogonal, assume moreover that each $\pi_{v_\infty}$ is $\std$-regular. Then the Galois representation $\rho_\pi$ constructed in Theorem \ref{thmconstofgalois} has that $\WD(\rho_{\pi}|_{G_{E_w}})^{\varphi-\sss}$ is pure of weight $1+\delta_{\mr{eo}}-N$ for every finite place $w\nmid p$ of $E$.
\end{proposition}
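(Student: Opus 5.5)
Since $d_i=1$ for all $i$, the construction in the proof of Theorem \ref{thmconstofgalois} gives
\[\rho_\pi=\bigoplus_{i=1}^r\rho_{i},\]
where $\rho_i$ is the Galois representation attached to $\widetilde\pi_i\otimes\Vert\cdot\Vert_E^{(1+\delta_{\mr{eo}}-N)/2}$; only $j=0$ occurs. Purity of a Frobenius-semisimple Weil--Deligne representation is preserved under direct sums, and a twist by $\Vert\cdot\Vert^{(1+\delta_{\mr{eo}}-N)/2}$ shifts weights by $1+\delta_{\mr{eo}}-N$ (with our convention sending $\Vert\cdot\Vert$ to the cyclotomic character). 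So it suffices to show, for each $i$ and each finite $w\nmid p$, that the Galois representation attached to $\widetilde\pi_i$ itself, suitably normalized, has pure Weil--Deligne representation of weight $0$ at $w$. Put differently, the normalized $\WD(\rho_{i}|_{G_{E_w}})^{\varphi-\sss}$ should be pure of weight $1+\delta_{\mr{eo}}-N$.

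For each $i$, $\widetilde\pi_i$ is cuspidal and (conjugate-)self dual, and by the proof of Theorem \ref{thmconstofgalois} it is cohomological up to a twist (the extra twist by $\Vert\cdot\Vert^{(n_i-1)/2}$ when $n_i$ is even). In the unitary case, $E$ is a CM field and $\widetilde\pi_i$ is conjugate-self dual, so the results of Shin \cite[Corollary 7.15]{shin} and Caraiani \cite[Corollary 5.9]{caravnep} apply directly. They give purity of $\WD(\rho|_{G_{E_w}})^{\varphi-\sss}$ for the Galois representation attached to the cohomological twist, and the weight is read off from the twist. In the non-unitary case ($E=F$), I use the patching description above. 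Given $v\nmid p$, I choose an imaginary quadratic $E'$ such that $v$ splits in $E'$ and the base change $\widetilde\pi_{i,E'}$ stays cuspidal. Then, for $w'\mid v$ in $FE'$, we have $F_v=E'_{w'}$, so $\WD(\rho_i|_{G_{F_v}})$ equals $\WD(\rho_{\widetilde\pi_{i,E'}}|_{G_{E'_{w'}}})$ up to the fixed twist. Purity then transfers from the Shin--Caraiani result over $E'$.

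The main obstacle is bookkeeping the weight. One must check that the normalizing twists used in Theorem \ref{thmconstofgalois}, including the half-integral twist $\Vert\cdot\Vert^{(1-n_i)/2}$ for even $n_i$, combine with the Shin--Caraiani weight (which is $n_i-1$ for the cohomological normalization of $\widetilde\pi_i\otimes\Vert\cdot\Vert^{(1-n_i)/2}$) to give the same weight $1+\delta_{\mr{eo}}-N$ for every $i$. I would verify this by unitarity: $\widetilde\pi_i$ is unitary cuspidal, so its unitary normalization has weight $0$. Alternatively, one can use the self-duality $\rho_\pi^*(1+\delta_{\mr{eo}}-N)\cong\rho_\pi$ from Theorem \ref{thmconstofgalois}(a): since each summand $\rho_i$ is pure of some weight $k_i$, (conjugate-)self duality of each summand up to that twist forces $k_i=1+\delta_{\mr{eo}}-N$. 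This second route avoids tracking individual normalizations, and I would use it.
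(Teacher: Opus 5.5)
Your proposal is correct and follows the paper's proof. Purity of each summand comes from Shin and Caraiani, applied directly over the CM field $E$ in the unitary case and via base change to a CM quadratic extension split at $v$ otherwise, and the weight is then fixed by the (conjugate-)self-duality of Theorem~\ref{thmconstofgalois}(a). Applying that duality to each summand $\rho_i$ separately, as you do (using that each $\widetilde\pi_i$ is itself (conjugate-)self dual, and that the weight is uniform in $w$ so that $w$ and $c(w)$ agree at split places), is slightly more careful than the paper's one-line appeal to (a), which is stated only for the whole of $\rho_\pi$.
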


\begin{proof}
If $G$ is not unitary, then $E=F$ and, taking $v=w$ with $w$ as in the statement of the proposition, the purity follows from the above discussion upon taking one of the fields $E'$ to split the finite place $v$ under consideration. If $G$ is unitary, then we do not need to base change any $\widetilde\pi_i$ as each such representation is already over the imaginary quadratic extension $E$ of $F$. So the same discussion for $\widetilde\pi_i$ in place of $\widetilde{\pi}_{i,E'}$ implies the purity. The claim about the weight in either case follows from property (a) of Theorem \ref{thmconstofgalois}.
\end{proof}

Let us give a name to some of the hypotheses used in this proposition.

\begin{definition}
\label{defstrgen}
Let $\pi$ be a discrete automorphic representation of $G(\A_F)$ with parameter $\psi=\boxplus_{i=1}^r\widetilde\pi_i[d_i]$, as above. We will say $\pi$ is \textit{strongly generic} if $\pi$ is generic (i.e., $d_i=1$ for all $i$) and, moreover, if each $\widetilde{\pi}_i$ is conjugate-self dual if $G$ is unitary.
\end{definition}

This notion is important for us as purity will play a strong role in the combinatorial aspects of the proof of our main theorem. For the applications we have in mind which use forms of Ribet's method, the relevant Galois representations which are attached to such $\pi$ will be irreducible; this will force $r=1$ and $d_1=1$, making the lift to $GL_N(\A_E)$ not only strongly generic, but actually cuspidal, equal to $\widetilde{\pi}_1$.

\section{Hecke families}
The purpose of this section is to simply make and discuss the definition of Hecke families that we will use in this paper. We continue to use the group $G$ and the fields $E$ and $F$, and we identify the Levi $M$ of the parabolic subgroup $P$ with $GL_{1/E}\times H$ as usual. As in the previous section we fix a prime number $p$ and an isomorphism $\iota:\C\overset{\sim}{\longrightarrow}\overline{\Q}_p$. The definition we study is the following.

\begin{definition}
\label{defheckefamilies}
Let $K_f$ be a factorizable, open compact subgroup of $G(\A_{F,f})$. A \textit{Hecke family of level} $K_f$ \textit{for} $G$ (or simply \textit{Hecke family}, if the rest is implicit) is a tuple $\mc{F}=(\mb{T},\mf{X},\Sigma,\Psi)$, where
\begin{itemize}
\item $\mb{T}$ is a sub-$\Q_p$-algebra (with unit) of $C_c^\infty(K_f\backslash G(\A_{F,f})/K_f,\overline{\Q}_p)$ generated over $\Q_p$ by all the $\Q_p$-valued spherical operators at all but finitely many places where $K_f$ is hyperspecial, as well as finitely many other operators in $C_c^\infty(K_f\backslash G(\A_{F,f})/K_f,\overline{\Q}_p)$;
\item $\mf{X}$ is an affinoid rigid space over $\Q_p$;
\item $\Sigma$ is a Zariski dense subset of $\mf{X}(\overline \Q_p)$;
\item $\Psi:\mb{T}\to\mc{O}(\mf{X})$ is a $\Q_p$-linear map;
\end{itemize}
such that the following holds: First, let us write $\iota^{-1}(\mb{T})$ for the $\C$-subalgebra of
\[C_c^\infty(K_f\backslash G(\A_{F,f})/K_f,\C)\]
generated by the operators given by composing functions in $\mb{T}$ with $\iota^{-1}$, so:
\[\iota^{-1}(\mb{T})=\Span_\C\sset{\iota^{-1}\circ\phi}{\phi\in\mb{T}}.\]
Then we require that for any $x\in\Sigma$, there is a cohomological automorphic representation $\pi_{x}$ of $G(\A_F)$, which is $\std$-regular at every archimedean place of $F$ if $G$ is even orthogonal (see Definition \ref{defstdreg}), and a nontrivial constituent $V_x$ of the semisimplification of $\pi_{x,f}^{K_f}$ as an $\iota^{-1}(\mb{T})$-module, which we write as $(\pi_{x,f}^{K_f})^{\mb{T}-\sss}$, with the property that
\begin{equation}
\label{eqnheckefamcomp}
x(\Psi(\phi))=\iota(\tr(\iota^{-1}\circ\phi|V_x)),
\end{equation}
for all $\phi\in\mb{T}$.

We say the Hecke family $\mc{F}$ is \textit{strongly} $\Sigma$-\textit{generic} if each of the representations $\pi_x$ for $x\in\Sigma$ can be taken to be discrete and strongly generic in the sense of Definition \ref{defstrgen}.

Finally, we say a point $x_0\in\mf{X}(\overline{\Q}_p)$ is \textit{strongly} $M$-\textit{Eisenstein} if there is an automorphic representation $\sigma$ of $H(\A_F)$ and a Hecke character $\chi$ of $GL_1(\A_E)$ such that the following are satisfied:
\begin{itemize}
\item The automorphic representation $\sigma$ is discrete, strongly generic, and cohomological at all archimedean places of $F$, and it is $\std$-regular at all archimedean places of $F$ if $G$ is even orthogonal;
\item The parabolically induced representation
\[\Pi_{x_0}=\Ind_{P(\A_F)}^{G(\A_F)}(\chi\boxtimes \sigma)\]
has, at all archimedean places $v_\infty$ of $F$, infinitesimal character of the form $\lambda_{v_\infty}+\rho$ for some integral weight $\lambda_{v_\infty}$ of $G^\vee$;
\item There is a nontrivial constituent $V_{x_0}'$ of $(\Pi_{x_0,f}^{K_f})^{\mb{T}-\sss}$ as a $\iota^{-1}(\mb{T})$-representation, and there is another finite dimensional $\iota^{-1}(\mb{T})$-representation $V_{x_0}''$, such that, writing $V_{x_0}=V_{x_0}'\oplus V_{x_0}''$, we then have
\[x_0(\Psi(\phi))=\iota(\tr(\iota^{-1}\circ\phi|V_{x_0})),\]
for all $\phi\in\mb{T}$.
\end{itemize}
For clarity, the weight $\rho$ is the half sum of positive roots for $G^\vee$, and $(\cdot)^{\mb{T}-\sss}$ has the same meaning as above. If these conditions hold, we say that the point $x_0$ is \textit{induced from} $(\chi,\sigma)$.
\end{definition}

\begin{remark}
We make several remarks about this definition in order to acquaint the reader with the concepts just posed.
\begin{enumerate}[label=(\arabic*)]
\item The definition depends on the fixed isomorphism $\iota$.

\item For clarity, the unit in $C_c^\infty(K_f\backslash G(\A_{F,f})/K_f,\overline{\Q}_p)$ is the operator
\[1_{K_{f}}=\frac{1}{\vol(K_{f})}\chars(K_f).\]
Thus, in particular, we are assuming $\mb{T}$ contains this operator $1_{K_f}$.

\item Again for clarity, let us explain the condition on $\mb{T}$ about spherical operators in the definition. Let $S$ be the set of finite places away from which $\mb{T}$ is supposed to contain all spherical Hecke operators; by assumption, this set contains finitely many finite places of $F$, and $K_f$ is hyperspecial at any place not in $S$. Since $K_f$ is assumed to be factorizable, we may write $K_f=K_{S}K_f^{S}$ where $K_{S}$ is an open compact subgroup of $G(\A_{F,S})$ and $K_f^S$ is a factorizable open compact subgroup of $G(\A_{F,f}^S)$ which is hyperspecial at every finite place not in $S$. Correspondingly, we have
\[C_c^\infty(K_f\backslash G(\A_{F,f})/K_f,\Q_p)=C_c^\infty(K_{S}\backslash G(\A_{F,S})/K_{S},\Q_p)\otimes_{\Q_p} C_c^\infty(K_f^S\backslash G(\A_{F,f}^S)/K_f^S,\Q_p),\]
as algebras, and the second algebra in the product, namely $C_c^\infty(K_f^S\backslash G(\A_{F,f}^S)/K_f^S,\Q_p)$, is the spherical Hecke algebra away from $S$. The operator
\[1_{K_{S}}=\frac{1}{\vol(K_{S})}\chars(K_S)\]
is the identity element of the first algebra in the product, $C_c^\infty(K_{S}\backslash G(\A_{F,S})/K_{S},\Q_p)$. Thus the subalgebra
\[1_{K_S}\otimes C_c^\infty(K_f^S\backslash G(\A_{F,f}^S)/K_f^S,\Q_p)\]
sits in $C_c^\infty(K_f\backslash G(\A_{F,f})/K_f,\overline\Q_p)$, and we have required that $\mb{T}$ contains this subalgebra.

\item A natural source of Hecke families is given by Hida theory and the theory of eigenvarieties. Although, in these theories, the variation over the weight is a crucial aspect, we have chosen not to keep track of the weight in our general definition. This will not affect our main theorem, Theorem \ref{thmmainthm}, and it will also allow us to be more general; Theorem \ref{thmmainthm} is proved using the purity of the Galois representations involved and makes no use of the variation of the weights of the automorphic representations to which these Galois representations are attached.

\item Let us explain how Hida theory gives Hecke families. If $G$ is not orthogonal, then versions of Hida theory exist for $G$ and will give us a finitely generated module $\mc{M}$ over an Iwasawa algebra $\Lambda=\mc{O}[\![T_1,\dotsc,T_{n'}]\!]$ of some relative dimension $n'$, which depends on the group $G$ and the field $F$, over a finite extension $\mc{O}$ of $\Z_p$. We view $\mc{O}$ as a subring of $\overline\Q_p$. The module $\mc{M}$ is a space of $p$-adic families of ordinary modular forms varying in weight in a particular way which is parametrized by $\Lambda$, and these families are of a given level away from $p$ (i.e., of given \textit{tame level}) which we eventually take to be the component $K_f^p$ of our $K_f$.

Now let $S$ be the set of finite places of $F$ containing only those above $p$ and all the places at which the tame level $K_f^p$ is not hyperspecial, and let $K_f^S$ be the factor of the tame level subgroup $K_f^p$ away from $S$. Then the module $\mc{M}$ has a natural $\Z_p$-linear action of the spherical Hecke algebra
\[C_c^\infty(K_f^S\backslash G(\A_{F,f}^S)/K_f^S,\Z_p).\]
There is also an action at $p$ of an algebra of $U_p$-operators on these forms, which form a commutative subalgebra of the Hecke algebra at $p$ with level given by some Iwahori subgroup $I_p$. However, this action only coincides with the usual action of the Iwahori Hecke algebra on automorphic representations after a specific normalization, and since the operators at $p$ play no role in our main theorem, we will feel free to ignore them.

Now at the expense of extending scalars to a finite integral extension $\mb{I}$ of $\Lambda$, we can then find eigenfamilies for the spherical Hecke algebra $C_c^\infty(K_f^S\backslash G(\A_{F,f}^S)/K_f^S,\Z_p)$; this algebra will act by scalars such an eigenfamily $\mbf{f}$.

We can then build a Hecke family as follows. Let $I_p$ be a sufficiently deep Iwahori subgroup at the places above $p$ in $F$, and let $K_f=K_f^pI_p$. Let $\mf{X}$ be the rigid generic fiber of the $\Z_p$-formal scheme defined by $\mb{I}$, and $\Sigma\subset\mf{X}(\overline\Q_p)$ the set of points $x\in\mf{X}(\overline\Q_p)$ coming from those points $y\in\spec(\mb{I})(\overline\Q_p)$ of level $I_p$ which lie above classical weights of $\Lambda$. For such $x$, we take $V_x$ to be the $1$-dimensional $\C$-vector subspace spanned by $\iota^{-1}(\mbf{f}_{y})$ in the automorphic representation $\pi_x$ generated by $\iota^{-1}(\mbf{f}_{y})$, where $\mbf{f}_{y}$ is the specialization of $\mbf{f}$ at $y$ (and the $\iota^{-1}$ is there to signify that we are viewing $\mbf{f}_{y}$ with an automorphic form with values in $\C$ via $\iota^{-1}$). We take $\mb{T}$ to simply be the spherical Hecke algebra
\[\mb{T}=1_{K_S}\otimes C_c^\infty(K_f^S\backslash G(\A_{F,f}^S)/K_f^S,\Q_p).\]
Then we define the map
\[\Psi:\mb{T}\to\mc{O}(\mf{X})\]
to be the one that sends $\phi$ in the $\Z_p$-subalgebra
\[1_{K_S}\otimes C_c^\infty(K_f^S\backslash G(\A_{F,f}^S)/K_f^S,\Z_p)\subset\mb{T}\]
to its eigenvalue, call it $\lambda_{\phi,\mbf{f}}$, on $\mbf{f}$; we then extend $\Psi$ to $\mb{T}$ by $\Q_p$-linearity. Then we have
\[\phi\cdot\mbf{f}_y=x(\lambda_{\phi,\mbf{f}})\mbf{f}_y,\]
which is equivalent to the crucial compatibility property \eqref{eqnheckefamcomp} of the definition. So with these definitions, the tuple $(\mb{T},\mf{X},\Sigma,\Psi)$ just defined is a Hecke family. 

\item In order to apply our main theorem, one needs to include the operators constructed in Section \ref{sectypes} in the Hecke algebra $\mb{T}$. In view of the remarks just made about Hida theory, we note that Hida theory is usually constructed in such a way that it carries only an action of the spherical Hecke algebra described above. Nevertheless, one should be able to show that the module $\mc{M}$ described above carries also an action the full Hecke algebra away from $p$,
\[C_c^\infty(K_f^p\backslash G(\A_{F,f}^p)/K_f^p,\Z_p).\]
However, this algebra is generally far too large (and in particular, noncommutative) for us to expect to find simultaneous eigenfamilies $\mbf{f}$ for it. So we can instead take $\mb{T}$ the algebra generated by operators of the form
\[1_{K_p}\otimes T^S\otimes \bigotimes_{v\in S\backslash\{\textrm{primes over }p\}}T_v,\]
where $T^S\in C_c^\infty(K_f^p\backslash G(\A_{F,f}^p)/K_f^p,\Q_p)$ is spherical away from $S$, each $T_v$ is in the subalgebra of
\[C_c^\infty(K_v\backslash G(F_v)/K_v,\overline\Q_p)\]
generated by one of the Hecke operators defined in Definition \ref{defheckeoptype} (viewed with coefficients in $\overline\Q_p$ instead of $\C$ via $\iota$) for each $v\in S$ not above $p$, and where we assume that the factor $K_v$ of $K_f$ at $v$ is deep enough so that these operators are of level $K_v$. We note that, since we allow $T^S$ to have coefficients in $\overline\Q_p$, it is possible that we may have to replace the space $\mf{X}$ above with some restriction of scalars of $\mf{X}$ from a finite extension of $\Q_p$, and then lift $\Sigma$ accordingly.

In any case, this algebra $\mb{T}$ is then obviously commutative, and we can expect to find simultaneous eigenfamilies for it. If, for $v\in S$ not lying over $p$, any of the eigenvalues of the operators $T_v$ as above have nonzero eigenvalue on $\mbf{f}$, then in particular the automorphic representations $\pi_x$ for general $x\in\Sigma$ have supercuspidal type at $v$ given by one used in Definition \ref{defheckeoptype}.

\item More generally, let us allow $G$ to be odd orthogonal again as well as symplectic or unitary. Assume we are given a level subgroup $K_f^p\subset G(\A_{F,f}^p)$ which is hyperspecial away from a finite set $S$ of finite places of $F$ including the places above $p$. Then the theory of eigenvarieties gives us families of (not necessarily ordinary) automorphic eigensystems for the spherical Hecke algebra for $K_f^S$ (as well as the algebra of $U_p$ operators), varying over rigid spaces $\mf{Y}$ over $\Q_p$ which admit a finite map to a space of weights. If one uses the theory of Urban \cite{urbanev} to construct these families, then it is explained there (see \cite[Theorem 5.3.10]{urbanev} specifically) that, after passing to a finite cover of an open affinoid neighborhood of a given point $x_0\in\mf{Y}(\overline\Q_p)$, these families also interpolate the traces of any Hecke operators in $C_c^\infty(K_f^p\backslash G(\A_F^p)/K_f^p,\Q_p)$, not just spherical ones. Thus, to incorporate this aspect of the theory, we have allowed the dimension of the spaces $V_x$ for $x\in\Sigma$ in our definition of Hecke families to be arbitrary (though they are still finite dimensional, since each $\pi_{x,f}^{K_f}$ is), rather than just $1$-dimensional like for Hida families.

From this theory of eigenvarieties one can thus construct Hecke families $(\mb{T},\mf{X},\Sigma,\Psi)$ for $G$, where $\mf{X}$ is the aforementioned finite cover of a given neighborhood of $x_0$, and $\mb{T}$ is the full Hecke algebra away from $p$.

We remark here that, although the families $\mf{Y}$ of representations constructed in \cite[Theorem 5.3.10]{urbanev} are generically irreducible, their specializations at given points $x_0\in\mf{Y}(\overline\Q_p)$ can be reducible. This is why, in our definition of a strongly $M$-Eisenstein point $x_0$, we have allowed the space $V_{x_0}$ to split into $V_{x_0}'\oplus V_{x_0}''$ with $V_{x_0}'$ Eisenstein but $V_{x_0}''$ possibly non-Eisenstein.

\item In either of the examples above coming from Hida theory or eigenvarieties, if the families constructed there are generically cuspidal, then after restricting $\Sigma$ away from points of irregular weight, the corresponding Hecke families will be $\Sigma$-generic. But in general this condition of $\Sigma$-genericity is slightly weaker than generic cuspidality.

In any case, we will use this condition on the Galois side in order to know that for $x\in\Sigma$, the Galois representations attached to $\pi_x$ are all pure of the same weight.

\item Finally, we comment on the last part of the definition. The main motivation for this paper comes from studying the $p$-adic variation of cuspidal automorphic forms or representations which degenerate to Eisenstein series, i.e., from the theory of Eisenstein congruences. Such a study gives rise naturally, via the processes described above, to $\Sigma$-generic Hecke families for $G$ which contain a point (possibly many points) $x_0$ which are strongly $M$-Eisenstein. It is usually possible to choose the sections defining the relevant Eisenstein series at bad finite places $v$, and such choices will affect behavior of the Hecke actions on the spaces $V_{x_0}$.

The Galois representations attached to the representations $\pi_x$ for $x\in\Sigma$ will deform to those attached to Eisenstein series, and the Galois representations attached to Eisenstein series will be reducible. But the choice of sections for the Eisenstein series at $x_0$ at bad finite places $v$ will influence the behavior of the local components of the representations $\pi_x$ for $x\in\Sigma$ as long as we allow our families to see the actions of certain Hecke operators at $v$, such as the ones constructed in Section \ref{sectypes}. Thus, by local-global compatibility, this strategy will allow us some control over the restrictions of these Galois representations to the decomposition groups $G_{E_w}$ for $w$ in $E$ over such $v$; we will thus obtain control over the possible monodromy in the associated Weil--Deligne representations, which is very important if we are trying to use our families to construct Selmer classes via Ribet's method.
\end{enumerate}
\end{remark}

We end this section with a small lemma about Hecke families which will be useful in the proof of our main theorem.

\begin{lemma}
\label{lemdimVxisconst}
Let $\mc{F}=(\mb{T},\mf{X},\Sigma,\Psi)$ be a Hecke family, and assume $\mf{X}$ is Zariski connected. Then there exists a positive integer $D$ such that, for any $x\in\Sigma$, the space $V_x$ of Definition \ref{defheckefamilies} has dimension $D$ as a $\C$-vector space. Moreover, if $x_0\in\mf{X}(\overline\Q_p)$ is any strongly $M$-Eisenstein point, then we have as well that $\dim_\C(V_{x_0})=D$.
\end{lemma}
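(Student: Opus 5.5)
The plan is to use the unit element of $\mb{T}$. By Remark (2), $\mb{T}$ contains the identity operator $1_{K_f}=\chars(K_f)/\vol(K_f)$. This operator acts as the identity on every space of $K_f$-fixed vectors, and hence on every $\iota^{-1}(\mb{T})$-constituent $V_x$.

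First, for $x\in\Sigma$, the compatibility \eqref{eqnheckefamcomp} applied to $\phi=1_{K_f}$ gives
\[x(\Psi(1_{K_f}))=\iota(\tr(\iota^{-1}\circ 1_{K_f}|V_x))=\dim_\C V_x.\]
Here we note that $\iota^{-1}\circ 1_{K_f}$ is again $\chars(K_f)/\vol(K_f)$, since this function is rational-valued. So the analytic function $f=\Psi(1_{K_f})\in\mc{O}(\mf{X})$ takes positive integer values on $\Sigma$.

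Second, we show $f$ is constant on $\mf{X}$; this is the main point. Each value $\dim V_x$ is at most some bound? Not a priori, so we avoid needing one. Instead, we use that the values $f(x)$ for $x\in\Sigma$ lie in $\Z$, a subset of $\Q_p$.

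For each integer $d$, let $Z_d$ be the Zariski closed subset $\{f=d\}$. Then $\Sigma\subset\bigcup_d Z_d$. To reduce to finitely many $d$, we use that $f$ is a bounded analytic function on the affinoid $\mf{X}$, so $|f|_{\sup}<\infty$. Bounded $p$-adic absolute value does not bound integers, however, so instead we argue algebraically.

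Consider the polynomial-free route. Since $\mf{X}$ is affinoid and Zariski connected, $\mc{O}(\mf{X})$ has no nontrivial idempotents. The function $f$ satisfies $\prod_{d}(f-d)=0$ on $\Sigma$ only if finitely many $d$ occur. To get finiteness, we use that $f(x)=\dim V_x\le\dim\pi_{x,f}^{K_f}$. Alternatively, and more cleanly, we use the reduction map: $f$ maps $\mf{X}$ into a closed ball. The image of $\Sigma$ is a set of integers, and we use that the Zariski closure of $\Sigma$ is all of $\mf{X}$.

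I would try the following. Each irreducible component $C$ of $\mf{X}$ is the Zariski closure of $\Sigma\cap C$, because $\Sigma$ is dense and there are finitely many components. On an irreducible affinoid, a nonconstant analytic function has fibers that are proper closed subsets. Also, $f(C)$ is an open-ish set in the sense that $f-f(x)$ has only finitely many zeros on one-dimensional pieces. So it is not immediately contradictory for countably many integer fibers to cover a dense set.

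This is the genuine obstacle. I would resolve it by noting that $\mb{T}$ is finitely generated modulo spherical operators and that $\pi_x$ is cohomological of level $K_f$. However, the weights vary, so $\dim\pi_{x,f}^{K_f}$ is not obviously bounded.

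The better approach is to use traces of powers. For each $k\ge1$, $\Psi$ interpolates $\tr(\phi^k|V_x)$ for every $\phi\in\mb{T}$. Here is a cleaner route that I would commit to. Choose $\Psi(1_{K_f})=f$. Then $f$ is a rigid function that is $\Z$-valued on a Zariski dense set. On a connected affinoid, the integrality of $\mc{O}(\mf{X})^\circ$ shows that $f$ is congruent modulo $p^m$ to a locally constant function on a finite cover by affinoid subdomains, for every $m$. Each integer value $d$ is attained on $\Sigma$, hence by density on a whole component. Two different integer values would then give disjoint nonempty closed sets covering $\mf{X}$, contradicting connectedness. So $f\equiv D$.

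Finally, evaluating at the strongly $M$-Eisenstein point $x_0$ gives
\[D=f(x_0)=\tr(1_{K_f}|V_{x_0}'\oplus V_{x_0}'')=\dim V_{x_0},\]
as required. The positivity of $D$ follows from the nontriviality of $V_x$.
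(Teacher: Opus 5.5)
Your first step ($x(\Psi(1_{K_f}))=\dim_\C V_x$ for $x\in\Sigma$) and your last step (evaluating at $x_0$) are fine. The middle step, constancy of $f=\Psi(1_{K_f})$, has a genuine gap, and the argument you commit to does not close it.

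The claim that an integer value attained on $\Sigma$ is ``by density'' attained on a whole component is a non sequitur. More fundamentally, no $p$-adic local-constancy argument can work, because the positive integers are $p$-adically dense in $\Z_p$. Concretely, take the closed unit disc $\mf{X}=\mathrm{Sp}\,\Q_p\langle T\rangle$, which is irreducible, and $f=T$. This function takes positive integer values on the Zariski dense set $\Sigma=\Z_{>0}$, yet it is not constant. So integrality of the values on $\Sigma$ can never force constancy by itself; you need an a priori bound on those values.

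The missing ingredient is exactly the bound you set aside. Since $V_x$ is a constituent of $\pi_{x,f}^{K_f}$, you have $\dim V_x\le\dim\pi_{x,f}^{K_f}$. The right-hand side is bounded uniformly in $x$ by Bernstein's uniform admissibility theorem: for a fixed compact open subgroup, the dimension of the fixed vectors in an irreducible smooth representation is bounded independently of the representation. Equivalently, irreducible modules over the Hecke algebra of level $K_f$ have bounded dimension; this is what the paper invokes.

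Your worry about varying weights is misplaced. The archimedean components of $\pi_x$ play no role in $\pi_{x,f}^{K_f}$. At places where $K_f$ is hyperspecial the invariants are at most one-dimensional, and at the finitely many remaining places Bernstein's bound applies.

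With the bound in hand, $f(x)\in\{1,\dots,D'\}$ for all $x\in\Sigma$. Then $\prod_{d=1}^{D'}(f-d)$ vanishes on the Zariski dense set $\Sigma$, hence is nilpotent and vanishes at every point of $\mf{X}$. The level sets $\{f=d\}$ are therefore finitely many pairwise disjoint Zariski closed subsets covering $\mf{X}$, so connectedness forces $f\equiv D$ on points. This is the paper's argument, and after it your evaluation at $x_0$ and your positivity argument go through as written.
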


\begin{proof}
By assumption, the algebra $\mb{T}$ contains the unit
\[1_{K_f}=\frac{1}{\vol(K_f)}\chars(K_f)\]
of $C_c^\infty(K_f\backslash G(\A_{F,f})/K_f,\Q_p)$. Thus $\Psi(1_{K_f})$ is defined and analytic on $\mf{X}$. Let $x\in\Sigma$. Then by definition, using \eqref{eqnheckefamcomp}, we have
\begin{equation}
\label{eqntrofunitx}
x(\Psi(1_{K_f}))=\iota(\tr(\iota^{-1}\circ 1_{K_f}|V_x))=\dim_\C(V_x)\leq\dim_\C(\pi_{x,f}^{K_f}).
\end{equation}
Similarly, for any strongly $M$-Eisenstein point $x_0$, we have
\begin{equation}
\label{eqntrofunity}
x_0(\Psi(1_{K_f}))=\dim_\C(V_{x_0}).
\end{equation}
A result of Bernstein \cite[Theorem 1]{Bernstein} implies that the irreducible, finite dimensional representations of $C_c^\infty(K_f\backslash G(\A_{F,f})/K_f,\C)$ have bounded dimension, say by an integer $D'$. Thus we have $0<\dim_\C(\pi_{x,f}^{K_f})\leq D'$, the lower bound due to the fact that $V_x$ is nontrivial by definition. 

Thus by \eqref{eqntrofunitx}, the analytic function $\Psi(1_{K_f})$ takes on only finitely many possible values on the dense subset $\Sigma$, all of which are positive integers. So since we assumed $\mf{X}$ is connected, the function $\Psi(1_{K_f})$ is constant, necessarily given by some positive integer $D$. Therefore $\dim_\C(V_x)=D$ for any $x\in\Sigma$ by \eqref{eqntrofunitx}, and similarly for any strongly $M$-Eisenstein point $x_0$ by \eqref{eqntrofunity}.
\end{proof}

\section{Families of Galois representations}
\label{secgaloisfamilies}
We continue in this section to fix an isomorphism $\iota:\C\overset{\sim}{\longrightarrow}\overline{\Q}_p$. Let $K_f$ be a factorizable, open compact subgroup of $G(\A_{F,f})$, and fix also throughout this section a Hecke family $\mc{F}=(\mb{T},\mf{X},\Sigma,\Psi)$, as in Definition \ref{defheckefamilies}. Our goal now is to attach a family of Galois representations to $\mc{F}$. We will pass through the theory of pseudorepresentations to do this; we will not need to recall the full definition of pseudorepresentation here, which is given in \cite{taylorpsrep}. Suffice it to say that a pseudorepresentation of dimension $N_0$ of a group $G_0$ over a ring $R$ is, in particular, a function $T:G_0\to R$ with $T(1)=N_0$, and that if $\rho:G_0\to GL_N(R)$ is a representation then $\tr(\rho)$ is a pseudorepresentation.

We continue to denote by $N$ the dimension of the standard representation of $G^\vee$, and by $n$ the absolute rank of $G$.

\begin{proposition}
\label{propconstofpsrep}
Let $\mc{F}=(\mb{T},\mf{X},\Sigma,\Psi)$ be a Hecke family.
\begin{enumerate}[label=(\alph*)]
\item There exists a unique continuous, $N$-dimensional pseudorepresentation $T_{\mc{F}}:G_E\to\mc{O}(\mf{X})$ such that, for every $x\in\Sigma$, we have
\[x\circ T_{\mc{F}}=\tr\rho_{\pi_x},\]
where $\pi_x$ is the automorphic representation attached to the point $x$ as in Definition \ref{defheckefamilies}, and $\rho_{\pi_x}$ is the Galois representation attached to $\pi_x$ by Theorem \ref{thmconstofgalois} via $\iota$.
\item Moreover, let $x_0\in\mf{X}(\overline\Q_p)$ be a strongly $M$-Eisenstein point, say induced by some pair $(\chi,\sigma)$ for some algebraic Hecke character $\chi$ for $E$ and some discrete, strongly generic, cohomological automorphic representation $\sigma$ of $H(\A_F)$ which, if $G$ is even orthogonal, is $\std$-regular at all archimedean places of $F$ (see Definition \ref{defstdreg}). Then the Hecke characters $\chi\Vert\cdot\Vert^{(1+\delta_{\mr{eo}}-N)/2}$ and $\chi^*\Vert\cdot\Vert^{(1+\delta_{\mr{eo}}-N)/2}$ are algebraic, where $(\cdot)^*$ denotes (conjugate-)dual. Write $\rho_{\chi\Vert\cdot\Vert^{(1+\delta_{\mr{eo}}-N)/2}}$ for the character $G_E\to\overline\Q_p^\times$ associated with $\chi\Vert\cdot\Vert^{(1+\delta_{\mr{eo}}-N)/2}$ by class field theory via $\iota$, and similarly for $\rho_{\chi^*\Vert\cdot\Vert^{(1+\delta_{\mr{eo}}-N)/2}}$. Then
\[x_0\circ T_{\mc{F}}=\tr(\rho_{\sigma}(-1)\oplus \rho_{\chi\Vert\cdot\Vert^{(1+\delta_{\mr{eo}}-N)/2}}\oplus\rho_{\chi^*\Vert\cdot\Vert^{(1+\delta_{\mr{eo}}-N)/2}}),\]
where $\rho_\sigma$ is attached to $\sigma$ by Theorem \ref{thmconstofgalois}, and $\rho_{\sigma}(-1)$ denotes the twist of $\rho_{\sigma}$ by the inverse of the $p$-adic cyclotomic character.
\end{enumerate}
\end{proposition}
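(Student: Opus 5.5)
For part (a), I will follow the standard Chenevier/Bellaïche--Chenevier interpolation argument. First I fix a finite set $S$ of finite places of $F$ containing those above $p$ and those where $K_f$ is not hyperspecial. I also require that $\mb{T}$ contains all spherical operators outside $S$. For $v\notin S$ and $w\mid v$ unramified, Theorem \ref{thmconstofgalois}(b) gives that $\rho_{\pi_x}|_{G_{E_w}}$ is unramified. Its Frobenius characteristic polynomial is determined by the Satake parameter of $\pi_{x,v}$ through $\std$. When $v$ is inert in the unitary case, I pass to $w$ and use the $L$-group of $G(F_v)$ restricted to $W_{E_w}$. The coefficients of these polynomials, twisted by $\Vert\cdot\Vert_w^{(1+\delta_{\mr{eo}}-N)/2}$, are eigenvalues of explicit spherical operators $h_{w,k}\in\mb{T}$ acting on $\pi_{x,v}^{K_v}$. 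This is a Satake-transform computation, and the half-integral twist is handled with the fixed $\iota$. Every $\phi$ in the spherical part of $\mb{T}$ acts by a scalar on $V_x$. So \eqref{eqnheckefamcomp} together with Lemma \ref{lemdimVxisconst} (on each connected component) gives $x(\Psi(h_{w,k}))=D\cdot(\text{eigenvalue})$. Hence $a_{w,k}:=\Psi(h_{w,k})/D\in\mc{O}(\mf{X})$ interpolates these coefficients. I then define $T(\Frob_w^j)$ as the Newton-identity polynomials in the $a_{w,k}$, which are power sums of the roots.

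Next I apply \cite[Proposition 7.1.1]{BCbook} (Chenevier's interpolation of pseudocharacters). The $G_E$-representations $\rho_{\pi_x}$ for $x\in\Sigma$ are unramified outside $S$ and have traces of Frobenius interpolated by elements of $\mc{O}(\mf{X})$ of bounded norm. Boundedness holds because these are integral Frobenius traces, so they lie in $\mc{O}(\mf{X})^0$ after rescaling, and $\Sigma$ is Zariski dense. The proposition then yields a unique continuous $N$-dimensional pseudocharacter $T_{\mc{F}}:G_{E,S}\to\mc{O}(\mf{X})$ with $x\circ T_{\mc{F}}=\tr\rho_{\pi_x}$. Uniqueness follows from Chebotarev density together with Zariski density of $\Sigma$, since $\mc{O}(\mf{X})$ is reduced, or after passing to its reduction. \textbf{The main technical point} is this boundedness/continuity input. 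I expect it to follow because each $\rho_{\pi_x}$ is continuous on a compact group and hence conjugate into $GL_N$ of the integers, so $|x\circ T|\le 1$ for all $x\in\Sigma$ and thus $T(\Frob_w)$ has sup norm at most $1$.

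For part (b), the parity hypothesis on the infinitesimal character of $\Pi_{x_0}$ is read through Proposition \ref{propparamliftcoho} applied to $\sigma$. It shows that the exponents of $\chi_{v_\infty}$, shifted by $(1+\delta_{\mr{eo}}-N)/2$, are integers, so $\chi\Vert\cdot\Vert^{(1+\delta_{\mr{eo}}-N)/2}$ and its (conjugate-)dual are algebraic. At unramified $w\notin S$, the Satake parameter of $\Pi_{x_0,v}$ under $\std$ is $\{\chi_w,\chi_w^*\}\cup\std(\text{Satake}(\sigma_v))$. The rank-$(N-2)$ standard embedding for $H$ has twist $(1+\delta_{\mr{eo}}-(N-2))/2$, which differs from ours by $1$ and produces the Tate twist $(-1)$. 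The spherical operators act by scalars on $V_{x_0}'$, and also on all of $V_{x_0}$ of dimension $D$ by evaluating $x_0\circ\Psi$. This gives $x_0(T(\Frob_w))$ equal to the trace of Frobenius of the stated sum. One subtlety is that $V_{x_0}''$ must carry the same spherical character. I will argue this from the fact that $x_0(\Psi(\phi))/D$ equals $\phi$'s trace on $V_{x_0}$ divided by $D$ for every spherical $\phi$. Since $\Psi$ restricted to the spherical algebra is multiplicative on $\Sigma$ up to the factor $D$, and hence everywhere by density, the spherical action on $V_{x_0}$ is a single character, which is the one from $V_{x_0}'$. Chebotarev and Brauer--Nesbitt then conclude the identity of pseudocharacters.
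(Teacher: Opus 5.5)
Your proposal is essentially the paper's argument. You interpolate Frobenius traces at a density-one set of good places by $D^{-1}\Psi$ of spherical operators (with $D$ from Lemma \ref{lemdimVxisconst}), feed these into Chenevier's interpolation proposition with the same integrality-based boundedness input, and for (b) read off $\std(\tilde t_{w'})=\diag(\chi(\varpi_{w'}),\chi^*(\varpi_{w'}),t_{w'})$ and conclude by Chebotarev. The paper is leaner in two respects. It uses only the single minuscule operator $\chars(K_{v'}e_1^\vee(\varpi_{v'})K_{v'})$, whose Satake transform is exactly $q_{w'}^{(N-\delta_{\mr{eo}}-1)/2}\tr(\cdot\,|\,\std)$ because $\langle\rho,e_1^\vee\rangle=\tfrac{N-\delta_{\mr{eo}}-1}{2}$, and in the unitary case it uses only split places; so your full characteristic polynomials, Newton identities, half-integral-twist bookkeeping and inert-place computations are unnecessary. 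Your multiplicativity and linear-independence argument, showing that the spherical algebra acts on all of $V_{x_0}=V_{x_0}'\oplus V_{x_0}''$ through the single Satake character of $\Pi_{x_0}$, is a welcome addition: the paper uses this tacitly when it says the argument for \eqref{eqnawxfrobjust6} still computes $a_{w'}(x_0)$.
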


\begin{proof}
We first define a set $S_0$ of places of $E$ as follows, by splitting into three cases. If $G$ is unitary, let $S_0$ be the set of finite places $w'$ of $E$ which are split over a place $v'$ of $F$ at which the level $K_f$ of $\mc{F}$ is hyperspecial and at which $\mb{T}$ contains all spherical Hecke operators at $v'$. If $G=SO_{2n}^\eta$ with $\eta\ne 1$ let $S_0$ denote the set of finite places $w'=v'$ of $E=F$ at which $\eta$ is unramified, at which $K_f$ is hyperspecial, and at which $\mb{T}$ contains all spherical Hecke operators at $v'$. Finally, in all other cases, let $S_0$ denote the set of finite places $w'=v'$ of $E=F$ at which $K_f$ is hyperspecial and at which $\mb{T}$ contains all spherical Hecke operators at $v'$. Then in all cases, the set $S_0$ has density $1$ in the set of all places of $E$.

For $x\in\Sigma$, let $\rho_{\pi_x}$ be the Galois representation attached to $\pi_x$ by Theorem \ref{thmconstofgalois}, which exists because $\pi_x$ is discrete by our strong $\Sigma$-genericity hypothesis. To prove part (a) of our proposition, we will invoke \cite[Proposition 7.1.1]{chengln}, which immediately implies the statement at hand once we can show the following claim: There is a compact subring $R\subset\mc{O}(\mf{X})$ with the property that, for any $w'\in S_0$, there is an element $a_{w'}\in R$ such that
\begin{equation}
\label{eqnawxfrob}
a_{w'}(x)=\tr(\Frob_{w'}^{-1}|\rho_{\pi_x}),
\end{equation}
for any $x\in\Sigma$. Here $\Frob_w$ denotes an arithmetic Frobenius element. In fact, we will take $R=\mc{O}(\mf{X})^\circ$, the ring of power bounded elements in $\mc{O}(\mf{X})$, which is a compact subring of $\mc{O}(\mf{X})$; here, we say an element $f\in\mc{O}(\mf{X})$ is \textit{power bounded} if the subset $\sset{f^m\in\mc{O}(\mf{X})}{m\in\Z_{>0}}\subset\mc{O}(\mf{X})$ is bounded in the natural Banach topology on $\mc{O}(\mf{X})$. Actually, note that if we find functions $a_{w'}$ such that \eqref{eqnawxfrob} holds, then each $a_{w'}$ is power bounded by continuity because the traces $\tr(\Frob_{w'}^{-1}|\rho_{\pi_x})$ are necessarily $p$-adically integral. So to prove (a) it suffices now to construct functions $a_{w'}\in\mc{O}(\mf{X})$ such that \eqref{eqnawxfrob} holds.

So let $w'\in S_0$, and let $v'$ be the place of $F$ below $w'$ (so $w'=v'$ inside of $E=F$ if $G$ is not unitary). Then we have that $\pi_{v'}$ is an unramified representation of the quasisplit group $G(F_{v'})$ of rank $n$, and this group is split unless $G=SO_{2n}^\eta$ with $\eta$ having its local component $\eta_{v'}$ at $v'$ be nontrivial.

Let $T$ be the maximal $F_{v'}$-split torus in the split group $G_{/F_{v'}}$, which is defined in Section \ref{secgroups} in all cases except the unitary case; if $G$ is unitary, so that $G_{/E_{w'}}\cong GL_{n/E_{w'}}$ and so $G_{/F_{v'}}\cong GL_{n/F_{v'}}$ via the natural isomorphism $F_{v'}\cong E_{w'}$, let $T$ be the usual diagonal torus. Let $e_1^\vee$ be the cocharacter sending $t\in\GL_{1/E_{w'}}$ to $\diag(t,1,\dotsc,1)$ in the unitary case; otherwise, let $e_1^{\vee}$ be the cocharacter of $T$ which is dual to $e_1$ with respect to the basis $e_1,\dotsc,e_n$, or $e_1,\dotsc,e_{n-1}$ if $G=SO_{2n}^\eta$ with $\eta_{v'}\ne 1$, of the $E_{w'}$-rational character group as in Section \ref{secgroups}.

Let $D$ be the dimension the vector space $V_x$ associated with $x\in\Sigma$ as in Definition \ref{defheckefamilies}; this is independent of the choice of $x\in\Sigma$ by Lemma \ref{lemdimVxisconst}. Then we define the function $a_{w'}$ by
\begin{equation}
\label{eqnawxfrobjust1}
a_{w'}=D^{-1}\Psi(\iota\circ(\chars(K_{v'} e_1^\vee(\varpi_{v'})K_{v'})\otimes 1_{K_f^{v'}})),
\end{equation}
where $\varpi_{v'}$ is a fixed the uniformizer of $F_{v'}$, where $K_{v'}$ is the component of $K_f$ at $v'$, and where $K_f^{v'}$ is that away from $v'$. We now check that the relation \eqref{eqnawxfrob} holds.

To do this, we must recall some of the theory of the Satake transform. Let $X^*(T)$ denote the character group of the split torus $T$ defined just above. In all cases, one may identify this group with the quotient of the Levi subgroup of the standard minimal parabolic subgroup of $G(F_{v'})$ by its maximal compact subgroup. Let $W$ be the relative Weyl group of $T$ in $G_{/F_{v'}}$. Let $\mc{S}:C_c^{\infty}(K_{v'}\backslash G(F_{v'})/K_{v'},\C)\to \C[X^*(T)]^{W}$ denote the Satake transform, where $\C[X^*(T)]^{W}$ denotes the Weyl-invariants in the group ring $\C[X^*(T)]$. Then $\mc{S}$ is an isomorphism of $\C$-algebras.

When $G$ is not $SO_{2n}^\eta$ with $\eta_{v'}\ne 1$, so that $G_{F_{v'}}$ is split, the paragraph above is part of the standard theory of the Satake transform. But otherwise, $G_{/F_v}$ is quasisplit but not split, and the transform $\mc{S}$ is constructed in \cite[\S 4.2]{CartierCorv}, and proven to be an isomorphism.

We now wish to compute $\tr(\Frob_{w'}^{-1}|\rho_{\pi_x})$ in terms of the Satake parameters of the local component $\pi_{x,v'}$ of $\pi_{x}$ at $v'$. We will do this separately in the split and nonsplit cases.

So first assume that $G$ is not $SO_{2n}^\eta$ with $\eta_{v'}\ne 1$. Since the inclusion $F_{v'}\hookrightarrow E_{w'}$ is an isomorphism, the spherical representation $\pi_{x,v'}$ may be seen as a representation of $G(E_{w'})$ whose Satake parameter, call it $s_{w'}$, under the standard representation $\std$ of $G^\vee$ has
\[\Vert\varpi_{v'}\Vert_{v'}^{(1+\delta_{\mr{eo}}-N)/2}\std(s_{w'})=\iota^{-1}(\rho_{\pi_x}(\Frob_{w'}^{-1})^{\sss}),\]
by property (b) of Theorem \ref{thmconstofgalois}. Let $q_{w'}$ denote the cardinality of the residue field of $E_{w'}$ or, equivalently, of $F_{v'}$. We therefore have that
\begin{equation}
\label{eqnawxfrobjust2}
\tr(\Frob_{w'}^{-1}|\rho_{\pi_x})=\iota(q_{w'}^{(N-\delta_{\mr{eo}}-1)/2}\tr(s_{w'}|\std)).
\end{equation}

Now since $e_1^\vee$ is dominant and miniscule and, when viewed as a weight for $T^\vee$ in $G^\vee$, it is the highest weight of the standard representation of $G^\vee$, we have that
\begin{equation}
\label{eqnawxfrobjust3}
q_{w'}^{\langle\rho,e_1^\vee\rangle}\tr(s_{w'}|\std)=\mc{S}(\chars(K_{v'} e_1^\vee(\varpi_{v'})K_{v'}))(s_{w'});
\end{equation}
see for example \cite[Formula (3.13)]{gross} for a justification of this. But note also that
\begin{equation}
\label{eqnawxfrobjust4}
\langle\rho,e_1^\vee\rangle=\frac{N-\delta_{\mr{eo}}-1}{2}.
\end{equation}
Thus, combining \eqref{eqnawxfrobjust2}, \eqref{eqnawxfrobjust3} and \eqref{eqnawxfrobjust4}, we get
\begin{equation}
\label{eqnawxfrobjust5}
\tr(\Frob_{w'}^{-1}|\rho_{\pi_x})=\iota(\mc{S}(\chars(K_{v'} e_1^\vee(\varpi_{v'})K_{v'}))(s_{w'})),
\end{equation}
in the split case.

Now assume instead that $G=SO_{2n}^\eta$ with $\eta_{v'}\ne 1$. Then the $L$-parameter $\phi_{x,v'}$ of $\pi_{x,v'}$ may be described as follows. A maximal torus in $G_{/F_{v'}}$ is given by $T_{\mr{max}}=T\times SO_{2/F_{v'}}^\eta$. Let $\chi_{x,v'}:T_{\mr{max}}(F_{v'})\to\C^\times$ be the unramified character whose parabolic induction to $G(F_{v'})$ has $\pi_{x,v'}$ as the unramified constituent; note that this character is trivial on $SO_2^\eta(F_{v'})$ because this group is compact, as it may be identified with the set of norm $1$ elements in the quadratic extension of $F_{v'}$ cut out by $\eta_{v'}$. Then the $L$-parameter $\phi_{x,v'}$ factors through $T^\vee(\C)\times O_2(\C)\subset O_{2n}(\C)\subset GL_{2n}(\C)$ under the standard representation \eqref{eqnstdofLgps} and is determined by
\begin{multline}
\label{eqnphixvprimediag}
\phi_{x,v'}(\varphi_{v'}^{-1})=\diag(\chi_{x,v'}(e_1^\vee(\varpi_{v'})),\dotsc,\chi_{x,v'}(e_{n-1}^\vee(\varpi_{v'})),-1,1,\\
\chi_{x,v'}(e_{n-1}^\vee(\varpi_{v'}))^{-1},\dotsc,\chi_{x,v'}(e_1^\vee(\varpi_{v'}))^{-1}),
\end{multline}
where $\varphi_{v'}$ is the arithmetic Frobenius element in $W_{F_{v'}}$, and $e_1^\vee,\dotsc,e_{n-1}^\vee$ is the dual basis to $e_1,\dotsc,e_{n-1}$. This parameter is trivial on $SL_{2,\mr{D}}(\C)$. Since $\eta_{v'}$ is nontrivial and unramified by assumption, the $O_2(\C)$-factor of the image of $\varphi_{v'}$ under $\phi_{x,v'}$ must lie in the non-identity component of $O_2(\C)$; this accounts for the $(-1,1)$ in the middle of the list above.

Let $s_{w'}=s_{v'}\in T^\vee(\C)$ now be the Satake parameter of $\pi_{x,v'}$, so
\[s_{w'}=\prod_{i=1}^{n-1}e_i(\chi_{x,v'}(e_i^\vee(\varpi_{v'}))),\]
where each $e_i$ above is viewed as a cocharacter of $T^\vee$, and each $e_i^\vee$ is viewed as one for $T$. Then since the relative root system of $T^\vee$ in $G_{/F_{v'}}$ is of type $B$, the dual torus $T^\vee$ is maximal in a group of type $C_{n-1}$, which we can take to be $Sp_{2n-2}(\C)$. The trace of the standard representation on this group applied to $s_{w'}$ is thus given by
\[\sum_{i=1}^{n-1}(\chi_{x,v'}(e_i^\vee(\varpi_{v'}))+\chi_{x,v'}(e_i^\vee(\varpi_{v'}))^{-1}).\]
Thus, letting $B$ be the standard minimal parabolic in $G_{/F_{v'}}$ containing $T_{\mr{max}}$, the theory of the Satake transform still computes
\[\mc{S}(\chars(K_{v'} e_1^\vee(\varpi_{v'})K_{v'}))(s_{w'})=\delta_B(e_1^\vee(\varpi_{v'}))^{-1/2}\sum_{i=1}^{n-1}(\chi_{x,v'}(e_i^\vee(\varpi_{v'}))+\chi_{x,v'}(e_i^\vee(\varpi_{v'}))^{-1});\]
the argument used to justify \cite[Formula (3.13)]{gross} still shows that there are no other terms on the right hand side above, because $e_1^\vee$ is dominant and miniscule. The sum on the right hand side above visibly coincides with the trace of \eqref{eqnphixvprimediag}, and the term $\delta_B(e_1^\vee(\varpi_{v'}))^{-1/2}$ is given by $q_{w'}^{n-1}=q_{w'}^{(N-\delta_{\mr{eo}}-1)/2}$. Altogether, this justifies the formula \eqref{eqnawxfrobjust5} again, this time in the nonsplit case.

Now we have that
\begin{equation}
\label{eqnawxfrobjust6}
\begin{aligned}
a_{w'}(x)&=x(D^{-1}\Psi(\iota\circ(\chars(K_{v'} e_1^\vee(\varpi_{v'})K_{v'})\otimes 1_{K_f^{v'}})))\\
&=D^{-1}\iota(\tr(\chars(K_{v'} e_1^\vee(\varpi_{v'})K_{v'})\otimes 1_{K_f^{v'}}|V_x))\\
&=\iota(\mc{S}(\chars(K_{v'} e_1^\vee(\varpi_{v'})K_{v'}))(s_{w'})),
\end{aligned}
\end{equation}
the first equality by \eqref{eqnawxfrobjust1}, the second by Definition \ref{defheckefamilies}, and the third because the operator $\chars(K_{v'} e_1^\vee(\varpi_{v'})K_{v'})\otimes 1_{K_f^{v'}}$ acts on every vector in $V_x$ by the scalar
\[\mc{S}(\chars(K_{v'} e_1^\vee(\varpi_{v'})K_{v'}))(s_{w'}).\] Thus, combining \eqref{eqnawxfrobjust5} and \eqref{eqnawxfrobjust6} gives \eqref{eqnawxfrob}. As noted above, part (a) of the proposition follows.

As for (b), we first prove that the characters $\chi\Vert\cdot\Vert^{(1+\delta_{\mr{eo}}-N)/2}$ and $\chi^*\Vert\cdot\Vert^{(1+\delta_{\mr{eo}}-N)/2}$ are algebraic. Indeed, by assumption, since $x_0$ is strongly $M$-Eisenstein, the component at any archimedean place $v_\infty$ of $F$ of the induced representation $\Ind_{P(\A_F)}^{G(\A_F)}(\chi\boxtimes \sigma)$ has infinitesimal character $\lambda_{v_\infty}+\rho$ for some integral weight $\lambda_{v_\infty}$. Thus, by the proof of Proposition \ref{propparamliftcoho}, its infinitesimal character becomes integral after adding $(\tfrac{1+\delta_{\mr{eo}}-N}{2},\tfrac{1+\delta_{\mr{eo}}-N}{2},\dotsc,\tfrac{1+\delta_{\mr{eo}}-N}{2})$. This implies that the character $\chi\Vert\cdot\Vert^{(1+\delta_{\mr{eo}}-N)/2}$ is algebraic. Thus 
\[(\chi\Vert\cdot\Vert^{(1+\delta_{\mr{eo}}-N)/2})^*\Vert\cdot\Vert^{1+\delta_{\mr{eo}}-N}=\chi^*\Vert\cdot\Vert^{(1+\delta_{\mr{eo}}-N)/2}\]
is also algebraic.

Now for $w'\in S_0$ lying above some $v'$ in $F$, write $t_{w'}$ for the Satake parameter at $w'$ of $\sigma_{v'}$, similarly as in the proof of (a) above. Let us also write $\tilde{t}_{w'}$ for that of the unramified constituent of $\Ind_{P(\A_F)}^{G(\A_F)}(\chi\boxtimes \sigma)_{v'}$. Then
\begin{equation}
\label{eqnttildesatake}
\std(\tilde{t}_{w'})=\diag(\chi(\varpi_{w'}),\chi^*(\varpi_{w'}),t_{w'}).
\end{equation}
The argument above used to prove \eqref{eqnawxfrobjust6} in part (a) still shows that
\[a_{w'}(x_0)=\iota(\mc{S}(\chars(K_v e_1^\vee(\varpi_{v'})K_v))(\tilde{t}_{w'}))=\iota(q_{w'}^{(N-\delta_{\mr{eo}}-1)/2}\tr(\tilde{t}_{w'}|\std)).\]
By (a), the relation 
\[x(T_{\mc{F}}(\Frob_{w'}^{-1}))=a_{w'}(x),\]
holds for every $x\in\Sigma$. Since $\Sigma$ is dense, we therefore get
\[x_0(T_{\mc{F}}(\Frob_{w'}^{-1}))=a_{w'}(x_0)=\iota(q_{w'}^{(N-\delta_{\mr{eo}}-1)/2}\tr(\tilde{t}_{w'}|\std)).\]
Thus we have, by \eqref{eqnttildesatake}, that
\begin{equation}
\label{eqnalmostpartb}
\begin{aligned}
x_0(T_{\mc{F}}(\Frob_{w'}^{-1}))=&\,\iota(q_{w'}^{(N-\delta_{\mr{eo}}-1)/2}\chi(\varpi_{w'})+q_{w'}^{(N-\delta_{\mr{eo}}-1)/2}\chi^*(\varpi_{w'})\\
&+q_{w'}^{(N-\delta_{\mr{eo}}-1)/2}\tr(t_{w'}|\std))\\
=&\,\rho_{\chi\Vert\cdot\Vert^{(1+\delta_{\mr{eo}}-N)/2}}(\Frob_{w'}^{-1})+\rho_{\chi^*\Vert\cdot\Vert^{(1+\delta_{\mr{eo}}-N)/2}}(\Frob_{w'}^{-1})\\
&\,+\iota(q_{w'}^{(N-\delta_{\mr{eo}}-1)/2}\tr(t_{w'}|\std)).
\end{aligned}
\end{equation}
But from property (b) of Theorem \ref{thmconstofgalois}, we have
\[q_{w'}^{(N-\delta_{\mr{eo}}-1)/2}\tr(t_{w'}|\std)=q_{w'}q_{w'}^{(N-\delta_{\mr{eo}}-3)/2}\tr(t_{w'}|\std)=\iota^{-1}(\chi_{\cyc}^{-1}(\Frob_{w'}^{-1})\rho_{\sigma}(\Frob_{w'}^{-1})),\]
where $\chi_{\cyc}$ is the $p$-adic cyclotomic character. Combining this with \eqref{eqnalmostpartb} and invoking Chebotarev density gives part (b) of the proposition.
\end{proof}

\begin{proposition}
\label{propgalfamonfreemod}
With $\mc{F}=(\mb{T},\mf{X},\Sigma,\Psi)$ as above, fix $x_0\in\mf{X}(\overline\Q_p)$. Then there is a Zariski open affinoid neighborhood $\mf{X}_0$ of $x_0$ in $\mf{X}$, a reduced affinoid rigid space $\mf{Y}$ over $\Q_p$, a surjective and generically finite map $q:\mf{Y}\to\mf{X}_0$, and a continuous semisimple representation $\rho_{\mf{Y}}:G_E\to GL_N(\mc{O}(\mf{Y}))$ such that the following holds: For any $x\in\mf{X}_0(\overline\Q_p)$ and any $y\in\mf{Y}$ above $x$, and for any $g\in G_E$, we have
\begin{equation}
\label{eqntracefromY}
\tr(\rho_{\mf{Y}}(g))(y)=(T_{\mc{F}}(g))(x),
\end{equation}
where $T_\mc{F}$ is the pseudorepresentation of Proposition \ref{propconstofpsrep}.
\end{proposition}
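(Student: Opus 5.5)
This follows the Bellaïche--Chenevier approach to realizing pseudocharacters over affinoids as true representations after a generically finite base change (\cite{BCbook}, Chapter 1; see also \cite{chengln}). We start from the continuous $N$-dimensional pseudorepresentation $T_{\mc{F}}:G_E\to\mc{O}(\mf{X})$ of Proposition \ref{propconstofpsrep}.

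First, we localize. Shrinking to a connected Zariski open affinoid neighborhood of $x_0$ does not affect continuity or the trace identities, so we may assume $\mf{X}$ is connected. Replacing $\mf{X}$ by its reduction changes nothing on $\overline\Q_p$-points, and $T_{\mc{F}}$ descends to $\mc{O}(\mf{X}_{\mr{red}})$. We then work one irreducible component at a time; since there are finitely many components, we can later take the disjoint union $\mf{Y}$ of the covers built over each one, and this union is still affinoid.

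On an irreducible component $\mf{X}'$ with function field $K=\Frac(\mc{O}(\mf{X}'))$, Taylor's theorem (\cite{taylorpsrep}) applies to the pseudorepresentation $T_{\mc{F}}\otimes K$ over an algebraic closure $\overline K$. It gives a semisimple representation $\rho_{\overline K}:G_E\to GL_N(\overline K)$ with trace $T_{\mc{F}}$. Because $G_E$ is topologically finitely generated modulo the relevant kernel, or alternatively because the image of $\mc{O}(\mf{X}')[G_E]$ in $M_N(\overline K)$ is finite over $\mc{O}(\mf{X}')$, this representation is defined over a finite extension $L/K$. We let $\mf{Y}'$ be the normalization of $\mf{X}'$ in $L$, which is finite over $\mf{X}'$ and hence affinoid and reduced. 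Next we take a finitely generated $\mc{O}(\mf{Y}')$-lattice stable under $G_E$, namely the image of $\mc{O}(\mf{Y}')[G_E]$ acting on a basis. This lattice need not be free, so we pass to a Zariski open affinoid cover on which it is free, or blow up/flatten following \cite[Lemma 7.2.x]{chengln} and \cite{BCbook}. That gives $\rho_{\mf{Y}}:G_E\to GL_N(\mc{O}(\mf{Y}))$. Continuity follows because the lattice is generated by finitely many vectors, and the action is controlled by the continuous $T_{\mc{F}}$ through the Cayley--Hamilton algebra.

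For the trace identity \eqref{eqntracefromY}: $\tr\rho_{\mf{Y}}(g)$ and $q^*T_{\mc{F}}(g)$ agree in the function field, hence in $\mc{O}(\mf{Y})$ since $\mf{Y}$ is reduced. Evaluating at any $y$ above $x$ then gives the identity. Semisimplicity holds generically; at special points we only claim the trace identity, and the global representation is semisimple because it is semisimple over the generic points.

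The main obstacle is arranging surjectivity onto a neighborhood $\mf{X}_0$ of $x_0$ while keeping a free module, i.e. ensuring some point $y$ lies over $x_0$. Finiteness of normalization gives surjectivity over $\mf{X}'$. For freeness, I would first try the Bellaïche--Chenevier trick of passing to a blow-up or torsion-free quotient (\cite[Lemma 1.3.x]{BCbook}), which is proper and hence still surjective, then take an affinoid open containing a point over $x_0$. I would then choose $\mf{X}_0$ as the Zariski open image, shrinking to an affinoid, so that the map is generically finite. This is exactly why the statement only asserts generically finite rather than finite.
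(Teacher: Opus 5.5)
Your proposal follows essentially the same route as the paper, which simply cites \cite[Lemma 7.8.11]{BCbook}. You pass to the normalization of $\mf{X}$ in a finite extension of the function field over which the generic semisimple representation with trace $T_{\mc{F}}$ is defined, blow up to make a $G_E$-stable lattice locally free of rank $N$, and then restrict to affinoid pieces.

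One step needs a small correction. The fiber of the blow-up over $x_0$ can be positive dimensional, so the image of a single affinoid open need not be a neighborhood of $x_0$. Instead, cover the preimage of an affinoid neighborhood $\mf{X}_0$ (quasi-compact, by properness) by finitely many affinoids on which the sheaf is free, and let $\mf{Y}$ be their disjoint union.
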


\begin{proof}
This follows from \cite[Lemma 7.8.11]{BCbook}; in fact \textit{loc. cit.} constructs a rigid space, call it $\mf{Y}'$, which is finite and dominant over $\mf{X}$, and a blow-up $\mf{Y}''$ of $\mf{Y}'$ with a locally free coherent sheaf of $\mc{O}_{\mf{Y}''}$-modules of rank $N$ satisfying a property analogous to \eqref{eqntracefromY}. Our proposition follows from this upon taking Zariski open affinoid neighborhoods.
\end{proof}

We remark that in the above proposition, it may happen that the fiber in $\mf{Y}$ above $x_0$ has positive dimension.

\section{Weil--Deligne representations}
\label{secWDreps}
We begin this section with some generalities about Weil--Deligne representations. We fix throughout this section a nonarchimedean local field $L$ of characteristic $0$ and an arithmetic Frobenius element $\varphi\in W_L$. We let $\lambda$ denote the order of the residue field of $L$, and we fix a square root $\lambda^{1/2}$ of $\lambda$ in $\overline\Q_p$. We denote the inertia group of $L$ as usual by $I_L$.

We will have occasion below to consider Weil--Deligne representations over rings which are not fields; specifically, we will consider $p$-adic families of Weil--Deligne representations which we will view as Weil--Deligne representations over affinoid rings. For clarity, we make the following definitions.

\begin{definition}
\label{defWDrepoverA}
Let $k$ be a field of characteristic $0$ and $A$ a $k$-algebra. By a \textit{Weil--Deligne representation} for $L$ over $A$ we shall mean an $A$-module $M$, together with an action
\[r:W_L\to\aut_A(M)\]
with open kernel, and with a nilpotent operator
\[N\in\End_A(M)\]
such that, for all $g\in W_L$, we have
\[r(g)N=\lambda^{v(g)}Nr(g).\]
Here, we recall that $v:W_L\to\Z$ is the unique homomorphism factoring through $W_L/I_L$ and such that $v(\varphi)=1$. Such a Weil--Deligne representation will be denoted by $(r,N,M)$, or usually just by $(r,N)$ when $M$ is understood.
\end{definition}

There is then an obvious notion of a morphism, and hence of an isomorphism, of such Weil--Deligne representations, as well as a notion of quotients of such, and of Weil--Deligne subrepresentations.

\begin{definition}
Let $k$ be a field of characteristic $0$ and $A$ a $k$-algebra. Let $(r,N,M)$ be a Weil--Deligne representation for $L$ over $A$ in the sense of Definition \ref{defWDrepoverA}. Given an $A$-algebra $B$, we define the base changed Weil--Deligne representation $(r,N,M)\otimes_A B$ for $L$ over $B$ by
\[(r,N,M)\otimes_A B=(r\otimes\id_B,N\otimes\id_B,M\otimes_A B),\]
where $\id_B:B\to B$ is the identity map. It is easy to verify then that $(r,N,M)\otimes_A B$ is indeed a Weil--Deligne representation for $L$ over $B$.
\end{definition}

We now set some notation which will be used later.

\begin{notation}
\label{notnorderonNs}
Let $k=\bar{k}$ be an algebraically closed field of characteristic $0$. Let $N,N'$ be nilpotent operators on a finite dimensional $k$-vector space $V$. Then we write $N\sim N'$ if $N$ and $N'$ are conjugate by $GL(V)$. We write $N\prec N'$ if $N$ is in the Zariski closure of the $GL(V)$-orbit of $N'$.
\end{notation}

\begin{notation}
\label{notnisotypic}
Let $k=\bar{k}$ be an algebraically closed field of characteristic $0$. Let $\tau$ be an irreducible, finite dimensional representation of the inertia group $I_L$ with open kernel and $k$-coefficients. Let $(r,N)$ be a finite dimensional Weil--Deligne representation for $L$ over $k$, say acting on a finite dimensional $k$-vector space $V$. We assume $(r,N)$ is Frobenius-semisimple, that is, the action of any arithmetic Frobenius element via $r$ is semisimple.

Then we write $r[\tau]$ for the $\tau$-isotypic component of $r$, by which we mean restriction of $r$ to the span, call it $V[\tau]$, of all subspaces of $V$ on which $r|_{I_L}$ acts by $\tau$. We also write $N[\tau]$ for the restriction of $N$ to this space $V[\tau]$. Then we write
\[(r,N)[\tau]=(r[\tau],N[\tau]),\]
which is again a finite-dimensional, Frobenius-semisimple Weil--Deligne representation for $L$ over $k$.
\end{notation}

Thus, in the notation above, we have
\[(r,N)\cong\bigoplus_\tau(r,N)[\tau],\]
where the sum is over all isomorphism classes of irreducible, finite dimensional representations $\tau$ of $I_L$ over $k=\bar{k}$ with open kernel.

We now give a general lemma about irreducible representations of $W_L$.

\begin{lemma}
\label{lemweilrepsuptounrtw}
Let $k=\bar{k}$ be an algebraically closed field of characteristic $0$. let $d$ be a positive integer, and let $r:W_L\to GL_d(k)$ be representation with open kernel. Then $r$ is irreducible if and only if $r|_{I_L}$ is. Moreover, in this case, if $r':W_L\to GL_d(k)$ is another representation with $r'|_{I_L}\cong r|_{I_L}$, then there is an $\alpha\in k^\times$ such that $r'\cong r\otimes\chi_\alpha$; here $\chi_\alpha$ has a similar meaning as in Section \ref{secparams}, namely it is the unramified character $W_L\to k^\times$ with $\chi_\alpha(\varphi)=\alpha$.
\end{lemma}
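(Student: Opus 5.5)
We first treat the equivalence, and then the uniqueness up to an unramified twist. Because $r$ has open kernel, $r(I_L)$ is a finite group. The subgroup $I_L$ is normal in $W_L$ with quotient generated by $\varphi$, and $\varphi^m$ acts trivially on $I_L/\ker(r|_{I_L})$ for some $m\ge 1$. One direction is immediate: if $r|_{I_L}$ is irreducible, then $r$ is irreducible, since any $W_L$-stable subspace is $I_L$-stable.

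For the converse we use Clifford theory, which applies because $I_L$ is normal. Decompose $r|_{I_L}$ into isotypic components $V[\tau_1],\dots,V[\tau_s]$. The element $\varphi$ permutes these components, sending $V[\tau]$ to $V[\tau^\varphi]$, where $\tau^\varphi(g)=\tau(\varphi^{-1}g\varphi)$. Since $r$ is irreducible, the permutation is transitive. Let $e$ be the size of the $\varphi$-orbit of $\tau_1$. Then $W_L^{(e)}=\langle I_L,\varphi^e\rangle$ preserves $V[\tau_1]$, and $r=\Ind_{W_L^{(e)}}^{W_L}V[\tau_1]$. The space $V[\tau_1]$ is irreducible for $W_L^{(e)}$, and $\tau_1$ extends to $W_L^{(e)}$ because the quotient $W_L^{(e)}/I_L\cong\Z$ is free. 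So $V[\tau_1]\cong\tilde\tau_1\otimes U$, where $\tilde\tau_1$ is an extension of $\tau_1$ and $U$ is a representation of $\Z$ on the multiplicity space; irreducibility then forces $U$ to be one-dimensional.

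This is the step I expect to be the main obstacle, and it is where the statement needs care. Irreducibility of $r$ alone only shows that $r|_{I_L}$ is multiplicity-free, namely $\tau_1\oplus\dots\oplus\tau_e$ with the $\tau_i$ pairwise distinct Frobenius-conjugates. It does \emph{not} show that $e=1$. For example, if $L'/L$ is unramified quadratic and $\theta$ is a tamely ramified character of $W_{L'}$ with $\theta^\varphi\ne\theta$, then $r=\Ind_{W_{L'}}^{W_L}\theta$ is irreducible while $r|_{I_L}=\theta|_{I_L}\oplus\theta^\varphi|_{I_L}$ is reducible. So the "if and only if" cannot hold as literally written. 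I would either strengthen the hypothesis (for instance, that $r|_{I_L}$ is isotypic) or weaken the conclusion to: $r$ is irreducible if and only if $r|_{I_L}$ is multiplicity-free with constituents forming a single $\varphi$-orbit. Only the uniqueness part below is actually used later.

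For the uniqueness statement, assume $r$ is irreducible and $r'|_{I_L}\cong r|_{I_L}$. Then by Frobenius reciprocity
\[\hom_{I_L}(r,r')\cong\hom_{I_L}(r|_{I_L},r|_{I_L})\cong k^e.\]
The group $W_L/I_L$ acts on this space by $f\mapsto r'(\varphi)\,f\,r(\varphi)^{-1}$. This action factors through a finite-dimensional representation of $\Z$, so it has an eigenvector $f\ne0$ over $k=\bar k$, with eigenvalue $\alpha^{-1}$ say. That eigenvector is a $W_L$-equivariant map $r\otimes\chi_\alpha\to r'$. It is nonzero, and its source is irreducible, so it is injective. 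Since $\dim r=\dim r'=d$, it is an isomorphism, giving $r'\cong r\otimes\chi_\alpha$. This argument requires only that $r$ be irreducible, so the uniqueness part holds in general and does not depend on the repaired equivalence.
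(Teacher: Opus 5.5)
Your proposal is correct, and your objection to the statement is well founded. The implication ``$r$ irreducible $\Rightarrow$ $r|_{I_L}$ irreducible'' is false, and your example is a genuine counterexample. It is $\Ind_{W_{L'}}^{W_L}\theta$ with $L'/L$ unramified quadratic and $\theta^\varphi\neq\theta$; such $\theta$ exist, e.g.\ inflated from a character of $k_{L'}^\times$ whose order does not divide $\lambda-1$. These are the parameters of depth-zero supercuspidals of $GL_2(L)$, so they are not exotic in the paper's setting of arbitrary bad places.

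The paper's only justification is ``a consequence of Schur's lemma''. The intended argument is presumably this: if $r|_{I_L}$ is irreducible, then $\hom_{I_L}(r,r')$ is a line by Schur, $\varphi$ acts on it by a scalar, and a generator is the desired isomorphism. That argument really does need irreducibility on inertia. Your route differs in two ways.
\begin{itemize}
\item You use Clifford theory, extending $\tau_1$ to its stabilizer $\langle I_L,\varphi^e\rangle$ via $W_L/I_L\cong\Z$. This identifies what irreducibility of $r$ actually gives: $r|_{I_L}$ is multiplicity-free and its constituents form a single Frobenius orbit.
\item For the twist statement, you take a $\varphi$-eigenvector in the possibly $e$-dimensional space $\hom_{I_L}(r,r')$, and use irreducibility of the source to get injectivity.
\end{itemize}
This proves the second assertion under the weaker and correct hypothesis that $r$ is irreducible, and it reduces to the Schur argument when $e=1$.

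There are two harmless slips.
\begin{itemize}
\item The identification $\hom_{I_L}(r,r')\cong\End_{I_L}(r|_{I_L})$ comes from the given isomorphism $r'|_{I_L}\cong r|_{I_L}$, not from Frobenius reciprocity. In any case you only need this space to be nonzero and finite dimensional.
\item With the action $f\mapsto r'(\varphi)fr(\varphi)^{-1}$, an eigenvalue $\beta$ yields $r\otimes\chi_\beta\cong r'$. So you should take the eigenvalue to be $\alpha$, not $\alpha^{-1}$.
\end{itemize}

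One caveat on your remark that only the uniqueness part is used later. The explicit citation in the section on Weil--Deligne representations is indeed for the twist statement, but the surrounding setup tacitly relies on the false direction.
\begin{itemize}
\item The constituents of $r_{\bar\eta}$ are indexed by irreducible $\tau\in\mc{T}$ with $r_i^\tau|_{I_L}\cong\tau$.
\item The same assumption is built into Lemma \ref{lemspecificlemma}.
\item $(r,N)[\tau]$ is asserted to be a Weil--Deligne representation. This fails when $\tau^\varphi\not\cong\tau$, since $r(\varphi)$ carries $V[\tau]$ onto $V[\tau^\varphi]$.
\end{itemize}
The natural repair is to replace $\tau$ by its Frobenius orbit throughout. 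Your corrected version of the lemma is what that repair needs. It should be combined with the Clifford-theoretic existence of a $\overline\Q_p$-model with the given inertial restriction, obtained by inducing an extension of $\tau_1$.
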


\begin{proof}
This lemma is well known and is a consequence of Schur's lemma; we omit the proof.
\end{proof}

With this setup, we can state the following lemma; it will be the only place in this section where the setting from the previous sections plays a role.

\begin{lemma}
\label{lemexistsWDforrhoY}
Let the setting be as in Proposition \ref{propgalfamonfreemod}, so that in particular there is a reduced affinoid rigid space $\mf{Y}$ and a continuous, semisimple representation $\rho_{\mf{Y}}:G_E\to GL_N(\mc{O}(\mf{Y}))$. Let $\iota:\C\to\overline\Q_p$ we the isomorphism that was fixed in the previous section in order to ultimately define the representation $\rho_{\mf{Y}}$. Let
\[(r_y,N_y)=\WD((y\circ\rho_\mf{Y})|_{G_L})^{\varphi-\sss}\otimes\chi_{\iota(q_w^{(N-\delta_{\mr{eo}}-1)/2})}\]
be the $\chi_{\iota(q_w^{(N-\delta_{\mr{eo}}-1)/2})}$-twist of Frobenius-semisimple Weil--Deligne representation attached to the restriction of $\rho_y$ to $G_{E_w}$. Then there exists a Weil--Deligne representation $(r_{\mf{Y}},N_{\mf{Y}})$ into the same free $\mc{O}(\mf{Y})$-module of rank $N$ on which $\rho_{\mf{Y}}$ acts with the property that, for any $y\in\mf{Y}(\overline{\Q}_p)$, we have
\[((r_{\mf{Y}},N_{\mf{Y}})\otimes_y\overline\Q_p)^{\varphi-\sss}\cong(r_y,N_y).\]
\end{lemma}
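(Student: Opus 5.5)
We will construct $(r_{\mf{Y}},N_{\mf{Y}})$ directly from $\rho_{\mf{Y}}|_{G_{E_w}}$ by Grothendieck's $\ell$-adic monodromy argument, carried out over the affinoid ring $\mc{O}(\mf{Y})$ rather than over a field, in the spirit of \cite[\S 7.8]{BCbook}. Throughout, $L=E_w$ has residue characteristic $\ell\ne p$. First we use the compactness of $G_{E_w}$ and continuity of $\rho_{\mf{Y}}$ to find an open subgroup of inertia on which $\rho_{\mf{Y}}$ is close to the identity. Concretely, let $\mc{O}(\mf{Y})^\circ$ be the power bounded elements. Since $\mf{Y}$ is reduced, $\mc{O}(\mf{Y})^\circ$ is bounded and open, so some $GL_N(\mc{O}(\mf{Y}))$-conjugate of $\rho_{\mf{Y}}|_{G_{E_w}}$ lands in $GL_N(\mc{O}(\mf{Y})^\circ)$ after replacing the module by a $G_{E_w}$-stable lattice. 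Then there is an open normal subgroup $I'\subset I_{E_w}$ with $\rho_{\mf{Y}}(I')\subset 1+p^2M_N(\mc{O}(\mf{Y})^\circ)$. The wild part of $I'$ is pro-$\ell$, and the image above is pro-$p$, so shrinking $I'$ we may assume $\rho_{\mf{Y}}|_{I'}$ factors through the tame quotient $t_\ell:I'\to\Z_\ell(1)$, and in fact through its $\Z_p$-part.

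Next we define the monodromy. Fix $\sigma\in I'$ mapping to a topological generator of the image of $t_p$. We set
\[N_{\mf{Y}}=\frac{1}{m}\log\rho_{\mf{Y}}(\sigma),\]
where $m$ is the appropriate normalizing index; the logarithm converges because $\rho_{\mf{Y}}(\sigma)-1$ is topologically nilpotent. Standard identities then hold over $\mc{O}(\mf{Y})$, since they are identities of convergent power series. These give $\rho_{\mf{Y}}(g)=\exp(t_p(g)N_{\mf{Y}})$ for $g\in I'$, and $\rho_{\mf{Y}}(h)N_{\mf{Y}}\rho_{\mf{Y}}(h)^{-1}=q_w^{v(h)}N_{\mf{Y}}$ for $h\in W_{E_w}$. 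The second identity comes from $h\sigma h^{-1}$ acting on $t_p$ by the cyclotomic character. From it, $N_{\mf{Y}}$ is nilpotent: its characteristic polynomial coefficients $c_i$ satisfy $c_i=q_w^{i}c_i$, hence vanish.

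With $N_{\mf{Y}}$ in hand, we define
\[r_{\mf{Y}}(\tilde\varphi^a\gamma)=\rho_{\mf{Y}}(\tilde\varphi^a\gamma)\exp(-t_p(\gamma)N_{\mf{Y}}),\]
for a lift $\tilde\varphi$ of $\varphi$ and $\gamma\in I_{E_w}$. We extend $t_p$ to $W_{E_w}$ through the choice of $\tilde\varphi$ as in Grothendieck's construction. We then twist by the unramified character $\chi_{\iota(q_w^{(N-\delta_{\mr{eo}}-1)/2})}$. The usual verification shows that $r_{\mf{Y}}$ is a homomorphism trivial on $I'$, so it has open kernel, and that $(r_{\mf{Y}},N_{\mf{Y}})$ satisfies Definition \ref{defWDrepoverA}.

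The compatibility with specialization is then formal. For $y\in\mf{Y}(\overline\Q_p)$, reducing the formulas above modulo $y$ gives exactly Grothendieck's recipe for $\WD((y\circ\rho_{\mf{Y}})|_{G_{E_w}})$, computed with the same $I'$, $\sigma$, and $\tilde\varphi$, since $\log$ and $\exp$ commute with $y$. Grothendieck's construction is independent of these choices up to isomorphism, so after Frobenius-semisimplification and the twist we obtain $(r_y,N_y)$.

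The main obstacle is the integrality step: making $\rho_{\mf{Y}}(I')$ lie in $1+p^2M_N(\mc{O}(\mf{Y})^\circ)$ on the given free module. The image might not preserve the standard lattice. I would first try to avoid this by noting that $1+p^2M_N(\mc{O}(\mf{Y})^\circ)$ is an open subgroup of $GL_N(\mc{O}(\mf{Y}))$ in the Banach topology. By continuity, its preimage is then an open subgroup of $G_{E_w}$, which directly yields $I'$ without changing lattices.
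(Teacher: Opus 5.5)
Your proposal is correct and is essentially the argument behind \cite[Lemmas 7.8.12 and 7.8.14]{BCbook}, namely Grothendieck's monodromy theorem run over $\mc{O}(\mf{Y})$; the paper simply cites those lemmas and inserts the unramified twist afterwards. Drop the lattice replacement in your first paragraph, since a $G_{E_w}$-stable lattice need not be free or equal to the given module, and use your closing observation instead: $\rho_{\mf{Y}}^{-1}(1+p^2M_N(\mc{O}(\mf{Y})^\circ))$ is already open, which gives $I'$ on the original module. Also correct a slip: the image of $I'$ in the tame quotient is an open subgroup of $\prod_{q\ne\ell}\Z_q(1)$, not of $\Z_\ell(1)$, before you pass to its $\Z_p$-part.
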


\begin{proof}
This follows (without the twist, which may be inserted afterward) from \cite[Lemmas 7.8.12 and 7.8.14]{BCbook}.
\end{proof}

We remark that the twist that was inserted above was put there so that the representations $(r_y,N_y)$ would be pure of weight $0$ for $y$ in the set $\Sigma$ given to us by the Hecke family in Proposition \ref{propgalfamonfreemod}; this purity assertion follows immediately from Proposition \ref{proppurityofgal}.

We now shift focus a bit and fix for the rest of this section:
\begin{itemize}
\item A positive integer $d$;
\item A reduced, irreducible, affinoid rigid space $\mf{Y}$ over $\Q_p$;
\item A Weil--Deligne representation $(r_{\mf{Y}},N_{\mf{Y}})$ on a free $\mc{O}(\mf{Y})$-module of rank $d$.
\end{itemize}
We remark that, as far as the rest of this section is concerned, the objects $\mf{Y}$ and $(r_{\mf{Y}},N_{\mf{Y}})$ just fixed need not be the ones from Lemma \ref{lemexistsWDforrhoY} above; however, in the application to our main theorem later, we will take them to be so.

We set the following useful piece of notation.

\begin{notation}
\label{notnmathcalT}
We write $\mc{T}$ for the set of isomorphism classes of irreducible, finite dimensional representations of $I_L$ with open kernel and $\overline\Q_p$-coefficients. For a class $\tau\in\mc{T}$, we often abusively write $\tau$ as well for a fixed representative of this class.
\end{notation}

Let us write $(r_{\bar{\eta}},N_{\bar{\eta}})$ for the Frobenius-semisimplified, base changed representation,
\[(r_{\bar{\eta}},N_{\bar{\eta}})=((r_{\mf{Y}},N_{\mf{Y}})\otimes\overline{\Frac(\mc{O}(\mf{Y}))})^{\varphi-\sss}.\]
Then there is an isomorphism
\begin{equation*}
\label{eqnWDisotodirsumorig}
(r_{\bar{\eta}},N_{\bar{\eta}})\cong\bigoplus_{\tau\in\mc{T}} \bigoplus_{i\in\mc{I}^\tau}r_i^\tau\otimes\chi_{\alpha_i^\tau}\otimes\Sp(m_i^\tau),
\end{equation*}
where
\begin{itemize}
\item For each $\tau\in\mc{T}$, the set $\mc{I}^\tau$ is a finite index set;
\item For each $\tau\in\mc{T}$ and each $i\in\mc{I}^\tau$, the factor $r_{i}^\tau$ is a fixed, irreducible, finite dimensional representation of $W_L$ with $\overline{\Q}_p$-coefficients such that $r_i^\tau|_{I_L}\cong\tau$;
\item Each element $\alpha_i^\tau\in\overline{\Frac(\mc{O}(\mf{Y}))}^\times$;
\item For each $\tau\in\mc{T}$ and each $i\in\mc{I}^\tau$, the number $m_i^\tau$ is a positive integer.
\end{itemize}
Note we are using Lemma \ref{lemweilrepsuptounrtw} in order to assume each $r_0^\tau$ has $\overline{\Q}_p$-coefficients instead of coefficients in $\overline{\Frac(\mc{O}(\mf{Y}))}$. Thus there is a direct sum decomposition of $W_L$-representations with coefficients in $\overline{\Frac(\mc{O}(\mf{Y}))}$ as follows:
\begin{equation}
\label{eqnWDisotodirsum}
r_{\bar{\eta}}=\bigoplus_{\tau\in\mc{T}}\bigoplus_{i\in\mc{I}^\tau}\bigoplus_{\substack{1-m_i^\tau \leq j\leq m_i^\tau-1\\ j\equiv m_i^\tau-1\modulo{2}}} r_{\bar{\eta},i,j}^\tau\quad\textrm{with}\quad r_{\bar{\eta},i,j}^\tau\cong r_i^\tau\otimes\chi_{\lambda^{j/2}\alpha_i^\tau}.
\end{equation}
We recall here that the special representation $\Sp(m)$, for $m$ a positive integer, was defined in Section \ref{secparams}, and we note that it may depend on the choice of square root $\lambda^{1/2}$ of $\lambda$ that we fixed at the beginning of this section.

We would like to be able to pass from the decomposition \eqref{eqnWDisotodirsum} above to similar decompositions for the specializations of $r_\mf{Y}$ at various points $y\in\mf{Y}(\overline\Q_p)$. But this will not work directly over $\mf{Y}$, the main issue being that the elements $\alpha_i^\tau$ are not necessarily in $\mc{O}(\mf{Y})$. However, the following proposition resolves this issue after making a benign base change.

\begin{proposition}
\label{propexistenceofmfz}
There is a reduced, irreducible, affinoid rigid space $\mf{Z}$ admitting a finite, surjective map $\pi:\mf{Z}\to\mf{Y}$ with the following properties:
\begin{enumerate}[label=(\roman*)]
\item For each $\tau\in\mc{T}$ and each $i\in\mc{I}^\tau$, the ring $\mc{O}(\mf{Z})$ contains the fields of definition of $r_i^\tau$;
\item For each $\tau\in\mc{T}$ and each $i\in\mc{I}^\tau$, we have that $\alpha_i^\tau\in\mc{O}(\mf{Z})^\times$;
\item We have that $\lambda^{1/2}\in \mc{O}(\mf{Z})$.
\end{enumerate}
\end{proposition}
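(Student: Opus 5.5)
The plan is to build $\mf{Z}$ as the normalization-type finite cover of $\mf{Y}$ obtained by adjoining finitely many integral elements. First I reduce to finitely many elements: only finitely many $\tau\in\mc{T}$ have $\mc{I}^\tau$ nonempty (the total dimension is $d$), so there are finitely many $r_i^\tau$ and finitely many $\alpha_i^\tau$. Each $r_i^\tau$ has open kernel and so factors through a finite quotient of $I_L$ extended by Frobenius; by Lemma \ref{lemweilrepsuptounrtw} its twist class is what matters, and one may choose a model defined over a finite extension $K_0$ of $\Q_p$. I then replace $\mf{Y}$ by $\mf{Y}\times_{\Q_p}K_0(\lambda^{1/2})$, or rather by one irreducible component of it mapping finitely and surjectively onto $\mf{Y}$. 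This handles (i) and (iii).

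For (ii), I show each $\alpha_i^\tau$ is integral over $\mc{O}(\mf{Y})$ and its inverse is integral as well. Writing $r_{\mf{Y}}(\varphi)$, its characteristic polynomial $P(X)$ has coefficients in $\mc{O}(\mf{Y})$ and constant term $\pm\det r_{\mf{Y}}(\varphi)\in\mc{O}(\mf{Y})^\times$, since $r_{\mf{Y}}(\varphi)$ is invertible. The eigenvalues of $r_{\bar\eta}(\varphi)$ are roots of $P$. To extract $\alpha_i^\tau$, I pick an integer $f$ such that $\varphi^f$ acts on $r_i^\tau$ as a scalar $c\in\overline\Q_p^\times$; this is possible because $r_i^\tau$ is irreducible with finite image on inertia, so some power of $\varphi$ is central. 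Then $(\alpha_i^\tau)^f c$ is an eigenvalue of $r_{\bar\eta}(\varphi^f)$, hence a root of a monic polynomial over $\mc{O}(\mf{Y})$ with unit constant term. So $\alpha_i^\tau$ satisfies such a polynomial, which makes both $\alpha_i^\tau$ and $(\alpha_i^\tau)^{-1}$ integral.

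I then let $A$ be the $\mc{O}(\mf{Y})\otimes K_0(\lambda^{1/2})$-subalgebra of $\overline{\Frac(\mc{O}(\mf{Y}))}$ generated by all $\alpha_i^\tau$, their inverses, and $K_0(\lambda^{1/2})$. This $A$ is a domain that is finite over $\mc{O}(\mf{Y})$, so it is an affinoid algebra. I set $\mf{Z}=\mathrm{Sp}(A)$. It is reduced and irreducible because $A$ is a domain, and the map to $\mf{Y}$ is finite and injective on functions, hence surjective by going-up for integral extensions. The main obstacle I expect is the central-power step for (ii), together with checking that "generated by finitely many integral elements" gives a genuine affinoid; the latter is standard, since a finite module over an affinoid algebra carries a canonical affinoid structure.
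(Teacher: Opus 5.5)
Your proposal is correct, and it reaches the result by a somewhat different route from the paper.

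\textbf{The paper's route.} The paper takes $\mf{Z}$ to be the normalization of $\mf{Y}$ in the finite extension of $\Frac(\mc{O}(\mf{Y}))$ generated by the following elements:
\begin{itemize}
\item all eigenvalues $\lambda_{\mf{Y},k}$ of $r_{\mf{Y}}(\varphi)$;
\item all eigenvalues $\lambda_{0,k}$ of $\varphi$ on the $\alpha$-free representation $\bigoplus r_i^\tau\otimes\chi_{\lambda^{j/2}}$;
\item the fields of definition of the $r_i^\tau$, and $\lambda^{1/2}$.
\end{itemize}
Integral closedness then does all the work. The $\lambda_{\mf{Y},k}^{\pm 1}$ are roots of the characteristic polynomials of $r_{\mf{Y}}(\varphi)^{\pm1}$, and the $\lambda_{0,k}$ are nonzero algebraic numbers, so both lie in $\mc{O}(\mf{Z})^\times$. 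Each $\alpha_i^\tau$ is then recovered as a quotient $\lambda_{\mf{Y},k}/\lambda_{0,k'}$.

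\textbf{Your route.} You prove directly that $\alpha_i^\tau$ and $(\alpha_i^\tau)^{-1}$ are integral over $\mc{O}(\mf{Y})$, and take for $\mc{O}(\mf{Z})$ the finite subalgebra of $\overline{\Frac(\mc{O}(\mf{Y}))}$ they generate together with $\mc{O}(\mf{Y})$ and $K_0(\lambda^{1/2})$.
\begin{itemize}
\item This avoids invoking finiteness of normalization for affinoid algebras.
\item The resulting $\mf{Z}$ need not be normal, but normality is not part of the statement.
\item Since you end up defining $A$ inside $\overline{\Frac(\mc{O}(\mf{Y}))}$, the intermediate passage to a component of $\mf{Y}\times_{\Q_p}K_0(\lambda^{1/2})$ is redundant.
\end{itemize}

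\textbf{The central-power step.} The step you flagged as the main obstacle is correct. Conjugation by $r_i^\tau(\varphi)$ induces a finite-order automorphism of the finite group $r_i^\tau(I_L)$. Hence some power of $r_i^\tau(\varphi)$ centralizes the whole image, and it is scalar by Schur. The step is, however, unnecessary. If $\mu\in\overline\Q_p^\times$ is any eigenvalue of $r_i^\tau(\varphi)$, then $\alpha_i^\tau\lambda^{j/2}\mu$ is already a root of the characteristic polynomial of $r_{\mf{Y}}(\varphi)$. That is precisely the paper's observation.

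\textbf{A small slip.} When $m_i^\tau$ is even, $j=0$ does not occur in \eqref{eqnWDisotodirsum}. So the eigenvalue of $r_{\bar\eta}(\varphi^f)$ you obtain is $(\alpha_i^\tau)^f c\,\lambda^{jf/2}$ for some odd $j$, not $(\alpha_i^\tau)^f c$. Since $c\,\lambda^{jf/2}$ is a nonzero algebraic number, your integrality argument goes through unchanged.
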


\begin{proof}
Recall we have our fixed lift $\varphi$ of Frobenius in $W_L$. Let $\lambda_{\mf{Y},k}\in\overline{\Frac(\mc{O}(\mf{Y}))}$ be the eigenvalues of $r_{\mf{Y}}(\varphi)$ in any chosen order, where $1\leq k\leq d=\dim(r_{\bar{\eta}})$, and let $\lambda_{0,k}\in\overline\Q_p$ be those of the representation with coefficients in $\overline{\Q}_p$ given by
\begin{equation}
\label{eqnWeilsumwithoutalphas}
\bigoplus_{\tau\in\mc{T}}\bigoplus_{i\in\mc{I}^\tau}\bigoplus_{\substack{1-m_i^\tau \leq j\leq m_i^\tau-1\\ j\equiv m_i^\tau-1\modulo{2}}} r_i^\tau\otimes\chi_{\lambda^{j/2}}
\end{equation}
again in any chosen order. We take $\mf{Z}$ be the normalization of $\mf{Y}$ in the finite extension field of $\Frac(\mc{O}(\mf{Y}))$ obtained from adjoining to it the following: All of the elements $\lambda_{\mf{Y},k}$ and $\lambda_{0,k}$, the field of definition of each $r_i^\tau$, and the element $\lambda^{1/2}$. Then the natural map $\mf{Z}\to\mf{Y}$ is finite and it is surjective by going up. Thus we need to show that for any $k$, we have that $\lambda_{\mf{Y},k},\lambda_{0,k}\in\mc{O}(\mf{Z})^\times$, as well as $\lambda^{1/2}\in\mc{O}(\mf{Z})^\times$.

Of course the elements $\lambda_{0,k}$ and $\lambda^{1/2}$ are in $\mc{O}(\mf{Z})^\times$ because they are algebraic over $\Q_p$ and nonzero, and $\mc{O}(\mf{Y}')$ is integrally closed containing $\Q_p$. As for $\lambda_{\mf{Y},k}$, note first that
\[\tr(r_\mf{Y}(\varphi)^n)\in\mc{O}(\mf{Y})\]
for all integers $n$. So by the theory of symmetric polynomials, the coefficients of the characteristic polynomials of $r_\mf{Y}(\varphi)^{\pm 1}$ are in $\mc{O}(\mf{Y})$. Thus the roots $\lambda_{\mf{Y},k}^{\pm 1}$ are in $\mc{O}(\mf{Z})$, again because $\mc{O}(\mf{Z})$ is integrally closed, proving the claim.

It follows that the elements $\alpha_i^\tau$ are in $\mc{O}(\mf{Z})^\times$ as desired, since they are given by certain quotients of eigenvalues of $r_{\mf{Y}}(\varphi)$ by those of the representation of \eqref{eqnWeilsumwithoutalphas}.
\end{proof}

Now let us write
\[(r_\mf{Z},N_\mf{Z})=(r_\mf{Y},N_\mf{Y})\otimes_{\mc{O}(\mf{Y})}\mc{O}(\mf{Z})\]
where $\mf{Z}$ is as in the proposition above. Then $(r_\mf{Z},N_\mf{Z})$ is a Weil--Deligne representation into a free $\mc{O}(\mf{Z})$-module of rank $d$, the same rank as that of $(r_\mf{Y},N_\mf{Y})$. It has the property that for any point $y\in\mf{Y}(\overline{\Q}_p)$ and any $z\in\mf{Z}(\overline{\Q}_p)$ over $y$, we have
\[(r_{\mf{Z}},N_{\mf{Z}})\otimes_{z}\overline\Q_p\cong(r_{\mf{Y}},N_{\mf{Y}})\otimes_{y}\overline\Q_p.\]

\begin{proposition}
\label{propdecompofrzprime}
Let the setup be as above. For any $z\in\mf{Z}(\overline\Q_p)$, we have that
\[(r_\mf{Z}\otimes_{z}\overline\Q_p)^{\sss}\cong \bigoplus_{\tau\in\mc{T}} \bigoplus_{i\in\mc{I}^\tau}\bigoplus_{\substack{1-m_i^\tau \leq j\leq m_i^\tau-1\\ j\equiv m_i^\tau-1\modulo{2}}} r_i^\tau\otimes\chi_{\lambda^{j/2}\alpha_i^\tau(z)},\]
where the semisimplification on the right is as $W_L$-representations (or, equivalently, it is Frobenius semisimplification).
\end{proposition}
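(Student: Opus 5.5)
Since the statement concerns only the semisimplification as a $W_L$-representation, I will compare the two sides through characters. Two semisimple finite-dimensional representations of $W_L$ with open kernel over the algebraically closed field $\overline\Q_p$ are isomorphic if and only if their traces agree on all of $W_L$. The plan is therefore to show that, for every $g\in W_L$,
\[\tr\big((r_\mf{Z}\otimes_z\overline\Q_p)(g)\big)=\sum_{\tau}\sum_{i}\sum_{j}\tr\big(r_i^\tau(g)\big)\,\lambda^{v(g)j/2}\alpha_i^\tau(z)^{v(g)}.\]
Here $v:W_L\to\Z$ is the valuation homomorphism with $v(\varphi)=1$. The right-hand side is exactly the trace of the displayed direct sum.

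The first step is to establish this identity generically, as an identity in $\overline{\Frac(\mc{O}(\mf{Z}))}$ (equivalently in $\overline{\Frac(\mc{O}(\mf{Y}))}$). By \eqref{eqnWDisotodirsum}, $r_{\bar\eta}$ is the Frobenius-semisimplification of $r_\mf{Y}\otimes\overline{\Frac(\mc{O}(\mf{Y}))}$. Frobenius-semisimplification preserves traces, since it only replaces $r(\varphi)$ by its semisimple part, which commutes with the image. Hence
\[\tr\, r_\mf{Z}(g)=\sum_{\tau,i,j}\tr\big(r_i^\tau(g)\big)\big(\lambda^{j/2}\alpha_i^\tau\big)^{v(g)}.\]

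The second step is to check that both sides of this generic identity lie in $\mc{O}(\mf{Z})$. The left side does because $r_\mf{Z}$ acts on a free $\mc{O}(\mf{Z})$-module. The right side does by Proposition \ref{propexistenceofmfz}: each $\tr\, r_i^\tau(g)$ lies in the field of definition of $r_i^\tau$, which is contained in $\mc{O}(\mf{Z})$; moreover $\lambda^{1/2}\in\mc{O}(\mf{Z})$ and $\alpha_i^\tau\in\mc{O}(\mf{Z})^\times$, so negative powers of $v(g)$ cause no trouble. Since $\mf{Z}$ is reduced and irreducible, $\mc{O}(\mf{Z})$ is a domain and injects into its fraction field. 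The identity therefore holds in $\mc{O}(\mf{Z})$ itself, and specializing along $z:\mc{O}(\mf{Z})\to\overline\Q_p$ gives the trace identity at $z$, for every $g$.

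The final step is to conclude the isomorphism from equality of traces. Both sides have open kernel. The left side does because $r_\mf{Z}$ has open kernel, hence so does every specialization. On the right, $r_i^\tau$ has open kernel and $\chi_{\lambda^{j/2}\alpha_i^\tau(z)}$ is unramified. Consequently both representations factor through $W_L/J$ for some open normal $J\subset I_L$, and $W_L/J$ is an extension of $\Z$ by a finite group. For semisimple representations of such a group over a field of characteristic $0$, equality of characters implies isomorphism, by the standard Brauer--Nesbitt / linear independence of characters argument for group algebras.

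I expect the main obstacle to be this last point, namely justifying ``same trace implies isomorphic semisimplifications'' for the non-finite group $W_L/J$. I would handle it by applying the Brauer--Nesbitt theorem directly to the group algebra $\overline\Q_p[W_L]$. That theorem states that two semisimple finite-dimensional modules over any algebra in characteristic $0$ with equal characteristic polynomials (here, equal traces) are isomorphic, so no finiteness of the group is needed.
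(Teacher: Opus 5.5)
Your proposal is correct and is essentially the paper's proof. Like the paper, you identify $\tr(r_\mf{Z})=\tr(r_{\bar\eta})$ with the trace of the decomposition \eqref{eqnWDisotodirsum}, specialize at $z$ (legitimate because everything lies in $\mc{O}(\mf{Z})$ by Proposition \ref{propexistenceofmfz}), and conclude that semisimple representations in characteristic $0$ are determined by their traces; you just supply details the paper leaves implicit, namely that Frobenius-semisimplification preserves traces and that Brauer--Nesbitt applies to the group algebra. As in the paper, the specialization step tacitly assumes that $z$ restricts to the fixed embedding on the fields of definition of the $r_i^\tau$ and on $\lambda^{1/2}$, so that $z(\tr r_i^\tau(g))=\tr r_i^\tau(g)$; for other $\overline\Q_p$-points of $\mf{Z}$ you would have to replace $r_i^\tau$ and $\lambda^{1/2}$ by the corresponding conjugates.
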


\begin{proof}
Note that
\[\tr(r_{\mf{Z}})=\tr(r_{\bar{\eta}})=\sum_{\tau\in\mc{T}}\sum_{i\in\mc{I}^\tau}\sum_{\substack{1-m_i^\tau \leq j\leq m_i^\tau-1\\ j\equiv m_i^\tau-1\modulo{2}}} \tr(r_i^\tau\otimes\chi_{\lambda^{j/2}\alpha_i^\tau}),\]
the latter equality following from \eqref{eqnWDisotodirsum}. Thus for any $z\in\mf{Z}(\overline\Q_p)$, we have that
\[z(\tr(r_\mf{Z}))=\sum_{\tau\in\mc{T}}\sum_{i\in\mc{I}^\tau}\sum_{\substack{1-m_i^\tau \leq j\leq m_i^\tau-1\\ j\equiv m_i^\tau-1\modulo{2}}} \tr(r_i^\tau\otimes\chi_{\lambda^{j/2}\alpha_i^\tau(z)}).\]
The proposition follows, since both sides of the asserted isomorphism are semisimple representations of $W_L$ with coefficients in $\overline\Q_p$.
\end{proof}

\begin{proposition}
\label{propwdvari}
Notation as set above, for any $z\in\mf{Z}(\overline\Q_p)$, let us also write
\[(r_z,N_z)=((r_{\mf{Z}},N_{\mf{Z}})\otimes_{z}\overline\Q_p)^{\varphi-\sss}.\]
\begin{enumerate}[label=(\alph*)]
\item For any $z\in\mf{Z}(\overline\Q_p)$, we have
\[(r_{z}|_{I_L})\otimes_{\overline{\Q}_p}\overline{\Frac(\mc{O}(\mf{Z}))}\cong (r_{\mf{Z}}|_{I_L})\otimes_{\mc{O}(\mf{Z})}\overline{\Frac(\mc{O}(\mf{Z}))}.\]
In particular, for any $z_1,z_2\in\mf{Z}(\overline\Q_p)$, we have $r_{z_1}|_{I_L}\cong r_{z_2}|_{I_L}$.
\item There is a Zariski open and dense subset $\mf{U}$ of $\mf{Z}$ with the following properties: First, for any $z_1$ and $z_2$ in $\mf{U}(\overline\Q_p)$ and any continuous, irreducible, finite dimensional representation $\tau$ of $I_L$ with finite image, if we identify the space $r_{z_1}[\tau]$ (see Notation \ref{notnisotypic}) with $r_{z_2}[\tau]$ (which can be done by (a)) then $N_{z_1}[\tau]\sim N_{z_2}[\tau]$ (Notation \ref{notnorderonNs}). Moreover, for any $z$ in $\mf{U}(\overline\Q_p)$ and any such $\tau$, we have, under the obvious identification, that
\[N_{z}[\tau]\otimes_{\overline{\Q}_p}\overline{\Frac(\mc{O}(\mf{Z}))}\sim N_{\mf{Z}}[\tau]\otimes_{\mc{O}(\mf{Z})}\overline{\Frac(\mc{O}(\mf{Z}))}.\]
Finally, for any $z\in\mf{U}(\overline\Q_p)$ and any $z_0\in\mf{Z}(\overline\Q_p)$, then identifying the space of $r_{z}[\tau]$ with $r_{z_0}[\tau]$, we have $N_{z_0}[\tau]\prec N_z[\tau]$ (again see Notation \ref{notnorderonNs}).
\end{enumerate}
\end{proposition}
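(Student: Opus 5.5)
For (a), the plan is to use that $I_L$ acts through a finite quotient on the free module underlying $r_{\mf{Z}}$. Since $r_{\mf{Z}}$ has open kernel, $r_{\mf{Z}}|_{I_L}$ factors through a finite group $Q=I_L/I'$. Over a field of characteristic $0$, representations of a finite group are determined by their characters, and these characters take values in sums of roots of unity. The function $z\mapsto\tr(r_{\mf{Z}}(g))(z)$ for $g\in I_L$ is therefore an element of $\mc{O}(\mf{Z})$ taking values in a finite set. Since $\mf{Z}$ is reduced and irreducible, hence connected, it is constant. Alternatively, Proposition \ref{propdecompofrzprime} gives directly that the restriction to $I_L$ of $(r_{\mf{Z}}\otimes_z\overline\Q_p)^{\sss}$ is $\bigoplus_\tau\bigoplus_i(\tau)^{\oplus m_i^\tau}$, independent of $z$. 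Restriction to $I_L$ is already semisimple because the image is finite, and Frobenius-semisimplification does not change the restriction to $I_L$, so both sides of (a) are isomorphic to the same $\overline\Q_p$-rational representation. This gives (a).

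For (b), the plan is to work with the isotypic projectors. For each $\tau$, let $e_\tau=\frac{\dim\tau}{|Q|}\sum_{g\in Q}\overline{\chi_\tau(g)}\,r_{\mf{Z}}(g)\in\End_{\mc{O}(\mf{Z})}(M)$, where $M$ is the underlying free module. This is an idempotent commuting with $r_{\mf{Z}}(W_L)$ (since $I_L$ is normal, the Frobenius permutes isotypic parts; to handle this, sum $e_\tau$ over the $\varphi$-orbit of $\tau$, which is what $r[\tau]$ implicitly requires up to the identification in the statement) and with $N_{\mf{Z}}$, because $N$ commutes with $I_L$. Then $M[\tau]=e_\tau M$ is projective, hence locally free, of constant rank by (a), and $N_{\mf{Z}}[\tau]$ is a nilpotent endomorphism of it. For nilpotent $N$, the conjugacy class is determined by the ranks of the powers $N^k$, $k\ge1$. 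The functions $z\mapsto\operatorname{rank}(N_z^k[\tau])$ are lower semicontinuous in the Zariski topology: rank $\ge s$ is the nonvanishing of some $s\times s$ minor, computed locally on a trivialization. Each generic rank $s_k$, attained at the generic point $\bar\eta$, is achieved on a nonempty Zariski open set $\mf{U}_{k,\tau}$, which is dense by irreducibility, and the rank is $\le s_k$ everywhere. Only finitely many $\tau$ occur and only $k<d$ matter, so intersecting gives $\mf{U}$. One point to check is that $(\cdot)^{\varphi-\sss}$ does not alter $N$ up to conjugacy. Semisimplification of Frobenius via Jordan decomposition produces an operator conjugate to the original $N$ restricted to the isotypic part; the standard construction of the Frobenius-semisimplification keeps $N$ (or a $GL$-conjugate of it), so the ranks of $N_z^k$ are those of the specialization of $N_{\mf{Z}}$.

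Finally, I need to show that equal ranks of all powers give $N_{z_1}[\tau]\sim N_{z_2}[\tau]$ for $z_1,z_2\in\mf{U}$, and similarly that the rank comparison with $\bar\eta$ yields the generic statement. For arbitrary $z_0$, I will use $\operatorname{rank}N_{z_0}^k\le\operatorname{rank}N_z^k$ for all $k$. By Gerstenhaber--Hesselink, the closure order on nilpotent orbits in $\mf{gl}$ is the dominance order on the dual partitions, which is equivalent to these rank inequalities for all $k$. This gives $N_{z_0}[\tau]\prec N_z[\tau]$. I expect the main obstacle to be bookkeeping rather than substance: making the identification of the spaces $r_{z}[\tau]$ precise, since only the dimension matters for $\sim$ and $\prec$, and checking that the rank conditions survive Frobenius semisimplification.
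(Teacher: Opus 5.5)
Your proposal is correct. For (b) it takes a genuinely more self-contained route than the paper, whose proof is two citations. Part (a) is read off from Proposition \ref{propdecompofrzprime}; this is exactly your ``alternatively'' argument (see also \cite[Proposition 7.8.17]{BCbook}). Part (b) is imported wholesale from \cite[Proposition 7.8.19]{BCbook}. You instead reconstruct what that citation provides:
\begin{itemize}
\item $I_L$-isotypic projectors $e_\tau$ on the free module;
\item Zariski lower semicontinuity of $z\mapsto\mathrm{rank}(N_z^k[\tau])$ via minors;
\item the fact that the Jordan type of a nilpotent operator is determined by these ranks;
\item Gerstenhaber--Hesselink, to convert $\mathrm{rank}(N_{z_0}^k[\tau])\leq\mathrm{rank}(N_z^k[\tau])$ for all $k$ into $N_{z_0}[\tau]\prec N_z[\tau]$.
\end{itemize}
This buys an explicit $\mf{U}$, namely the common nonvanishing locus of finitely many minors. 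It also makes clear that only $r|_{I_L}$ and $N$ matter. Frobenius-semisimplification leaves both literally unchanged, since it only strips from $r(\varphi)$ a unipotent factor commuting with $r(I_L)$ and $N$. The paper's route is shorter but depends on the Bella\"{i}che--Chenevier set-up.

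Some points to tidy:
\begin{itemize}
\item The projector $e_\tau$ does not commute with $r_{\mf{Z}}(\varphi)$ when $\tau^\varphi\not\cong\tau$, and you do not need it to. The relations $\sim$ and $\prec$ concern orbits of $\GL$ of the $\tau$-isotypic subspace, so commuting with $r_{\mf{Z}}(I_L)$ and $N_{\mf{Z}}$ suffices. Summing over $\varphi$-orbits would only give statements about orbit sums.
\item Write $e_\tau$ with $\tr\tau(g^{-1})$ rather than a complex conjugate. Its coefficients lie in $\mc{O}(\mf{Z})$ for the finitely many relevant $\tau$ by Proposition \ref{propexistenceofmfz}(i).
\item You can avoid local trivializations, since $\mathrm{rank}(N_z^k[\tau])$ is the rank at $z$ of the $d\times d$ matrix $N_{\mf{Z}}^k e_\tau$.
\item ``Irreducible, hence connected, hence constant'' only places the trace in the algebraic closure $K_0$ of $\Q_p$ in the domain $\mc{O}(\mf{Z})$. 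By Proposition \ref{propexistenceofmfz}(i), $K_0$ is in general a nontrivial extension of $\Q_p$. Points inducing different embeddings $K_0\hookrightarrow\overline\Q_p$ give Galois-conjugate trace values, and they specialize $e_\tau$ to the projector of a conjugate of $\tau$. So you should fix an embedding of $K_0$ and work only with points compatible with it. The paper's Proposition \ref{propdecompofrzprime} holds under the same convention, so this does not separate your route from the paper's, but it should be made explicit.
\end{itemize}
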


\begin{proof}
Part (a) follows from Proposition \ref{propdecompofrzprime} (and more generally, from \cite[Proposition 7.8.17]{BCbook}). Part (b) follows from \cite[Proposition 7.8.19]{BCbook}.
\end{proof}

In the next section, we study the degeneration of the operator $N_{\mf{Z}}$ at points not necessarily in $\mf{U}$ under certain purity assumptions.

\section{Combinatorics of weights}
\label{secweights}
We continue to study many of the objects we studied in the previous section. For the convenience of the reader, we summarize the relevant objects in play, which were all fixed after Lemma \ref{lemexistsWDforrhoY}.

We have first the characteristic $0$ nonarchimedean local field $L$ whose residue field contains $\lambda$ elements and is of characteristic different from $p$. We fixed a square root $\lambda^{1/2}$ of $\lambda$ in $\overline\Q_p$. Then we also have the affinoid rigid space $\mf{Y}$ over $\Q_p$. The ring $\mc{O}(\mf{Y})$ is an integral domain, and we have a Weil--Deligne representation $(r_\mf{Y},N_\mf{Y})$ of $W_L$ acting on a free $\mc{O}(\mf{Y})$-module of finite rank $d$.

We have the Frobenius-semisimplified base change $(r_{\bar{\eta}},N_{\bar{\eta}})$ of $(r_{\mf{Y}},N_{\mf{Y}})$ to $\overline{\Frac(\mc{O}(\mf{Y}))}$, which decomposes as
\begin{equation}
\label{eqndecompofretaNeta}
(r_{\bar{\eta}},N_{\bar{\eta}})\cong\bigoplus_{\tau\in\mc{T}} \bigoplus_{i\in\mc{I}^\tau}r_i^\tau\otimes\chi_{\alpha_i^\tau}\otimes\Sp(m_i^\tau);
\end{equation}
we refer the reader to above \eqref{eqnWDisotodirsum} for a description of the pieces of this sum and its summands, suffice it to say that $\mc{T}$ is defined in Notation \ref{notnmathcalT}, each $\mc{I}^\tau$ is a finite set, each $m_i^\tau$ is a positive integer, and $\alpha_i^\tau\in\overline{\Frac(\mc{O}(\mf{Y}))}^\times$. This gave us a direct sum decomposition of $r_{\bar{\eta}}$ into irreducible $W_L$-representations \eqref{eqnWDisotodirsum}.

Now we have the reduced, irreducible, affinoid rigid space $\mf{Z}$ admitting a finite, surjective map $\pi:\mf{Z}\to\mf{Y}$ from Proposition \ref{propexistenceofmfz}. We continue to write $(r_\mf{Z},N_\mf{Z})$ for the base change of $(r_\mf{Y},N_\mf{Y})$ to $\mc{O}(\mf{Z})$. The rigid space $\mf{Z}$ has a Zariski dense and open subset $\mf{U}$ on which the monodromy operator $N_\mf{Z}$ is ``as large as possible;'' see Proposition \ref{propwdvari} (b) for a precise statement.

Fix two points $z,z_0\in\mf{Z}(\overline\Q_p)$. We now make the following assumptions about the behavior of $(r_\mf{Z},N_{\mf{Z}})$ at these two points.

\begin{assumption}
\label{assumptionsonzandz0}
Write
\[(r_z,N_z)=((r_\mf{Z},N_\mf{Z})\otimes_z\overline\Q_p)^{\varphi-\sss},\]
and
\[((r_{z_0},N_{z_0})=(r_\mf{Z},N_\mf{Z})\otimes_{z_0}\overline\Q_p)^{\varphi-\sss}.\]
We assume that
\begin{enumerate}[label=(\arabic*)]
\item The point $z$ is in $\mf{U}(\overline\Q_p)$;
\item The Weil--Deligne representation $(r_{z},N_{z})$ is pure of weight $0$;
\item There exists a second nilpotent operator $\overline{N}_0$ on $r_{z_0}$ with the property that $(r_{z_0},\overline{N}_0)$ is still a Weil--Deligne representation and, for any $\tau\in\mc{T}$, we have $\overline{N}_0[\tau]\prec N_{z_0}[\tau]$;
\item There exist:
\begin{itemize}
\item Elements $\tau_{0,+},\tau_{0,-}\in\mc{T}$ which are not necessarily distinct;
\item An integer $k_0\geq 0$;
\item Two irreducible, finite dimensional representations $r_{\tau_{0,+}}$ and $r_{\tau_{0,-}}$ of $W_L$ with $\overline{\Q}_p$-coefficients which are pure of respective weights $k_0$ and $-k_0$, such that $r_{\tau_{0,\pm}}|_{I_L}\cong \tau_{0,\pm}$;
\item A Frobenius-semisimple Weil--Deligne representation $(r_0',N_0')$ for $L$ with coefficients in $\overline{\Q}_p$ which is pure of weight $0$;
\end{itemize}
such that
\[(r_{z_0},\overline{N}_0)\cong (r_0',N_0')\oplus r_{\tau_{0,+}}\oplus r_{\tau_{0,-}}.\]
\end{enumerate}
\end{assumption}

With the setup concluded, we remark that in our application, the Weil--Deligne representation $(r_z,N_z)$ will be that attached a member in $\Sigma$ of a strongly $\Sigma$-generic Hecke family, and $(r_{z_0},N_{z_0})$ will be that attached to a strongly $M$-Eisenstein point on this family (see Definition \ref{defheckefamilies} for these notions). In this case, the representations $r_{\tau_{0,\pm}}$ above will be characters of $W_L$. The other Weil--Deligne representation $(r_{z_0},\overline{N}_0)$ will be the one coming from the \textit{semisimple} Galois representation attached to the Eisenstein representation corresponding to $z_0$; thus $(r_{z_0},N_{z_0})$ will come from the restriction to $G_L$ of a reducible, global Galois representation, and $(r_{z_0},\overline{N}_0)$ will come from the restriction to $G_L$ of the semisimplification of that Galois representation, which is why we have $\overline{N}_0\prec N_{z_0}$.

\begin{lemma}
\label{lemlambdaweil}
For any $\tau\in\mc{T}$ and any $i\in\mc{I}^\tau$, the number $\alpha_i^\tau(z_0)/\alpha_i^\tau(z)$ is a $\lambda$-Weil number.
\end{lemma}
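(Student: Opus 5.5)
The plan is to read off Frobenius eigenvalues at the two points from Proposition \ref{propdecompofrzprime}, and then compare them using the purity built into Assumption \ref{assumptionsonzandz0}.

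Fix $\tau\in\mc{T}$ and $i\in\mc{I}^\tau$. By Proposition \ref{propdecompofrzprime}, the semisimplification $r_z$ contains the summand $r_i^\tau\otimes\chi_{\alpha_i^\tau(z)}$ (take $j=0$, or more generally any admissible $j$). Similarly, $r_{z_0}$ contains $r_i^\tau\otimes\chi_{\alpha_i^\tau(z_0)}$. Here Frobenius semisimplification does not change the underlying semisimple $W_L$-representation, and neither does replacing $N_{z_0}$ by $\overline{N}_0$. Choose an eigenvalue $\beta\in\overline\Q_p$ of $r_i^\tau(\varphi)$; it is independent of the point. Then $\beta\alpha_i^\tau(z)$ is an eigenvalue of $r_z(\varphi)$, and $\beta\alpha_i^\tau(z_0)$ is an eigenvalue of $r_{z_0}(\varphi)$. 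The quotient $\alpha_i^\tau(z_0)/\alpha_i^\tau(z)$ is the ratio of these two eigenvalues, so it suffices to show that every eigenvalue of $\varphi$ at $z$ and at $z_0$ is a $\lambda$-Weil number, of some weight.

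At $z$ this is Assumption \ref{assumptionsonzandz0} (2): $(r_z,N_z)$ is pure of weight $0$. Decomposing it as in \eqref{eqnWDrepdecompgen}, the Frobenius eigenvalues on each $r_i\otimes\Sp(m_i)$ are $\lambda$-Weil numbers. The convention adopted at the end of Section \ref{secparams} only speaks about the eigenvalues "when written in the form" \eqref{eqnWDrepdecompgen}, but in any case every eigenvalue of $\varphi$ on $r_z$ is a $\lambda$-Weil number. The one thing to check carefully is that, under this convention, the twisting by $\lambda^{j/2}$ inside $\Sp(m)$ still yields Weil numbers. It does, since $\lambda^{j/2}$ is itself a $\lambda$-Weil number.

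At $z_0$ we use Assumption \ref{assumptionsonzandz0} (4): $r_{z_0}\cong r_0'\oplus r_{\tau_{0,+}}\oplus r_{\tau_{0,-}}$ as $W_L$-representations. Here $r_0'$ underlies a pure weight-$0$ Weil--Deligne representation, and $r_{\tau_{0,\pm}}$ are pure of weights $\pm k_0$. Hence all eigenvalues of $\varphi$ on $r_{z_0}$ are $\lambda$-Weil numbers, of weights in $\{\text{weights of } r_0'\text{ pieces}\}\cup\{\pm k_0\}$.

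It remains to note that a quotient of two $\lambda$-Weil numbers is again a $\lambda$-Weil number. It is algebraic, and under every complex embedding its absolute value is $\lambda^{(w_1-w_2)/2}$. The only mildly delicate point is that the "weight" is then possibly just an integer difference, which the definition allows. So the main obstacle is bookkeeping rather than anything conceptual: one has to make sure the summand $r_i^\tau\otimes\chi_{\alpha_i^\tau(z_0)}$ genuinely occurs in $r_{z_0}$, which requires Proposition \ref{propdecompofrzprime} applied at $z_0$ and not just at generic points. The same $\beta$ must be used on both sides, and that is legitimate because $r_i^\tau$ has constant $\overline\Q_p$-coefficients by Proposition \ref{propexistenceofmfz}(i).
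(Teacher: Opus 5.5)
Your proposal is correct and is essentially the paper's argument: apply Proposition \ref{propdecompofrzprime} at both $z$ and $z_0$, use conditions (2) and (4) of Assumption \ref{assumptionsonzandz0} to see that every Frobenius eigenvalue at either point is a $\lambda$-Weil number, and take the quotient of matching eigenvalues.

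One small slip: $j=0$ is admissible only when $m_i^\tau$ is odd. In general you should fix an admissible $j$ and compare the eigenvalues $\beta\lambda^{j/2}\alpha_i^\tau(z)$ and $\beta\lambda^{j/2}\alpha_i^\tau(z_0)$, whose ratio is still $\alpha_i^\tau(z_0)/\alpha_i^\tau(z)$.
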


\begin{proof}
Take $z'=z$ or $z'=z_0$ in Proposition \ref{propdecompofrzprime}. Then conditions (2) and (4) of Assumption \ref{assumptionsonzandz0} imply that every eigenvalue of $\varphi$ on $r_i^\tau\otimes\chi_{\lambda^{j/2}\alpha_i^\tau(z')}$, for $i$ and $j$ as in the sums of Proposition \ref{propdecompofrzprime}, are $\lambda$-Weil numbers. The lemma follows then by taking quotients of these $\lambda$-Weil numbers.
\end{proof}

We may thus set the following notation.

\begin{notation}
For any $\tau\in\mc{T}$ and any $i\in\mc{I}^\tau$, we write $a_i^\tau$ for the $\lambda$-Weil number $\alpha_i^\tau(z_0)/\alpha_i^\tau(z)$, and we write $w_i^\tau$ for its weight.
\end{notation}

The main goal of the rest of this section is to prove the following.

\begin{proposition}
\label{proppossibsforWDdegen}
Let the setting be as above.
\begin{enumerate}[label=(\alph*)]
\item For all $\tau\in\mc{T}$ and all $i\in\mc{I}^\tau$, we have that $-k_0\leq w_i^\tau\leq k_0$.
\item If there exists both $i_+\in \mc{I}^{\tau_{0,+}}$ with $w_{i_+}^{\tau_{0,+}}=k_0$ and $i_-\in \mc{I}^{\tau_{0,-}}$ with $w_{i_-}^{\tau_{0,-}}=-k_0$, then $\overline{N}_0[\tau]\sim N_z[\tau]$ for all $\tau$.
\end{enumerate}
\end{proposition}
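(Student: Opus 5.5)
The plan is to reduce everything to bookkeeping of Frobenius weights inside each $\tau$-isotypic part, using Proposition \ref{propdecompofrzprime} at the two points $z$ and $z_0$.

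First I normalize. Fix $\tau\in\mc{T}$ and $i\in\mc{I}^\tau$. By condition (1) of Assumption \ref{assumptionsonzandz0} and Proposition \ref{propwdvari}(b), $N_z[\tau]$ has the Jordan type of $N_{\bar\eta}[\tau]$. Hence $(r_z,N_z)[\tau]\cong\bigoplus_i r_i^\tau\otimes\chi_{\alpha_i^\tau(z)}\otimes\Sp(m_i^\tau)$. Purity of weight $0$ (condition (2)) then forces $r_i^\tau\otimes\chi_{\alpha_i^\tau(z)}$ to be pure of weight $0$. By Lemma \ref{lemweilrepsuptounrtw} all $r_i^\tau$ for fixed $\tau$ are unramified twists of one another. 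So I can encode $r_{z}[\tau]$ and $r_{z_0}[\tau]$ by weight-multiplicity functions $f_z^\tau,f_{z_0}^\tau:\Z\to\Z_{\geq0}$, counting copies of a fixed weight-$0$ representative of $\tau$ twisted to weight $k$. Proposition \ref{propdecompofrzprime} gives
\[f_{z_0}^\tau=\sum_{i\in\mc{I}^\tau}\mathbf{1}_{\{w_i^\tau-(m_i^\tau-1),\,w_i^\tau-(m_i^\tau-3),\dotsc,\,w_i^\tau+(m_i^\tau-1)\}},\]
a sum of segments shifted by $w_i^\tau$. Condition (4) gives the second description $f_{z_0}^\tau=g^\tau+\delta_{\tau,\tau_{0,+}}\mathbf{1}_{\{k_0\}}+\delta_{\tau,\tau_{0,-}}\mathbf{1}_{\{-k_0\}}$. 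Here $g^\tau$ is the weight function of the pure weight $0$ representation $(r_0',N_0')[\tau]$, so it is a sum of centered segments: it is symmetric, and $g^\tau(k)\ge g^\tau(k+2)$ for $k\ge0$ in each parity class.

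The key elementary lemma is that for a pure weight $0$ Frobenius-semisimple Weil--Deligne representation, the Jordan type of $N[\tau]$ is determined by the weight function. Indeed, the number of $\Sp(m)$ blocks equals $g(m-1)-g(m+1)$, since only centered segments occur.

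For (a), I argue by extremality. Suppose some $w_i^\tau>k_0$, and choose $(\tau,i)$ maximizing the top weight $t=w_i^\tau+m_i^\tau-1$. Then $t>k_0$, so weight $t$ is supplied by $g^\tau$, and by symmetry weight $-t$ occurs in $g^\tau$ too. I then compare total multiplicities in the range $|k|\ge t$, or equivalently the sums $\sum_k k\,f(k)$ and $\sum_k f(k)$ restricted to $|k|$ large. The shifted segments give the identity $\sum_k k f_{z_0}^\tau(k)=\sum_i m_i^\tau w_i^\tau$. A segment with $w_i^\tau>k_0$ lies strictly above the only possible non-symmetric contributions at $\pm k_0$. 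I expect that this, combined with maximality of $t$ and the monotonicity of $g^\tau$, yields a contradiction by counting weights above $k_0$ against weights below $-k_0$. The case $w_i^\tau<-k_0$ is symmetric. This counting step is where I expect the main difficulty: the segments for different $i$ interact, and one must organize an induction, peeling off the maximal segment and its mirror, rather than argue segment by segment.

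For (b), take $i_\pm$ as in the hypothesis. First show $m_{i_\pm}^{\tau_{0,\pm}}=1$. If $m_{i_+}>1$, the segment of $i_+$ reaches weight $k_0+m_{i_+}-1>k_0$, which must come from $g$. That forces a centered segment of length $\ge k_0+m_{i_+}$ in $g$, hence weights below $-k_0$, and these can only be matched by some $w_j<-k_0$, contradicting (a). So the two special segments at $z_0$ are exactly the characters $r_{\tau_{0,\pm}}$. Removing them, a weight-sum argument over the remaining segments, together with (a), should force all other $w_i^\tau=0$; the step $|w|\le k_0$ plus balancing presumably gives this. Then $r_{z_0}[\tau]$ minus the two special pieces has the same weight function as $r_z[\tau]$ minus the corresponding two weight-$0$ pieces. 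Both $(r_0',N_0')$ and the complement of the two weight-$0$ characters in $(r_z,N_z)$ are pure of weight $0$ with matching weight functions. By the key lemma their monodromies have the same Jordan type, and in both cases the special characters carry no monodromy. This gives $\overline{N}_0[\tau]\sim N_z[\tau]$ for every $\tau$.
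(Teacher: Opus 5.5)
Your proposal has a genuine gap. It never uses item (3) of Assumption~\ref{assumptionsonzandz0}, nor the last clause of Proposition~\ref{propwdvari}(b), $N_{z_0}[\tau]\prec N_z[\tau]$, which holds because $z\in\mf{U}$. Your arguments for (a), and for the two intermediate claims in (b) ($m_{i_\pm}=1$, and $w_i^\tau=0$ for the remaining indices), use only the weight functions $f_{z_0}^\tau$, $g^\tau$ and the special characters. That data does not imply the statement. For example, take $\tau_{0,+}=\tau_{0,-}=\tau$, $k_0=0$, and five indices with all $m_i^\tau=1$ and $(w_i^\tau)=(2,-2,0,0,0)$. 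Then
\[f_{z_0}^\tau=\mathbf{1}_{\{2\}}+\mathbf{1}_{\{-2\}}+3\cdot\mathbf{1}_{\{0\}}=g^\tau+2\cdot\mathbf{1}_{\{0\}},\qquad g^\tau=\mathbf{1}_{\{-2,0,2\}},\]
where $g^\tau$ is the weight function of a single $\Sp(3)$. It is symmetric and unimodal, and $\sum_k kf_{z_0}^\tau(k)=0=\sum_i m_i^\tau w_i^\tau$, yet $w_1^\tau=2>k_0$. So no comparison of weights above $k_0$ with weights below $-k_0$ can yield (a).

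The same problem affects (b). With $k_0=2$, $(w_i^\tau,m_i^\tau)=(2,1),(-2,1),(1,1),(-1,1)$ and $g^\tau=\mathbf{1}_{\{-1,1\}}$, all your constraints and (a) hold and $m_{i_\pm}=1$. But the other $w_i^\tau$ are $\pm1$, and $\overline{N}_0[\tau]\not\sim N_z[\tau]$. Your argument for $m_{i_+}=1$ also fails as written. A weight below $-k_0$ in $g^\tau$ can be supplied by a long segment with $|w_j^\tau|\le k_0$ (e.g.\ $w_j^\tau=0$), not only by some $w_j^\tau<-k_0$.

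What rules these configurations out is the orbit-closure relation $\overline{N}_0[\tau]\prec N_{z_0}[\tau]\prec N_z[\tau]$. In the first example all blocks of $N_z[\tau]$ have length $1$, so $\overline{N}_0[\tau]=0$, which is incompatible with an $\Sp(3)$ summand in $(r_0',N_0')$. This is the input the paper's proof rests on, in three places:
\begin{enumerate}
\item Since $\mathrm{rk}(\overline{N}_0[\tau]^j)\le\mathrm{rk}(N_z[\tau]^j)$ for all $j$, the blocks of $N_0'[\tau]$ are no longer than the longest block of $N_z[\tau]$. In a pure weight $0$ representation, a constituent of weight $k$ lies in a block of length at least $|k|+1$.
\item The paper first uses Lemma~\ref{lemspecificlemma}, whose hypothesis $N_2\prec N_1$ is exactly this relation, to show that segments with $m_i^\tau-1>k_0$ are unshifted and can be removed. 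Afterwards all blocks of $N_z[\tau_{0,\pm}]$ have length at most $k_0+1$, so all weights of $r_0'[\tau_{0,\pm}]$ lie in $[-k_0,k_0]$. This gives (a) and $m_{i_\pm}=1$ immediately.
\item In the final counting (Steps 7--8), the same relation turns the inequalities $n_0'(k,\pm)\ge n_z(k,\pm)$, obtained from weights, into equalities.
\end{enumerate}
Your closing observation, that for pure weight $0$ the Jordan type is read off from the weight function, is correct. But it can only be applied once these intermediate claims are established, and those require the monodromy input.
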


To prove this proposition, it will be convenient to have the following lemma; despite its specificity, it will be used in different ways in proving the proposition above.

\begin{lemma}
\label{lemspecificlemma}
Let $\tau\in\mc{T}$. Let $(r_1,N_1)$ and $(r_2,N_2)$ be two finite dimensional Weil--Deligne representations with $\overline\Q_p$-coefficients. Assume $(r_1,N_1)=(r_1,N_1)[\tau]$ and $(r_2,N_2)=(r_2,N_2)[\tau]$. Assume $(r_1,N_1)$ is pure of weight $0$, and write
\begin{equation}
\label{eqnWDr1decomp}
r_1\cong\bigoplus_{i\in\mc{I}_1}r_{1,i}\otimes\Sp(m_i)
\end{equation}
for some finite index set $\mc{I}_1$ and some irreducible $W_L$-representations $r_{1,i}$ of weight $0$ with $r_{1,i}|_{I_L}\cong\tau$, and some positive integers $m_i$. Assume moreover that there are Weil numbers $a_i$ for each $i\in\mc{I}_1$ such that
\begin{equation}
\label{eqnWDr2decomp}
r_2\cong\bigoplus_{i\in\mc{I}_1}\bigoplus_{\substack{1-m_i\leq j\leq m_i-1 \\ j\equiv m_i-1\modulo{2}}} r_{1,i}\otimes\chi_{a_i\lambda^{j/2}},
\end{equation}
as $W_L$-representations. This implies $r_1$ and $r_2$ have the same dimension, and we assume that under one (equivalently, any) identification of the underlying vector spaces, we have $N_2\prec N_1$.

Now, assume that $(r_2,N_2)$ splits as
\[(r_2,N_2)=(r_2',N_2')\oplus (r_2'',N_2'')\]
with $(r_2',N_2')$ pure of weight $0$ and $(r_2'',N_2'')$ a (possibly empty) direct sum of Weil--Deligne representations which are all pure (not necessarily of weight $0$). Let $w_{\mr{max}}''$ be the largest weight (possibly $-\infty$) of an eigenvalue of $\varphi_L$ on $r_2''$, and $w_{\mr{min}}''$ the smallest weight (possibly $\infty$) of an eigenvalue of $\varphi_L$ on $r_2''$. Then for any $i\in\mc{I}_1$ such that $m_i-1\geq \max\{w_{\mr{max}}'',-w_{\mr{min}}''\}$, the Weil number $w_i$ is of weight $0$, and
\[\bigoplus_{\substack{i\in\mc{I}_1\\ m_i-1\geq \max\{w_{\mr{max}}'',-w_{\mr{min}}''\}}}r_{1,i}\otimes\chi_{a_i}\otimes\Sp(m_i)\]
is a direct summand of $(r_2',N_2')$.
\end{lemma}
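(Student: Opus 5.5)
The plan is to reduce everything to linear algebra of Frobenius eigenvalues along ``$\lambda$-ladders'' and then to combine a weight bound with the dominance order $N_2\prec N_1$, inducting on the lengths $m_i$.

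\emph{Step 1: reduction to unramified data.} By Lemma \ref{lemweilrepsuptounrtw}, all the $r_{1,i}$ are unramified twists of a single irreducible $W_L$-representation $r_\tau$ with $r_\tau|_{I_L}\cong\tau$, and we may take $r_\tau$ of weight $0$. Since $(r_1,N_1)$ and $(r_2,N_2)$ are $\tau$-isotypic, each can be written as $r_\tau\otimes U$. Here $U$ is a Weil--Deligne representation on which $I_L$ acts trivially, so it is just a vector space with a semisimple Frobenius $F$ and a nilpotent $N$ satisfying $FN=\lambda NF$ (with our arithmetic normalization, $N$ raises weights by $2$). Moreover $N_2\prec N_1$ is equivalent to the corresponding relation on the $U$'s. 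Thus I may assume $\tau$ is trivial and each $r_{1,i}$ is an unramified character of weight $0$.

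In this setting $N$ maps the $\mu$-eigenspace of $F$ into the $\lambda\mu$-eigenspace. So everything decomposes along the classes $\mu\lambda^{\Z/2}$, and within a class, weights go up by exactly $2$ along each Jordan chain. The relation $\prec$ can also be checked class by class: it amounts to the inequalities $\operatorname{rank}N_2^k\le\operatorname{rank}N_1^k$ for all $k$, which are additive over the eigenvalue classes.

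\emph{Step 2: the weight statement.} By \eqref{eqnWDr2decomp}, the $W_L$-summands of $r_2$ coming from the index $i$ have Frobenius eigenvalues $a_i\lambda^{j/2}$, of weights $w_i+j$ for $j$ from $1-m_i$ to $m_i-1$.
\begin{itemize}
\item On the other hand, $r_2=r_2'\oplus r_2''$. A pure weight-$0$ Weil--Deligne representation decomposes into blocks $s\otimes\Sp(\ell)$ whose weights are symmetric about $0$. Each pure summand of $r_2''$ has all its eigenvalue weights inside $[w''_{\min},w''_{\max}]$.
\item I will first show that no block of length $\ell$ with $\ell-1>M:=\max\{w''_{\max},-w''_{\min}\}$ can occur inside $r_2''$. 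Indeed, a pure block of weight $t$ has weights $t\pm(\ell-1)$, both of which must lie in $[-M,M]$.
\item Next I compare the top weight $w_i+m_i-1$ with these constraints when $m_i-1\ge M$. If $w_i>0$, this eigenvalue has weight $>M$. So it cannot lie in $r_2''$, and it must lie in $r_2'$ at the top of a block of length $>m_i$.
\item Such a long block in $N_2$ would have to be dominated by the blocks of $N_1$ in the same eigenvalue class. I expect to rule this out by choosing $i$ with $m_i$ maximal in its class and using $\operatorname{rank}N_2^{k}\le\operatorname{rank}N_1^{k}$. The case $w_i<0$ is symmetric, using the bottom weight. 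This gives $w_i=0$.
\end{itemize}

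\emph{Step 3: the summand statement, by descending induction.} I induct downward on $m_i$ over the indices with $m_i-1\ge M$. Fix such an $i$ of maximal length; by Step 2, $w_i=0$. The eigenvalue $a_i\lambda^{(1-m_i)/2}$ has weight $-(m_i-1)\le -M$. If this weight is strictly outside $[w''_{\min},w''_{\max}]$, the eigenvector lies in $r_2'$. Purity of $r_2'$ then forces $N_2^{m_i-1}$ to be nonzero on it, with image in the eigenvalue $a_i\lambda^{(m_i-1)/2}$. Hence $r_2'$ contains a block $\chi_{a_i}\otimes\Sp(m_i)$, which splits off as a direct summand because Frobenius-semisimple Weil--Deligne representations are sums of indecomposables.

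Removing this block from both $(r_1,N_1)$ and $(r_2,N_2)$ preserves every hypothesis, including $N_2\prec N_1$, since the rank inequalities drop by equal amounts. The induction then continues.

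\emph{Main obstacle.} The hardest point is the boundary case $m_i-1=M$. There the extreme weight $\pm M$ may also occur in $r_2''$, so the argument above does not immediately place the extreme eigenvector in $r_2'$. I would first try to handle it by counting, within the eigenvalue class of $a_i$, the total dimension of the weight-$\pm M$ eigenspaces on both sides and applying the dominance inequalities for $k=m_i-1$. The aim is to show that a length-$m_i$ chain must survive in $N_2$, and that a chain of that length cannot sit in $r_2''$ unless it is centered at weight $0$. In that centered case it is pure of weight $0$ and can be moved into $r_2'$.
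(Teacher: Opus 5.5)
\textbf{Gap in the boundary case.} The problem is the boundary case $m_i-1=\max\{w_{\mr{max}}'',-w_{\mr{min}}''\}$ that you flag. Your plan for it cannot work, because $N_2\prec N_1$ bounds $N_2$ only from above; nothing forces a chain of length $m_i$ to survive in $N_2$. In fact the summand assertion is false there. Take $\tau$ trivial, $\mc{I}_1=\{1,2\}$, $r_{1,1}=r_{1,2}=\chi_1$, $m_1=2$, $m_2=1$, and $a_1=a_2=1$. Then $(r_1,N_1)=\Sp(2)\oplus\chi_1$ and $r_2=\chi_{\lambda^{-1/2}}\oplus\chi_{\lambda^{1/2}}\oplus\chi_1$. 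Put $N_2=0$, so $N_2\prec N_1$, and set $(r_2',N_2')=\chi_1$ and $(r_2'',N_2'')=\chi_{\lambda^{-1/2}}\oplus\chi_{\lambda^{1/2}}$, a sum of pure pieces of weights $-1$ and $1$. Then $w''_{\mr{max}}=1=-w''_{\mr{min}}=m_1-1$, yet $\Sp(2)$ is not a summand of the one-dimensional $(r_2',N_2')$. The weight assertion $w_i=0$ does survive at the boundary. Once the longer blocks are peeled off, every remaining index in the range has maximal length, and your Step 2 argument applies.

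\textbf{What the paper does.} The paper's proof does not treat this case either. After showing $w_i=0$, it asserts that the top weight satisfies $m_i-1>w''_{\mr{max}}$, which requires strict inequality. So what it actually proves is the lemma with $m_i-1>\max\{w''_{\mr{max}},-w''_{\mr{min}}\}$, and that is all that is used later. In Steps 1 and 2 of the proof of Proposition \ref{proppossibsforWDdegen} one has $r_2''=0$, and Step 3 only uses indices with $m_i-1>k_0$. So the fix is to replace $\geq$ by $>$ and drop the boundary analysis.

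With that change, your Steps 2 and 3 are the paper's argument. The extreme-weight eigenvalue is forced into $r_2'$, and purity puts it in a block of length at least $m_i+\vert w_i\vert$. Domination forces $w_i=0$ and a block of length exactly $m_i$. You then peel off $r_{1,i}\otimes\chi_{a_i}\otimes\Sp(m_i)$ from both sides and induct.

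\textbf{A correction to Step 1.} Your claim that $\prec$ ``can be checked class by class'' is wrong. The hypothesis is orbit closure in $GL(V)$, which is equivalent only to the global inequalities $\mathrm{rank}\,N_2^k\leq\mathrm{rank}\,N_1^k$. These do not imply per-class inequalities, and the twists by the $a_i$ move summands between Frobenius classes anyway. So in Step 2 you must take $m_i$ maximal over all of $\mc{I}_1$, not within its class. This is automatic if you maximize over the indices in the range, since every other index has strictly smaller $m_i$, and it is what the paper does.
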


\begin{proof}
Let $\mc{I}_1'$ be the set $i\in\mc{I}_1$ such that $m_i-1\geq \max\{w_{\mr{max}}'',-w_{\mr{min}}''\}$, and let $i\in\mc{I}_1'$ be such that $m_i$ is maximal. Then it follows from \eqref{eqnWDr2decomp} that $r_{1,i}\otimes\chi_{a_i\lambda^{(m_i-1)/2}}$ and $r_{1,i}\otimes\chi_{a_i\lambda^{-(m_i-1)/2}}$ are direct summands of $r_{2}$. 

Now assume for sake of contradiction that the weight $w_i$ of $a_i$ is strictly positive. Then the weight of $r_{1,i}\otimes\chi_{a_i\lambda^{(m_i-1)/2}}$ is strictly larger than $m_i-1>w_{\mr{max}}''$, and so we must have that $r_{1,i}\otimes\chi_{a_i\lambda^{(m_i-1)/2}}$ is a summand of $r_2'$. Then since $r_2'$ is pure of weight $0$, there must be a positive integer $M\geq m_i+w_i$ with $M\equiv m_i+w_i\modulo{2}$, such that $r_{1,i}\otimes\chi_{a_i\lambda^{-w_i}}\otimes\Sp(M)$ is a direct summand of $(r_2',N_2')$. But since $M>m$, this contradicts $N_2\prec N_1$. So $w_i\leq 0$, and completely symmetric argument then shows that $w_i=0$.

Thus the weight of the summand $r_{1,i}\otimes\chi_{a_i\lambda^{(m_i-1)/2}}$ equals $m_i-1>w_{\mr{max}}''$, so we still have that it is a summand of $r_2'$. Again we find that there must be a positive integer $M\geq m_i$ with $M\equiv m_i\modulo{2}$, such that $r_{a_i}\otimes\Sp(M)$ is a direct summand of $(r_2',N_2')$. And similarly as above, if actually $M>m_i$, then this would contradict the fact that $N_2\prec N_1$. So $r_{1,i}\otimes\chi_{a_i}\otimes\Sp(m_i)$ is a direct summand of $(r_1',N_1')$.

Now for this $i$, consider the quotient representations
\[(r_1,N_1)/(r_{1,i}\otimes\Sp(m_i))\quad\textrm{and}\quad(r_2,N_2)/(r_{1,i}\otimes\chi_{a_i}\otimes\Sp(m_i)).\]
Then these respective representations satisfy the same list of hypotheses that $(r_1,N_1)$ and $(r_2,N_2)$ do in the statement of the lemma. Thus an easy induction on $\#\mc{I}_1'$ suffices to complete the proof.
\end{proof}

\begin{proof}[Proof (of Proposition \ref{proppossibsforWDdegen})]
We now proceed in several steps.

\textit{Step 1}. We first observe the following consequence of Lemma \ref{lemspecificlemma} along with the decomposition \eqref{eqndecompofretaNeta}: We have that
\begin{equation}
\label{eqndecompofrzNz}
(r_{z},N_{z})\cong\bigoplus_{\tau\in\mc{T}}\bigoplus_{i\in\mc{I}^\tau}r_i^\tau\otimes\chi_{\alpha_i^\tau(z)}\otimes\Sp(m_i^\tau).
\end{equation}
To see this, for any $\tau\in\mc{T}$, apply Lemma \ref{lemspecificlemma} with $(r_1,N_1)=(r_{z},N_{z})[\tau]$ and $(r_2,N_2)=(r_2',N_2')$ given by the $\tau$-isotypic component of the right hand side above. We can take $\mc{I}_1=\mc{I}^\tau$ there, as well as $a_i=1$ for all $i$. Then $(r_2'',N_2'')$ is trivial and both $(r_1,N_1)$ and $(r_2,N_2)$ are pure of weight $0$, and we can take $w_{\mr{max}}''=-\infty$ and $w_{\mr{min}}''=\infty$. We conclude that the $\tau$-component of the right hand side of \eqref{eqndecompofrzNz} is a summand of that of the left. They are thus isomorphic since they have the same dimension. Now sum over all $\tau$.

\textit{Step 2}. We now make a preliminary reduction. Let $\tau$ be such that $\tau\ne\tau_{0,+},\tau_{0,-}$. Then by item (4) of Assumption \ref{assumptionsonzandz0}, we have that $(r_{z_0},\overline{N}_0)[\tau]$ is pure of weight $0$. Moreover, by Proposition \ref{propwdvari} (b) along with item (3) of Assumption \ref{assumptionsonzandz0}, we also have that $\overline{N}_0[\tau]\prec N_{z}[\tau]$. We may thus apply Lemma \ref{lemspecificlemma} with $(r_1,N_1)=(r_{z},N_{z})[\tau]$ and $(r_2,N_2)=(r_2',N_2')=(r_{z_0},\overline{N}_0)[\tau]$ there, taking $\mc{I}_1=\mc{I}^\tau$ along with $r_{1,i}=r_i^\tau\otimes\chi_{\alpha_i^\tau(z)}$, $a_i=a_i^\tau$, and $m_i=m_i^\tau$ for each $i\in\mc{I}^\tau$; note that the hypothesis \eqref{eqnWDr2decomp} of Lemma \ref{lemspecificlemma} is satisfied by Proposition \ref{propdecompofrzprime}. Thus this tells us that
\[\bigoplus_{i\in\mc{I}^\tau}r_i^\tau\otimes\chi_{a_i^\tau}\otimes\Sp(m_i^\tau)\]
is a direct summand of $(r_{z_0},\overline{N}_0)[\tau]$, and hence they are equal for dimension reasons. Moreover, Lemma \ref{lemspecificlemma} also tells us that $w_i^\tau=0$ for all $i\in\mc{I}^\tau$. Thus part (a) of the proposition holds for $\tau\ne\tau_{0,+},\tau_{0,-}$, and we have moreover that $\overline{N}_0[\tau]\sim N_z[\tau]$.

\textit{Step 3}. Thus it suffices to study the case when $\tau=\tau_{0,+}$ or $\tau=\tau_{0,-}$. Consider the former case. We now apply Lemma \ref{lemspecificlemma} with $(r_1,N_1)=(r_{z},N_{z})[\tau_{0,+}]$ and $(r_2,N_2)=(r_{z_0},\overline{N}_0)[\tau_{0,+}]$ there, taking now $(r_2',N_2')=(r_0',N_0')[\tau_{0,+}]$ and taking $(r_2'',N_2'')$ to be $r_{\tau_{0,+}}$ if $\tau_{0,+}\ne\tau_{0,-}$, and otherwise taking $(r_2'',N_2'')$ to be $r_{\tau_{0,+}}\oplus r_{\tau_{0,-}}$ (see item (4) of Assumption \ref{assumptionsonzandz0}); we take $\mc{I}_1$ and each $r_{1,i}$, $a_i$ and $m_i$ like in Step 2 but with $\tau=\tau_{0,+}$. Then the number $\max\{w_{\mr{max}}'',-w_{\mr{min}}''\}$ of Lemma \ref{lemspecificlemma} is equal to $k_0$, and we obtain following: Let $\mc{I}_{+}'$ be the set of $i\in\mc{I}^{\tau_{0,+}}$ such that $m_i^{\tau_{0,+}}-1>k_0$. Then
\[\bigoplus_{i\in\mc{I}_+'}r_i^{\tau_{0,+}}\otimes\chi_{a_i^{\tau_{0,+}}}\otimes\Sp(m_i^{\tau_{0,+}})\]
is a direct summand of $(r_{z_0},\overline{N}_0)[\tau_{0,+}]$, and moreover $w_i^{\tau_{0,+}}=0$ for every $i\in\mc{I}_+'$. Thus, taking the quotient of $(r_{z_0},\overline{N}_0)[\tau_{0,+}]$ by this summand, and the quotient of $(r_z,N_z)$ by its summand
\[\bigoplus_{i\in\mc{I}_+'}r_i^{\tau_{0,+}}\otimes\chi_{\alpha_i^{\tau_{0,+}}(z)}\otimes\Sp(m_i^{\tau_{0,+}}),\]
we may, and will, therefore assume $\mc{I}_+'$ is empty, i.e., that $m_i^{\tau_{0,+}}-1\leq k_0$ for every $i\in\mc{I}^{\tau_{0,+}}$.

A completely symmetric argument allows us to assume that $m_i^{\tau_{0,-}}-1\leq k_0$ for every $i\in\mc{I}^{\tau_{0,-}}$, and we do assume this from now on.

\textit{Step 4}. With the assumptions just made, we now note the following: If $\tau=\tau_{0,+}$ or $\tau=\tau_{0,-}$, then there are no irreducible direct summands of $r_0[\tau]$ with weight $w$ having $\vert w\vert>k_0$. Indeed, the argument is similar to above, as such a summand would force the existence of a direct summand of $(r_{z_0},\overline{N}_0)[\tau]$ of the form $R\otimes\Sp(M)$ for some $W_L$-representation $R$ and some $M$ with $M-1>k_0+1$. But this is impossible since $\overline{N}_0[\tau]\prec N_z[\tau]$.

\textit{Step 5}. We now prove (a). Assume for sake of contradiction that there is an $i\in\mc{I}^{\tau_{0,+}}$ such that $w_i^{\tau_{0,+}}>k_0$. Then
\[r_i^{\tau_{0,+}}\otimes\chi_{\alpha_i^{\tau_{0,+}}(z)}\otimes\chi_{a_i^{\tau_{0,+}}\lambda^{(m_i^{\tau_{0,+}}-1)/2}}\]
is a direct summand of $r_{z_0}[\tau_{0,+}]$, and visibly this summand has weight $w_i^{\tau_{0,+}}+m_i^{\tau_{0,+}}-1>k_0$. But this contradicts the conclusion of Step 4. A completely symmetric argument shows also that $w_i^{\tau_{0,+}}\geq -k_0$, and similarly for $\tau_{0,-}$ in place of $\tau_{0,+}$. This suffices to complete the proof of (a).

\textit{Step 6}. Now we tackle (b). Assume from now on that there exists both $i_+\in \mc{I}^{\tau_{0,+}}$ with $w_{i_+}^{\tau_{0,+}}=k_0$ and $i_-\in \mc{I}^{\tau_{0,-}}$ with $w_{i_-}^{\tau_{0,-}}=-k_0$. We first claim that $m_{i_+}^{\tau_{0,+}}=m_{i_-}^{\tau_{0,-}}=1$. Indeed, otherwise, say if $m_{i_+}^{\tau_{0,+}}>1$, then
\[r_{i_+}^{\tau_{0,+}}\otimes\chi_{\lambda^{(m_{i_+}^{\tau_{0,+}}-1)/2}\alpha_{i_+}^{\tau_{0,+}}(z)}\]
is a direct summand of $r_z[\tau_{0,+}]$, and hence
\[r_{i_+}^{\tau_{0,+}}\otimes\chi_{\lambda^{(m_{i_+}^{\tau_{0,+}}-1)/2}a_{i_+}^{\tau_{0,+}}(z)}\]
is a direct summand of $r_{z_0}[\tau_{0,+}]$. But this representation has weight $k_0+m_{i_+}^{\tau_{0,+}}-1>k_0$, contradicting Step 4. So $m_{i_+}^{\tau_{0,+}}=1$, and a symmetric argument shows $m_{i_-}^{\tau_{0,-}}=1$.

\textit{Step 7}. For clarity, we note that, as at the beginning of Step 3, we currently may assume, and are assuming, that $r_z$ is the span of $r_z[\tau_{0,+}]$ and $r_z[\tau_{0,-}]$, and similarly for $r_0'$. Now for any integer $k$ with $-k_0\leq k \leq k_0$, let $n_{0}'(k,\pm)$ (resp. $n_{z}(k,\pm)$) be the nonnegative integer such that the span of all the weight $k$ summands of $r_0'[\tau_{0,\pm}]$ (resp. of $r_z/((r_{i_+}^{\tau_{0,+}}\otimes\chi_{\alpha_{i_+}^{\tau_{0,+}}(z)})\oplus (r_{i_-}^{\tau_{0,-}}\otimes\chi_{\alpha_{i_-}^{\tau_{0,-}}(z)}))[\tau_{0,\pm}]$) is the direct sum of $n_{0}'(k,\pm)$ irreducible $W_L$-representations. We claim that for all $k$, we have $n_{0}'(k,\pm)=n_z(k,\pm)$ (same sign). We will prove this simultaneously with the claim that for all $i\in\mc{I}^{\tau_{0,+}}\cup\mc{I}^{\tau_{0,-}}\backslash\{i_+,i_-\}$ (the union is disjoint if $\tau_{0,+}\ne\tau_{0,-}$), we have that the number $w_i^{\tau_{0,+}}$ or $w_i^{\tau_{0,-}}$, whichever makes sense, is $0$. We will prove these claims by descending induction on $\vert k\vert$ from $\vert k\vert =k_0$. This step will accomplish the base case.

For ease of notation, let us write $\widetilde{\mc{I}}=\mc{I}^{\tau_{0,+}}\cup\mc{I}^{\tau_{0,-}}\backslash\{i_+,i_-\}$, and let us drop the superscripts on $r_i^{\tau_{0,\pm}}$, $\alpha_i^{\tau_{0,\pm}}$ $a_i^{\tau_{0,\pm}}$, $m_i^{\tau_{0,\pm}}$, and $w_i^{\tau_{0,\pm}}$, and simply write $r_i$, $\alpha_i$, $a_i$, $m_i$, and $w_i$, respectively, when $i\in\widetilde{\mc{I}}$.

First, note that by Steps 3 and 4, we are assuming that there are no direct summands of $r_0'$ or $r_z$ of weight $k$ with $\vert k\vert >k_0$. Let $i\in\widetilde{\mc{I}}$ be such that $r_i\otimes\chi_{\alpha_i(z)}\otimes\Sp(m_i)$ has a direct summand of weight $k_0$. Then by weight $0$ purity of $r_z$ along with the assumption just noted, we have $m_i-1=k_0$. If, for this $i$, we had $w_i>0$, then the summand $r_i\otimes\chi_{\lambda^{k_0/2}\alpha_i(z_0)}$ of $r_0'$ would have weight $k_0+w_i>k_0$, contrary to our assumptions. So $w_i\leq 0$, and symmetrically, we must then have $w_i=0$.

Thus, invoking Proposition \ref{propdecompofrzprime}, we find that along with $r_{\tau_{0,+}}$, there are at least $n_z(k_0,\pm)$ other distinct summands of $r_0[\tau_{0,\pm}]$ of weight $k$ which are in direct sum. Allowing for one of these to be isomorphic to $r_{\tau_{0,+}}$, we find then $r_0'[\tau_{0,\pm}]$ contains at least $n_z(k_0,\pm)$ such summands i.e., $n_0'(k_0,\pm)\geq n_z(k_0,\pm)$. It follows from the purity of $r_0'$, along with the fact that $\overline{N}_0[\tau_{0,\pm}]\prec N_z[\tau_{0,\pm}]$, that actually we must have $n_0'(k_0,\pm)= n_z(k_0,\pm)$.

As usual, a symmetric argument implies that if $i\in\widetilde{\mc{I}}$ is such that $r_i\otimes\chi_{\alpha_i(z)}\otimes\Sp(m_i)$ has a direct summand of weight $-k_0$, then $w_i=0$, and then also implies the fact that $n_0'(-k_0,\pm)=n_z(-k_0,\pm)$. This completes the base case of the aforementioned induction.

\textit{Step 8}. We now carry out the induction step of the argument explained in the previous step.

If $k_0=0$, we are done with the induction. Otherwise, let $k$ be such that $0\leq\vert k\vert<k_0$. Assume that for all $k'$ with $\vert k\vert<\vert k'\vert\leq k_0$, we have that if $i\in\widetilde{\mc{I}}$ is such that $r_i\otimes\chi_{\alpha_i(z)}\otimes\Sp(m_i)$ has a direct summand of weight $k'$, then $w_i=0$ and $n_0'(k',\pm)=n_z(k',\pm)$. Using the notation of Step 7, for any integer $K$ with $0\leq K\leq k_0$, we let
\[(r_z,N_z)[K]=\bigoplus_{\substack{i\in\widetilde{\mc{I}}\\ m_i-1\geq K}}r_i\otimes\chi_{\alpha_i(z)}\otimes\Sp(m_i).\]
This is a direct summand of $(r_z,N_z)/((r_{i_+}^{\tau_{0,+}}\otimes\chi_{\alpha_{i_+}^{\tau_{0,+}}(z)})\oplus (r_{i_-}^{\tau_{0,-}}\otimes\chi_{\alpha_{i_-}^{\tau_{0,-}}(z)}))$, which makes sense by Step 6. The underlying $W_L$ representation is
\[\bigoplus_{\substack{i\in\widetilde{\mc{I}}\\ m_i-1\geq K}}\bigoplus_{\substack{1-m_i^\tau \leq j\leq m_i^\tau-1\\ j\equiv m_i^\tau-1\modulo{2}}}r_i\otimes\chi_{\lambda^{j/2}\alpha_i(z)},\]
and it contains all summands of $r_z$ with weight at least $K$ or at most $-K$. The corresponding $W_L$-subrepresentation of $r_0'$, let us call it $r_0'[K]$, coming from Proposition \ref{propdecompofrzprime} is
\[r_0'[K]=\bigoplus_{\substack{i\in\widetilde{\mc{I}}\\ m_i-1\geq K}}\bigoplus_{\substack{1-m_i^\tau \leq j\leq m_i^\tau-1\\ j\equiv m_i^\tau-1\modulo{2}}}r_i\otimes\chi_{\lambda^{j/2}\alpha_i(z_0)}.\]
By the induction hypothesis, if $K>\vert k\vert$, then the direct sum of this representation with $(r_{i_+}^{\tau_{0,+}}\otimes\chi_{\alpha_{i_+}^{\tau_{0,+}}(z_0)})\oplus (r_{i_-}^{\tau_{0,-}}\otimes\chi_{\alpha_{i_-}^{\tau_{0,-}}(z_0)})$ contains all summands of $r_{z_0}$ with weight at least $K$ or at most $-K$.

Thus, if there is an $i\in\widetilde{\mc{I}}$ is such that $r_i\otimes\chi_{\alpha_i(z)}\otimes\Sp(m_i)$ has a direct summand of weight $k$ and $w_i\ne 0$, then first we must have $m_i-1=\vert k\vert$ by weight $0$ purity along with the induction hypothesis. Moreover, we would have then that the $W_L$-representation $(r_0'[\vert k\vert])[\tau_{0,\pm}]$ would have either more than $n_0(\vert k\vert+\vert w_i\vert,\pm)$ summands of weight $\vert k\vert+\vert w_i\vert$, or more than $n_0(-(\vert k\vert+\vert w_i\vert),\pm)$ summands of weight $-(\vert k\vert+\vert w_i\vert)$, all signs depending on the signs of $k$ and $w_i$ and the underlying $I_L$-representation of $r_i$, and this contradicts the induction hypothesis. Thus $w_i=0$, and so Proposition \ref{propdecompofrzprime} implies that $n_0'(k,\pm)\geq n_z(k,\pm)$.

Now since the representations $(r_z,N_z)/((r_{i_+}^{\tau_{0,+}}\otimes\chi_{\alpha_{i_+}^{\tau_{0,+}}(z)})\oplus (r_{i_-}^{\tau_{0,-}}\otimes\chi_{\alpha_{i_-}^{\tau_{0,-}}(z)}))[\tau_{0,\pm}]$ and $(r_0',N_0')[\tau_{0,\pm}]$ are pure of weight $0$, it follows that the number of disjoint summands of either of these representations of the form $r\otimes\Sp(m)$, for some irreducible $W_L$-representation $r$ and fixed positive integer $m$, equals $n_z(m-1,\pm)-n_z(m+1,\pm)=n_z(-(m-1),\pm)-n_z(-(m+1),\pm)$ and respectively $n_0'(m-1,\pm)-n_0'(m+1,\pm)=n_0'(-(m-1),\pm)-n_0'(-(m+1),\pm)$. Thus the induction hypothesis along with the relation $\overline{N}_0\prec N_z$ shows that the inequality $n_0'(k,\pm)\geq n_z(k,\pm)$ just proved is actually an equality. This completes the induction.

\textit{Step 9}. We now complete the proof of (b). By Steps 7 and 8, we know that $n_0'(k,\pm)=n_z(k,\pm)$ for all $k$ with $0\leq k\leq k_0$. Like at the end of Step 8, this implies that for any integer $m>0$, the number of disjoint summands of either $(r_0',N_0')[\tau_{0,\pm}]$ or of
\[(r_z,N_z)/((r_{i_+}^{\tau_{0,+}}\otimes\chi_{\alpha_{i_+}^{\tau_{0,+}}(z)})\oplus (r_{i_-}^{\tau_{0,-}}\otimes\chi_{\alpha_{i_-}^{\tau_{0,-}}(z)}))[\tau_{0,\pm}]\]
of the form $r\otimes\Sp(m)$, for some irreducible $W_L$-representation $r$, are equal. It follows that that the monodromy operators on these Weil--Deligne representations are conjugate. From this it immediately follows that $\overline{N}_0[\tau_{0,\pm}]\sim N_z[\tau_{0,\pm}]$, as desired.
\end{proof}

\section{Controlling monodromy}

We now continue with our group $G$ over our totally real number field $F$, along with the number field $E$ which equals $F$ if $G$ is not unitary, and otherwise is an imaginary quadratic extension of $F$. We state and prove our main theorem in full.

\begin{theorem}
\label{thmmainthm}
Fix an isomorphism $\iota:\C\overset{\sim}{\longrightarrow}\overline{\Q}_p$. Let $K_f$ be a factorizable, compact open subgroup of $G(\A_{F,f})$, and let $\mc{F}=(\mb{T},\mf{X},\Sigma,\Psi)$ be a Hecke family of level $K_f$ for $G$ which is strongly $\Sigma$-generic, in the sense of Definition \ref{defheckefamilies}, defined using $\iota$. For $x\in\Sigma$, let $\pi_x$ be the automorphic representation for $G$ attached to $x$. Assume there is a point $x_0\in\mf{X}(\overline\Q_p)$ which is strongly $M$-Eisenstein for $\mc{F}$ and induced by some $(\chi,\sigma)$, again as in Definition \ref{defheckefamilies}.

Fix a finite place $v$ of $F$ not lying above $2$ or $p$, and if $G$ is unitary, assume $v$ is inert or ramified in $E$. Let $w$ be the place above $v$ in $E$ if $G$ is unitary, and otherwise let $w=v$ in $E=F$.

Let $(r_{\sigma},N_{\sigma})$ be the $(N-2)$-dimensional, Frobenius-semisimple Weil--Deligne representation for $W_{E_w}$ over $\overline\Q_p$ associated with the local $L$-parameter of $\sigma$ at $v$ and $\iota$, and for any $x\in\Sigma$, let $(r_{\pi_x},N_{\pi_x})$ be that associated with the local $L$-parameter of $\pi_x$ at $v$ and $\iota$. Let us abusively write $\chi_v$ for both the local component of $\chi$ at $v$ and for the character $W_{E_w}\to\C^\times$ attached to it by class field theory. Write $\chi_v^*$ for its (conjugate-)dual.

Assume $\mf{X}$ is Zariski irreducible. Then there are:
\begin{itemize}
\item A compact open subgroup $K_v'\subset G(F_v)$;
\item A Hecke operator $\phi_v\in C_c^\infty(K_v'\backslash G(F_v)/K_v',\overline\Q_p)$;
\item A $\iota^{-1}(\phi_v\otimes 1_{K_f^v})$-eigenvector $u_v\in\Ind_{P(F_v)}^{G(F_v)}(\chi_v\boxtimes \sigma_v)^{K_v'}$, say with eigenvalue $\lambda(u_v)$;
\end{itemize}
such that, if we assume
\begin{enumerate}[label=(\arabic*)]
\item Writing $K_{f,v}$ for the component of $K_f$ at $v$ and $K_f^v$ for that away from $v$, we have that $K_{f,v}\subset K_v'$;
\item The operator $\phi_v\otimes 1_{K_f^v}\in\mb{T}$;
\item If $V_{x_0}'$ is the constituent of $(\Ind_{P(\A_F)}^{G(\A_F)}(\chi\boxtimes\sigma)_f^{K_f})^{\mb{T}-\sss}$ given to us by the strong $M$-Eisenstein property of $x_0$ in Definition \ref{defheckefamilies}, then there is a vector $u\in V_{x_0}'$ such that $\iota^{-1}(\phi_v\otimes 1_{K_v'})u=\lambda(u_v)u$;
\end{enumerate}
then for any $x\in\Sigma$ outside a proper Zariski closed subset of $\mf{X}$, the underlying $I_{E_w}$-representations in
\[(r_\sigma\oplus(\iota\circ\chi_v)\oplus(\iota\circ\chi_v^*),N_\sigma\oplus 0\oplus 0)\quad \textrm{and}\quad (r_{\pi_x},N_{\pi_x})\]
are isomorphic, and for any continuous, irreducible, finite dimensional representation $\tau$ of $I_{E_w}$ with $\overline\Q_p$-coefficients and open kernel, we have that the monodromy operators above satisfy
\[(N_\sigma\oplus 0\oplus 0)[\tau]\sim N_{\pi_x}[\tau].\]
Here, if we write $(r_\sigma\oplus(\iota\circ\chi_v)\oplus(\iota\circ\chi_v^*),N_\sigma\oplus 0\oplus 0)[\tau]$ and $(r_{\pi_x},N_{\pi_x})$ for the $\tau$-isotypic components of these respective Weil--Deligne representations, then $(N_\sigma\oplus 0\oplus 0)[\tau]$ and $N_{\pi_x}[\tau]$ denote the respective monodromy operators in these $\tau$-isotypic components.
\end{theorem}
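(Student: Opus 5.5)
The plan is to build $\phi_v$ from a Miyauchi--Stevens type, use covers to read its eigenvalues off Jacquet modules, interpolate one eigenvalue over the family, and feed the resulting weight jump into Proposition \ref{proppossibsforWDdegen}.

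\emph{Local construction.} Let $(\tau,L)$ be the supercuspidal support of the Bernstein component containing $\chi_v\boxtimes\sigma_v$, viewed in $M(F_v)$, so that $L\subset M$. Let $\varrho$ be the type of Proposition \ref{propexistenceofcovers}. We take $K_v'=K'$, the kernel of $\varrho$, and $\phi_v=\iota\circ\phi_\varrho$ as in Definition \ref{defheckeoptype}.

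By Corollary \ref{corjacofeisenstein}, $\Jac_M(\Ind(\chi_v\boxtimes\sigma_v))$ contains both $\chi_v\Vert\cdot\Vert_w^{b_0}\boxtimes\sigma_v$ and $\chi_v^*\Vert\cdot\Vert_w^{b_0}\boxtimes\sigma_v$, where $b_0=\tfrac{N-\delta_{\mr{eo}}-1}{2}$. Both constituents are of type $(\tau,L)$. Let $k_0\ge 0$ be the absolute value of the weight of $\chi_v(\varpi_w)$. We choose whichever of $\chi_v,\chi_v^*$, call it $\chi_+$, has $\chi_+(\varpi_w)$ of weight $+k_0$, and set
\[
\lambda(u_v)=\chi_+(\varpi_w)q_w^{-b_0}.
\]
Proposition \ref{propexistenceofcovers}(ii),(iii) identifies $e_{\varrho,G}*\Ind(\chi_v\boxtimes\sigma_v)$ with $e_{\varrho,M}*\Jac_M$, intertwining $\phi_\varrho$ with the action of $\varpi_w\times 1$. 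Since the Jacquet module has finite length, $\lambda(u_v)$ is an eigenvalue there, and a genuine eigenvector $u_v$ exists.

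\emph{Interpolating an eigenvalue.} Replace $\mf{X}$ by the neighbourhood $\mf{X}_0$ and the spaces $\mf{Y}\to\mf{Z}$ of Propositions \ref{propgalfamonfreemod} and \ref{propexistenceofmfz}, taking an irreducible component through a point $z_0$ over $x_0$. By Lemma \ref{lemdimVxisconst}, $\dim V_x=D$ is constant, and the functions $\Psi((\phi_v\otimes 1)^k)$ are power sums of the eigenvalues. Hence the characteristic polynomial of $\phi_v\otimes 1$ on $V_x$ has analytic coefficients. After a further finite normalization, its roots become functions on $\mf{Z}$. By hypothesis (3), one root $\mu$ satisfies $\mu(z_0)=\lambda(u_v)$.

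For $x\in\Sigma$, the representation $\pi_{x,v}$ lies in a tempered packet. This follows from strong genericity together with the purity of Proposition \ref{proppurityofgal} and local--global compatibility (Theorem \ref{thmconstofgalois}(b)). So Proposition \ref{propjacoftempered} shows that every nonzero eigenvalue has the form $\rho_i(\varpi_w)q_w^{-b}$ or $\rho_i^*(\varpi_w)q_w^{-b}$ with $b\ge b_0$, where $\rho_i$ is a character constituent of the parameter. Via Lemma \ref{lemexistsWDforrhoY} (twisted to weight $0$), these are Frobenius eigenvalues on $r_z$ multiplied by $q_w^{-(b-b_0)}$. Since there are only finitely many such shapes and $\Sigma$ is Zariski dense, one shape holds on a dense set. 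Consequently, after twisting, $\mu=\lambda_{\mf{Z}}q_w^{-c}$ identically, with $c\ge 0$ and $\lambda_{\mf{Z}}$ the eigenvalue $\lambda^{j/2}\alpha_i^\tau$ of a character summand as in Proposition \ref{propdecompofrzprime}; here $\mu\ne 0$ because $\lambda(u_v)\neq 0$.

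\emph{Weight argument and conclusion.} At $z_0$, the twisted $\lambda_{\mf{Z}}(z_0)$ has weight $k_0+2c$ (up to the Weil-number normalisation). Meanwhile, Assumption \ref{assumptionsonzandz0} holds with the following data:
\begin{itemize}
\item $\overline{N}_0$ comes from the semisimple Eisenstein Galois representation of Proposition \ref{propconstofpsrep}(b);
\item $r_{\tau_{0,\pm}}$ are the twisted $\chi_v$ and $\chi_v^*$, of weights $\pm k_0$;
\item $(r_0',N_0')$ comes from $\sigma$, pure of weight $0$;
\item $\overline{N}_0\prec N_{z_0}$, since $\overline{N}_0$ comes from a semisimplification.
\end{itemize}
Proposition \ref{proppossibsforWDdegen}(a), together with $\vert j\vert\le m-1$, then forces $c=0$, $m_i=1$, and $w_i^{\tau_{0,+}}=k_0$. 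The (conjugate-)duality of Theorem \ref{thmconstofgalois}(a), which is inherited by $T_{\mc{F}}$, produces a partner index with $w=-k_0$. Proposition \ref{proppossibsforWDdegen}(b) then gives $\overline{N}_0[\tau]\sim N_z[\tau]$ for $z\in\mf{U}$.

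Proposition \ref{propwdvari}(a) identifies the inertia types. Translating back through Theorem \ref{thmconstofgalois}(b) and pushing $\mf{U}$ down to $\mf{X}$, which is finite and surjective, gives the theorem off a proper Zariski closed set.

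The step I expect to be the main obstacle is the middle one: showing that the interpolated root $\mu$ really equals a single Frobenius-eigenvalue function $\lambda_{\mf{Z}}$ times a fixed power $q_w^{-c}$, and tracking the exponents $b,j,c$ exactly. Only then does the weight jump at $z_0$ land in the form Proposition \ref{proppossibsforWDdegen} requires.
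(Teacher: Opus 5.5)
Your proposal follows the paper's own proof essentially step for step. The shared steps are:
\begin{enumerate}
\item build $\phi_v=\phi_\varrho$ from a Miyauchi--Stevens cover and get $\lambda(u_v)$ from Corollary \ref{corjacofeisenstein};
\item interpolate the characteristic polynomial using Lemma \ref{lemdimVxisconst} and hypothesis (3);
\item match a root generically through Proposition \ref{propjacoftempered};
\item compare weights against Proposition \ref{proppossibsforWDdegen}(a);
\item take the self-dual partner and apply Proposition \ref{proppossibsforWDdegen}(b).
\end{enumerate}
You are more explicit than the paper that $\pi_{x,v}$ is tempered. Two steps, however, are stated incorrectly and need repair.

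\textbf{The type.} It is false in general that $\chi_v\Vert\cdot\Vert_w^{b_0}\boxtimes\sigma_v$ and $\chi_v^*\Vert\cdot\Vert_w^{b_0}\boxtimes\sigma_v$ are both of type $(\tau,L)$. By Proposition \ref{propexistenceofcovers}(i), $e_{\varrho,M}$ only detects representations of $M(F_v)$ whose $GL_1(E_w)$-factor is an unramified twist of the $GL_1$-component of $\tau$. But $\chi_v^*$ is an unramified twist of $\chi_v$ only when $\chi_v^*\chi_v^{-1}$ is unramified, e.g.\ not for $Sp_{2n}$ with $\chi_v^2$ ramified. So if $\varrho$ is built from $\chi_v\boxtimes\tau_H$ and $\chi_+=\chi_v^*$, then $e_{\varrho,M}$ kills the constituent carrying $\lambda(u_v)$. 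The Eisenstein eigenvalue $\chi_v(\varpi_w)q_w^{-b_0}$ that the type does see has normalized weight $-k_0$, and then the comparison with part (a) yields nothing. The fix is to choose the representative of the cuspidal support in $L\subset M$ with $GL_1$-component $\chi_+$. This is still a type for the Bernstein block of $\Ind(\chi_v\boxtimes\sigma_v)$, since $\chi_+\boxtimes\tau_H$ and $\chi_v\boxtimes\tau_H$ are Weyl-conjugate in $G$. It also forces the generically matched character to have the inertial type of $\chi_+$ (or of its dual), which is what places the index in $\mc{I}^{\tau_{0,+}}$ with the right sign. The paper's Step 2 is equally terse here and needs the same choice.

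\textbf{The bookkeeping.} Writing $\lambda_{\mf{Z}}=\lambda^{j/2}\alpha_i^\tau$ for some $j$ does not close the argument. The character $\rho_i$ of Proposition \ref{propjacoftempered} is attached to $r_i$ in a block $r_i\boxtimes\Sym^{a_i-1}$, i.e.\ it is the \emph{center} of a Frobenius string. That center is not a Frobenius eigenvalue when $a_i$ is even.
\begin{itemize}
\item \emph{Why your version fails.} With arbitrary $j$ your normalization gives $w_i=k_0+2c-j$, so part (a) only yields $2c\le j$. Your claim $c\ge 0$ is unjustified, and $\vert j\vert\le m-1$ does not force $c=0$ or $m_i=1$.
\item \emph{The repair, as in the paper.} Restrict to $z$ over $\mf{U}\cap\mf{U}_1$, where $\mf{U}_1$ is the locus on which $y\circ\rho_{\mf{Y}}$ is semisimple. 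There $(r_z,N_z)$ really is the parameter of $\pi_{x,v}$, with the generic block structure, so $\rho_i$ corresponds to $r_i^\tau\otimes\chi_{\alpha_i^\tau(z)}$. This gives the identity \eqref{eqnbetaandalpha} between the root and the center function $(r_{i_0}^{\tau_0})^{\varepsilon}(\varphi^{e})\alpha_{i_0}^{\tau_0}$.
\item \emph{The weight count.} The center function has weight $0$ at $z$ and weight $\pm(k_0+2(b-b_0))$ at $z_0$. This is exactly the weight of $\alpha_{i_0}^{\tau_0}(z_0)/\alpha_{i_0}^{\tau_0}(z)$, so part (a) forces $b=b_0$ and weight exactly $\pm k_0$.
\end{itemize}
That is all part (b) needs; $m_{i_0}=1$ is proved inside (b) and need not be established first. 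The same locus $\mf{U}_1$ is also what justifies your final identification of $N_z$ with $N_{\pi_x}$.
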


\begin{proof}
Throughout the proof we will identify $\C\cong\overline\Q_p$ via $\iota$ and drop all instances of $\iota$ from the notation. Fix throughout a uniformizer $\varpi_w$ in $E_w$. We proceed in several steps.

\textit{Step 1}. Let $q_w$ denote the cardinality of the residue field of $E_w$. We first note that the $W_{E_w}$-character $\chi_v$ is pure of some weight $k_v\in\Z$, i.e., that $\chi_v(\varpi_w^{-1})$ is a $q_w$-Weil number of weight $k_v$. Indeed, we already saw in Proposition \ref{propconstofpsrep} (b) that $\chi\Vert\cdot\Vert^{(1-\delta_{\mr{eo}}-N)/2}$ is algebraic, which implies this immediately.

\textit{Step 2}. In this step, we use the content of Section \ref{sectypes} to define the objects $K_v'$, $\phi_v$, and $u_v$ whose existence is claimed in the theorem.

First, let $(\tau_v,L)$ be a pair with $L$ an $F_v$-Levi subgroup of $G_{/F_v}$ contained in $M_{/F_v}$, and $\tau_v$ a supercuspidal representation of $L(F_v)$ such that $\chi_v\boxtimes\sigma_v$ is a constituent of an induced representation of the form $\Ind_{L(F_v)}^{M(F_v)}((\chi_v\boxtimes\tau_v)\otimes\psi_v)$ for some unramified character $\psi_v$ of $L(F_v)$. Let $\varrho$ be a supercuspidal type for $(\chi_v\boxtimes\tau_v,L)$ in $G_{/F_v}$ satisfying the conclusions of Proposition \ref{propexistenceofcovers}; this is where we use $v\nmid 2$.

We then take $K_v'$ to be the kernel of $\varrho$. We also let $\phi_v=\phi_{\varrho}$, where $\phi_{\varrho}$ is the operator of Definition \ref{defheckeoptype} defined with respect to this type, using the fixed uniformizer $\varpi_w$ in $E_w$. We define $u_v$ below now.

First, let $\varepsilon\in\{\emptyset,*\}$ be the symbol determined by
\[\varepsilon=\begin{cases}
*&\textrm{if }k_v\textrm{ is nonnegative;}\\
\emptyset&\textrm{if }k_v\textrm{ is negative.}
\end{cases}\]
We then write
\[\chi_v^\varepsilon=\begin{cases}
\chi_v&\textrm{if }\varepsilon=\emptyset;\\
\chi_v^*&\textrm{if }\varepsilon=*.
\end{cases}\]
This symbol is defined so that $\chi_v^{\varepsilon}(\varpi_w)$ is a $q_w$-Weil number of nonnegative weight $\vert k_v\vert$ always.

Now, identifying $M(F_v)=E_w^\times\times H(F_v)$, we write $\varpi_w\times 1\in M(F_v)$ for the corresponding element of $E_w^\times\times H(F_v)$. Note that by Corollary \ref{corjacofeisenstein}, the Jacquet module
\[\Jac_{M}(\chi_v\rtimes\sigma_v)\]
contains the constituent $\chi_v^{\varepsilon}\Vert\cdot\Vert_w^{(N-\delta_{\mr{eo}}-1)/2}\boxtimes\sigma$ with $k_v$ the nonzero integer defined in Step 1. Let $e_{\varrho,M}$ be the idempotent associated with the restriction of $\varrho$ to $M(F_v)$, as in Proposition \ref{propexistenceofcovers} (ii), and similarly for $e_{\varrho,G}$. Then by the transitivity of covers as in Proposition \ref{propexistenceofcovers} (i), the idempotent $e_{\varrho,M}$ acts in a nonzero way on the fixed vector subspace
\[(\chi_v^{\varepsilon}\Vert\cdot\Vert_w^{(N-\delta_{\mr{eo}}-1)/2}\boxtimes\sigma_v)^{K_v'\cap M(F_v)}.\]
Now for any vector
\[\bar{u}\in(\chi_v^{\varepsilon}\Vert\cdot\Vert_w^{(N-\delta_{\mr{eo}}-1)/2}\boxtimes\sigma_v)^{K_v'\cap M(F_v)},\]
we have
\[\frac{\chars((\varpi_w\times 1)(K_v'\cap M(F_v)))}{\vol(K_v'\cap M(F_v))}\bar{u}=\chi_v^{\varepsilon}(\varpi_w)\Vert\varpi_w\Vert_w^{(N-\delta_{\mr{eo}}-1)/2}\bar{u}.\]
It follows that the operator $\bar{\phi}_v$ given by
\[\bar\phi_v=e_{\varrho,M}*\frac{\chars((\varpi_w\times 1)(K_v'\cap M(F_v)))}{\vol(K_v'\cap M(F_v))}*e_{\varrho,M}\]
acts by the scalar $\chi_v^{\varepsilon}(\varpi_w)\Vert\varpi_w\Vert_w^{(N-\delta_{\mr{eo}}-1)/2}$ on the space
\[e_{\varrho,M}*(\chi_v^{\varepsilon}\Vert\cdot\Vert_w^{(N-\delta_{\mr{eo}}-1)/2}\boxtimes\sigma_v).\]
Therefore, the space
\[e_{\varrho,M}*\Jac_{M}(\chi_v\rtimes\sigma_v)\]
contains an eigenvector for $\bar{\phi}_v$ with eigenvalue
\[\chi_v^{\varepsilon}(\varpi_w)\Vert\varpi_w\Vert_w^{(N-\delta_{\mr{eo}}-1)/2}=q_w^{(1+\delta_{\mr{eo}}-N)/2}\chi_v^{\varepsilon}(\varpi_w).\]
By definition, we have $\phi_v=t_{M,G}(\bar{\phi}_v)$, where $t_{M,G}$ is the Bushnell--Kutzko transfer map of Proposition \ref{propexistenceofcovers} (ii), and so it follows from Proposition \ref{propexistenceofcovers} (ii) and (iii) that the space
\[e_{\varrho,G}*\Ind_{P(F_v)}^{G(F_v)}(\chi_v\boxtimes \sigma_v)\] contains an eigenvector for $\phi_v$ with eigenvalue $q_w^{(1+\delta_{\mr{eo}}-N)/2}\chi_v^{\varepsilon}(\varpi_w)$; we take $u_v$ to be any such eigenvector, and hence
\[\lambda(u_v)=q_w^{(1+\delta_{\mr{eo}}-N)/2}\chi_v^{\varepsilon}(\varpi_w).\]

\textit{Step 3}. We now set up the objects on the Galois side we will need.

For $x\in\Sigma$, let $\rho_x=\rho_{\pi_x}$ be the semisimple Galois representation given to us by Theorem \ref{thmconstofgalois}. Applying Proposition \ref{propgalfamonfreemod}, we get the following: There is a Zariski open affinoid neighborhood $\mf{X}_0$ of $x_0$ in $\mf{X}$, a reduced affinoid rigid space $\mf{Y}$ over $\Q_p$, a surjective and generically finite map $\mf{Y}\to\mf{X}_0$, and a continuous semisimple representation $\rho_{\mf{Y}}:G_\Q\to GL_N(\mc{O}(\mf{Y}))$ such that the following holds: For any $x\in\mf{X}_0(\overline\Q_p)$ and any $y\in\mf{Y}$ above $x$, and for any $g\in G_E$, we have
\begin{equation}
\label{eqntracefromYinpf}
\tr(\rho_{\mf{Y}}(g))(y)=\tr(\rho_{\pi_x}(g));
\end{equation}
the above relation follows from combining \eqref{eqntracefromY} with the defining property of the pseudorepresentation $T_\mc{F}$ of Proposition \ref{propconstofpsrep}.

Now as in Lemma \ref{lemexistsWDforrhoY}, write
\[(r_y,N_y)=\WD((y\circ\rho_\mf{Y})|_{G_L})^{\varphi-\sss}\otimes\chi_{q_w^{(N-\delta_{\mr{eo}}-1)/2}}.\]
Then by that lemma, we get a Frobenius-semisimple Weil--Deligne representation $(r_{\mf{Y}},N_{\mf{Y}})$ into the same free $\mc{O}(\mf{Y})$-module of rank $N$ on which $\rho_{\mf{Y}}$ acts with the property that, for any $y\in\mf{Y}(\overline{\Q}_p)$, we have
\[((r_{\mf{Y}},N_{\mf{Y}})\otimes_y\overline\Q_p)^{\varphi-\sss}\cong(r_y,N_y).\]

As we did after Lemma \ref{lemexistsWDforrhoY}, we consider the base change
\[(r_{\bar{\eta}},N_{\bar{\eta}})=((r_{\mf{Y}},N_{\mf{Y}})\otimes_{\mc{O}(\mf{Y})}\overline{\Frac(\mc{O}(\mf{Y}))})^{\varphi-\sss}\]
and write
\begin{equation}
\label{eqqnretabardecompinpf}
(r_{\bar{\eta}},N_{\bar{\eta}})\cong\bigoplus_{\tau\in\mc{T}} \bigoplus_{i\in\mc{I}^\tau}r_i^\tau\otimes\chi_{\alpha_i^\tau}\otimes\Sp(m_i^\tau),
\end{equation}
for some finite index set $\mc{I}^\tau$, some irreducible, finite dimensional representation of $W_L$ with $\overline{\Q}_p$-coefficients with $r_i^\tau|_{I_L}\cong\tau$, some elements $\alpha_i^\tau\in\overline{\Frac(\mc{O}(\mf{Y}))}^\times$, and some positive integers $m_i^\tau$; we recall that $\mc{T}$ is as in Notation \ref{notnmathcalT}. Actually, by Proposition \ref{propexistenceofmfz}, there is a reduced, irreducible, affinoid rigid space $\mf{Z}$ admitting a finite, surjective map $\pi:\mf{Z}\to\mf{Y}$ such that $\alpha_i^\tau\in\mc{O}(\mf{Z})^\times$ for all $\tau$ and $i$.

We also write
\[(r_\mf{Z},N_\mf{Z})=(r_\mf{Y},N_\mf{Y})\otimes_{\mc{O}(\mf{Y})}\mc{O}(\mf{Z}),\]
and, for $z\in\mf{Z}(\overline\Q_p)$, we write
\[(r_z,N_z)=((r_{\mf{Z}},N_{\mf{Z}})\otimes_z\overline\Q_p)^{\varphi-\sss}.\]
Then, of course, we have that
\[(r_z,N_z)=(r_{\pi(z)},N_{\pi(z)}).\]

We observe here that the pseudorepresentation $T_{\mc{F}}$ is (conjugate-)self dual by interpolation, because each $\rho_{\pi_x}$ is for $x\in\Sigma$; that is, we have
\[T_{\mc{F}}(cgc)=T_{\mc{F}}(g^{-1})\]
for all $g\in G_E$, where $c$ is any complex conjugation in $G_F$. Thus the representations $\rho_\mf{Y}$ and $r_{\bar{\eta}}$ are (conjugate-)self dual as well. This will be useful later.

\textit{Step 4}. We claim now that for any point $z_0\in\mf{Z}(\overline\Q_p)$ above $x_0$, we have that
\[r_{z_0}\cong r_\sigma\oplus(\iota^{-1}\circ\chi_v)\oplus(\iota^{-1}\circ\chi_v^*),\]
and for any $\tau\in\mc{T}$, we have
\[(N_\sigma\oplus 0\oplus 0)[\tau]\prec N_{z_0}[\tau].\]
Indeed, Proposition \ref{propconstofpsrep} (b) tells us that for $y_0=\pi(z_0)\in\mf{Y}(\overline\Q_p)$, we have
\[(y_0\circ\rho_{\mf{Y}})^{\sss}=\rho_{\sigma}(-1)\oplus \rho_{\chi\Vert\cdot\Vert^{(1+\delta_{\mr{eo}}-N)/2}}\oplus\rho_{\chi^*\Vert\cdot\Vert^{(1+\delta_{\mr{eo}}-N)/2}},\]
where the semisimplification on the left is as $G_E$-representations and $\rho_{\chi\Vert\cdot\Vert^{(1+\delta_{\mr{eo}}-N)/2}}$ and $\rho_{\chi^*\Vert\cdot\Vert^{(1+\delta_{\mr{eo}}-N)/2}}$ are the $G_E$-characters associated with the Hecke characters $\chi\Vert\cdot\Vert^{(1+\delta_{\mr{eo}}-N)/2}$ and $\chi^*\Vert\cdot\Vert^{(1+\delta_{\mr{eo}}-N)/2}$ by class field theory via $\iota$. Therefore it follows that
\begin{equation}
\label{eqnWDofssrep}
\WD((y_0\circ\rho_\mf{Y})^{\sss}|_{G_L})^{\varphi-\sss}\otimes\chi_{q_w^{(N-\delta_{\mr{eo}}-1)/2}}=(r_\sigma\oplus\chi_v\oplus\chi_v^*,N_\sigma\oplus 0\oplus 0).
\end{equation}
But by definition, we have
\[\WD((y_0\circ\rho_\mf{Y})|_{G_L})^{\varphi-\sss}\otimes\chi_{q_w^{(N-\delta_{\mr{eo}}-1)/2}}=(r_{z_0},N_{z_0}),\]
whose left hand side is the same as that of \eqref{eqnWDofssrep} except that we did not semisimplify first. The claim then follows easily.

We note that the statement about $I_{E_w}$-representations in our theorem follows from here by Proposition \ref{propdecompofrzprime} or Proposition \ref{propwdvari} (a), because for any $x\in\Sigma\cap\mf{X}_0(\overline\Q_p)$ and any points $z,z_0\in\mf{Z}(\overline\Q_p)$ above $x$ and $x_0$, respectively, these propositions tell us in particular that the $I_{E_w}$-representations underlying $(r_{z_0},N_{z_0})$ and $(r_{z},N_{z})$ agree; but we also have that, if $y=\pi(z)$, then
\begin{equation}
\label{eqnWDofssrep2}
\WD((y\circ\rho_\mf{Y})^{\sss}|_{G_L})^{\varphi-\sss}\otimes\chi_{q_w^{(N-\delta_{\mr{eo}}-1)/2}}=(r_{\pi_x},N_{\pi_x}).
\end{equation}
by Proposition \ref{propconstofpsrep} (b). So this
does follow in view of \eqref{eqnWDofssrep}.

\textit{Step 5}. For each $x\in\Sigma$, let $V_x$ be the subspace of $(\pi_{x,f}^{K_f})^{\mb{T}-\sss}$ given to us by Definition \ref{defheckefamilies}. Then by Lemma \ref{lemdimVxisconst}, there is a positive integer $D$ such that $\dim_\C(V_x)=D$ for all $x\in\Sigma$. We now compute the eigenvalues of the operator $\phi_v\otimes 1_{K_f^v}$ constructed in Step 2 on $V_x$ for various $x\in\Sigma$.

First, let $\mf{U}_1\subset\mf{Y}$ be the open locus on which $(y\circ\rho_{\mf{Y}})^{\sss}=y\circ\rho_{\mf{Y}}$ for every $y\in\mf{U}_1(\overline{\Q}_p)$. We note that for any $x\in\Sigma \cap\mf{X}_0(\overline\Q_p)$, if $x$ has a point $y$ which is in $\mf{U}_1(\overline{\Q}_p)$ lying above it, then $\rho_{\pi_x}\cong y\circ\rho_{\mf{Y}}$, since both are then semisimple and they have the same trace by \eqref{eqntracefromYinpf}. It follows then from Theorem \ref{thmconstofgalois} (b) that for such $x$ and $y$, we have that the local $L$-parameter of $\pi_x$ at $v$ for such $x$ is given by the Weil--Deligne representation $(r_y,N_y)$ via $\iota$. We will use this below and compute the eigenvalues of $\phi_v\otimes 1_{K_f^v}$ for these points $x$.

For $x\in\Sigma$, by Proposition \ref{propexistenceofcovers} (iii) and \eqref{eqnBuKuJac} of Proposition \ref{propexistenceofcovers} (ii), the operator $\phi_v\otimes 1_{K_f^v}$ acts on $\pi_{x,f}^{K_f}$ as the operator $\bar{\phi}$ defined by
\[\bar{\phi}=\left(e_{\varrho,M}*\frac{\chars((\varpi_w^\nu\times 1)(K_v'\cap M(F_v)))}{\vol(K_v'\cap M(F_v))}*e_{\varrho,M}\right)\otimes 1_{K_f^v}\]
acts on
\[e_{\varrho,M}*\Jac_{M}(\pi_{x,v})\otimes(\pi_{x,f}^{v})^{K_f^v}.\]
If $x$ is such that there is a point $y\in\mf{U}_1(\overline\Q_p)$ above it, then by Proposition \ref{propjacoftempered}, the possible constituents of this Jacquet module can be read off the local $L$-parameter for $\pi_{x}$ at $v$, hence from the Weil--Deligne representation $(r_y,N_y)$; they are all of the form
\[(((r_i^\tau\otimes\chi_{\alpha_i^\tau(x)})\circ\mr{Art}_w^{-1})\cdot\Vert\cdot\Vert_w^{b})\boxtimes\varsigma\quad\textrm{or}\quad(((r_i^\tau\otimes\chi_{\alpha_i^\tau(x)})^{*}\circ\mr{Art}_w^{-1})\cdot\Vert\cdot\Vert_w^{b})\boxtimes\varsigma,\]
for some $\tau\in\mc{T}$ which is $1$-dimensional and some $i\in\mc{I}^\tau$, where $\mr{Art}_w$ is the local Artin map, the character $r_i^\tau$ and the function $\alpha_i^\tau$ are as in the decomposition \eqref{eqqnretabardecompinpf} of Step 3, the number $b$ is in $\frac{1}{2}\Z$ with $\frac{N-\delta_{\mr{eo}}-1}{2}\leq b_i\leq \frac{N-\delta_{\mr{eo}}-1}{2}+\frac{n_v-1}{2}$ (we remind the reader that $n_v$ is the $F_v$-rank of $G_{/F_v}$), and $\varsigma$ is a smooth admissible representation of $H(F_v)$. We obtain the following: For each such $x$, there is a finite set $\tilde{\mc{J}}_x$ consisting of quintuples $(\tau,i,\varepsilon_i,b_i,n_i)$ where
\begin{itemize}
\item $\tau\in\mc{T}$ is $1$-dimensional such that $\mc{I}^\tau\ne\emptyset$;
\item $i\in\mc{I}^\tau$;
\item $\varepsilon_i\in\{\emptyset,*\}$;
\item $b_i\in\frac{1}{2}\Z$ with $\frac{N-\delta_{\mr{eo}}-1}{2}\leq b_i\leq \frac{N-\delta_{\mr{eo}}-1}{2}+\frac{n_v-1}{2}$;
\item $n_i$ is a positive integer;
\end{itemize}
such that $\sum_i n_i\leq D$, and such that, as a $GL_1(E_w)$-module, we have
\[e_{\varrho,M}*\Jac_{M}(\pi_{x,v})\otimes(\pi_{x,f}^{v})^{K_f^v}\cong\bigoplus_{(\tau,i,\varepsilon_i,b_i,n_i)\in\tilde{\mc{J}}_x}(((r_i^\tau\otimes\chi_{\alpha_i^\tau(x)})^{\varepsilon_i}\circ\mr{Art}_w^{-1})\cdot\Vert\cdot\Vert_w^{b_i})^{\oplus n_i}.\]
Now for each such $i$, let us write $e_i\in\{\pm 1\}$ for the number
\[e_i=\begin{cases}
1&\textrm{if }\varepsilon_i=\emptyset;\\
-1&\textrm{if }\varepsilon_i=*.
\end{cases}\]
Then we have
\[(((r_i^\tau\otimes\chi_{\alpha_i^\tau(x)})^{\varepsilon_i}\circ\mr{Art}_w^{-1})\cdot\Vert\cdot\Vert_w^{b_i})(\varpi_w)=q_w^{-b_i}(r_i^\tau)^{\varepsilon_i}(\varphi^{-1})(\alpha_i^\tau(x))^{-e_i},\]
where $\varphi=\mr{Art}_w^{-1}(\varpi_w)^{-1}$ is the arithmetic Frobenius element corresponding to $\varpi_w^{-1}$. It follows, since $V_x$ is a constituent of $(\pi_{x,f}^{K_f})^{\sss}$, that there is a subset $\mc{J}_x\subset\tilde{\mc{J}}_x$ such that the generalized eigenvalues of $\phi_v\otimes 1_{K_f^v}$ on $V_x$ are given by
\[q_w^{-b_i}(r_i^\tau)^{\varepsilon_i}(\varphi^{-1})(\alpha_i^\tau(x))^{-e_i}\textrm{ with multiplicity }n_i\textrm{ for each }(\tau,i,\varepsilon_i,b_i,n_i)\in\mc{J}_x,\]
for $x$ as above; this accounts for $\sum_{i}n_i$ eigenvalues, and the rest are $0$.

\textit{Step 6}. We just computed the $D$ eigenvalues of $\phi_v\otimes 1_{K_f^v}$ on the space $V_x$; the space $V_{x_0}$ given to us by Definition \ref{defheckefamilies} also has dimension $D$ by Lemma \ref{lemdimVxisconst}, and we know one of its eigenvalues, namely $\lambda(u_v)$, by Step 2. We now isolate and interpolate these eigenvalues.

By the crucial property \eqref{eqnheckefamcomp} of Definition \ref{defheckefamilies}, we have that
\[x(\Psi(\phi_v\otimes 1_{K_f^v})^\nu)=\tr((\phi_v\otimes 1_{K_f^v})^\nu|V_x),\]
for $x\in\Sigma\cup\{x_0\}$ and any integer $\nu>0$. By the theory of symmetric polynomials, there is a polynomial whose coefficients are polynomial combinations, which depend only on the integer $D$, of the $x(\Psi(\phi_v\otimes 1_{K_f^v})^\nu)$, and whose roots are exactly the eigenvalues of $\phi_v\otimes 1_{K_f^v}$ on $V_x$. There is thus a finite extension $\mc{L}/\Frac(\mc{O}(\mf{X}))$ such that, writing $\mf{Y}'$ for the normalization of $\mf{X}$ in $\mc L$, we have that $\mc{O}(\mf{Y}')$ contains elements $\beta_1,\dotsc,\beta_D$ with the property that, for any $y'\in\mf{Y}'(\overline\Q_p)$ above $x\in\Sigma\cup\{x_0\}$, we have that $\beta_1(y'),\dotsc,\beta_D(y')$ are the eigenvalues of $\phi_v\otimes 1_{K_f^v}$ acting on $V_x$ with multiplicity. After possibly rearranging, and by our crucial assumption (3) in the statement of the theorem, we may and will assume that $\beta_1$ is such that there is a $y_0'\in\mf{Y}'(\overline\Q_p)$ above $x_0$ such that
\[\beta_1(y_0)=\lambda(u_v)=q_w^{(1+\delta_{\mr{eo}}-N)/2}\chi_v^{\varepsilon}(\varpi_w),\]
the second equality above coming from Step 2.

Let $\widetilde{\Sigma}$ be the preimage in $\mf{Y}'$ of those elements in $\Sigma$ which have a preimage in the set $\mf{U}_1$ of Step 5. Then $\widetilde{\Sigma}$ is also Zariski dense in $\mf{Y}'$ because the map $\mf{Y}'\to\mf{X}'$ is finite. Also, note that there are only finitely many possible quintuples $(\tau,i,\varepsilon_i,b_i,n_i)$ that can appear in any of the sets $\mc{J}_x$ from Step 5. Thus there is a Zariski dense subset $\widetilde{\Sigma}'\subset\widetilde{\Sigma}$ such that, for all $y'\in\widetilde{\Sigma}'$, we have that
\begin{equation}
\label{eqnbeta1eval}
\beta_1(y')=q_w^{-b_{i_0}}(r_{i_0}^{\tau_0})^{\varepsilon_{i_0}}(\varphi^{-1})(\alpha_{i_0}^{\tau_0}(x))^{-e_{i_0}},
\end{equation}
for some \textit{fixed} quintuple $(\tau_0,i_0,\varepsilon_{i_0},b_{i_0},n_{i_0})$ (though the entry $n_{i_0}$ plays no role here), where $z\in\mf{Z}(\overline\Q_p)$ is any point whose image in $\mf{X}(\overline\Q_p)$ coincides with that of $y'$.

Now let $\mf{Z}'$ be the normalization of $\mf{Z}$ in any compositum of $\mc L$ and $\Frac(\mc{O}(\mf{Z}))$. Then $\mc{O}(\mf{Z}')$ contains the element $\alpha_{i_0}^{\tau_0}$. Therefore, the analytic functions defined by either side of \eqref{eqnbeta1eval} coincide, and so we have
\begin{equation}
\label{eqnbetaandalpha}
(r_{i_0}^{\tau_0})^{\varepsilon_{i_0}}(\varphi^{e_{i_0}})\alpha_{i_0}^{\tau_0}=(q_w^{b_{i_0}}\beta_1)^{-e_{i_0}}.
\end{equation}

\textit{Step 7}. With \eqref{eqnbetaandalpha} in hand, we now finish the proof. Let $z$ be any point in $\mf{Z}(\overline\Q_p)$ above a point in $x\in\Sigma$ and whose image in $\mf{Y}$ is in the open set $\mf{U}_1$ of Step 5. Then $(r_z,N_z)=(r_{\pi_x},N_{\pi_x})$, which is pure of weight $0$ by Proposition \ref{proppurityofgal}. Thus the character $r_{i_0}^{\tau_0}\otimes\chi_{\alpha_{i_0}^{\tau_0}(z)}$ of $W_{E_w}$ is pure of weight $0$. Hence $(r_{i_0}^{\tau_0})^{\varepsilon_{i_0}}(\varphi^{e_{i_0}})\alpha_{i_0}^{\tau_0}(z)$ is a Weil number of weight $0$, as it equals the evaluation of $r_{i_0}^{\tau_0}\otimes\chi_{\alpha_{i_0}^{\tau_0}(z)}$ on either $\varphi$ or possibly on $c\varphi c$, for some complex conjugation $c$ in $G_F$, when $G$ is unitary. However, for any $z_0'\in\mf{Z}(\overline\Q_p)$ over the point $y_0'$ from Step 6, writing $z_0$ for the image of $z_0'$ in $\mf{Z}$, we have by \eqref{eqnbetaandalpha} that
\[(r_{i_0}^{\tau_0})^{\varepsilon_{i_0}}(\varphi^{e_{i_0}})\alpha_{i_0}^{\tau_0}(z_0)=(q_w^{b_{i_0}}\lambda(u_v))^{-e_{i_0}}=(q_w^{b_{i_0}}q_w^{(1+\delta_{\mr{eo}}-N)/2}\chi_v^{\varepsilon}(\varpi_w))^{-e_{i_0}}.\]
The right hand side is a Weil number of weight
\[w_0=-e_{i_0}\left(b_{i_0}-\frac{N-\delta_{\mr{eo}}-1}{2}+\vert k_v\vert\right).\]
Because $\frac{N-\delta_{\mr{eo}}-1}{2}\leq b_{i_0}\leq \frac{N-\delta_{\mr{eo}}-1}{2}+\frac{n_v-1}{2}$, we therefore have that $\vert w_0\vert\geq \vert k_v\vert$ with equality holding if and only if $b_{i_0}=\frac{N-\delta_{\mr{eo}}-1}{2}$; this crucial numerical comparison is at the heart of the matter.

In any case, it follows from above that $\alpha_{i_0}^{\tau_0}(z_0)/\alpha_{i_0}^{\tau_0}(z)$ is a Weil number of weight $w_0$. By the (conjugate-)self duality observed at the end of Step 3, there is also another index $i_1\in\mc{I}^{\tau_1}$ for some $\tau_1\in\mc{T}$ for which $\alpha_{i_1}^{\tau_1}(z_0)/\alpha_{i_1}^{\tau_1}(z)$ is a Weil number of weight $-w_0$.

Now we invoke Proposition \ref{proppossibsforWDdegen}, taking $r_{\tau_{0,+}}=\chi_v^{\varepsilon}$ and $r_{\tau_{0,-}}=(\chi_v^{\varepsilon})^*$ there, and taking $\overline{N}_0=N_\sigma\oplus 0\oplus 0$, as well as $i_+=i_0$ and $i_-=i_1$, or vice-versa, depending on the sign of $w_0$, and finally taking $k_0=k_v$. (If $k_v=0$, the choice of sign does not matter, and we can make one arbitrarily.) The hypothesis we need that $(N_\sigma\oplus 0\oplus 0)[\tau]\prec N_{z_0}[\tau]$ for any $\tau\in\mc{T}$ is satisfied by the claim proved in Step 4. Then Proposition \ref{proppossibsforWDdegen} (a) says that $\vert w_0\vert\leq\vert k_v\vert$. By our crucial numerical comparison above, this implies $\vert w_0\vert=\vert k_v\vert$, and so the hypotheses of Proposition \ref{proppossibsforWDdegen} (b) are satisfied. We conclude that $(N_\sigma\oplus 0\oplus 0)[\tau]\sim N_z[\tau]=N_{\pi_x}[\tau]$ for such $z$.

Since the map $\mf{Y}\to\mf{X}_0$ is surjective, the locus of points in $\mf{X}_0$ which have a point in $\mf{U}_1$ above them contains a Zariski open in $\mf{X}_0$. Thus the conclusion
\[(N_\sigma\oplus 0\oplus 0)[\tau]\sim N_{\pi_x}[\tau]\]
is satisfied for $x\in\Sigma$ away from a proper Zariski closed subset, proving the theorem.
\end{proof}

\printbibliography

\end{document}